\theoremstyle{plain}
\newtheorem{theorem}{Theorem}[section]
\newtheorem*{theorem*}{Theorem}
\newtheorem{proposition}[theorem]{Proposition}
\newtheorem{lemma}[theorem]{Lemma}
\newtheorem{corollary}[theorem]{Corollary}
\newtheorem{conjecture}[theorem]{Conjecture}
\newtheorem{fact}[theorem]{Fact}
\theoremstyle{definition}
\newtheorem{definition}[theorem]{Definition}
\newtheorem{example}[theorem]{Example}
\newtheorem{remark}[theorem]{Remark}
\numberwithin{equation}{subsection}
\newcommand{\define}[1]{\textit{#1}}
\newcommand{\spn}[1]{\mathrm{span}_{\CC}\{#1\}}
\newcommand{\set}[1]{\left\{#1 \right\}}
\newcommand{\Ker}{\mathrm{Ker}}
\newcommand{\Hom}{\mathrm{Hom}}
\DeclareMathOperator{\End}{End}
\DeclareMathOperator{\Aut}{Aut}
\DeclareMathOperator{\id}{id}
\newcommand{\BasicRing}[1]{\mathbb{#1}}
\newcommand{\ZZ}{\BasicRing{Z}}
\newcommand{\NN}{\BasicRing{N}}
\newcommand{\RR}{\BasicRing{R}}
\newcommand{\CC}{\BasicRing{C}}
\newcommand{\Ann}{\mathrm{Ann}}
\newcommand{\alg}[1]{\mathcal{#1}}
\DeclareMathOperator{\gr}{gr}
\DeclareMathOperator{\Ass}{Ass}
\DeclareMathOperator{\rank}{rank}
\newcommand{\Ad}{\mathrm{Ad}}
\newcommand{\ad}{\mathrm{ad}}
\newcommand{\classicalG}[1]{\mathrm{#1}}
\newcommand{\classicalg}[1]{\mathfrak{#1}}
\DeclareMathOperator{\Lie}{Lie}
\newcommand{\lieC}[1]{\mathfrak{#1}}
\newcommand{\lie}[1]{\mathfrak{#1}}
\newcommand{\univ}[1]{\mathcal{U}(\lieC{#1})}
\newcommand{\univfilt}[2]{\mathcal{U}_{#1}(\lieC{#2})}
\newcommand{\univcent}[1]{\mathcal{Z}(\lieC{#1})}
\newcommand{\AV}{\mathcal{AV}}
\newcommand{\hsum}{\sideset{}{^\oplus}{\sum}}
\newcommand{\hrep}[1]{\mathcal{#1}}
\newcommand{\orbit}[1]{\mathcal{#1}}
\newcommand{\Dsheaf}[2]{\mathscr{#1}_{#2}}
\newcommand{\Dalg}[2]{\mathcal{#1}_{#2}}
\newcommand{\ntDalg}[1]{\Dalg{D}{#1}}
\newcommand{\sect}{\Gamma}
\newcommand{\regular}{\mathcal{O}}
\newcommand{\rring}[1]{\regular(#1)}
\newcommand{\Variety}{\mathcal{V}}
\DeclareMathOperator{\Rrankmin}{loc\text{.}\underline{rank}}
\DeclareMathOperator{\Rrankmax}{loc\text{.}\overline{rank}}
\DeclareMathOperator{\Rrank}{loc\text{.}rank}
\DeclareMathOperator{\glrank}{gl\text{.}rank}
\DeclareMathOperator{\Krankmin}{K\text{.}\underline{rank}}
\DeclareMathOperator{\Krankmax}{K\text{.}\overline{rank}}
\DeclareMathOperator{\Krank}{K\text{.}rank}
\let\Re\relax
\let\Im\relax
\DeclareMathOperator{\Re}{\mathrm{Re}}
\DeclareMathOperator{\Im}{\mathrm{Im}}
\DeclareMathOperator{\res}{\mathrm{rest}}
\newcommand{\GIT}{/\!/}
\newcommand{\rfilt}[2]{\mathcal{R}_{#1}(#2)}
\begin{document}

\title{Cartan subalgebras for restrictions of $\mathfrak{g}$-modules}

\author{Masatoshi Kitagawa}

\date{}

\maketitle

\begin{abstract}
	In this paper, we deal with the $\mathcal{U}(\mathfrak{g})$-action on a $\mathfrak{g}$-module on which a larger algebra $\mathcal{A}$ acts irreducibly.
	Under a mild condition, we will show that the support of the $\mathcal{Z}(\mathfrak{g})$-action is a union of affine subspaces in the dual of a Cartan subalgebra modulo the Weyl group action.
	As a consequence, we propose a definition of a Cartan subalgebra for such a $\mathfrak{g}$-module.

	The support of the $\mathcal{Z}(\mathfrak{g})$-module is an algebraic counterpart of the support of the measure in the irreducible decomposition of a unitary representation.
	This consideration is motivated by the theory of the discrete decomposability initiated by T.\ Kobayashi.
	Defining a Cartan subalgebra for a $\mathfrak{g}$-module is motivated by the study of I.\ Losev on Poisson $G$-varieties.
	These are related each other through the associated variety and the nilpotent orbit associated to a $\mathfrak{g}$-module.
\end{abstract}

\section{Introduction}

The purpose of this paper is to study the $\univcent{g}$-action on a $\lie{g}$-module on which a larger algebra acts irreducibly.
The restriction to $\lie{g}$ of modules of larger Lie algebras and algebras of twisted differential operators are the main objects.
A definition of Cartan subalgebras of such restrictions is proposed in this paper.

Let $G_\RR$ be a connected reductive Lie group and $\hrep{H}$ a unitary representation of $G_\RR$ on a separable Hilbert space.
By the theorem of Mautner--Teleman, $\hrep{H}$ has a unique irreducible decomposition by direct integral as
\begin{align}
	\hrep{H} \simeq \int_{\widehat{G}_\RR}^\oplus m(\pi) \hrep{H}_\pi d\mu(\pi), \label{intro:eqn:IrreducibleDecomp}
\end{align}
where $\mu$ is a Borel measure on the unitary dual $\widehat{G}_\RR$, and $m\colon \widehat{G}_\RR\rightarrow \NN \cup \set{\infty}$  is the multiplicity function.
If the representation $\hrep{H}$ has no additional condition, the decomposition has no restriction, so it may be too wild in general.
In practice, we often impose strong assumptions on $\hrep{H}$, e.g.\ $\hrep{H} = L^2(X)$ for some $G_\RR$-manifold $X$ or $\hrep{H} = \hrep{W}|_{G_\RR}$ the restriction of an irreducible representation of a larger Lie group.
Hence the irreducible decomposition of $\hrep{H}$ is expected to be controllable.

We focus on `the largest components' of the support of the measure $\mu$.
In this paper, we mainly study an algebraic counterpart.
Let $\lie{g}$ be a finite-dimensional complex reductive Lie algebra.
Fix a Cartan subalgebra $\lie{h}$ of $\lie{g}$ and write $W$ for the Weyl group of $\lie{g}$.
For a non-zero $\lie{g}$-module $V$, we define (Definition \ref{def:Rank})
\begin{align*}
	\rfilt{k}{V} &\coloneq \set{v \in V: \dim(\Variety(\Ann_{\univcent{g}}(v))) \leq k} \quad (k\geq -1), \\
	\Rrankmin(V) &\coloneq \min\set{k \in \NN : \rfilt{k}{V} \neq 0}, \\ 
	\Rrankmax(V) &\coloneq \min\set{k \in \NN : \rfilt{k}{V} = V},
\end{align*}
where $\univcent{g}$ is the center of the universal enveloping algebra $\univ{g}$ and $\Variety(I)$ denotes the subvariety of $\lie{h}^*/W$ corresponding to an ideal $I\subset \univcent{g} \simeq S(\lie{h})^W$.
If $\Rrankmin(V) = \Rrankmax(V)$, we write $\Rrank(V)\coloneq \Rrankmin(V)$, and say that $V$ is equi-rank.
We consider the support of the $\univcent{g}$-module $V$ as an algebraic counterpart of the support of the measure $\mu$.

We shall state the main results about the shape of the variety $\Variety(\Ann_{\univcent{g}}(v))$.
Let $(\alg{A}, G)$ be a generalized pair with connected reductive algebraic group $G$ (see Definition \ref{def:GeneralizedPair}).
$(\univ{\widetilde{g}}, G)$ and $(\ntDalg{X}, G)$ are typical examples, where $\lie{\widetilde{g}}$ is a Lie algebra containing $\lie{g}$ and $X$ is a $G$-variety.
It is important for us that the adjoint action of $\lie{g}$ on $\alg{A}$ is locally finite and completely reducible.
Let $q\colon \lie{h}^* \rightarrow \lie{h}^*/W$ denote the quotient map.
The following two theorems are the main results in this paper.
See Theorems \ref{thm:RankFiltration} and \ref{thm:Affinity}.

\begin{theorem}\label{intro:thm:EquiRank}
	For any irreducible $\alg{A}$-module, the restriction $V|_{\lie{g}}$ is equi-rank.
\end{theorem}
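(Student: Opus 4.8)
The plan is to deduce the theorem from one structural statement: for every $k$ the set $\rfilt{k}{V}$ is an $\alg{A}$-submodule of $V$. First note that $\rfilt{k}{V}$ is always a linear subspace: $\Ann_{\univcent{g}}(v)$ is an ideal of the commutative ring $\univcent{g}$, and for $v,w\in\rfilt{k}{V}$ one has $\Ann_{\univcent{g}}(v)\cap\Ann_{\univcent{g}}(w)\subseteq\Ann_{\univcent{g}}(v+w)$, whence $\Variety(\Ann_{\univcent{g}}(v+w))\subseteq\Variety(\Ann_{\univcent{g}}(v))\cup\Variety(\Ann_{\univcent{g}}(w))$ has dimension $\le k$, while stability under scalars is trivial. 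Granting in addition that $\rfilt{k}{V}$ is $\alg{A}$-stable, irreducibility of $V$ forces $\rfilt{k}{V}\in\set{0,V}$. Since $\rfilt{k}{V}$ is non-decreasing in $k$ and $\rfilt{r}{V}=V$ for $r\coloneq\dim(\lie{h}^*/W)$ (every subvariety of $\lie{h}^*/W$ having dimension $\le r$), there is a threshold $k_0\le r$ with $\rfilt{k}{V}=0$ for $k<k_0$ and $\rfilt{k}{V}=V$ for $k\ge k_0$; hence $\Rrankmin(V)=\Rrankmax(V)=k_0$, i.e.\ $V|_{\lie{g}}$ is equi-rank. So the whole content is the $\alg{A}$-stability of $\rfilt{k}{V}$.

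To establish that, I would fix $a\in\alg{A}$ and $v\in\rfilt{k}{V}$ and bound $\dim\Variety(\Ann_{\univcent{g}}(av))$ by $k$. Since $\univcent{g}$ is central in $\univ{g}$, one has $\Ann_{\univcent{g}}(v)=\Ann_{\univcent{g}}(N)$ for the $\lie{g}$-submodule $N\coloneq\univ{g}v$, so $\dim\Variety(\Ann_{\univcent{g}}(N))\le k$. Because the adjoint action of $\lie{g}$ on $\alg{A}$ is locally finite, $a$ lies in a finite-dimensional $\ad(\lie{g})$-stable subspace $E\subseteq\alg{A}$ (which, by complete reducibility, may be taken semisimple). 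Using the Leibniz compatibility $\xi\cdot(bw)=(\ad(\xi)b)\cdot w+b\cdot(\xi\cdot w)$ for $\xi\in\lie{g}$, $b\in\alg{A}$, $w\in V$, the subspace $E\cdot N\subseteq V$ is $\lie{g}$-stable and the multiplication map $E\otimes N\to E\cdot N$ is a surjective homomorphism of $\lie{g}$-modules; since $av\in E\cdot N$ we get $\univ{g}(av)\subseteq E\cdot N$, hence
\[
	\Ann_{\univcent{g}}(av)=\Ann_{\univcent{g}}\big(\univ{g}(av)\big)\supseteq\Ann_{\univcent{g}}(E\cdot N)\supseteq\Ann_{\univcent{g}}(E\otimes N).
\]
Thus it suffices to prove $\dim\Variety(\Ann_{\univcent{g}}(E\otimes N))\le\dim\Variety(\Ann_{\univcent{g}}(N))$ for $E$ finite-dimensional and $N$ arbitrary.

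This tensor-product estimate is the crux, and it is where the affine structure of the $\univcent{g}$-support enters. Under the Harish-Chandra isomorphism $\univcent{g}\simeq S(\lie{h})^W$, tensoring with a finite-dimensional module moves infinitesimal characters only by the weights of $E$; in the spirit of Kostant's description of the center, one shows that $\Variety(\Ann_{\univcent{g}}(E\otimes N))$ is contained in $q\big(\bigcup_{\nu}\big(q^{-1}(\Variety(\Ann_{\univcent{g}}(N)))+\nu\big)\big)$, the union over the finitely many weights $\nu$ of $E$. Since $q\colon\lie{h}^*\to\lie{h}^*/W$ is finite and surjective and translation by a fixed vector is an automorphism of $\lie{h}^*$, each translate $q^{-1}(\Variety(\Ann_{\univcent{g}}(N)))+\nu$ has the same dimension as $\Variety(\Ann_{\univcent{g}}(N))$, and hence so do the finite union and its image under $q$. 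This gives the inequality, completing the proof that $\rfilt{k}{V}$ is an $\alg{A}$-submodule, and with it the theorem.

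The hard part will be exactly this last step: controlling the $\univcent{g}$-action on $E\otimes N$ when $N$ need not possess a generalized infinitesimal character, so that the support can only be translated by weights of $E$ and its dimension cannot grow. Setting this up cleanly — via the coproduct $\univcent{g}\to\univ{g}\otimes\univ{g}$ and the finiteness properties it enjoys over $\univcent{g}\otimes\univcent{g}$ after passing to $\lie{g}$-invariants — is the technical heart of the matter; it is essentially the affinity theorem, of which the equi-rank statement is the dimension-theoretic corollary. Everything else (the subspace property, the reduction to the tensor-product bound, and the passage from $\alg{A}$-stability to equi-rank via irreducibility) is formal.
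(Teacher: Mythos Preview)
Your overall strategy is exactly the paper's (Theorem~\ref{thm:RankFiltration}): reduce equi-rank to the $\alg{A}$-stability of each $\rfilt{k}{V}$, and reduce that via the multiplication map $E\otimes N\to V$ (with $N=\univ{g}v$ and $E\subset\alg{A}$ finite-dimensional) to the tensor-product support bound $\Variety(\Ann_{\univcent{g}}(E\otimes N))\subseteq q\big(\bigcup_\nu(q^{-1}\Variety(\Ann_{\univcent{g}}N)+\nu)\big)$. You have also correctly located the only nontrivial step.

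Where you diverge from the paper is in how that bound is proved. The paper (Lemma~\ref{lem:TranslationSupport}) does \emph{not} argue via the coproduct directly. Instead, it first shows (Lemma~\ref{lem:FiniteGenDenseQuot}, resting on Lemmas~\ref{lem:FiniteGenFiltration}--\ref{lem:ExistenceQuotient}) that for a finitely generated $N$ the set $\{\chi:\ N/\Ker(\chi)N\neq 0\}$ is Zariski dense in $\Variety(\Ann_{\univcent{g}}N)$; then, for any quotient $W$ of $E\otimes N$ with infinitesimal character $[\lambda]$, the adjunction $\Hom_{\lie g}(E\otimes N,W)\cong\Hom_{\lie g}(N,E^*\otimes W)$ together with the classical Kostant theorem (Fact~\ref{fact:Kostant}) forces $[\lambda-\nu]$ to lie in that dense set for some weight $\nu$ of $E$. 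Applying the density lemma now to $E\otimes N$ gives the inclusion. Your sketch skips this density step entirely, and your proposed substitute has a concrete error: $(\univ{g}\otimes\univ{g})^{\lie g}$ is \emph{not} finite over $\univcent{g}\otimes\univcent{g}$ --- passing to associated graded, $S(\lie g\oplus\lie g)^{G_{\mathrm{diag}}}$ has Krull dimension $\dim\lie g$ while $S(\lie g)^G\otimes S(\lie g)^G$ has dimension $2\rank\lie g$. What \emph{is} true, and would rescue your line of argument, is that $(\End(E)\otimes\univ{g})^{\lie g}$ is finite over $1\otimes\univcent{g}$ (this follows from Kostant's decomposition $\univ{g}\cong H\otimes\univcent{g}$ with $H$ the harmonics, since $(\End(E)\otimes H)^{\lie g}$ is then finite-dimensional); extracting the precise translate description from this still requires essentially the content of Fact~\ref{fact:Kostant}. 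Finally, calling the tensor-product bound ``essentially the affinity theorem'' overstates things: it is the common lemma feeding both results, but the affinity theorem (Theorem~\ref{thm:Affinity}) needs a genuinely further argument.
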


\begin{theorem}\label{intro:thm:Affinity}
	Let $V$ be an irreducible $\alg{A}$-module.
	Suppose that $\alg{A}$ has at most countable dimension.
	Write $R$ for the set of all $\lie{h}$-weights in $\alg{A}$.
	Then there exist a subspace $\lie{a}^*\subset \lie{h}^*$ and $\lambda_0 \in \lie{h}^*$
	satisfying the following properties.
	\begin{enumerate}
		\item $\lie{a}^*$ is spanned by some elements of the form $\mu - \mu'$ ($\mu, \mu' \in R$).
		\item For any $P \in \Ass_{\univcent{g}}(V|_{\lie{g}})$, there exists $\alpha \in R$ such that $\Variety(P) = q(\lie{a}^* + \lambda_0 + \alpha)$.
	\end{enumerate}
\end{theorem}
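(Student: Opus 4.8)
The plan is to transport the statement to the polynomial ring $S(\lie{h})$ via the Harish--Chandra isomorphism $\univcent{g}\simeq S(\lie{h})^W$, to exploit the $\lie{h}$-weight grading of $\alg{A}$ afforded by the locally finite completely reducible adjoint action, and to prove a shift lemma describing how left multiplication by an $\ad(\lie{h})$-weight vector of $\alg{A}$ moves the $\univcent{g}$-support of a module; Theorem~\ref{intro:thm:EquiRank} then enters as the rigidity input. Writing $\widetilde{\Variety}(\cdot):=q^{-1}\Variety(\cdot)\subseteq\lie{h}^*$ for the lift of a closed subset of $\lie{h}^*/W$, it is enough to establish: (a) every $k$-dimensional irreducible component of $\mathrm{Supp}_{\univcent{g}}(V|_{\lie{g}})$, where $k:=\Rrank(V)$, becomes after lifting a translate by an element of $R$ of one fixed irreducible variety $\widetilde X_0$; and (b) $\widetilde X_0$ is an affine subspace $\lambda_0+\lie{a}^*$ with $\lie{a}^*$ spanned by differences of elements of $R$. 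Indeed, by Theorem~\ref{intro:thm:EquiRank} every $P\in\Ass_{\univcent{g}}(V|_{\lie{g}})$ has $\dim\Variety(P)=k$, so by (a) each component of $\widetilde{\Variety}(P)$ is some $\widetilde X_0+\beta$; as $\Variety(P)$ is irreducible these components form a single $W$-orbit, and with (b) one obtains $\Variety(P)=q(\lie{a}^*+\lambda_0+\alpha)$ for one $\alpha\in R$, which is (2), while (1) is (b).

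For the setup, fix $\lie{g}=\lie{n}^-\oplus\lie{h}\oplus\lie{n}^+$ and let $\Phi\colon\univ{g}\to\alg{A}$ be the homomorphism attached to the generalized pair, through which the adjoint action is inner. Decompose $\alg{A}=\bigoplus_\mu\alg{A}_\mu$ into $\ad(\lie{h})$-weight spaces: this is an $\lie{h}^*$-grading with support $R$, with $1\in\alg{A}_0$ and $\Phi(\univcent{g})\subseteq\alg{A}^{\lie{g}}\subseteq\alg{A}_0$ (since $\ad(\univcent{g})$ acts by a scalar on every finite-dimensional $\ad(\lie{g})$-submodule), and $V=\alg{A}v=\sum_{\mu\in R}\alg{A}_\mu v$ for every $0\ne v\in V$ by irreducibility. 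The heart is the shift lemma. From $\Phi(h)\,a=a\,(\Phi(h)+\mu(h))$ for $h\in\lie{h}$, $a\in\alg{A}_\mu$, one gets $\Phi(p)\,a=a\,\Phi(t_\mu p)$ for $p\in S(\lie{h})$, where $t_\mu$ denotes translation of the argument by $\mu$; decomposing a central $z$ as $z=\gamma'(z)+w$ with $\gamma'(z)\in S(\lie{h})$ its unshifted Harish--Chandra projection and $w\in\univ{g}\lie{n}^+$, and using that $\Phi(\lie{n}^+)$ acts on an $\ad(\lie{n}^+)$-highest weight vector $a\in\alg{A}_\mu$ only on the right, one obtains
\begin{equation*}
	\Phi(z)\,a \;=\; a\,\Phi\bigl(t_\mu\gamma'(z)\bigr)\;+\;r,\qquad r\in\Phi(\univ{g})\,a\,\Phi(\lie{n}^+).
\end{equation*}
On a vector killed by $\Phi(\lie{n}^+)$ this is an exact identity, making $\Ann_{\univcent{g}}(av)$ the $\mu$-shift of $\Ann_{\univcent{g}}(v)$; in general it yields $\widetilde{\Variety}(\Ann_{\univcent{g}}(av))\subseteq\bigcup_\beta\bigl(\widetilde{\Variety}(\Ann_{\univcent{g}}(v))+\beta\bigr)$, the union over the $\lie{h}$-weights $\beta$ of the finite-dimensional $\ad(\lie{g})$-module generated by $a$ (all lying in $R$). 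The step I expect to be the main obstacle is precisely the control of the $\lie{n}^+$-correction terms $r$: one must manufacture enough $\Phi(\lie{n}^+)$-locally finite, ideally highest-weight-like, cyclic vectors inside the irreducible module $V$, which I would attempt using $\ad(\lie{g})$-local finiteness of $\alg{A}$ together with irreducibility of $V$, passing if necessary to an associated graded object where the correction disappears.

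Granting the shift lemma, (a) follows by choosing $v_0$ with $P_0:=\Ann_{\univcent{g}}(v_0)$ a maximal annihilator, hence prime, letting $\widetilde X_0$ be a fixed component of $\widetilde{\Variety}(P_0)$ (of dimension $k$), and observing that for any $v=\sum_l a_lv_0\in V$ with $a_l\in\alg{A}_{\mu_l}$ the components of $\widetilde{\Variety}(\Ann_{\univcent{g}}(v))$, being $k$-dimensional, lie among the translates $\widetilde X_0+\beta$ with $\beta\in R$. For (b), since $\widetilde{\Variety}(P_0)$ is $W$-stable with all components $k$-dimensional, $w\widetilde X_0=\widetilde X_0+\beta_w$ for some $\beta_w\in R$ and all $w\in W$; the assignment $w\mapsto\beta_w$ is a cocycle for the $W$-action on $\lie{h}^*$ modulo the ($W$-stable) translation stabilizer $L$ of $\widetilde X_0$, so, $W$ being finite, it is a coboundary as a cocycle valued in $\lie{h}^*/(L\otimes_{\ZZ}\RR)$; translating $\widetilde X_0$ by a suitable vector we may thus assume it is $W$-stable and $L$-stable, and as $L$ is Zariski dense in $\lie{a}^*:=L\otimes_{\ZZ}\RR$ the irreducible variety $\widetilde X_0$ becomes invariant under translation by $\lie{a}^*$, hence equals a single coset $\lambda_0+\lie{a}^*$ with $\dim\lie{a}^*=k$. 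Finally $\lie{a}^*$ is a $W$-stable subspace of $\lie{h}^*$, so its semisimple part is spanned by roots lying in it, which lie in $R$ and equal differences $\alpha-0$; a short argument using the weights $\beta_w$ (which lie in $R$, and in $\lie{a}^*$ along the central directions) settles the center, giving (1). Besides the shift lemma, the delicate points are this identification of $\lie{a}^*$ and the verification that the single base point $\lambda_0$ obtained from $P_0$ serves every associated prime, which one gets by transporting through the shift lemma.
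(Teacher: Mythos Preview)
Your plan has two genuine gaps, and both are rooted in the same omission: you never use the countability of $\dim_{\CC}\alg{A}$ in any essential way, whereas the paper's Remark~\ref{rmk:Affinity} shows the theorem is false without it.

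\textbf{The shift lemma.} Your proposed identity $\Phi(z)\,a=a\,\Phi(t_\mu\gamma'(z))+r$ with $r\in\Phi(\univ{g})\,a\,\Phi(\lie{n}^+)$ cannot be salvaged the way you suggest. A general irreducible $\alg{A}$-module $V$ need not contain any nonzero $\Phi(\lie{n}^+)$-locally finite vector, so there is nowhere to evaluate the identity cleanly, and passing to an associated graded object destroys the very $\univcent{g}$-annihilator information you are after. The paper sidesteps this entirely: its shift lemma is Lemma~\ref{lem:AnnTranslation}, whose proof uses Kostant's theorem (Fact~\ref{fact:Kostant}) on $F\otimes V$ for a finite-dimensional $\lie{g}$-module $F\subset\alg{A}$, together with an embedding of $V$ into a product $\prod_\lambda V/\Ker(\chi_\lambda)V$ over a dense set of characters $\lambda$ (Lemma~\ref{lem:ExistenceQuotient}). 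That embedding already requires countable dimension, and it is what makes the shift work without ever touching $\lie{n}^+$.

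\textbf{The affinity step (b).} Even granting your shift lemma, your cocycle argument does not prove that $\widetilde X_0$ is an affine subspace. From $w\widetilde X_0=\widetilde X_0+\beta_w$ you can arrange $\widetilde X_0$ to be $W$-stable, and of course it is stable under its translation stabilizer $\lie{a}^*$ (which is already a linear subspace, being Zariski closed). But nothing you wrote forces $\dim\lie{a}^*=k$: a $W$-stable irreducible $k$-dimensional variety can have translation stabilizer $0$ (e.g.\ the hyperbola $xy=1$ in $\CC^2$ with $W=\ZZ/2$ swapping coordinates---exactly the shape of the counterexample in Remark~\ref{rmk:Affinity}). The sentence ``hence equals a single coset $\lambda_0+\lie{a}^*$ with $\dim\lie{a}^*=k$'' is a non sequitur. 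The paper's mechanism is completely different: one picks $P\in\Ass_{\univcent{g}}(V)$, a component $A$ of $q^{-1}(\Variety(P))$, and then---using that both $R$ and the exceptional set in Lemma~\ref{lem:ExistenceQuotient} are countable---a point $\lambda\in A$ generic enough that $V/\Ker(\chi_\lambda)V\neq 0$ and $\lambda\notin w(A+\mu)$ whenever $w(A+\mu)\neq A$. Embedding $V^P$ into $\prod_F F^*\otimes V/\Ker(\chi_\lambda)V$ and applying Fact~\ref{fact:Kostant} yields $\Variety(P)=q(\overline{\lambda-R'})$ for some countable $R'\subset R$; the generic choice of $\lambda$ then forces $A=\lambda_0+\lie{a}^*$ with $\lie{a}^*=\spn{\mu-\mu':\mu,\mu'\in R''}$ for a suitable $R''\subset R'$. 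That Baire-type genericity argument is the heart of the proof, and it has no counterpart in your sketch.
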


To show Theorem \ref{intro:thm:Affinity}, the assumption that $V$ has at most countable dimension is essential.
We will give an example in Remark \ref{rmk:Affinity}.
Using Theorem \ref{intro:thm:Affinity}, we define a small Cartan subalgebra for $V$ by the dual of $\lie{a}^*$ (i.e.\ $\lie{h}/\bigcap_{\lambda \in \lie{a^*}} \Ker(\lambda)$).
See Definition \ref{def:Cartan}.
If $G$ is a complex torus, a stronger result than Theorem \ref{intro:thm:Affinity} holds,
which is a generalization of weight space decomposition.
We will show this in Theorem \ref{thm:DirectSumAbelian}.

Hereafter, assume that $\alg{A}$ has at most countable dimension.
One of the motivations of our study is the theory of discrete decomposability.
The theory is initiated by T.\ Kobayashi in the series of papers \cite{Ko94, Ko98_discrete_decomposable_2, Ko98_discrete_decomposable_3}.
We say that a $\lie{g}$-module $V$ is discretely decomposable if $V$ has an exhaustive filtration $F$ such that $F_{-1}(V) = 0$ and $F_k(V)$ has finite length for any $k$.
Let $K$ be a finite covering of the connected component of a symmetric subgroup of $G$.
It is easy to see that an irreducible $(\alg{A}, K)$-module $V$ is discretely decomposable as a $(\lie{g}, K)$-module if and only if $\Rrank(V|_{\lie{g}}) = 0$.
Theorem \ref{intro:thm:EquiRank} is a generalization of \cite[Lemma 1.5]{Ko98_discrete_decomposable_3}.

In \cite[Theorem 3.1]{Ko98_discrete_decomposable_3}, T.\ Kobayashi gave a necessary condition for the discretely decomposable restriction by the associated variety of a module.
As a generalization of the result, we will give a lower bound of $\Rrank(V)$ by the associated variety.
Suppose that $\alg{A}$ is a filtered $\CC$-algebra such that $\gr \alg{A}$ is a finitely generated integral domain and the homomorphism $\univ{g}\rightarrow \alg{A}$ is a homomorphism of filtered algebras.
Write $X$ for the affine variety corresponding to $\gr \alg{A}$
and $\res_{\lie{g}}\colon X\rightarrow \lie{g}^*$ for the morphism corresponding to $S(\lie{g})\rightarrow \gr \alg{A}$.

We denote by $\widetilde{q}\colon \lie{g}^* \rightarrow \lie{g}^*\GIT G (\simeq \lie{h}^* \GIT W)$
and $q\colon \lie{h}^* \rightarrow \lie{h}^* // W$ the quotient morphisms.
For a finitely generated $\alg{A}$-module $V$, let $\AV(V)$ denote the associated variety of $V$.

\begin{theorem}\label{intro:thm:AV}
	Let $V$ be an irreducible $(\alg{A}, K)$-module.
	\begin{enumerate}
		\item $\AV(\univ{g}v)$ does not depend on the choice of $0\neq v \in V$. (Set $\AV(V|_{\lie{g}})\coloneq \AV(\univ{g}v)$.)
		\item $\overline{\widetilde{q}(\AV(V|_{\lie{g}}))} = q(\lie{a}^*)$, where $\lie{a}^*$ is the dual of a small Cartan subalgebra for $V$.
		\item $\res_{\lie{g}}(\AV(V)) \subset \AV(V|_{\lie{g}})$.
		\item $\dim(\overline{\res_{\lie{g}}(\AV(V))}\GIT K) \leq \Rrank(V|_{\lie{g}})$.
	\end{enumerate}
\end{theorem}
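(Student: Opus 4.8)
The plan is to prove the four parts in order, using Theorems~\ref{intro:thm:EquiRank} and~\ref{intro:thm:Affinity} freely. For \emph{(1)}, fix $0\neq v,v'\in V$. Since $V$ is $\alg{A}$-irreducible there is $a\in\alg{A}$ with $v'=av$, and local finiteness and complete reducibility of $\ad(\lie{g})$ on $\alg{A}$ let me enlarge $a$ to a finite-dimensional $\ad(\lie{g})$-submodule $E\subseteq\alg{A}$. An induction on the standard filtration of $\univ{g}$ gives $\univ{g}E\subseteq E\univ{g}$, so $\univ{g}v'=\univ{g}av\subseteq E\univ{g}v$, a finitely generated $\univ{g}$-submodule of $V$. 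The multiplication map $E\otimes_{\CC}\univ{g}v\to E\univ{g}v$ is $\univ{g}$-linear for the diagonal (adjoint $\otimes$ left-multiplication) action on the source; filtering the source by $E\otimes\univfilt{k}{g}v$ one gets $\gr(E\otimes\univ{g}v)\cong E\otimes\gr(\univ{g}v)$ with $S(\lie{g})$ acting through the second factor, whence $\AV(E\univ{g}v)\subseteq\AV(E\otimes\univ{g}v)=\AV(\univ{g}v)$. Thus $\AV(\univ{g}v')\subseteq\AV(\univ{g}v)$, and symmetry (using $v=a'v'$) gives equality; the same argument shows $\AV(\univ{g}v)$ is $K$-stable, which I use below.

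For \emph{(2)}, set $v_0=v$ and $L=\Ann_{\univ{g}}(v_0)$, so that $\AV(\univ{g}v_0)=\Variety_{S(\lie{g})}(\gr L)$ for the good filtration $\univfilt{k}{g}v_0$. As $S(\lie{g})$ is integral over $S(\lie{g})^G$, the image $\widetilde{q}(\AV(\univ{g}v_0))$ is the closed conical subvariety $\Variety_{S(\lie{g})^G}\bigl(S(\lie{g})^G\cap\gr L\bigr)$ of $\lie{g}^*\GIT G\simeq\lie{h}^*\GIT W$. Using Kostant's theorem $\gr\univcent{g}=S(\lie{g})^G$ (for the induced filtration) and that the Harish--Chandra isomorphism $\univcent{g}\cong S(\lie{h})^W$ differs from the Chevalley isomorphism $\gr\univcent{g}\cong S(\lie{h})^W$ only by the lower-order $\rho$-shift, I would identify this conical subvariety with the cone of leading forms of $\Variety(\Ann_{\univcent{g}}(v_0))\subseteq\lie{h}^*/W$, of dimension $\operatorname{GKdim}_{\univcent{g}}(\univcent{g}v_0)=\dim\Variety(\Ann_{\univcent{g}}(v_0))=\Rrank(V|_{\lie{g}})$ (the last equality by Theorem~\ref{intro:thm:EquiRank}). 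By Theorem~\ref{intro:thm:Affinity}, $\Variety(\Ann_{\univcent{g}}(v_0))$ is a finite union of translates $q(\lie{a}^*+\lambda_0+\alpha_j)$, each of dimension $\dim\lie{a}^*=\Rrank(V|_{\lie{g}})$, and since $q$ is finite the cone of leading forms of each such translate is $q(\lie{a}^*)$; as $q(\lie{a}^*)$ is irreducible of that dimension, this forces $\overline{\widetilde{q}(\AV(V|_{\lie{g}}))}=q(\lie{a}^*)$.

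For \emph{(3)}, let $v_0$ generate $V$ over $\alg{A}$ and take the good filtration $V_k=\alg{A}_k v_0$, so $\AV(V)=\operatorname{Supp}_{\gr\alg{A}}(\gr V)$. For a nonzero homogeneous $\bar w\in\gr_k V$ lift to $w=av_0\in V_k$ with $a\in\alg{A}_k$; then $\univfilt{j}{g}w\subseteq\alg{A}_{j+k}v_0=V_{j+k}$, so the good filtration $\univfilt{j}{g}w$ maps compatibly into $V$ and $S(\lie{g})\bar w\subseteq\gr V$ is an $S(\lie{g})$-module quotient of $\gr(\univ{g}w)$; by \emph{(1)}, $\operatorname{Supp}_{S(\lie{g})}(S(\lie{g})\bar w)\subseteq\AV(\univ{g}w)=\AV(V|_{\lie{g}})$. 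Taking the union over all such $\bar w$ gives $\operatorname{Supp}_{S(\lie{g})}(\gr V)\subseteq\AV(V|_{\lie{g}})$, and since the $\gr\alg{A}$-support maps into the $S(\lie{g})$-support under $\res_{\lie{g}}$ (a localization argument), $\res_{\lie{g}}(\AV(V))\subseteq\operatorname{Supp}_{S(\lie{g})}(\gr V)\subseteq\AV(V|_{\lie{g}})$.

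For \emph{(4)}, choose $v_0$ inside a finite-dimensional $K$-stable subspace, making $V_k=\alg{A}_k v_0$ a $K$-stable good filtration. Since $\lie{k}$ acts on $V$ integrably through $K$ and the filtration is $K$-stable, $\lie{k}$ maps each $V_k$ into itself, so its image in $\gr\alg{A}$ kills $\gr V$; hence $\res_{\lie{g}}(\AV(V))\subseteq\Variety(\lie{k})=\lie{k}^{\perp}$, which I identify with $\lie{p}^*$ for a Cartan decomposition $\lie{g}=\lie{k}\oplus\lie{p}$ of the symmetric pair $(\lie{g},\lie{k})$. Put $Y=\overline{\res_{\lie{g}}(\AV(V))}$, a $K$-stable closed subvariety of $\lie{p}^*$. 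By the Kostant--Rallis restriction theorem $S(\lie{g}^*)^G\to S(\lie{p}^*)^K$ is surjective, so $\lie{p}^*\GIT K\to\lie{g}^*\GIT G$ is a closed immersion; hence $Y\GIT K$ maps isomorphically onto $\overline{\widetilde{q}(Y)}$, giving $\dim(Y\GIT K)=\dim\widetilde{q}(Y)$. Finally, by \emph{(3)} $\widetilde{q}(Y)\subseteq\widetilde{q}(\AV(V|_{\lie{g}}))$, so by \emph{(2)} and Theorem~\ref{intro:thm:EquiRank}, $\dim(Y\GIT K)=\dim\widetilde{q}(Y)\leq\dim q(\lie{a}^*)=\Rrank(V|_{\lie{g}})$. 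The step I expect to fight hardest for is the identification in \emph{(2)} — matching the associated variety of $\gr(\univ{g}v_0)$ over $\gr\univcent{g}$ with the leading-form cone of $\Variety(\Ann_{\univcent{g}}(v_0))$, with an equality rather than just an inclusion — which forces one to control the two natural filtrations on $\univcent{g}v_0$; parts \emph{(3)} and \emph{(4)} are then comparatively formal, modulo the citation of Kostant--Rallis.
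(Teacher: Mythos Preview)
Your arguments for parts (1), (3), and (4) are essentially those of the paper (Proposition~\ref{prop:AmoduleAV}, Proposition~\ref{prop:LowerBoundAV}, and the last paragraph of Theorem~\ref{thm:RankAV}), and they go through.

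The gap is in part (2), and it is exactly the one you flag. Working with a single vector $v_0$ and $L=\Ann_{\univ{g}}(v_0)$, only the inclusion $\gr(L\cap\univcent{g})\subseteq\gr L\cap S(\lie{g})^G$ is automatic; it gives $\overline{\widetilde{q}(\AV(V|_{\lie{g}}))}\subseteq q(\lie{a}^*)$ but not the reverse. Your asserted dimension equality $\dim\widetilde{q}(\AV(\univ{g}v_0))=\operatorname{GKdim}_{\univcent{g}}(\univcent{g}v_0)$ is precisely the missing statement, and the ``two filtrations on $\univcent{g}v_0$'' issue is a real obstruction: the filtration on $\univcent{g}v_0$ induced from $\univfilt{\bullet}{g}v_0$ need not be a good $\univcent{g}$-filtration, so there is no a~priori reason the associated graded of $\univcent{g}v_0$ computed inside $\gr(\univ{g}v_0)$ has the same support as the one computed with the native $\univcent{g}$-filtration.

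The paper's proof of (2) (Theorem~\ref{thm:RankAV}) resolves this by using the $K$-structure in the hypotheses, which your argument ignores. One takes a finitely generated $(\lie{g},K)$-submodule $U$ with a $K$-stable good filtration and works with the $K$-isotypic pieces $U(\tau)$ rather than with $\univcent{g}v_0$. The key input is Lemma~\ref{lem:RestrictionGoodFiltration}: because $S(\lie{g}/\lie{k})^K$ is module-finite over $S(\lie{g})^G$, the filtration on $U(\tau)$ induced from $U$ \emph{is} a good $\univcent{g}$-filtration. This is exactly the two-filtration compatibility you are looking for, but it holds for $U(\tau)$, not for $\univcent{g}v_0$. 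One then gets $\widetilde{q}(\AV(U))=\bigcup_i\AV(U(\tau_i)|_{\univcent{g}})$ on the nose, and Lemma~\ref{lem:GrAffine} computes each summand as $q(\lie{a}^*)$ from Theorem~\ref{thm:Affinity}. So the missing idea is to pass from a single vector to a $K$-isotypic component and invoke the Kostant--Rallis-type finiteness there, rather than at the very end as you do in (4).
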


The proofs of 1 and 3 in Theorem \ref{intro:thm:AV} are essentially the same as \cite[Theorems 3.7 and 3.1]{Ko98_discrete_decomposable_3}.
It is a hard problem when $\overline{\res_{\lie{g}}(\AV(V))} = \AV(V|_{\lie{g}})$ holds.
In the context of discrete decomposable restriction of modules of reductive Lie groups,
this equality is conjectured in \cite[Conjecture 5.11]{Ko11}.
For real rank one symmetric subgroups, the equality $\dim(\overline{\res_{\lie{g}}(\AV(V))}\GIT K) = \Rrank(V|_{\lie{g}})$ will be proved in Proposition \ref{prop:Rank1ToConj}.

We shall consider the branching problem of reductive Lie groups.
Let $\widetilde{G}_\RR$ be a connected semisimple Lie group with finite center,
and $G_\RR$ a connected reductive subgroup of $\widetilde{G}_\RR$.
Fix a Cartan involution $\theta$ of $\widetilde{G}_\RR$ that stabilizes $G_\RR$
and write $\widetilde{K}$ (resp.\ $K$) for the complexification of $\widetilde{G}^\theta_\RR$ (resp.\ $G^\theta_\RR$).
Suppose that the Zariski closure $G\coloneq \overline{\Ad(G_\RR)}$ in $\Aut(\lie{\widetilde{g}})$ has the Lie algebra $\lie{g} = \Lie(G_\RR)\otimes_{\RR} \CC$.

We need to modify the definition of the ranks for continuous representations.
For a continuous representation $V$ of $G_\RR$ on a quasi-complete locally convex space,
we define (Definition \ref{def:RankContinous})
\begin{align*}
	\Rrankmin(V) &\coloneq \min\set{k \in \NN : \rfilt{k}{V^\infty} \neq 0}, \\
	\Rrankmax(V) &\coloneq \min\set{k \in \NN : \overline{\rfilt{k}{V^\infty}} = V},
\end{align*}
where $V^\infty$ is the space of all smooth vectors in $V$.
If the support of the measure $\mu$ in \eqref{intro:eqn:IrreducibleDecomp} is a union of algebraic sets in a suitable sense, the ranks means the size of the smallest and the largest components of the support.
It is easy to see
\begin{align*}
	\Rrankmax(\hrep{H}|_{G_\RR}) \leq \Rrankmax(\hrep{H}^\infty|_{G_\RR}) \leq \Rrank(\hrep{H}_{\widetilde{K}}|_{\lie{g}, K})
\end{align*}
for any irreducible unitary representation of $\hrep{H}$ of $\widetilde{G}_\RR$ (see Proposition \ref{prop:BranchingEasy}).
$\Rrank(\hrep{H}_{\widetilde{K}}|_{\lie{g}, K}) = 0$ implies that
$\Rrankmax(\hrep{H}|_{G_\RR}) = 0$ and hence $\hrep{H}|_{G_\RR}$ is discretely decomposable.
This is a paraphrase of \cite[Theorem 2.7]{Ko00_discretely_decomposable} by the ranks.
We will prove fundamental results for the ranks in Section \ref{section:Branching}.

We will show a similar result as Theorem \ref{intro:thm:Affinity} for $\Variety(\Ann_{\univcent{g}}(V))$.
We need to assume that $G_\RR$ is real spherical in $\widetilde{G}_\RR$, that is, $G_\RR \widetilde{P}_\RR$ is open in $\widetilde{G}_\RR$ for some minimal parabolic subgroup $\widetilde{P}_\RR\subset \widetilde{G}_\RR$.

\begin{theorem}\label{intro:thm:AffinityAnn}
	Let $V$ be an irreducible $(\widetilde{\lie{g}}, \widetilde{K})$-module.
	Then there exist a subspace $\lie{c}^* \subset \lie{h}^*$ and $\alpha_1, \ldots, \alpha_r \in \lie{h}^*$ such that $\Variety(\Ann_{\univcent{g}}(V)) = \bigcup_i q(\lie{c}^* + \alpha_i)$.
\end{theorem}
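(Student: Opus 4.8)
The plan is to reduce the statement to a claim about associated primes and then bring in real sphericity. Since $\univcent{g}\simeq S(\lie{h})^W$ is Noetherian and $q\colon\lie{h}^*\rightarrow\lie{h}^*/W$ is a finite morphism, $\Variety(\Ann_{\univcent{g}}(V))$ is a closed subvariety of $\lie{h}^*/W$ with finitely many irreducible components $Z_1,\ldots,Z_r$, and $Z_i=\Variety(\mathfrak{q}_i)$ for the minimal primes $\mathfrak{q}_1,\ldots,\mathfrak{q}_r$ of $\univcent{g}$ over $\Ann_{\univcent{g}}(V)$. Because $\univ{\widetilde{g}}$ has at most countable dimension over $\CC$, Theorem \ref{intro:thm:Affinity} applies to the irreducible $\univ{\widetilde{g}}$-module $V$ and yields one subspace $\lie{a}^*\subset\lie{h}^*$ (the dual of the small Cartan subalgebra of $V$, Definition \ref{def:Cartan}) and one $\lambda_0\in\lie{h}^*$ with $\Variety(P)=q(\lie{a}^*+\lambda_0+\alpha_P)$ for every $P\in\Ass_{\univcent{g}}(V|_{\lie{g}})$, where $\alpha_P$ is an $\lie{h}$-weight of $\univ{\widetilde{g}}$; each such $q(\lie{a}^*+\mu)$ is closed and irreducible as the image of an affine subspace under $q$. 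Taking $\lie{c}^*\coloneq\lie{a}^*$, the theorem will follow once each $\mathfrak{q}_i$ is shown to lie in $\Ass_{\univcent{g}}(V|_{\lie{g}})$: then $Z_i=q(\lie{c}^*+\lambda_0+\alpha_{\mathfrak{q}_i})$, and one puts $\alpha_i\coloneq\lambda_0+\alpha_{\mathfrak{q}_i}$.

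So everything reduces to one claim: each minimal prime of $\univcent{g}$ over $\Ann_{\univcent{g}}(V)$ is an associated prime of the $\univcent{g}$-module $V|_{\lie{g}}$. This is automatic for finitely generated modules, but $V|_{\lie{g}}$ is very far from finitely generated over $\univcent{g}$, and for general infinite modules the claim genuinely fails; real sphericity of $G_\RR$ in $\widetilde{G}_\RR$ is the hypothesis that should repair it. I would argue by localization: fixing a minimal prime $\mathfrak{q}$ over $\Ann_{\univcent{g}}(V)$ and passing to $(\univcent{g})_{\mathfrak{q}}$, the extended ideal $\mathfrak{q}(\univcent{g})_{\mathfrak{q}}$ is the unique minimal prime over $\Ann_{(\univcent{g})_{\mathfrak{q}}}(V_{\mathfrak{q}})$, so as soon as $V_{\mathfrak{q}}\neq 0$ it is an associated prime of $V_{\mathfrak{q}}$ and hence $\mathfrak{q}\in\Ass_{\univcent{g}}(V|_{\lie{g}})$. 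Thus the heart of the matter is the non-vanishing $V_{\mathfrak{q}}\neq 0$, i.e.\ that $V|_{\lie{g}}$ is not entirely $\univcent{g}$-torsion along $\mathfrak{q}$. I expect to extract this from the open $G_\RR$-orbit on $\widetilde{G}_\RR/\widetilde{P}_\RR$: real sphericity (and the consequent finiteness of $G_\RR$-orbits there) should confine the $\lie{g}$-infinitesimal characters occurring in $V|_{\lie{g}}$ to finitely many $W$-classes modulo $\lie{a}^*$, so that only finitely many parallel sheets $q(\lie{a}^*+\mu)$ can appear in $\bigcup_{v\neq 0}\Variety(\Ann_{\univcent{g}}(v))$, whose Zariski closure is $\Variety(\Ann_{\univcent{g}}(V))$; since by Theorem \ref{intro:thm:EquiRank} every such sheet has dimension $\Rrank(V|_{\lie{g}})=\dim\lie{a}^*$, the closure cannot expand into a larger subspace modulo $W$ and each $Z_i$ is a translate of $q(\lie{a}^*)$. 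As an independent check of the dimension, one may instead feed Theorem \ref{intro:thm:AV} the filtered algebra $\univ{\widetilde{g}}$: $\res_{\lie{g}}(\AV(V))\subseteq\AV(V|_{\lie{g}})$ and $\overline{\widetilde{q}(\AV(V|_{\lie{g}}))}=q(\lie{a}^*)$, with real sphericity guaranteeing $\res_{\lie{g}}(\AV(V))$ is large enough to pin $\dim\Variety(\Ann_{\univcent{g}}(V))=\dim\lie{a}^*$.

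The main obstacle is precisely this finiteness/non-vanishing step — making quantitative the idea that real sphericity forbids infinitely many parallel ``central-character sheets'' from accumulating in $\Variety(\Ann_{\univcent{g}}(V))$. I expect it to be the sole place where the real-spherical hypothesis is essential; once it is in place, the decomposition $\Variety(\Ann_{\univcent{g}}(V))=\bigcup_{i=1}^{r}q(\lie{c}^*+\alpha_i)$ with $\lie{c}^*=\lie{a}^*$ is immediate, the rest being the formal input of Noetherianity of $\lie{h}^*/W$ together with Theorems \ref{intro:thm:EquiRank} and \ref{intro:thm:Affinity}.
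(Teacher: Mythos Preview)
Your reduction to showing that every minimal prime over $\Ann_{\univcent{g}}(V)$ is an associated prime of $V|_{\lie{g}}$ is clean, and the localization argument you give for that implication is correct. But the crucial finiteness step you flag as ``the main obstacle'' is not merely a technical gap: it is equivalent to the equality $\Rrank(V|_{\lie{g}})=\glrank(V|_{\lie{g}})$. Indeed, every $P\in\Ass_{\univcent{g}}(V)$ has $\dim\Variety(P)=\Rrank(V|_{\lie{g}})=\dim\lie{a}^*$ by Theorems \ref{intro:thm:EquiRank} and \ref{intro:thm:Affinity}, whereas each minimal prime $\mathfrak{q}$ over $\Ann_{\univcent{g}}(V)$ has $\dim\Variety(\mathfrak{q})=\glrank(V|_{\lie{g}})$. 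So your claim forces these two ranks to agree, i.e.\ forces $\lie{c}^*=\lie{a}^*$. The paper does not assert this; in Theorem \ref{thm:AffinityAnn} it only records $\lie{a}^*\subset\lie{c}^*$, and Definition \ref{def:Cartan} deliberately distinguishes the small Cartan subalgebra from the Cartan subalgebra. Your heuristic via the orbit structure on $\widetilde{G}_\RR/\widetilde{P}_\RR$ does not obviously yield this stronger conclusion, and the associated-variety check you mention (Theorem \ref{intro:thm:AV}) only pins down $q(\lie{a}^*)=\overline{\widetilde{q}(\AV(V|_{\lie{g}}))}$, not $\Variety(\Ann_{\univcent{g}}(V))$.

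The paper sidesteps this entirely by working with a \emph{different} irreducible $\widetilde{\lie{g}}$-module $V'$ having the same annihilator in $\univ{\widetilde{g}}$ (hence the same $\Ann_{\univcent{g}}$) but whose restriction $V'|_{\lie{g}}$ is finitely generated. Real sphericity enters only through Yamashita's criterion (Facts \ref{fact:Yamashita1} and \ref{fact:Yamashita2}): one finds $g\in\widetilde{G}_\RR$ with $\AV(V)\cap\Ad^*(g)\lie{g}^\perp=\{0\}$, twists the $\widetilde{\lie{g}}$-action by $\Ad(g)$ to get $V'$, and concludes that $V'|_{\lie{g}}$ is finitely generated. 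Then $\Ann_{\univcent{g}}(V)=\Ann_{\univcent{g}}(V')=\bigcap_i\Ann_{\univcent{g}}(v_i)$ for a finite generating set $\{v_i\}$ of $V'|_{\lie{g}}$, and Theorem \ref{intro:thm:Affinity} applied to $V'$ immediately gives the decomposition, with $\lie{c}^*$ the dual of the small Cartan subalgebra of $V'$ (not of $V$). This is Theorem \ref{thm:AffinityAnn} combined with Theorem \ref{thm:AffinityRealSpherical}. So the paper never needs to control $\Ass_{\univcent{g}}(V)$ or to compare $\Rrank$ with $\glrank$; it trades the infinite module $V$ for a finitely generated one with the same annihilator.
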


We will prove Theorem \ref{intro:thm:AffinityAnn} in Theorem \ref{thm:AffinityRealSpherical}.
For technical reasons, we need to find an irreducible $\widetilde{\lie{g}}$-module $V'$ such that $\Ann_{\univ{\widetilde{g}}}(V) = \Ann_{\univ{\widetilde{g}}}(V')$ and $V'|_{\lie{g}}$ is finitely generated.
To find $V'$, we use Yamashita's criterion \cite{Ya94} and the real sphericity.

At the last of this section, we shall note a conjecture about the Cartan subalgebra.
In \cite{Lo09}, a Cartan subalgebra for a Poisson $G$-variety is given.
Theorem \ref{intro:thm:AffinityAnn} is regarded as a representation-theoretic analogue of this result.
This is also one of motivations of our study.
It is well-known that $\Variety(\gr \Ann_{\univ{\widetilde{g}}}(V))$ is the closure of one nilpotent coadjoint orbit $\orbit{O}$ in $\widetilde{\lie{g}}^*$ (see \cite[Theorem 9.3]{Ja04} and references therein).
Then $\res_{\lie{g}}\colon \overline{\orbit{O}} \rightarrow \lie{g}^*$ is the moment map of the Poisson $G$-variety $\overline{\orbit{O}}$.
In Proposition \ref{prop:AssociatedVarietyAnn}, we will prove $\overline{\res_{\lie{g}}(\overline{O})}//G \subset q(\lie{c}^*)$.
I.\ Losev proved in \cite[Theorem 1.2.3]{Lo09} that $\overline{\res_{\lie{g}}(\overline{O})}//G = q(\lie{d}^*)$ for some subspace $\lie{d}^*\subset \lie{h}^*$.
From the results, we expect $q(\lie{c}^*) = q(\lie{d}^*)$.

There exist several studies of Cartan subalgebras and Weyl groups for $G$-varieties and Poisson $G$-varieties.
For the Cartan subalgebras for $G$-varieties, see \cite[Chapter 4]{Ti11} and references therein.
Motivated by the studies, we guess that there exists a suitable Weyl group for the small Cartan subalgebra for $V$ and they are constructed geometrically from a Lagrangian subvariety in $\orbit{O}$.

This paper is organized as follows.
In Section 2, we recall the notions of generalized pairs and infinitesimal characters.
We also treat existence of irreducible quotients of $G_\RR$-representations.
The Kostant's theorem (Fact \ref{fact:Kostant}) is the key fact in this paper.
In Section 3, we introduce the ranks and prove Theorems \ref{intro:thm:EquiRank} and \ref{intro:thm:Affinity}.
Section 4 is devoted to the branching problem of reductive Lie groups.
Theorems \ref{intro:thm:AV} and \ref{intro:thm:AffinityAnn} are proved here.

\subsection*{Notation and convention}

In this paper, any algebra $\alg{A}$ except Lie algebras is unital, associative and over $\CC$.
We express real Lie groups and their Lie algebras by Roman alphabets and corresponding German letters with subscript $(\cdot)_\RR$, and express complex Lie groups (or affine algebraic groups) and their Lie algebras by Roman alphabets and corresponding German letters, respectively.
Similarly, we express the complexification of a real Lie algebra by the same German letter as that of the real form without any subscript.
For example, the Lie algebras of real Lie groups $G_\RR, K_\RR$ and $H_\RR$ are denoted as $\lie{g}_\RR, \lie{k}_\RR$ and $\lie{h}_\RR$ with complexifications $\lie{g}$, $\lie{k}$ and $\lie{h}$, respectively.
For a topological group $G$, let $G_0$ denote the identity component of $G$.

For a $G$-set $X$ of a group $G$, we write $X^G$ for the subset of all $G$-invariant elements in $X$.
We use similar notations for the set of all vectors annihilated by a Lie algebra $\lie{g}$ (resp.\ an ideal $I$) as $V^{\lie{g}}$ (resp.\ $V^I$).
The coordinate ring of an affine variety $X$ is denoted by $\rring{X}$.
For an ideal $I$ of $\rring{X}$, we denote by $\Variety(I)$ the subvariety in $X$ determined by $I$.

\subsection*{acknowledgement}

This work was supported by JSPS KAKENHI Grant Number JP23K12963.

\section{Preliminary}

In this section, we recall basic results about generalized pair, infinitesimal character and irreducible quotient.

\subsection{Generalized pair}

In this subsection, we recall the notion of generalized pairs.
We refer the reader to \cite[p.96]{KnVo95_cohomological_induction}.

In this paper, any variety is defined over $\CC$.
Let $G$ be a connected reductive algebraic group.
For a representation $V$ of $G$ as an abstract group,
we say that $V$ is a $G$-module or $G$ acts rationally on $V$ if the $G$-action on $V$ is locally finite
and any finite-dimensional subrepresentation of $V$ is a representation as an algebraic group.
Hence any $G$-module is completely reducible.

\begin{definition}\label{def:GeneralizedPair}
	Let $\alg{A}$ be a unital associative $\CC$-algebra.
	Suppose that $G$ acts on $\alg{A}$ rationally by algebra automorphisms.
	We say that $(\alg{A}, G)$ equipped with a $G$-equivariant algebra homomorphism $\iota\colon \univ{g} \rightarrow \alg{A}$ is a \define{generalized pair} if the adjoint action of $\lie{g}$ on $\alg{A}$
	given by $\ad_{\alg{A}}(X)a = [\iota(X), a]$ ($X \in \lie{g}, a \in \alg{A}$)
	coincides with the differential of the action of $G$ on $\alg{A}$.
\end{definition}

For a generalized pair $(\alg{A}, G)$, we denote by $\Ad_{\alg{A}}$ (or $\Ad$) the action of $G$ on $\alg{A}$.
If $V$ is an $\alg{A}$-algebra, then $V$ is a $\lie{g}$-module via the homomorphism $\iota\colon \univ{g}\rightarrow \alg{A}$.
We denote by $V|_{\lie{g}}$ the $\lie{g}$-module.
When we write the action of $\lie{g}$ on $V|_{\lie{g}}$, we omit $\iota$ as $Xv$ ($X \in \lie{g}, v \in V$).

We give typical examples of generalized pairs.
If $G'$ is a connected reductive subgroup of $G$, then $(\univ{g}, G')$ is a generalized pair.
If $\Dsheaf{A}{X}$ is a $G$-equivariant algebra of twisted differential operators on a smooth $G$-variety $X$, then $(\sect(\Dsheaf{A}{X}), G)$ forms a generalized pair.

We only need the following trivial property of generalized pairs.

\begin{proposition}
	Let $(\alg{A}, G)$ be a generalized pair and $V$ an $\alg{A}$-module.
	Then the multiplication map $\alg{A}\otimes V\rightarrow V$ is a $\lie{g}$-homomorphism.
\end{proposition}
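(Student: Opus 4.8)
The plan is to unwind the three $\lie{g}$-module structures in play and then verify equivariance by a one-line Leibniz computation. First I would recall that $\alg{A}$ is a $\lie{g}$-module via the adjoint action $\ad_{\alg{A}}(X)a = [\iota(X),a]$ --- this is exactly the action whose local finiteness and complete reducibility were emphasised before Definition~\ref{def:GeneralizedPair}, and by the defining condition of a generalized pair it agrees with the differential of the rational $G$-action on $\alg{A}$. Next, $V$ carries the $\lie{g}$-action $Xv \coloneq \iota(X)v$ coming from the $\alg{A}$-module structure through $\iota\colon\univ{g}\to\alg{A}$; this is the module $V|_{\lie{g}}$. Finally, $\alg{A}\otimes V$ is given the tensor product $\lie{g}$-module structure, namely $X\cdot(a\otimes v) = ([\iota(X),a])\otimes v + a\otimes(\iota(X)v)$ for $X\in\lie{g}$, $a\in\alg{A}$, $v\in V$, extended $\CC$-linearly.

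The map in question is the multiplication $m\colon\alg{A}\otimes V\to V$, $m(a\otimes v) = av$, using the $\alg{A}$-module action of $\alg{A}$ on $V$; it is well defined and $\CC$-linear since that action is $\CC$-bilinear. I would then simply compute, for $X\in\lie{g}$, $a\in\alg{A}$, $v\in V$,
\begin{align*}
	m\bigl(X\cdot(a\otimes v)\bigr)
	&= m\bigl(([\iota(X),a])\otimes v\bigr) + m\bigl(a\otimes(\iota(X)v)\bigr) \\
	&= (\iota(X)a - a\iota(X))v + a(\iota(X)v) \\
	&= \iota(X)(av) = X\cdot m(a\otimes v),
\end{align*}
where the middle line uses associativity of the $\alg{A}$-module action and the cancellation of the term $a(\iota(X)v)$. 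Since $m$ is $\CC$-linear, this suffices to conclude that $m$ is a $\lie{g}$-homomorphism.

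There is essentially no obstacle here: the statement is formal, and the only substantive input is the compatibility built into the definition of a generalized pair, which guarantees that the $\lie{g}$-action on $\alg{A}$ really is $a\mapsto[\iota(X),a]$ rather than some unrelated derivation. The one point worth stating carefully is which of the several $\lie{g}$-actions (on $\alg{A}$, on $V$, on $\alg{A}\otimes V$) is being used at each step, so that the cancellation above is transparent; I would make that explicit in the write-up and then the computation closes the proof.
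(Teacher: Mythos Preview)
Your proof is correct; the paper itself gives no proof at all, simply labelling this a ``trivial property of generalized pairs'' and stating it without argument. Your unwinding of the three $\lie{g}$-actions and the one-line Leibniz computation is exactly the content behind that triviality, so your approach is the same as what the paper implicitly has in mind.
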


\begin{definition}
	Let $(\alg{A}, G)$ be a generalized pair.
	An $\alg{A}$-module $V$ is called $(\alg{A}, G)$-module if the $\lie{g}$-action on $V$ lifts to a rational $G$-action.
\end{definition}

Since we have assumed that $G$ is connected, the rational $G$-action on $V$ is unique.
Note that $(\alg{A}, G)$-modules are not main objects in this paper.
The notion of $(\alg{A}, G)$-modules is used only in Subsection \ref{subsection:AV}.

\subsection{Infinitesimal character}

We recall fundamental facts of infinitesimal characters of $\lie{g}$-modules.
Let $\lie{g}$ be a finite-dimensional reductive Lie algebra over $\CC$.
Fix a Cartan subalgebra $\lie{h}\subset \lie{g}$ and a set $\Delta^+(\lie{g}, \lie{h})$ of positive roots.
Let $W$ denote the Weyl group of $\lie{g}$.
Then the center $\univcent{g}$ of $\univ{g}$ is isomorphic to $S(\lie{h})^W$ via the Harish-Chandra isomorphism.
See \cite[Theorem 3.2.3]{Wa88_real_reductive_I}.

By the Harish-Chandra isomorphism, there is a bijection between the set of characters of $\univcent{g}$ and $\lie{h}^*/W$.
Under this identification, we call an element of $\lie{h}^*/W$ or $\lie{h}^*$ an infinitesimal character of $\lie{g}$.
For $\lambda \in \lie{h}^*$, we write $[\lambda] \in \lie{h}^*/W$ for the image via the quotient map,
and $\chi_{\lambda}$ (or $\chi_{[\lambda]}$) for the corresponding character of $\univcent{g}$.
In this context, $\Ker([\lambda])$ means the kernel of $\chi_{[\lambda]}$, which is a maximal ideal of $\univcent{g}$.

Let $V$ be a $\lie{g}$-module and $\lambda$ a character of $\lie{h}$.
We set
\begin{align*}
	V_{\lambda} ( = V_{[\lambda]} = V_{\chi_\lambda}) \coloneq \set{v \in V : \Ker(\chi_\lambda)^n v = 0 \text{ for some }n}.
\end{align*}
We call $V_\lambda$ the primary component of $V$ with the infinitesimal character $\lambda$.
$V$ is said to be $\univcent{g}$-finite if $\univcent{g}/\Ann_{\univcent{g}}(V)$ is finite dimensional.
By \cite[Proposition 7.20]{KnVo95_cohomological_induction}, if $V$ is $\univcent{g}$-finite, then $V$ is the finite direct sum of all non-zero primary components $V_\chi$, which is called the primary decomposition of $V$.

The following result by B.\ Kostant is essential for our study.
The last assertion can be read from the proof of \cite[Theorem 7.133]{KnVo95_cohomological_induction}.

\begin{fact}[B.\ Kostant (see {\cite[Theorem 7.133]{KnVo95_cohomological_induction}})]\label{fact:Kostant}
	Let $V$ be a $\lie{g}$-module with an infinitesimal character $[\lambda]\in \lie{h}^*/W$
	and $F$ a completely reducible finite-dimensional $\lie{g}$-module.
	Then $V\otimes F$ is $\univcent{g}$-finite and has the primary decomposition
	\begin{align*}
		V\otimes F = \bigoplus_{\chi\in R} (V\otimes F)_{\chi},
	\end{align*}
	where $R\coloneq \set{[\lambda + \nu] : \nu \text{ is a weight of }F}$.
	Moreover, $\Ker(\chi)^{|W|}(V\otimes F)_{\chi} = 0$ for any $\chi \in R$.
\end{fact}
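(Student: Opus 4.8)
The plan is to reduce to the case where $V$ is already $\univcent{g}$-finite by working one weight of $F$ at a time, and to leverage the decomposition of $V\otimes F$ into $\lie{h}$-weight spaces inherited from the finite-dimensional factor $F$. First I would recall that $V$ has infinitesimal character $[\lambda]$, so $\univ{g}$ acts on $V$ through $\univ{g}\otimes_{\univcent{g}}\CC_{\chi_\lambda}$, and that $\univcent{g}$ acts on $V\otimes F$ diagonally. The key point is that, although $\univcent{g}$ does not act by a single character on $V\otimes F$, its action is controlled by the classical computation underlying Kostant's theorem: decompose $F$ into its generalized $\univcent{g}$-eigenspaces after tensoring, which amounts to the statement that the Harish-Chandra projection of the action of $z\in\univcent{g}$ on $V\otimes F$, relative to a filtration of $F$ by $\lie{b}$-submodules, is upper triangular with diagonal entries $\chi_{\lambda+\nu}(z)$ as $\nu$ runs over the weights of $F$.

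Concretely, the main step is: choose a composition series $0 = F_0\subset F_1\subset\cdots\subset F_m = F$ of $F$ as a $\lie{b}$-module (where $\lie{b} = \lie{h}\oplus\lie{n}^+$), so that each quotient $F_i/F_{i-1}$ is one-dimensional with $\lie{h}$ acting by a weight $\nu_i$ of $F$. This gives a filtration $V\otimes F_i$ of $V\otimes F$ by $\univ{b}$-submodules, hence by $\univcent{g}$-submodules (since $\univcent{g}\subset\univ{b}$ after the Harish-Chandra projection, modulo $\univ{g}\lie{n}^+$ acting suitably — this is precisely the content of the proof in \cite[Theorem 7.133]{KnVo95_cohomological_induction}). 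On the subquotient $(V\otimes F_i)/(V\otimes F_{i-1})\cong V\otimes(F_i/F_{i-1})$, an element $z\in\univcent{g}$ acts by the scalar $\chi_{\lambda+\nu_i}(z)$. Therefore on $V\otimes F$ the action of $z$ satisfies $\prod_{i=1}^m(z - \chi_{\lambda+\nu_i}(z)) = 0$, so $\univcent{g}$ acts on $V\otimes F$ through the finite-dimensional quotient $\univcent{g}/J$ where $J = \bigcap_{\nu}\Ker(\chi_{\lambda+\nu})$ (intersection over distinct values $[\lambda+\nu]$). This proves $\univcent{g}$-finiteness, and by \cite[Proposition 7.20]{KnVo95_cohomological_induction} it yields the primary decomposition $V\otimes F = \bigoplus_{\chi\in R}(V\otimes F)_\chi$ with $R$ as stated.

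For the last assertion, I would refine the bound on nilpotency. Grouping the composition factors of $F$ by their associated infinitesimal character, for a fixed $\chi = [\lambda+\nu]\in R$ the number of indices $i$ with $[\lambda+\nu_i] = \chi$ is the multiplicity of the $W$-orbit of $\lambda+\nu$ among the weights of $F$; since all weights of $F$ with a common image in $\lie{h}^*/W$ form a single $W$-orbit (the weight multiplicities of $F$ are $W$-invariant, and distinct $W$-translates of a weight are distinct weights), this multiplicity is at most $|W|$. Hence on the primary component $(V\otimes F)_\chi$ the element $z - \chi(z)$ satisfies $(z-\chi(z))^{|W|} = 0$, i.e. $\Ker(\chi)^{|W|}(V\otimes F)_\chi = 0$, as claimed. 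The main obstacle is making the "upper triangular" claim precise — that passing to $\lie{b}$-subquotients really diagonalizes the $\univcent{g}$-action with the expected eigenvalues — but this is exactly the mechanism of Kostant's argument in \cite{KnVo95_cohomological_induction}, so I would cite it rather than reprove it, and the only genuinely new bookkeeping is the $W$-orbit count giving the uniform exponent $|W|$.
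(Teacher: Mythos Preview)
The paper does not prove this statement; it is quoted as a fact from \cite[Theorem~7.133]{KnVo95_cohomological_induction}, with the remark that the exponent $|W|$ ``can be read from the proof'' there. So there is no proof in the paper to compare against, and the comments below concern only the internal correctness of your sketch.

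Your central mechanism---that the $\lie{b}$-flag $0=F_0\subset\cdots\subset F_m=F$ makes $V\otimes F_i$ a filtration of $V\otimes F$ by $\univcent{g}$-submodules---fails for general $V$. A direct computation in $\lie{sl}_2$ with $z$ the Casimir and $F$ the standard two-dimensional representation gives $z\cdot(v\otimes e_1)=(\text{terms in }V\otimes e_1)+2Xv\otimes e_2$, so $V\otimes\CC e_1$ is $z$-stable only when $\lie{n}^+$ annihilates $V$. The parenthetical ``$\univcent{g}\subset\univ{b}$ after the Harish-Chandra projection'' does not rescue this: $\univcent{g}$ is not contained in $\univ{b}$, and the pieces of $z$ lying in $\lie{n}^-\univ{g}$ genuinely move vectors out of $V\otimes F_i$. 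What \emph{does} work is the Verma case: for $V=M(\lambda)$ the tensor identity $M(\lambda)\otimes F\cong\univ{g}\otimes_{\univ{b}}(\CC_{\lambda-\rho}\otimes F|_{\lie{b}})$ upgrades the $\lie{b}$-flag to a genuine $\lie{g}$-filtration with Verma subquotients $M(\lambda+\nu_i)$, whence $\prod_i(z-\chi_{\lambda+\nu_i}(z))=0$ on $M(\lambda)\otimes F$; one then transfers this identity to arbitrary $V$ with infinitesimal character $\chi_\lambda$ (for instance via the universal module $\univ{g}\otimes_{\univcent{g}}\CC_\lambda$, or by a Zariski-density argument in $\lambda$). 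Your sketch collapses this two-step argument into a single step that is not valid.

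Your derivation of the exponent $|W|$ is also flawed. You correctly observe that the set of \emph{distinct} weights $\nu$ with $[\lambda+\nu]=\chi$ has at most $|W|$ elements (one for each $w\in W$, namely $\nu=w\mu-\lambda$ with $[\mu]=\chi$). But the number of indices $i$ in the composition series with $[\lambda+\nu_i]=\chi$ is $\sum_{\nu:[\lambda+\nu]=\chi}\dim F_\nu$, which counts weight multiplicities and can exceed $|W|$: already for $\lie{g}=\lie{sl}_3$, $\lambda=0$, and $F$ an irreducible representation whose zero-weight space has dimension at least $2$, the roots contribute $6\cdot\dim F_\alpha$ indices to a single primary component. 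The filtration length therefore does not give $|W|$ directly; the uniform exponent $|W|$ requires the finer argument in the cited reference rather than the raw subquotient count.
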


\subsection{Existence of irreducible quotients}

Let $\widetilde{G}_\RR$ be a connected reductive Lie group and $G_\RR$ a connected reductive subgroup of $\widetilde{G}_\RR$.
For an irreducible representation $V$ of $\widetilde{G}_\RR$, it is a non-trivial problem how many irreducible quotients the restriction $V|_{G_\RR}$ has.
We recall two results for this problem.

Let $\hrep{H}$ be a Hilbert representation of $\widetilde{G}_\RR$.
Then $\hrep{H}^\infty$ denotes the space of all smooth vectors in $\hrep{H}$ equipped with the topology given by the seminorms $\set{\|X\cdot\|: X \in \univ{\widetilde{g}}}$.
For a finite-dimensional $\widetilde{G}_\RR$-stable subspace $F\subset \univ{\widetilde{g}}$ containing $1$,
we denote by $\hrep{H}_F$ the completion of $\hrep{H}^\infty$ with respect to the seminorms $\set{\|X\cdot \| : X \in F}$ (i.e.\ the graph norm).
Then $\hrep{H}_F$ is a Hilbert representation of $\widetilde{G}_\RR$.

The following proposition is easy from the definition of the topology of $\hrep{H}^\infty$.

\begin{proposition}
	Let $\hrep{H}$ be a Hilbert representation of $\widetilde{G}_\RR$
	and $T\colon \hrep{H}^\infty\rightarrow \hrep{W}$ a continuous $G_\RR$-linear map to a Hilbert representation $\hrep{W}$ of $G_\RR$.
	Then there exists a finite-dimensional $\widetilde{G}_\RR$-stable subspace $F\subset \univ{\widetilde{g}}$ containing $1$ such that
	$T$ extends to a continuous $G_\RR$-linear map from $\hrep{H}_F$ to $\hrep{W}$.
\end{proposition}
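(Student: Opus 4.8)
The plan is to unwind the definition of the topology on $\hrep{H}^\infty$ and use continuity of $T$ to trap the relevant seminorm inside a single finite-dimensional $\widetilde{G}_\RR$-stable piece of $\univ{\widetilde{g}}$. First I would recall that the topology of $\hrep{H}^\infty$ is given by the seminorms $p_X(v) = \|Xv\|$ for $X$ ranging over $\univ{\widetilde{g}}$; since $\univ{\widetilde{g}}$ is the increasing union of its finite-dimensional $\widetilde{G}_\RR$-stable subspaces containing $1$ (the $\widetilde{G}_\RR$-action on $\univ{\widetilde{g}}$ being locally finite and $\univ{\widetilde{g}}$ being at most countable-dimensional), this topology is generated by the directed family of norms $\|\cdot\|_E := \max\{\|Xv\| : X \in E, \|X\|_E \le 1\}$ indexed by such subspaces $E$ (equivalently, the graph-type norm attached to a finite spanning set of $E$). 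Thus $\hrep{H}^\infty$ is the projective limit of the Hilbert spaces $\hrep{H}_E$ over this directed system.

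Next I would use continuity of $T\colon \hrep{H}^\infty \rightarrow \hrep{W}$: by definition of the projective-limit topology, there is a single index $E$ and a constant $C>0$ such that $\|Tv\|_{\hrep{W}} \le C\,\|v\|_E$ for all $v \in \hrep{H}^\infty$. Enlarging $E$ if necessary, I may take $E = F$ to be a finite-dimensional $\widetilde{G}_\RR$-stable subspace of $\univ{\widetilde{g}}$ containing $1$ (the span of a $\widetilde{G}_\RR$-stable set containing $E$ is still finite-dimensional because the action is locally finite). Then $T$ is continuous for the norm $\|\cdot\|_F$, hence extends uniquely by continuity to the completion $\hrep{H}_F$ of $\hrep{H}^\infty$ in this norm, giving a bounded linear map $\widetilde{T}\colon \hrep{H}_F \rightarrow \hrep{W}$.

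Finally I would check that $\widetilde{T}$ is $G_\RR$-linear. This is a density argument: $\hrep{H}^\infty$ is dense in $\hrep{H}_F$ by construction, the $G_\RR$-actions on $\hrep{H}_F$ and on $\hrep{W}$ are by bounded operators (continuous Hilbert representations), and $\widetilde{T}$ agrees with the $G_\RR$-equivariant map $T$ on the dense subspace $\hrep{H}^\infty$; hence for each $g \in G_\RR$ the two bounded operators $\widetilde{T}\circ g$ and $g\circ\widetilde{T}$ coincide on a dense set, so they coincide. One technical point worth stating carefully is that $\hrep{H}_F$ is genuinely a Hilbert representation of $\widetilde{G}_\RR$ (so in particular a $G_\RR$-representation) — this is already recorded in the paragraph preceding the proposition, since the graph norm attached to an $\widetilde{G}_\RR$-stable $F$ is $\widetilde{G}_\RR$-continuous.

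The main obstacle, such as it is, is the reduction in the second step: one must observe that the continuity estimate furnished by the projective-limit topology involves only finitely many seminorms, and that the finite set of elements of $\univ{\widetilde{g}}$ appearing there can be absorbed into a single finite-dimensional $\widetilde{G}_\RR$-stable subspace containing $1$; everything after that is a routine extension-by-density argument. I would also remark that countable-dimensionality of $\univ{\widetilde{g}}$ is automatic, so no extra hypothesis is needed beyond what is stated.
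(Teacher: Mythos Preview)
Your proposal is correct and follows exactly the approach the paper has in mind: the paper simply records that the proposition ``is easy from the definition of the topology of $\hrep{H}^\infty$'', and your argument is precisely the unwinding of that remark. The only comment is that countable-dimensionality of $\univ{\widetilde{g}}$ plays no role --- what matters is that continuity into a normed space yields an estimate involving finitely many of the defining seminorms $\|X\cdot\|$, which you then absorb into a single $\widetilde{G}_\RR$-stable $F$.
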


For any irreducible admissible Hilbert representation $\hrep{H}$,
$\hrep{H}^\infty$ is realized as a subrepresentation of a principal series representation by the Casselman subrepresentation theorem \cite[Theorem 3.8.3]{Wa88_real_reductive_I}.
The full flag manifold of $\widetilde{G}_\RR$ contains a closed $G_\RR$-orbit.
Considering the restriction map (and some projection), we can construct an irreducible admissible quotient of $\hrep{H}^\infty|_{G_\RR}$.
See \cite[Lemma 6]{Ki24} for the details.

\begin{proposition}\label{prop:ExistenceQuotient}
	Let $\hrep{H}$ be an irreducible admissible Hilbert representation of $\widetilde{G}_\RR$.
	Then $\hrep{H}^\infty|_{G_\RR}$ has an irreducible admissible quotient.
	Moreover, there exists a finite-dimensional $\widetilde{G}_\RR$-stable subspace $F\subset \univ{\widetilde{g}}$ containing $1$ such that $\hrep{H}_F|_{G_\RR}$ has an irreducible admissible quotient.
\end{proposition}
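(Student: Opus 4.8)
The plan is to split the statement into two steps: first exhibit an irreducible admissible quotient of $\hrep{H}^\infty|_{G_\RR}$ (this is essentially \cite[Lemma 6]{Ki24}), and then transport it to a suitable Hilbert completion $\hrep{H}_F$ by means of the preceding proposition.

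For the first step I would argue as follows. By the Casselman subrepresentation theorem \cite[Theorem 3.8.3]{Wa88_real_reductive_I}, $\hrep{H}^\infty$ embeds as a closed $\widetilde{G}_\RR$-subrepresentation of a smooth minimal principal series $I = C^\infty(\widetilde{G}_\RR/\widetilde{P}_\RR;\mathcal{L}_\sigma)$ for a finite-dimensional representation $\sigma$ of $\widetilde{P}_\RR$. The full flag manifold $\widetilde{G}_\RR/\widetilde{P}_\RR$ is compact, so it contains a closed $G_\RR$-orbit $S$; restriction of sections along $S$ (composed, if necessary, with a $G_\RR$-equivariant projection to a jet bundle along $S$ so that the map does not vanish on $\hrep{H}^\infty$) gives a nonzero continuous $G_\RR$-equivariant map from $\hrep{H}^\infty$ into a space of sections $C^\infty(S;\mathcal{E})$ of a homogeneous vector bundle over the compact manifold $S$, which for a suitably chosen $S$ is a degenerate principal series of $G_\RR$ and in any case is admissible of finite length. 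An irreducible admissible quotient $\hrep{W}_0$ of the image then provides a nonzero continuous $G_\RR$-equivariant map $T_0\colon \hrep{H}^\infty \to \hrep{W}_0$ with dense (hence, by Casselman--Wallach, full) image; this proves the first assertion.

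For the second step, fix a Hilbert globalization $\hrep{W}$ of $\hrep{W}_0$ and regard $T_0$ as a continuous $G_\RR$-linear map $T\colon \hrep{H}^\infty \to \hrep{W}$. By the preceding proposition there is a finite-dimensional $\widetilde{G}_\RR$-stable subspace $F\subset \univ{\widetilde{g}}$ with $1\in F$ such that $T$ extends to a continuous $G_\RR$-linear map $\widehat{T}\colon \hrep{H}_F \to \hrep{W}$. Since $\hrep{H}^\infty$ is dense in $\hrep{H}_F$ and $T(\hrep{H}^\infty)$ is dense in the irreducible representation $\hrep{W}$, the image $\widehat{T}(\hrep{H}_F)$ is dense in $\hrep{W}$, so $\hrep{W}$ (equivalently $\hrep{H}_F/\overline{\Ker\widehat{T}}$) is an irreducible admissible quotient of $\hrep{H}_F|_{G_\RR}$, which is the moreover part.

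The step that requires genuine work is the nonvanishing in the first part: one must choose the closed $G_\RR$-orbit $S$ and the jet level so that the restriction map does not kill $\hrep{H}^\infty \subset I$. This is precisely the content of \cite[Lemma 6]{Ki24}; the remaining ingredients — Casselman's theorem, existence of a closed $G_\RR$-orbit on the compact manifold $\widetilde{G}_\RR/\widetilde{P}_\RR$, finite length and admissibility of (degenerate) principal series of $G_\RR$, existence of irreducible quotients of finite-length admissible representations, and the extension step — are standard or already supplied by the excerpt.
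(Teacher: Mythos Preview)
Your proposal is correct and follows essentially the same approach as the paper: both use the Casselman subrepresentation theorem together with restriction to a closed $G_\RR$-orbit on the full flag manifold (deferring the nonvanishing to \cite[Lemma~6]{Ki24}) for the first assertion, and then invoke the preceding proposition to transport the resulting continuous $G_\RR$-map into a Hilbert globalization up to some $\hrep{H}_F$ for the ``moreover'' part. The only minor imprecision is the parenthetical identification of $\hrep{W}$ with $\hrep{H}_F/\Ker\widehat{T}$ --- these are not literally the same Hilbert space, but $\hrep{H}_F/\Ker\widehat{T}$ injects continuously with dense image into the irreducible admissible $\hrep{W}$, hence shares its Harish-Chandra module and is itself irreducible admissible, which is what you need.
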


We shall consider a similar result for unitary representations.
We refer the reader to \cite{Be88_plancherel}, \cite[Chapter 12]{Sc90} and \cite[Subsection 3.3]{Li18}.
Let $\hrep{H}$ be an irreducible unitary representation of $\widetilde{G}_\RR$.
Then the restriction $\hrep{H}|_{G_\RR}$ has the irreducible decomposition
\begin{align*}
	\hrep{H}|_{G_\RR} \simeq \int_{\widehat{G}_\RR}^\oplus m(\pi)\hrep{H}_\pi d\mu(\pi),
\end{align*}
where $\mu$ is a Borel measure on the unitary dual $\widehat{G}_\RR$ and $m\colon \widehat{G}_\RR \rightarrow \NN\cup \set{\infty}$ is the multiplicity function.
Remark that there is no continuous $G_\RR$-linear map from $\hrep{H}|_{G_\RR}$ to $\hrep{H}_\pi$ unless $\hrep{H}_\pi$ is a discrete spectrum of $\hrep{H}|_{G_\RR}$.
On the other hand, similar maps may be defined on $\hrep{H}^\infty|_{G_\RR}$ and some completion.
The following fact is essential for this purpose.
See \cite[Theorem 1.5 and Lemma 1.3]{Be88_plancherel}.

\begin{fact}\label{fact:HilbertSchmidt}
	Let $\varphi\colon \hrep{W} \rightarrow \hrep{H} = \int_X^\oplus 
	\hrep{H}_x d\mu(x)$ be a Hilbert--Schmidt $G_\RR$-equivariant operator from a Hilbert representation of $G_\RR$ to a direct integral of unitary representations of $G_\RR$.
	Suppose that $\hrep{W}$ is separable and $\varphi(\hrep{W})$ is dense in $\hrep{H}$.
	Then there exists a family $\set{\varphi_x : \hrep{W}\rightarrow \hrep{H}_x}_{x \in X}$ of continuous $G_\RR$-linear maps satisfying the following properties.
	\begin{enumerate}
		\item $\varphi_x(\hrep{W})$ is dense in $\hrep{H}_x$ $\mu$-a.e.
		\item For any $v \in \hrep{W}$, $\varphi(v) = (\varphi_x(v))_{x \in X}$ holds as an element of the direct integral $\int_X^\oplus \hrep{H}_x d\mu(x)$.
	\end{enumerate}
\end{fact}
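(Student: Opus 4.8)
The plan is to disintegrate $\varphi$ into a measurable field of fibrewise operators $(\varphi_x)_{x\in X}$; the Hilbert--Schmidt hypothesis is precisely what makes these fibrewise operators bounded. First I would fix an orthonormal basis $\{e_n\}_{n\in\NN}$ of $\hrep{W}$ (which exists since $\hrep{W}$ is separable) and represent each $\varphi(e_n)\in\int_X^\oplus\hrep{H}_x\,d\mu(x)$ by a measurable vector field $x\mapsto\varphi(e_n)_x$. The Hilbert--Schmidt condition reads $\sum_n\|\varphi(e_n)\|^2=\sum_n\int_X\|\varphi(e_n)_x\|^2\,d\mu(x)<\infty$, so by Tonelli's theorem $\sum_n\|\varphi(e_n)_x\|^2<\infty$ for $\mu$-a.e.\ $x$. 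On the conull set $X_1$ where this holds I would set $\varphi_x(v)\coloneq\sum_n\langle v,e_n\rangle\,\varphi(e_n)_x$ for $v\in\hrep{W}$ (and $\varphi_x\coloneq 0$ for $x\notin X_1$); the series converges, $\varphi_x$ is linear, and it is Hilbert--Schmidt with $\|\varphi_x\|_{\mathrm{HS}}^2=\sum_n\|\varphi(e_n)_x\|^2$, hence continuous, with $\|\varphi_x(v)\|\le\|v\|\,\|\varphi_x\|_{\mathrm{HS}}$.

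For property 2, I would observe that for each fixed $v$ the field $x\mapsto\varphi_x(v)$ is measurable, being the $\mu$-a.e.\ pointwise limit of the finite partial sums $\sum_{n\le N}\langle v,e_n\rangle\,\varphi(e_n)_x$, and that $\int_X\|\varphi_x(v)\|^2\,d\mu(x)\le\|v\|^2\sum_n\|\varphi(e_n)\|^2<\infty$, so $v\mapsto(\varphi_x(v))_x$ defines a bounded operator $\hrep{W}\to\int_X^\oplus\hrep{H}_x\,d\mu(x)$. It agrees with $\varphi$ on each basis vector $e_n$ by construction, hence on all of $\hrep{W}$ by boundedness, which is exactly property 2.

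Next comes $G_\RR$-equivariance. I would choose a countable dense subset $\{g_k\}_{k\in\NN}\subset G_\RR$ (possible since $G_\RR$ is second countable). For each $k,n$, equivariance of $\varphi$ together with property 2 gives $(\varphi_x(g_k e_n))_x=\varphi(g_k e_n)=g_k\varphi(e_n)=(g_k\varphi_x(e_n))_x$ as elements of the direct integral, so $\varphi_x(g_k e_n)=g_k\varphi_x(e_n)$ for $\mu$-a.e.\ $x$; intersecting over the countably many pairs $(k,n)$, there is a conull set $X_0\subset X_1$ on which $\varphi_x(g_k e_n)=g_k\varphi_x(e_n)$ holds for all $k$ and $n$. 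For $x\in X_0$ and any $v$, the maps $g\mapsto\varphi_x(gv)$ and $g\mapsto g\varphi_x(v)$ from $G_\RR$ to $\hrep{H}_x$ are continuous (strong continuity of the $G_\RR$-actions on $\hrep{W}$ and on $\hrep{H}_x$, plus boundedness of $\varphi_x$); they coincide for $v=e_n$ on the dense set $\{g_k\}$ hence for all $g$, and then, being bounded in $v$, for all $v\in\hrep{W}$. So each $\varphi_x$ with $x\in X_0$ is $G_\RR$-linear.

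Finally, property 1. For $x\in X_0$ write $\hrep{K}_x$ for the closure $\overline{\varphi_x(\hrep{W})}=\overline{\mathrm{span}}\{\varphi(e_n)_x:n\}$; then $x\mapsto\hrep{K}_x$ is a measurable field of closed subspaces (it is generated by the countably many measurable sections $\varphi(e_n)_\cdot$) and the fibrewise orthogonal projections assemble into a decomposable projection $P=\int_X^\oplus P_x\,d\mu(x)$ on $\int_X^\oplus\hrep{H}_x\,d\mu(x)$. For every $v\in\hrep{W}$ the fibres of $P\varphi(v)$ are $P_x\varphi(v)_x=P_x\varphi_x(v)=\varphi_x(v)=\varphi(v)_x$, using property 2 and $\varphi_x(v)\in\hrep{K}_x$; hence $\varphi(\hrep{W})\subseteq\Image(P)$, and since $\varphi(\hrep{W})$ is dense and $\Image(P)$ is closed, $P=\id$, so $P_x=\id_{\hrep{H}_x}$ and $\varphi_x(\hrep{W})$ is dense in $\hrep{H}_x$ for $\mu$-a.e.\ $x$. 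Discarding the remaining null set, the family $\{\varphi_x\}$ on the resulting conull set has all the required properties. I expect the third step --- promoting the element-wise a.e.\ equivariance to genuine $G_\RR$-linearity of the fibres --- to be the main obstacle; it is where second countability of $G_\RR$ and strong continuity of the representations are indispensable, and it is essentially the argument of \cite{Be88_plancherel}.
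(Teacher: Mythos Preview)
The paper does not prove this statement: it records it as a \emph{Fact} and cites \cite[Theorem 1.5 and Lemma 1.3]{Be88_plancherel}, so there is no in-paper proof to compare against. Your proposal is correct and is precisely the standard disintegration argument underlying Bernstein's result --- Tonelli on the Hilbert--Schmidt sum to get fibrewise boundedness, a countable density argument in $G_\RR$ to upgrade a.e.\ equivariance, and the decomposable-projection trick for fibrewise density --- so nothing is missing and nothing diverges from the reference the paper invokes.
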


To use Fact \ref{fact:HilbertSchmidt}, we need to find a Hilbert--Schmidt operator.
It is well-known that $\hrep{H}^\infty$ is a nuclear Fr\'echet space for any irreducible unitary representation $\hrep{H}$.
This is a consequence of the Casselman subrepresentation theorem.
In fact, the space of smooth sections of a vector bundle on a flag manifold is nuclear,
and so is its closed subspace.
By the nuclearity, there exists a finite-dimensional $\widetilde{G}_\RR$-subspace $F\subset \univ{\widetilde{g}}$ containing $1$ such that the inclusion $\hrep{H}_F\rightarrow \hrep{H}$ is Hilbert--Schmidt.
Therefore we obtain

\begin{proposition}\label{prop:ExistenceQuotientUnitary}
	Let $\hrep{H}$ be an irreducible unitary representation of $\widetilde{G}_\RR$.
	Then there exist a finite-dimensional $\widetilde{G}_\RR$-stable subspace $F\subset \univ{\widetilde{g}}$ containing $1$ and a family $\set{\hrep{W}_x}_{x\in X}$ of closed subrepresentation of $\hrep{H}_F|_{G_\RR}$ such that $\hrep{H}_F/\hrep{W}_x$ is irreducible admissible for any $x \in X$, and $\bigcap_x \hrep{W}_x = 0$.
\end{proposition}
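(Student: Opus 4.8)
The plan is to apply Fact~\ref{fact:HilbertSchmidt} to a Hilbert--Schmidt embedding of some $\hrep{H}_F$ into the restriction $\hrep{H}|_{G_\RR}$, and then to recognise the resulting densely defined maps as irreducible admissible quotients. By the nuclearity of $\hrep{H}^\infty$ recalled above, first fix a finite-dimensional $\widetilde{G}_\RR$-stable subspace $F\subset\univ{\widetilde{g}}$ containing $1$ such that the inclusion $\varphi\colon\hrep{H}_F\rightarrow\hrep{H}$ is Hilbert--Schmidt, and realise the irreducible decomposition of $\hrep{H}|_{G_\RR}$ recalled above as a direct integral $\hrep{H}|_{G_\RR}\simeq\int_X^\oplus\hrep{H}_x\,d\mu(x)$ with $\hrep{H}_x$ an irreducible unitary representation of $G_\RR$ for $\mu$-almost every $x$ (absorbing the multiplicities $m(\pi)$ into the base space $X$). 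The map $\varphi$ is $G_\RR$-equivariant, being the restriction of the $\widetilde{G}_\RR$-equivariant inclusion; $\hrep{H}_F$ is separable, being a Hilbert-space completion of the separable space $\hrep{H}^\infty$; and $\varphi(\hrep{H}_F)\supset\hrep{H}^\infty$ is dense in $\hrep{H}$. Hence Fact~\ref{fact:HilbertSchmidt} yields continuous $G_\RR$-linear maps $\varphi_x\colon\hrep{H}_F\rightarrow\hrep{H}_x$ ($x\in X$) with $\varphi_x(\hrep{H}_F)$ dense in $\hrep{H}_x$ for $\mu$-a.e.\ $x$, and $\varphi(v)=(\varphi_x(v))_{x\in X}$ for all $v\in\hrep{H}_F$.

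Discarding a $\mu$-null set (and keeping the name $X$ for the index set), I may assume that for every $x\in X$ the fibre $\hrep{H}_x$ is irreducible unitary and $\varphi_x(\hrep{H}_F)$ is dense in $\hrep{H}_x$. Put $\hrep{W}_x\coloneq\Ker(\varphi_x)$; it is a closed $G_\RR$-stable subspace of $\hrep{H}_F$, so $\hrep{H}_F/\hrep{W}_x$ is a Hilbert representation of $G_\RR$. If $v\in\bigcap_{x\in X}\hrep{W}_x$, then $\varphi(v)=(\varphi_x(v))_x=0$ as an element of $\int_X^\oplus\hrep{H}_x\,d\mu(x)$, hence $v=0$ since $\varphi$ is injective; thus $\bigcap_{x\in X}\hrep{W}_x=0$.

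It remains to show that each $\hrep{H}_F/\hrep{W}_x$ is irreducible admissible, which I expect to be the only genuine obstacle. The map $\varphi_x$ induces a continuous injective $G_\RR$-linear map $\hrep{H}_F/\hrep{W}_x\rightarrow\hrep{H}_x$ with dense image, so it suffices to prove the general claim: a Hilbert representation $B$ of $G_\RR$ admitting a continuous injective $G_\RR$-linear map $\psi\colon B\rightarrow\hrep{H}_x$ with dense image is itself irreducible admissible. Fixing a maximal compact subgroup $K_\RR\subset G_\RR$, the $\tau$-isotypic projector $E_\tau$ (integration of the $K_\RR$-action against $d_\tau\overline{\chi_\tau}$, for $\tau\in\widehat{K_\RR}$) is a bounded operator on any Hilbert representation of $G_\RR$, and $\psi$ commutes with it by $K_\RR$-equivariance and continuity; since $\psi(B)$ is dense, $\psi$ carries the isotypic subspace $B(\tau)$ injectively onto a dense, hence the whole, of the finite-dimensional space $\hrep{H}_x(\tau)$, so $\dim B(\tau)=\dim\hrep{H}_x(\tau)<\infty$ and $B$ is admissible. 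Then $\psi$ restricts to a bijection on $K_\RR$-finite vectors, i.e.\ to an isomorphism of $(\lie{g},K_\RR)$-modules from the Harish-Chandra module of $B$ onto that of $\hrep{H}_x$, which is irreducible; hence $B$ is irreducible admissible. Applying this with $B=\hrep{H}_F/\hrep{W}_x$ finishes the proof. The crux is precisely this last step: Fact~\ref{fact:HilbertSchmidt} provides only maps with dense image, and promoting ``dense image in an irreducible admissible representation'' to ``the quotient is irreducible admissible'' relies essentially on the finiteness of the $K_\RR$-types of $\hrep{H}_x$; some routine care is also needed to resolve the multiplicities of the direct integral so that the fibres are genuinely irreducible and to manage the $\mu$-a.e.\ qualifiers.
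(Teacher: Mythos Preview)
Your proof is correct and follows exactly the approach the paper sketches: use nuclearity of $\hrep{H}^\infty$ to find $F$ with $\hrep{H}_F\hookrightarrow\hrep{H}$ Hilbert--Schmidt, then apply Fact~\ref{fact:HilbertSchmidt} to the direct integral decomposition of $\hrep{H}|_{G_\RR}$ and take $\hrep{W}_x=\Ker(\varphi_x)$. The paper leaves the verification that $\hrep{H}_F/\hrep{W}_x$ is irreducible admissible implicit; your argument via $K_\RR$-isotypic projectors is the right one, and note that once admissibility is established the inclusion $B_K\subset B^\infty$ is automatic (each finite-dimensional $B(\tau)$ contains the dense subspace $E_\tau(B^\infty)$, hence equals it), so your invocation of the $(\lie g,K_\RR)$-module isomorphism is justified.
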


\begin{remark}
	The family $\set{\hrep{W}_x}_{x\in X}$ can be chosen to satisfy that $\hrep{H}_F/\hrep{W}_x$ is a dense (non-closed) subrepresentation of an irreducible unitary representation for any $x \in X$.
\end{remark}

\section{General theory}

In this section, we consider the restriction of an $\alg{A}$-module to a Lie algebra $\lie{g}$.
We introduce the notions of ranks for $\lie{g}$-modules.
Important properties of the ranks are proved under mild assumptions.
We will treat special results for the branching problem of reductive Lie groups in the next section.

\subsection{Definition of rank}

In this subsection, we define ranks of $\lie{g}$-modules, which intuitively means `the size of continuous spectrum'.
The notion is the main research subject in this paper.
Let $\lie{g}$ be a finite-dimensional reductive Lie algebra over $\CC$.

\begin{definition}\label{def:Rank}
	Let $V$ be a $\lie{g}$-module.
	For each $k \in \NN$, we set
	\begin{align*}
		\rfilt{k}{V}\coloneq \set{v \in V : \dim(\Variety(\Ann_{\univcent{g}}(v))) \leq k}
	\end{align*}
	and call the filtration $0 = \rfilt{-1}{V}\subset\rfilt{0}{V} \subset \cdots $ the \define{rank filtraion} of $V$.
	We define
	\begin{align*}
		\Rrankmin(V)&\coloneq \min\set{k \in \NN : \rfilt{k}{V} \neq 0}, \\
		\Rrankmax(V)&\coloneq \min\set{k \in \NN : \rfilt{k}{V} = V}, \\
		\glrank(V) &\coloneq \dim(\Variety(\Ann_{\univcent{g}}(V)))
	\end{align*}
	if $V\neq 0$, and $\Rrankmin(V) = \Rrankmax(V) = \glrank(V) = -1$ for $V = 0$.
	If $\Rrankmin(V) = \Rrankmax(V)$, we write $\Rrank(V) \coloneq \Rrankmin(V)$
	and we say that $V$ is \define{equi-rank}.
\end{definition}

By definition, the rank filtration is a $\lie{g}$-module filtration and we have
\begin{align*}
	\Rrankmin(V) \leq \Rrankmax(V) \leq \glrank(V) \leq \rank(\lie{g}).
\end{align*}
Although the equality $\Rrankmax(V) = \glrank(V)$ may not hold in general,
it always holds for finitely generated $V$.

\begin{proposition}\label{prop:RankFinitelyGen}
	Let $V$ be a finitely generated $\lie{g}$-module.
	Then $\Rrankmax(V) = \glrank(V)$ holds.
\end{proposition}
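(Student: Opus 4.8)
The plan is to reduce the statement to the finitely many generators and use the finite generation of $\univ{g}$ as a module over $\univcent{g}$ (Kostant's theorem, or rather its standard consequence that $\univ{g}$ is free of finite rank over $\univcent{g} \otimes \univ{h}$, but here the relevant point is just finite generation of $\univ{g}$ over $\univcent{g}$ up to tensoring with finite-dimensional modules). Let $v_1,\dots,v_m$ generate $V$ as a $\lie{g}$-module, so $V = \sum_i \univ{g} v_i$. Since $\rfilt{k}{V}$ is a $\lie{g}$-submodule for every $k$, the inequality $\Rrankmax(V) \le \glrank(V)$ being already known, it suffices to show $\Rrankmax(V) \ge \glrank(V)$, i.e. that if $\rfilt{k}{V} = V$ then $\dim(\Variety(\Ann_{\univcent{g}}(V))) \le k$. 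Equivalently, I want to show $\Ann_{\univcent{g}}(V) = \bigcap_i \Ann_{\univcent{g}}(\univ{g} v_i)$ has variety of dimension at most $\max_i \dim(\Variety(\Ann_{\univcent{g}}(\univ{g}v_i)))$, and then relate $\Variety(\Ann_{\univcent{g}}(\univ{g}v_i))$ to $\Variety(\Ann_{\univcent{g}}(v_i))$.

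First I would observe that for a single cyclic module $\univ{g}v$, the annihilator $\Ann_{\univcent{g}}(\univ{g}v)$ in $\univcent{g}$ has the same variety as $\Ann_{\univcent{g}}(v)$ up to the bounded discrepancy coming from Kostant's theorem: indeed $\univ{g}v$ is a quotient of $\univ{g} \otimes (\CC v)$ where $\CC v$ is acted on trivially only if $v$ is $\univcent{g}$-finite; in general one writes $z \cdot (Xv) = X(zv) + [\text{correction}]$, and the correction terms live in $\univ{g} \otimes F$ for a finite-dimensional $F$ obtained by applying $\ad$ finitely many times, so by Fact \ref{fact:Kostant} there is an integer $N$ (a power of $|W|$) with $\Ker(\Variety(\Ann_{\univcent{g}}(v)))^N \cdot \univ{g}v$ controlled; hence $\Variety(\Ann_{\univcent{g}}(\univ{g}v)) \subseteq \Variety(\Ann_{\univcent{g}}(v))$, and the reverse inclusion is trivial since $v \in \univ{g}v$. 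Thus $\dim \Variety(\Ann_{\univcent{g}}(\univ{g}v)) = \dim\Variety(\Ann_{\univcent{g}}(v))$. Then for the whole module, $\Variety(\Ann_{\univcent{g}}(V)) = \Variety\!\left(\bigcap_i \Ann_{\univcent{g}}(\univ{g}v_i)\right) = \bigcup_i \Variety(\Ann_{\univcent{g}}(\univ{g}v_i))$, a finite union, so its dimension is $\max_i \dim\Variety(\Ann_{\univcent{g}}(v_i))$.

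Finally, if $\rfilt{k}{V} = V$ then each $v_i \in \rfilt{k}{V}$, so $\dim\Variety(\Ann_{\univcent{g}}(v_i)) \le k$ for all $i$, whence $\glrank(V) = \dim\Variety(\Ann_{\univcent{g}}(V)) = \max_i \dim\Variety(\Ann_{\univcent{g}}(v_i)) \le k$. Combined with the general inequality $\Rrankmax(V) \le \glrank(V)$, this forces $\Rrankmax(V) = \glrank(V)$. I expect the main technical obstacle to be the precise argument that $\Variety(\Ann_{\univcent{g}}(\univ{g}v)) = \Variety(\Ann_{\univcent{g}}(v))$ — one must handle the fact that $\univ{g}v$ need not be $\univcent{g}$-finite, and verify that the "correction terms" when commuting $\univcent{g}$ past $\univ{g}$ really do lie in a finitely generated $\univcent{g}$-submodule so that Kostant's nilpotency bound applies uniformly; a clean way is to note $\univ{g} = \bigcup_n \univfilt{n}{g}$ with each $\univfilt{n}{g}$ a finite-dimensional $\lie{g}$-module under $\ad$, apply Fact \ref{fact:Kostant} to $V' \otimes \univfilt{n}{g}$ for a suitable $\univcent{g}$-finite replacement, and pass to the limit.
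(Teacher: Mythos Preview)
Your core approach matches the paper's exactly: pick generators $v_1,\dots,v_n$ and observe $\Ann_{\univcent{g}}(V) = \bigcap_i \Ann_{\univcent{g}}(v_i)$, whence $\glrank(V) = \max_i \dim\Variety(\Ann_{\univcent{g}}(v_i)) \le \Rrankmax(V)$; combined with the trivial reverse inequality this finishes.

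However, you are badly overcomplicating the identity $\Ann_{\univcent{g}}(\univ{g}v) = \Ann_{\univcent{g}}(v)$. There are no ``correction terms'' when commuting $\univcent{g}$ past $\univ{g}$: by definition $\univcent{g}$ is the \emph{center} of $\univ{g}$, so for $z\in\univcent{g}$ and $X\in\univ{g}$ one has $z(Xv)=X(zv)$ exactly. Hence $zv=0$ forces $z\cdot\univ{g}v=0$, and the two annihilators coincide on the nose. Kostant's theorem, $\univcent{g}$-finiteness, the filtration $\univfilt{n}{g}$, and the limiting argument you sketch are all irrelevant here. The paper's proof is three lines; yours would work, but the ``main technical obstacle'' you anticipate does not exist.
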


\begin{proof}
	We can assume $V\neq 0$.
	Take a finite generating subset $\set{v_1, \ldots, v_n}$ of $V$.
	Then we have $\Ann_{\univcent{g}}(V) = \bigcap_{i} \Ann_{\univcent{g}}(v_i)$.
	This shows the assertion.
\end{proof}

The following properties are clear from the definition of the ranks.

\begin{proposition}\label{prop:RankEasy}
	Let $T\colon W\rightarrow V$ be a $\lie{g}$-homomorphism between non-zero $\lie{g}$-modules.
	\begin{enumerate}
		\item $V$ has a submodule $U$ satisfying $\Rrank(U) = \Rrankmin(V)$.
		\item $T(\rfilt{k}{W}) \subset \rfilt{k}{V}$ for any $k$.
		\item If $T$ is surjective, then
		\begin{align*}
			\Rrankmax(V) \leq \Rrankmax(W),\quad \glrank(V) \leq \glrank(W).
		\end{align*}
		\item If $T$ is injective, then
		\begin{align*}
			\Rrankmin(V) &\leq \Rrankmin(W), \quad \glrank(W) \leq \glrank(V),\\
			\Rrankmax(W) &\leq \Rrankmax(V).
		\end{align*}
		If, moreover, $V$ is equi-rank, then so is $W$ and $\Rrank(W) = \Rrank(V)$.
	\end{enumerate}
\end{proposition}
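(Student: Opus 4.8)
The plan is to reduce all four assertions to one elementary observation: a $\lie{g}$-homomorphism $T\colon W\rightarrow V$ is in particular a homomorphism of $\univcent{g}$-modules, so $\Ann_{\univcent{g}}(w)\subseteq \Ann_{\univcent{g}}(Tw)$ for every $w\in W$, and this inclusion becomes an equality whenever $T$ is injective (if $z\cdot w=0$ then $z\cdot Tw=T(z\cdot w)=0$, and conversely $T(z\cdot w)=z\cdot Tw=0$ forces $z\cdot w=0$ by injectivity). Since $I\mapsto \Variety(I)$ reverses inclusions of ideals of $\univcent{g}$, this controls the quantities $\dim(\Variety(\Ann_{\univcent{g}}(Tw)))$ that define the rank filtration, and everything follows formally.

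First I would prove (2): for $w\in\rfilt{k}{W}$ the inclusion above gives $\Variety(\Ann_{\univcent{g}}(Tw))\subseteq\Variety(\Ann_{\univcent{g}}(w))$, hence $\dim(\Variety(\Ann_{\univcent{g}}(Tw)))\leq k$, i.e.\ $Tw\in\rfilt{k}{V}$. For (1) I would take $U\coloneq\rfilt{k}{V}$ with $k\coloneq\Rrankmin(V)$; the only point needed is the trivial identity $\rfilt{j}{U}=U\cap\rfilt{j}{V}$, valid for every submodule $U\subseteq V$ and every $j$ (the condition defining membership depends only on the element and its $\univcent{g}$-annihilator). From this, $\rfilt{j}{U}=0$ for $j<k$ while $\rfilt{k}{U}=U$, so $k\leq\Rrankmin(U)\leq\Rrankmax(U)\leq k$, which makes $U$ equi-rank with $\Rrank(U)=k=\Rrankmin(V)$.

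For (3), assuming $T$ surjective: if $\rfilt{k}{W}=W$ then $V=T(W)\subseteq\rfilt{k}{V}$ by (2), giving $\Rrankmax(V)\leq\Rrankmax(W)$; and surjectivity forces $\Ann_{\univcent{g}}(W)\subseteq\Ann_{\univcent{g}}(V)$, hence $\Variety(\Ann_{\univcent{g}}(V))\subseteq\Variety(\Ann_{\univcent{g}}(W))$ and $\glrank(V)\leq\glrank(W)$. For (4), assuming $T$ injective: applying (2) to a nonzero $w\in\rfilt{k}{W}$ with $k=\Rrankmin(W)$ gives $0\neq Tw\in\rfilt{k}{V}$, so $\Rrankmin(V)\leq\Rrankmin(W)$; identifying $W$ with $T(W)\subseteq V$ gives $\Ann_{\univcent{g}}(V)\subseteq\Ann_{\univcent{g}}(W)$, hence $\glrank(W)\leq\glrank(V)$; and for $\Rrankmax(W)\leq\Rrankmax(V)$ one uses the equality $\Ann_{\univcent{g}}(w)=\Ann_{\univcent{g}}(Tw)$ (valid by injectivity), so that $w\in\rfilt{k}{W}$ iff $Tw\in\rfilt{k}{V}$, whence $\rfilt{k}{V}=V$ implies $\rfilt{k}{W}=W$. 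Finally, chaining $\Rrankmin(V)\leq\Rrankmin(W)\leq\Rrankmax(W)\leq\Rrankmax(V)$ shows that if $V$ is equi-rank then so is $W$, with $\Rrank(W)=\Rrank(V)$.

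The argument is entirely formal, so there is no genuine obstacle; the only two points deserving a line of justification are the compatibility $\rfilt{j}{U}=U\cap\rfilt{j}{V}$ for submodules, used in (1), and the equality of $\univcent{g}$-annihilators under an injective $\lie{g}$-map, used for the $\Rrankmax$ bound in (4).
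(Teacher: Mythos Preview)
Your proof is correct and is exactly the kind of argument the paper has in mind: the paper does not give a proof at all, simply stating that these properties ``are clear from the definition of the ranks.'' Your write-up spells out precisely the formal verifications (the inclusion $\Ann_{\univcent{g}}(w)\subset\Ann_{\univcent{g}}(Tw)$, with equality for injective $T$, and the identity $\rfilt{j}{U}=U\cap\rfilt{j}{V}$ for submodules) that make each assertion immediate.
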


The rank filtration is characterized as follows.

\begin{proposition}\label{prop:RankFiltr}
	Let $V$ be a $\lie{g}$-module.
	Then $\rfilt{k}{V}/\rfilt{k-1}{V}$ is zero or an equi-rank module with $\Rrank(\rfilt{k}{V}/\rfilt{k-1}{V}) = k$ for any $k$.
	Conversely, an exhaustive filtration $F$ of $V$ (i.e.\ $\bigcup_{i} F_i(V) = V$) satisfies this condition is the rank filtration.
\end{proposition}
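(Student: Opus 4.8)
The plan is to package the combinatorics of the rank filtration into one commutative-algebra lemma about $\univcent{g} \simeq S(\lie{h})^W$, which is a polynomial ring, hence a Noetherian integral domain finitely generated over $\CC$. For $0\neq v$ in a $\lie{g}$-module, $\Ann_{\univcent{g}}(v)$ is a proper ideal, so $\Variety(\Ann_{\univcent{g}}(v))\neq\emptyset$; write $d(v)\coloneq \dim\Variety(\Ann_{\univcent{g}}(v))\in\{0,\dots,\rank\lie{g}\}$ and $d(0)=-1$, so that $\rfilt{k}{V}=\{v:d(v)\le k\}$ and ``$\rfilt{k}{V}/\rfilt{k-1}{V}$ is equi-rank of rank $k$'' means precisely: this subquotient is nonzero and every nonzero element of it has $d$-value $k$. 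Since $\univcent{g}$ is central in $\univ{g}$, one has $\Ann_{\univcent{g}}(zv)=(\Ann_{\univcent{g}}(v):z)$ for $z\in\univcent{g}$, and $\Ann_{\univcent{g}}(v+v')\supseteq\Ann_{\univcent{g}}(v)\cap\Ann_{\univcent{g}}(v')$ (the latter re-proving that $\rfilt{k}{V}$ is a $\lie{g}$-submodule). The technical heart is the \emph{Key Lemma}: if $I\subseteq J$ are ideals of $\univcent{g}$ with $\dim\Variety((I:z))\le m$ for every $z\in J$, then $\dim\Variety(I)\le\max\{m,\dim\Variety(J)\}$. I would prove it by choosing a minimal prime $\mathfrak{p}\supseteq I$ with $\dim\Variety(\mathfrak{p})=\dim\Variety(I)$ (finitely many minimal primes, by Noetherianity) and splitting cases: if $J\subseteq\mathfrak{p}$ then $\Variety(\mathfrak{p})\subseteq\Variety(J)$; otherwise pick $z\in J\setminus\mathfrak{p}$, note $(I:z)\subseteq\mathfrak{p}$ since $wz\in I\subseteq\mathfrak{p}$ and $z\notin\mathfrak{p}$ force $w\in\mathfrak{p}$, hence $\Variety(\mathfrak{p})\subseteq\Variety((I:z))$ and $\dim\Variety(\mathfrak{p})\le m$.

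For the first assertion, fix $k$ with $Q_k\coloneq\rfilt{k}{V}/\rfilt{k-1}{V}\neq 0$, take a nonzero $\bar v\in Q_k$ and a lift $v\in\rfilt{k}{V}\setminus\rfilt{k-1}{V}$; then $d(v)=k$. Put $I=\Ann_{\univcent{g}}(v)$ and $J=\Ann_{\univcent{g}}(\bar v)=\{z\in\univcent{g}:zv\in\rfilt{k-1}{V}\}\supseteq I$. For $z\in J$ we get $\dim\Variety((I:z))=d(zv)\le k-1$, so the Key Lemma with $m=k-1$ gives $k=d(v)\le\max\{k-1,\dim\Variety(J)\}$, forcing $\dim\Variety(J)\ge k$; since $I\subseteq J$ also gives $\dim\Variety(J)\le k$, we conclude $d(\bar v)=\dim\Variety(\Ann_{\univcent{g}}(\bar v))=k$. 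As $\bar v$ was an arbitrary nonzero element, $Q_k$ is equi-rank of rank $k$.

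For the converse, let $F$ be an exhaustive filtration with $F_{-1}(V)=0$ and each $F_k(V)/F_{k-1}(V)$ zero or equi-rank of rank $k$; I will show $F_k(V)=\rfilt{k}{V}$ by induction on $k\ge -1$, the case $k=-1$ being immediate. The inclusion $\rfilt{k}{V}\subseteq F_k(V)$ needs no inductive hypothesis: if $v\in\rfilt{k}{V}$ lay in $F_m(V)$ with $m$ minimal and $m>k$, then its image $\bar v$ in $F_m(V)/F_{m-1}(V)$ would be nonzero, so $\dim\Variety(\Ann_{\univcent{g}}(\bar v))=m$; but $\Ann_{\univcent{g}}(\bar v)\supseteq\Ann_{\univcent{g}}(v)$, so $m\le d(v)\le k<m$, a contradiction. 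For $F_k(V)\subseteq\rfilt{k}{V}$: given $v\in F_k(V)$, either $v\in F_{k-1}(V)=\rfilt{k-1}{V}\subseteq\rfilt{k}{V}$ by induction, or its image $\bar v\neq 0$ in the rank-$k$ subquotient, so $d(\bar v)=k$; with $I=\Ann_{\univcent{g}}(v)$ and $J=\Ann_{\univcent{g}}(\bar v)=\{z:zv\in F_{k-1}(V)=\rfilt{k-1}{V}\}$ (inductive hypothesis), every $z\in J$ satisfies $\dim\Variety((I:z))=d(zv)\le k-1$, so the Key Lemma with $m=k-1$ yields $d(v)\le\max\{k-1,k\}=k$, i.e.\ $v\in\rfilt{k}{V}$.

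The main obstacle is really just the Key Lemma, and within it the production of the \emph{lower} bound $\dim\Variety(\Ann_{\univcent{g}}(\bar v))\ge k$ for a subquotient: this is precisely where one must use Noetherianity of $\univcent{g}$ and the prime-avoidance-type argument separating the generic point of a top-dimensional component. Everything else (the submodule property of $\rfilt{k}{V}$ and the inductive bookkeeping) is routine; I would only keep a small eye on boundary indices, e.g.\ $k=0$, where $\rfilt{-1}{V}=0$ and the value $m=-1$ makes the Key Lemma degenerate harmlessly to the case $J=I$.
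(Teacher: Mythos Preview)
Your proof is correct. The overall structure matches the paper's, but you package the commutative algebra differently. For the first assertion, the paper picks a finite generating set $X_1,\dots,X_r$ of $J=\Ann_{\univcent{g}}(\bar v)$ and uses the inclusion $J\cdot\bigcap_i\Ann_{\univcent{g}}(X_iv)\subset\Ann_{\univcent{g}}(v)$ to conclude that the $k$-dimensional variety $\Variety(\Ann_{\univcent{g}}(v))$ is covered by $\Variety(J)$ together with varieties of dimension $\le k-1$; your Key Lemma reaches the same conclusion via a minimal prime $\mathfrak p$ over $I$ and a single $z\in J\setminus\mathfrak p$, which is a cleaner and reusable formulation. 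For the converse, the paper is terser: it observes that the induced filtration on $V/F_{k_0}(V)$ again satisfies the hypothesis with minimal nonzero level $>k_0$, hence $\Rrankmin(V/F_{k_0}(V))>k_0$, which immediately forces $\rfilt{k_0}{V}\subset F_{k_0}(V)$, and then inducts on the quotient; you instead run a direct induction on $k$ and invoke your Key Lemma a second time for the inclusion $F_k(V)\subset\rfilt{k}{V}$. The paper's version is shorter, but yours has the advantage that the same lemma drives both directions.
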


\begin{proof}
	Clearly, we may assume $V\neq 0$.
	Let $k \in \NN$ and $0\neq v \in \rfilt{k}{V}$.
	Write $\overline{v}$ for the image of $v$ via the quotient map $\rfilt{k}{V} \rightarrow \rfilt{k}{V}/\rfilt{k-1}{V}$.
	Take a finite generating set $\set{X_1, \ldots, X_r}$ of $\Ann_{\univcent{g}}(\overline{v})$.
	Then we have
	\begin{align*}
		\Ann_{\univcent{g}}(\overline{v}) \cdot \bigcap_{i} \Ann_{\univcent{g}}(X_i v) \subset \Ann_{\univcent{g}}(v).
	\end{align*}
	From $\dim(\Variety(\Ann_{\univcent{g}}(X_i v))) < \dim(\Variety(\Ann_{\univcent{g}}(v)))$, we obtain the desired equality $\dim(\Variety(\Ann_{\univcent{g}}(\overline{v}))) = \dim(\Variety(\Ann_{\univcent{g}}(v)))$.
	This shows the first assertion.

	Take an exhaustive filtration $F$ of $V$ satisfying the property in the first assertion.
	It is clear that $\min\set{k \in \NN : F_k(V)\neq 0} = \Rrankmin(V)$
	and $F_{\Rrankmin(V)}(V) \subset \rfilt{\Rrankmin(V)}{V}$.
	By the assumption of $F$, we have $\Rrankmin(V/F_{\Rrankmin(V)}(V)) > \Rrankmin(V)$.
	This implies $F_{\Rrankmin(V)}(V) \supset \rfilt{\Rrankmin(V)}{V}$.
	Therefore we obtain $F_{\Rrankmin(V)}(V) = \rfilt{\Rrankmin(V)}{V}$.
	Repeating this argument for $V/\rfilt{k}{V}$ inductively, we have shown the second assertion.
\end{proof}

We shall consider a similar notion for continuous representations.
Let $G_\RR$ be a connected reductive Lie group and fix a maximal compact subgroup $K_\RR\subset G_\RR$.
Write $K$ for the complexification of $K_\RR$.
Let $(\pi, V)$ be a continuous representation of $G_\RR$ on a quasi-complete locally convex space $V$.
We denote by $V_K$ (resp.\ $V^\omega$, $V^\infty$) the space of all $K$-finite smooth vectors (resp.\ analytic vectors, smooth vectors) in $V$.
Then $V_K, V^\omega$ and $V^\infty$ are $\lie{g}$-modules.
We abuse notation and write $\pi$ for the $\lie{g}$-actions.

\begin{definition}\label{def:RankContinous}
	We define
	\begin{align*}
		\Rrankmin(V)&\coloneq \min\set{k \in \NN : \rfilt{k}{V^\infty} \neq 0}, \\
		\Rrankmax(V)&\coloneq \min\set{k \in \NN : \overline{\rfilt{k}{V^\infty}} = V}, \\
		\glrank(V) &\coloneq \dim(\Variety(\Ann_{\univcent{g}}(V^\infty))).
	\end{align*}
	If $\Rrankmin(V) = \Rrankmax(V)$, we write $\Rrank(V) \coloneq \Rrankmin(V)$
	and we say that $V$ is \define{equi-rank}.
\end{definition}

The following properties are easy from the definition.
Although they are similar to Proposition \ref{prop:RankEasy}, remark that there is a slight difference.
In the fourth assertion, an analogue of the inequality for $\Rrankmax$ may not hold.

\begin{proposition}\label{prop:RankEasyConti}
	Let $T\colon W\rightarrow V$ be a continuous $G_\RR$-linear map between continuous representations of $G_\RR$ on quasi-complete locally convex spaces $V$ and $W$.
	\begin{enumerate}
		\item $V$ has a closed equi-rank subrepresentation $U$ satisfying $\Rrank(U) = \Rrankmin(V)$.
		\item $T(\rfilt{k}{W^\infty}) \subset \rfilt{k}{V^\infty}$ for any $k$.
		\item If $T$ has dense image, then.
		\begin{align*}
			\Rrankmax(V) \leq \Rrankmax(W), \quad \glrank(V) \leq \glrank(W).
		\end{align*}
		\item If $T$ is injective and $W\neq 0$, then
		\begin{align*}
			\Rrankmin(V) \leq \Rrankmin(W), \quad \glrank(W) \leq \glrank(V).
		\end{align*}
	\end{enumerate}
\end{proposition}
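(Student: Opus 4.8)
The plan is to deduce all four claims from the abstract-module statements of Proposition~\ref{prop:RankEasy}, applied to the map obtained by restricting $T$ to smooth vectors, together with two soft analytic inputs: first, that a continuous $G_\RR$-equivariant map sends smooth vectors to smooth vectors, so that $T(W^\infty)\subset V^\infty$ and $T|_{W^\infty}\colon W^\infty\to V^\infty$ is a $\lie{g}$-homomorphism; and second, that every $X\in\univ{g}$ acts by a continuous operator on a space of smooth vectors, since for a continuous seminorm $p$ and $X,Y\in\univ{g}$ the quantity $p(\pi(X)\pi(Y)\,\cdot\,)=p(\pi(XY)\,\cdot\,)$ is again one of the defining seminorms. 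Granting the first input, statement~2 is immediate: if $v\in\rfilt{k}{W^\infty}$ then $\Ann_{\univcent{g}}(Tv)\supset\Ann_{\univcent{g}}(v)$, hence $\Variety(\Ann_{\univcent{g}}(Tv))\subset\Variety(\Ann_{\univcent{g}}(v))$ and $Tv\in\rfilt{k}{V^\infty}$.

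For statements~3 and~4 the one real ingredient I would establish is a density lemma: if $T$ has dense image, then $T(W^\infty)$ is dense in $V^\infty$. This follows by working with the smoothing operators $\pi_V(\phi)$ for $\phi\in C_c^\infty(G_\RR)$, which are continuous from $V$ into $V^\infty$ and satisfy $\pi_V(\phi)\circ T=T\circ\pi_W(\phi)$; hence $\pi_V(\phi)(V)\subset\overline{T(W^\infty)}$, the closure taken in $V^\infty$, and the span of the $\pi_V(\phi)(V)$ is dense in $V^\infty$. Statement~3 then follows: taking $k=\Rrankmax(W)$, the density of $\rfilt{k}{W^\infty}$ in $W$ gives density of $T(\rfilt{k}{W^\infty})\subset\rfilt{k}{V^\infty}$ in $\overline{T(W)}=V$, so $\Rrankmax(V)\le k$; and any $z\in\Ann_{\univcent{g}}(W^\infty)$ annihilates $T(W^\infty)$, hence all of $V^\infty$ by the density lemma and continuity, so $\Ann_{\univcent{g}}(W^\infty)\subset\Ann_{\univcent{g}}(V^\infty)$ and $\glrank(V)\le\glrank(W)$. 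Statement~4 is dual and uses only the injectivity of $T$: a nonzero $w\in\rfilt{k}{W^\infty}$ with $k=\Rrankmin(W)$ maps to a nonzero $Tw\in\rfilt{k}{V^\infty}$, so $\Rrankmin(V)\le k$; and $z\in\Ann_{\univcent{g}}(V^\infty)$ annihilates $T(W^\infty)$, hence $W^\infty$ by injectivity, giving $\Ann_{\univcent{g}}(V^\infty)\subset\Ann_{\univcent{g}}(W^\infty)$ and $\glrank(W)\le\glrank(V)$.

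For statement~1, set $k=\Rrankmin(V)$, choose $0\neq v_0\in\rfilt{k}{V^\infty}$, and let $U$ be the closed $G_\RR$-subrepresentation of $V$ generated by $v_0$; note $v_0\in U^\infty\subset V^\infty$. The key point is that $\Ann_{\univcent{g}}(v_0)$ annihilates all of $U^\infty$: if $z\in\univcent{g}$ satisfies $zv_0=0$ then, since $z$ is $\Ad(G_\RR)$-invariant, $z\pi(g)v_0=\pi(g)zv_0=0$ for every $g$, and moving $z$ through the smoothing integrals shows $z$ kills each $\pi(\phi)v_0$; these span a dense subspace of $U^\infty$, so $z$ kills $U^\infty$ by continuity. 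Consequently $\Ann_{\univcent{g}}(v_0)\subset\Ann_{\univcent{g}}(u)$ and $\dim\Variety(\Ann_{\univcent{g}}(u))\le\dim\Variety(\Ann_{\univcent{g}}(v_0))\le k$ for every $u\in U^\infty$, i.e.\ $\rfilt{k}{U^\infty}=U^\infty$, whence $\Rrankmax(U)\le k$ because $U^\infty$ is dense in $U$; and $\rfilt{j}{U^\infty}=U^\infty\cap\rfilt{j}{V^\infty}=0$ for $j<k$, so $\Rrankmin(U)\ge k$. Hence $U$ is equi-rank with $\Rrank(U)=k=\Rrankmin(V)$.

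The routine algebra aside, the one genuine obstacle is transferring density and annihilation from the ambient space $V$ to the finer topology of $V^\infty$ — that is, the density lemma of the second paragraph and its analogue used in statement~1. In both cases the remedy is the same: reduce to the smoothing operators $\pi_V(\phi)$, which map $V$ continuously into $V^\infty$ and whose images span a dense subspace of $V^\infty$, and use that a central element of $\univcent{g}$ can be pulled through the integral $\pi_V(\phi)=\int_{G_\RR}\phi(g)\pi(g)\,dg$ since it commutes with the group action; continuity of the $\univ{g}$-action on $V^\infty$ then propagates the vanishing to the closure.
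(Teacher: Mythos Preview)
Your argument is correct; the paper itself gives no proof beyond the remark that the statement is ``easy from the definition,'' so you have supplied considerably more than what appears there. The reduction to the $\lie g$-homomorphism $T|_{W^\infty}$ together with the smoothing–operator density lemma is exactly the right mechanism, and the density lemma is genuinely needed for the $\glrank$ inequality in part~3.

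One simplification for part~1: rather than taking $U$ to be the closed cyclic subrepresentation generated by a single $v_0$ and then arguing that $\{\pi(\phi)v_0\}$ spans a dense subspace of $U^\infty$ (a step which is true but requires an extra layer of Gårding-type reasoning), you can simply set $U\coloneqq\overline{\rfilt{k}{V^\infty}}$. The subspace $\rfilt{k}{V^\infty}$ is already $G_\RR$-stable because $\Ad(G_\RR)$ acts trivially on $\univcent g$, so $U$ is a closed subrepresentation; then $\rfilt{k}{V^\infty}\subset\rfilt{k}{U^\infty}$ is dense in $U$, giving $\Rrankmax(U)\le k$, while $\rfilt{j}{U^\infty}=U^\infty\cap\rfilt{j}{V^\infty}=0$ for $j<k$ gives $\Rrankmin(U)\ge k$. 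This bypasses the question of whether the cyclic Gårding space is dense in $U^\infty$.
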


If $\pi(X)$ with domain $V^\infty$ is closable for any $X \in \univcent{g}$, we can define the rank filtration of $V$.
In fact, for any $v \in V$, $\Ann_{\univcent{g}}(v) = \set{X \in \univcent{g} : v \in \Ker(\overline{\pi(X)})}$
is an ideal of $\univcent{g}$.
When we define $\Rrankmin(V)$ and $\Rrankmax(V)$ using this filtration, they coincide with those in Definition \ref{def:RankContinous}.
This follows from $\overline{(V^\infty)^I} = V^I$ for any ideal $I \subset \univcent{g}$.
By the same reason, one can replace $V^\infty$ in Definition \ref{def:RankContinous} with $V_K$ or $V^\omega$.

\begin{example}\label{ex:R}
	Suppose $G_\RR = \RR^2$.
	For $\lambda \in\lie{g}^*$, let $(e^\lambda, \CC_{\lambda})$ denote the corresponding character of $G_\RR$.
	Fix a basis $\set{\lambda_1, \lambda_2}$ of $\lie{g}^*_\RR$.
	We shall consider the ranks of the representation 
	\begin{align*}
		V \coloneq \CC_{0} \oplus \hsum_{n \in \ZZ}\int^\oplus_{\RR} \CC_{\sqrt{-1} (s\lambda_1 + n \lambda_2)} d\mu(s),
	\end{align*}
	where $\mu$ is the Lebesgue measure on $\RR$.
	Then we have
	\begin{align*}
		\Rrankmin(V) = 0, \quad \Rrankmax(V) = 1, \quad \glrank(V) = 2.
	\end{align*}
\end{example}

As in Example \ref{ex:R}, $\Rrankmin(V)$ and $\Rrankmax(V)$ measure the size of some components of spectrum.
In fact, $\Rrankmax(V)$ is related to the size of the largest component of spectrum in unitary representations as follows.

\begin{lemma}\label{lem:EquiRankSub}
	Let $V$ be a unitary representation of $G_\RR$
	and $U$ a non-zero closed subrepresentation.
	Then we have
	\begin{align*}
		\Rrankmin(V) \leq \Rrankmin(U) \leq \Rrankmax(U) \leq \Rrankmax(V).
	\end{align*}
	If, moreover, $V$ is equi-rank, then so is $U$ and $\Rrank(U) = \Rrank(V)$ holds.
\end{lemma}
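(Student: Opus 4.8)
The plan is to deduce everything from Proposition \ref{prop:RankEasyConti} together with the special structure of unitary representations, namely the availability of orthogonal complements. First I would handle the inclusion $T = \iota\colon U\hookrightarrow V$, which is a continuous injective $G_\RR$-map; by Proposition \ref{prop:RankEasyConti}(4) this already gives $\Rrankmin(V)\leq \Rrankmin(U)$. The middle inequality $\Rrankmin(U)\leq \Rrankmax(U)$ is immediate from the definitions (via $\rfilt{k}{U^\infty}\subset \rfilt{k'}{U^\infty}$ for $k\leq k'$, so the index where it first becomes nonzero is at most the index where it first fills $U$). The content is therefore the last inequality $\Rrankmax(U)\leq \Rrankmax(V)$, and here is where I would use unitarity in an essential way.

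Let $U^{\perp}$ be the orthogonal complement of $U$ in $V$, so that $V = U\oplus U^{\perp}$ as unitary representations and the orthogonal projection $p\colon V\to U$ is a continuous surjective (indeed, having dense — in fact all of $U$ — image) $G_\RR$-map. I would first check that $p(V^\infty)\subseteq U^\infty$ and, more importantly, that $U^\infty = p(V^\infty)$: a smooth vector of $U$ is a smooth vector of $V$ lying in $U$, and conversely applying $p$ to a smooth vector of $V$ lands in $U$ and is smooth since $p$ is $G_\RR$-equivariant and continuous. Actually the cleaner route is to observe $U^\infty\subseteq V^\infty$ directly (a smooth vector for the $U$-action is a smooth vector for the $V$-action, the topologies being the subspace topologies from the same seminorms $\|X\cdot\|$), so that $\rfilt{k}{U^\infty} = \rfilt{k}{V^\infty}\cap U$ — here one uses that $\Ann_{\univcent{g}}(v)$ computed inside $U$ equals that computed inside $V$ for $v\in U^\infty$, which is clear since the $\lie{g}$-action on $U^\infty$ is the restriction of that on $V^\infty$. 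Now suppose $\overline{\rfilt{k}{V^\infty}} = V$ for some $k$, i.e. $k\geq \Rrankmax(V)$. Applying the continuous map $p$ and using $p(\rfilt{k}{V^\infty})\subseteq \rfilt{k}{V^\infty}\cap U = \rfilt{k}{U^\infty}$ (by Proposition \ref{prop:RankEasyConti}(2) applied to $p$, landing in $U$) together with surjectivity of $p$, we get $U = p(V) = p(\overline{\rfilt{k}{V^\infty}})\subseteq \overline{p(\rfilt{k}{V^\infty})}\subseteq \overline{\rfilt{k}{U^\infty}}$, whence $\overline{\rfilt{k}{U^\infty}} = U$ and $k\geq \Rrankmax(U)$. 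Taking $k = \Rrankmax(V)$ yields $\Rrankmax(U)\leq \Rrankmax(V)$.

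Finally, if $V$ is equi-rank, then $\Rrankmin(V) = \Rrankmax(V)$, and the chain of inequalities forces $\Rrankmin(U) = \Rrankmax(U) = \Rrankmin(V)$, so $U$ is equi-rank with $\Rrank(U) = \Rrank(V)$. The only genuine obstacle is the passage of the rank filtration through the projection $p$ — specifically verifying that for $v\in U^\infty$ the annihilator ideal $\Ann_{\univcent{g}}(v)$ does not change when computed in $U$ versus $V$, and that $p$ carries $\rfilt{k}{V^\infty}$ into $\rfilt{k}{U^\infty}$ rather than merely into $\rfilt{k}{V^\infty}$; both reduce to the fact that $U$, being a closed $G_\RR$-subrepresentation, has $U^\infty = V^\infty\cap U$ with the subspace $\lie{g}$-action, which is where the hypothesis that $U$ is closed (and $V$ unitary, so that $p$ exists and is continuous and $G_\RR$-equivariant) is used.
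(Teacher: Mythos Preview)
Your argument is correct and matches the paper's approach exactly: apply Proposition~\ref{prop:RankEasyConti} to the inclusion $U\hookrightarrow V$ and to the orthogonal projection $V\to U$. The paper's proof is the one-line citation of parts (3) and (4) of that proposition; your additional verification that $p$ carries $\rfilt{k}{V^\infty}$ into $\rfilt{k}{U^\infty}$ is precisely the content of part (2), which is already subsumed in (3).
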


\begin{proof}
	Applying Proposition \ref{prop:RankEasyConti} to the injection $U\hookrightarrow V$ and the projection $V\rightarrow U$, we obtain the lemma.
\end{proof}

\begin{proposition}
	Let $V$ be a unitary representation of $G_\RR$.
	Suppose that $V = \sum_{i \in I}^\oplus W_i$ is a direct sum of non-zero closed subrepresentations.
	Then we have
	\begin{align*}
		\max\set{\Rrankmax(W_i): i\in I} &= \Rrankmax(V), \\
		\min\set{\Rrankmin(W_i): i\in I} &= \Rrankmin(V).
	\end{align*}
	In particular, $V$ is equi-rank if and only if $W_i$'s are equi-rank and $\Rrank(W_i)$ is constant on $i \in I$.
\end{proposition}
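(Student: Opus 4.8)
The plan is to reduce the statement about the direct sum to the two-sided inequalities already established in Lemma \ref{lem:EquiRankSub}, applied to each summand $W_i$ viewed as a closed subrepresentation of $V$, and then to close the remaining gap by a density argument on the smooth vectors. First I would record the easy half: since each $W_i$ is a non-zero closed subrepresentation of $V$, Lemma \ref{lem:EquiRankSub} gives $\Rrankmin(V) \leq \Rrankmin(W_i)$ and $\Rrankmax(W_i) \leq \Rrankmax(V)$ for every $i \in I$. Taking the infimum over $i$ in the first inequality and the supremum over $i$ in the second yields
\begin{align*}
	\min_{i\in I}\Rrankmin(W_i) \geq \Rrankmin(V), \qquad \max_{i\in I}\Rrankmax(W_i) \leq \Rrankmax(V).
\end{align*}
So it remains to prove the two reverse inequalities.

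For the $\Rrankmin$ direction: let $k = \min_{i}\Rrankmin(W_i)$, attained at some index $i_0$. Then $\rfilt{k}{W_{i_0}^\infty} \neq 0$. Since $W_{i_0}$ is a closed $G_\RR$-subrepresentation, a smooth vector of $W_{i_0}$ is a smooth vector of $V$ (the inclusion $W_{i_0}\hookrightarrow V$ is continuous and $G_\RR$-equivariant, so it carries $W_{i_0}^\infty$ into $V^\infty$ compatibly with the $\lie{g}$-action, and the annihilator ideals in $\univcent{g}$ of a vector are computed intrinsically). Hence $\rfilt{k}{V^\infty} \supseteq \rfilt{k}{W_{i_0}^\infty} \neq 0$, which forces $\Rrankmin(V) \leq k$. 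Combined with the easy half this gives equality.

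For the $\Rrankmax$ direction the argument is a density statement and this is the step I expect to be the main obstacle, since it must interact correctly with the Hilbert-space direct sum and with the closure operation in the definition of $\Rrankmax$. Let $m = \max_{i}\Rrankmax(W_i)$; I claim $\overline{\rfilt{m}{V^\infty}} = V$. The key point is that the algebraic direct sum $\bigoplus_{i\in I} W_i^\infty$ is dense in $V^\infty$ (indeed in $V$): for a unitary representation the smooth vectors of $V$ are approximated, in the smooth topology, by finite sums of smooth vectors of the summands — one convolves with an approximate identity in $C_c^\infty(G_\RR)$ and uses that $V = \widehat{\bigoplus}_i W_i$ as Hilbert spaces, together with the fact established before Example \ref{ex:R} that $\overline{(V^\infty)^I} = V^I$ for any ideal $I$ so it suffices to work with $K$-finite or smooth vectors. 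For each $i$, $\rfilt{m}{W_i^\infty} \supseteq \rfilt{\Rrankmax(W_i)}{W_i^\infty} = W_i^\infty$ because $m \geq \Rrankmax(W_i)$ and the rank filtration is increasing; hence $W_i^\infty \subseteq \rfilt{m}{V^\infty}$ for every $i$ by the same intrinsic computation of annihilators as above. Therefore $\bigoplus_i W_i^\infty \subseteq \rfilt{m}{V^\infty}$, and taking closures gives $V = \overline{\bigoplus_i W_i^\infty} \subseteq \overline{\rfilt{m}{V^\infty}} \subseteq V$, so $\Rrankmax(V) \leq m$. Together with the easy half this proves $\Rrankmax(V) = \max_i \Rrankmax(W_i)$.

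Finally, the "in particular" clause is immediate: $V$ is equi-rank iff $\Rrankmin(V) = \Rrankmax(V)$, i.e.\ iff $\min_i \Rrankmin(W_i) = \max_i \Rrankmax(W_i)$; since $\Rrankmin(W_i) \leq \Rrankmax(W_i)$ always, this common value squeezes every $\Rrankmin(W_i)$ and $\Rrankmax(W_i)$ to it, so each $W_i$ is equi-rank with $\Rrank(W_i)$ equal to the constant $\Rrank(V)$, and conversely. I would write the proof in that order, with the density of $\bigoplus_i W_i^\infty$ in $V$ and the compatibility of smooth vectors and annihilator ideals under the closed inclusions $W_i \hookrightarrow V$ flagged as the two facts doing the real work.
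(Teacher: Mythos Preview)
Your overall strategy matches the paper's, and your argument for the $\Rrankmin$-equality is correct (and in fact more direct than the paper's: the paper extracts a closed equi-rank subrepresentation $U\subset V$ with $\Rrank(U)=\Rrankmin(V)$ and then projects it onto some $W_i$, whereas you simply include $\rfilt{k}{W_{i_0}^\infty}$ into $\rfilt{k}{V^\infty}$).

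There is, however, a genuine slip in the $\Rrankmax$-direction. You assert that
\[
\rfilt{\Rrankmax(W_i)}{W_i^\infty}=W_i^\infty,
\]
but this is not what the definition says: for continuous representations, $\Rrankmax(W_i)$ is the least $k$ with $\overline{\rfilt{k}{W_i^\infty}}=W_i$, so $\rfilt{\Rrankmax(W_i)}{W_i^\infty}$ is only \emph{dense} in $W_i$, not equal to $W_i^\infty$. Consequently you cannot conclude $W_i^\infty\subset\rfilt{m}{V^\infty}$, and the appeal to density of $\bigoplus_i W_i^\infty$ in $V$ is beside the point.

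The fix is immediate and is exactly what the paper does: since $\rfilt{m}{W_i^\infty}\supset\rfilt{\Rrankmax(W_i)}{W_i^\infty}$ is dense in $W_i$ for every $i$, the algebraic sum $\bigoplus_i \rfilt{m}{W_i^\infty}$ is dense in $\widehat{\bigoplus}_i W_i=V$; as $\bigoplus_i \rfilt{m}{W_i^\infty}\subset\rfilt{m}{V^\infty}$ (by the same annihilator computation you use), taking closures gives $\overline{\rfilt{m}{V^\infty}}=V$ and hence $\Rrankmax(V)\le m$. No separate density statement about $\bigoplus_i W_i^\infty$ is needed.
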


\begin{proof}
	Assume $V\neq 0$.
	By Lemma \ref{lem:EquiRankSub}, we have
	\begin{align*}
		\max\set{\Rrankmax(W_i): i\in I} &\leq \Rrankmax(V), \\
		\min\set{\Rrankmin(W_i): i\in I} &\geq \Rrankmin(V).
	\end{align*}
	The converse of the first inequality holds since $\bigoplus_{i \in I} \rfilt{\Rrankmax(W_i)}{W_i^\infty}$ is dense in $V$.
	To show the converse of the second inequality, take a non-zero closed equi-rank subrepresentation $U\subset V$ with $\Rrank(U) = \Rrankmin(V)$.
	Then there is $i \in I$ such that the projection of $U$ to $W_i$ is non-zero.
	This implies $\Rrankmin(W_i)\leq \Rrank(U)$ by Lemma \ref{lem:EquiRankSub},
	and hence we have shown the assertion.
\end{proof}

\begin{proposition}\label{prop:CanonicalRankDecomposition}
	Let $V$ be a unitary representation of $G_\RR$.
	There exists a unique orthogonal direct sum decomposition $V = \bigoplus_{i=\Rrankmin(V)}^{\Rrankmax(V)} W_i$
	such that $\Rrank(W_i) = i$ or $W_i = 0$ for any $i$.
\end{proposition}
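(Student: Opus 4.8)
The plan is to build the decomposition recursively out of the rank filtration, using that in a unitary representation every closed subrepresentation has an orthogonal complement. First I would let $r_0 = \Rrankmin(V)$ and consider the closure $U_{r_0} \coloneq \overline{\rfilt{r_0}{V^\infty}}$, which is a closed subrepresentation of $V$ since $\rfilt{r_0}{V^\infty}$ is a $\lie{g}$-submodule of $V^\infty$, hence $G_\RR$-stable, hence its closure is $G_\RR$-stable. I would then set $W_{r_0} \coloneq U_{r_0}$ and claim $\Rrank(W_{r_0}) = r_0$: the inequality $\Rrankmin(W_{r_0}) \geq r_0$ follows from Lemma \ref{lem:EquiRankSub} applied to $W_{r_0}\subset V$, and $\Rrankmax(W_{r_0}) \leq r_0$ holds because $W_{r_0}^\infty \supset \rfilt{r_0}{V^\infty}$ is dense in $W_{r_0}$ and is annihilated in the appropriate sense — more precisely, one checks $\rfilt{r_0}{W_{r_0}^\infty} \supset \rfilt{r_0}{V^\infty}$ using Proposition \ref{prop:RankEasyConti}(2) on the inclusion, and the latter is dense, so $\Rrankmax(W_{r_0}) \leq r_0$ by Definition \ref{def:RankContinous}. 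Then I would pass to the orthogonal complement $V \ominus W_{r_0}$, which is again a unitary representation of $G_\RR$, and observe via Lemma \ref{lem:EquiRankSub} (or the direct-sum proposition just above) that $\Rrankmin(V\ominus W_{r_0}) > r_0$, so the process strictly increases the minimal rank and terminates after at most $\Rrankmax(V) - \Rrankmin(V) + 1$ steps, yielding the decomposition $V = \bigoplus_{i} W_i$ with $\Rrank(W_i) = i$ or $W_i = 0$.

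For uniqueness, suppose $V = \bigoplus_i W_i = \bigoplus_i W_i'$ are two such orthogonal decompositions. The key point is that $W_i$ is forced to equal the orthogonal complement in $\bigoplus_{j\geq i} W_j$ of $\bigoplus_{j > i} W_j$, so it suffices to identify the "bottom piece" and induct. I would argue that $\bigoplus_{j \leq i} W_j$ equals $\overline{\rfilt{i}{V^\infty}}$: the inclusion $\subset$ holds because each $W_j$ with $j \leq i$ has $\Rrank(W_j) = j \leq i$, so $\rfilt{i}{W_j^\infty}$ is dense in $W_j$, and $\rfilt{i}{W_j^\infty} \subset \rfilt{i}{V^\infty}$ by Proposition \ref{prop:RankEasyConti}(2); the reverse inclusion $\supset$ holds because the projection of $\rfilt{i}{V^\infty}$ to any $W_j$ with $j > i$ lands in $\rfilt{i}{W_j^\infty}$, which is $0$ since $\Rrankmin(W_j) = j > i$. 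Hence $\bigoplus_{j\leq i} W_j = \overline{\rfilt{i}{V^\infty}} = \bigoplus_{j \leq i} W_j'$ for every $i$, and taking orthogonal complements of consecutive terms gives $W_i = W_i'$.

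The main obstacle I anticipate is the careful bookkeeping with the three a priori different spaces $V^\infty$, the closure operation, and the fact that $\rfilt{k}{(\cdot)^\infty}$ does not obviously commute with passing to closed subrepresentations; the remark after Definition \ref{def:RankContinous} (that one may compute the ranks using $V_K$, $V^\omega$ or $V^\infty$ interchangeably, and that $\overline{(V^\infty)^I} = V^I$) is what makes the density arguments go through, and I would lean on it to justify statements like "$\rfilt{i}{W^\infty}$ dense in $W$ when $\Rrank(W) \leq i$." Everything else — the existence of orthogonal complements, the monotonicity of ranks under inclusion and dense-image maps — is already packaged in Lemma \ref{lem:EquiRankSub} and Proposition \ref{prop:RankEasyConti}, so the proof should be short once the density lemma is invoked cleanly.
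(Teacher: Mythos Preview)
Your proposal is correct and follows essentially the same approach as the paper. The paper writes down the pieces in one line as $W_i \coloneq (\rfilt{i-1}{V^\infty})^\perp \cap \overline{\rfilt{i}{V^\infty}}$ rather than building them recursively, and for uniqueness it argues directly via Lemma~\ref{lem:EquiRankSub} that the projection of $W'_i$ to $W_j$ vanishes for $i<j$ (and symmetrically), which is exactly your identification $\bigoplus_{j\le i} W_j = \overline{\rfilt{i}{V^\infty}}$ phrased slightly differently.
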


\begin{proof}
	For each $i$, set $W_i \coloneq (\rfilt{i-1}{V^\infty})^\perp \cap \overline{\rfilt{i}{V^\infty}}$.
	Then $\set{W_i}_i$ satisfies the desired condition.

	Let $V = \bigoplus_{i=\Rrankmin(V)}^{\Rrankmax(V)} W'_i$ be an orthogonal direct sum decomposition satisfying the conditions in the assertion.
	By Lemma \ref{lem:EquiRankSub}, the projection of $W'_i$ to $W_j$ is zero for any $i, j$ ($i < j$).
	The same is true by swapping $W_i$ for $W'_i$.
	Hence we have $\bigoplus_{i=\Rrankmin(V)}^k W_i = \bigoplus_{i=\Rrankmin(V)}^k W'_i$ for any $k$.
	By induction on $i$, we obtain $W_i = W'_i$.
	This shows the uniqueness.
\end{proof}

We shall state a relation between the rank and the discrete decomposability.
A unitary representation $V$ of $G_\RR$ is said to be discretely decomposable if $V$ is isomorphic to a direct sum of irreducible representations.
The following proposition is easy from definition.
This is one of the reasons why we define the rank.

\begin{proposition}
	Let $V$ be a unitary representation of $G_\RR$.
	Then $V$ is discretely decomposable if and only if $\Rrankmax(V) = 0$.
\end{proposition}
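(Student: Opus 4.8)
The plan is to prove both directions by reducing everything to the behavior of the rank filtration on smooth vectors. For the forward direction, suppose $V$ is discretely decomposable, so $V = \bigoplus_{j \in J}^\oplus V_j$ with each $V_j$ irreducible unitary. Each $V_j$ is admissible, so $(V_j)_K$ is $\univcent{g}$-finite and has an infinitesimal character; thus $\Ann_{\univcent{g}}((V_j)^\infty)$ is a maximal ideal and $\dim(\Variety(\Ann_{\univcent{g}}((V_j)^\infty))) = 0$, i.e.\ $\Rrank(V_j) = 0$. By the preceding proposition on direct sums (the one computing $\Rrankmax(V)$ and $\Rrankmin(V)$ of an orthogonal direct sum), $\Rrankmax(V) = \max_j \Rrankmax(V_j) = 0$. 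The only subtlety is that $\Rrank(V_j)$ should be read on $V_j^\infty$, and for an admissible representation $(V_j)_K$ is dense in $V_j^\infty$ with the same annihilator in $\univcent{g}$ (using $\overline{(V^\infty)^I} = V^I$, as noted after Definition \ref{def:RankContinous}), so the equi-rank value is indeed $0$.

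For the converse, suppose $\Rrankmax(V) = 0$, i.e.\ $\rfilt{0}{V^\infty}$ is dense in $V$. First I would observe that $\rfilt{0}{V^\infty} = \bigoplus_{[\lambda]} (V^\infty)^{\Ker([\lambda])^n \text{ for some }n}$ is the algebraic direct sum of the primary components of $V^\infty$ with respect to $\univcent{g}$, each of which consists of vectors with a single (generalized) infinitesimal character. Taking closures, one gets an orthogonal Hilbert space decomposition $V = \bigoplus_{[\lambda]} V_{[\lambda]}$ into closed subrepresentations, each of which is a unitary representation all of whose smooth vectors have generalized infinitesimal character $[\lambda]$; in particular each $V_{[\lambda]}$ is quasi-admissible. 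The key point is then that a unitary representation with a fixed generalized infinitesimal character is discretely decomposable: its matrix coefficients lie in a space where, by the $\univcent{g}$-finiteness, Harish-Chandra's theory applies — concretely, one can invoke that such a representation decomposes discretely by a standard argument (e.g.\ the representation is contained in a sum of representations induced from $\univcent{g}$-characters, and by unitarity the direct integral over $\widehat{G}_\RR$ must be supported on the finite set of unitary $\pi$ with that infinitesimal character, which forces the measure to be atomic). Having each $V_{[\lambda]}$ discretely decomposable, the whole $V$ is a direct sum of irreducibles.

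I expect the main obstacle to be the converse, specifically the step asserting that a unitary representation all of whose smooth vectors carry a single generalized infinitesimal character is discretely decomposable. The clean way to see this is via the direct integral $\hrep{H}|_{G_\RR} \simeq \int_{\widehat{G}_\RR}^\oplus m(\pi)\hrep{H}_\pi\, d\mu(\pi)$: the condition $\Rrankmax(V) = 0$ forces, for $\mu$-almost every $\pi$, that $\pi$ has a generalized infinitesimal character and hence (being irreducible unitary) a genuine one; the set of $\pi \in \widehat{G}_\RR$ with a fixed infinitesimal character is finite by Harish-Chandra's finiteness theorem, so $\mu$ is supported on countably many atoms, giving discrete decomposability. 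One should be slightly careful that "$\Rrankmax(V) = 0$" a priori only gives density of $\rfilt{0}{V^\infty}$, not that every smooth vector has finite $\univcent{g}$-support; but combined with the direct integral decomposition and the fact that the annihilator in $\univcent{g}$ of a vector $(\xi_\pi)_\pi$ is contained in $\bigcap_\pi \Ann_{\univcent{g}}(\xi_\pi)$, a density argument over a $\mu$-conull set of $\pi$'s recovers exactly the statement that $\mu$-a.e.\ $\pi$ has an infinitesimal character, which suffices. This part will need to cite the decomposition-of-unitary-representations results already recalled in the excerpt (Mautner--Teleman and the direct integral discussion) and Harish-Chandra's finiteness of unitary dual with fixed infinitesimal character.
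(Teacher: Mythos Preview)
Your proposal is correct and rests on the same key ingredient as the paper's proof: Harish-Chandra's finiteness of irreducible unitary representations with a fixed infinitesimal character. The paper's proof is a single sentence invoking exactly this fact, so your direct-integral elaboration is a reasonable way to fill in the details (though a shorter Zorn-type argument---show that any nonzero closed subrepresentation contains a vector generating a $\univcent{g}$-finite closed subrepresentation, which by finiteness must contain an irreducible---would also work and avoids separability hypotheses).
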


\begin{proof}
	It is clear that $V$ is discretely decomposable only if $\Rrankmax(V) = 0$.
	The converse follows from the fact that the number of isomorphism classes of irreducible unitary representations with a fixed infinitesimal character is finite.
\end{proof}

A similar result for $\lie{g}$-modules holds in several good categories, e.g.\ the category of $(\lie{g}, K)$-modules and the BGG category $\mathcal{O}$.
We shall state the result only for $(\lie{g}, K)$-modules.
For a $\lie{g}$-module $V$, we say that $V$ is discretely decomposable if
$V$ has a filtration $0 = V_0\subset V_1 \subset V_2 \subset \cdots$ such that $\bigcup_{i \in \NN} V_i = V$ 
and $V_i$ has finite length for any $i \in \NN$.
See \cite[Definition 1.1]{Ko98_discrete_decomposable_3}.
By definition, a $\lie{g}$-module $V$ is discretely decomposable only if $V$ has at most countable dimension.

\begin{proposition}
	Let $V$ be a $(\lie{g}, K)$-module of at most countable dimension.
	Then $V$ is discretely decomposable if and only if $\Rrankmax(V) = 0$.
\end{proposition}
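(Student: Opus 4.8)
The plan is to isolate the only category-specific input --- that a finitely generated $\univcent{g}$-finite $(\lie{g},K)$-module has finite length --- and to deduce everything else formally from Propositions \ref{prop:RankFinitelyGen} and \ref{prop:RankEasy} together with the existence of infinitesimal characters. For $K$ a maximal compact this finiteness is Harish--Chandra's theorem, and it persists for the symmetric subgroups $K$ in force here; for the category $\mathcal{O}$ it is replaced by the elementary statement that a finitely generated object of $\mathcal{O}$ with generalized infinitesimal characters has finite length. We may assume $V\neq 0$.

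First I would prove that discrete decomposability forces $\Rrankmax(V)=0$. Let $0=V_0\subset V_1\subset\cdots$ be an exhaustive filtration of $V$ by finite-length $(\lie{g},K)$-submodules. Every element of $\univcent{g}$ is central in $\univ{g}$ and fixed by $\Ad(K)$, hence acts on each $V_i$ as a $(\lie{g},K)$-endomorphism; so by Dixmier's form of Schur's lemma it acts by a scalar on every irreducible subquotient of $V_i$, and running along a Jordan--Hölder series of $V_i$ shows that $\Ann_{\univcent{g}}(V_i)$ contains a product of maximal ideals of $\univcent{g}$. Thus $V_i$ is $\univcent{g}$-finite and $\dim\Variety(\Ann_{\univcent{g}}(V_i))=0$. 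Since each $v\in V$ lies in some $V_i$ and then $\Ann_{\univcent{g}}(v)\supset\Ann_{\univcent{g}}(V_i)$, we obtain $\rfilt{0}{V}=V$, i.e.\ $\Rrankmax(V)=0$.

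For the converse, assume $\Rrankmax(V)=0$. Using that $V$ has at most countable dimension and that $K$ acts locally finitely, I would choose finitely generated $(\lie{g},K)$-submodules $U_1\subset U_2\subset\cdots$ with $\bigcup_n U_n=V$ (for instance, let $U_n$ be the submodule generated by the $K$-span of the first $n$ vectors of a countable basis of $V$). For each $n$ with $U_n\neq 0$, the inclusion $U_n\hookrightarrow V$ gives $\Rrankmax(U_n)\leq\Rrankmax(V)=0$ by Proposition \ref{prop:RankEasy}, and Proposition \ref{prop:RankFinitelyGen} gives $\Rrankmax(U_n)=\glrank(U_n)=\dim\Variety(\Ann_{\univcent{g}}(U_n))$; hence $U_n$ is $\univcent{g}$-finite. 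By the finiteness input quoted above, each $U_n$ then has finite length, so the chain $0\subset U_1\subset U_2\subset\cdots$ exhibits $V$ as discretely decomposable.

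The main obstacle is precisely the finiteness statement ``finitely generated and $\univcent{g}$-finite implies finite length'': it fails for small $K$ (already for $K$ trivial), so this is exactly where the ``goodness'' of the category of $(\lie{g},K)$-modules enters. Everything else --- extracting the exhausting chain of finitely generated submodules and comparing ranks along inclusions --- is routine given the propositions already established.
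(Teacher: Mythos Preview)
Your proof is correct and follows the same approach as the paper. The paper's own proof is a single sentence: it cites Harish-Chandra's admissibility theorem (any finitely generated $(\lie{g},K)$-module with an infinitesimal character has finite length) and says the assertion follows from this fact. You have simply unpacked both directions explicitly and isolated the same category-specific input; your use of Propositions \ref{prop:RankFinitelyGen} and \ref{prop:RankEasy} to show that each $U_n$ is $\univcent{g}$-finite is exactly the intended route. One minor remark: in the context of this proposition $K$ is already the complexification of a maximal compact subgroup of $G_\RR$, so your aside about ``the symmetric subgroups $K$ in force here'' is unnecessary --- Harish-Chandra's theorem applies directly.
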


\begin{proof}
	Any finitely generated $(\lie{g}, K)$-module with an infinitesimal character has finite length by Harish-Chandra's admissibility theorem \cite[Theorem 3.4.1]{Wa88_real_reductive_I}.
	The assertion follows from this fact.
\end{proof}

\subsection{Equi-rank theorem}

We shall prove the equi-rank property under some reasonable assumption in this subsection.
We consider the $\univcent{g}$-action on $\alg{A}$-modules.
Let $(\alg{A}, G)$ be a generalized pair with connected reductive algebraic group $G$.
Fix a Cartan subalgebra $\lie{h}\subset \lie{g}$ and let $W$ denote the Weyl group of $\lie{g}$.

In general, an $\alg{A}$-module may not be finitely generated as a $\lie{g}$-module.
To treat such a module, we consider existence of good $\lie{g}$-module quotients of an $\alg{A}$-module.

\begin{lemma}\label{lem:FiniteGenFiltration}
	Let $V$ be a non-zero finitely generated $\lie{g}$-module.
	Then there exists a $\lie{g}$-module filtration $0=V_0\subset V_1 \subset \cdots \subset V_r = V$ such that the action of $\univcent{g}/\Ann_{\univcent{g}}(V_i/V_{i-1})$ on $V_i/V_{i-1}$ is torsion-free for any $i \geq 1$.
\end{lemma}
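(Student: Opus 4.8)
The plan is to build the filtration by Noetherian induction on $\Ann_{\univcent{g}}(V)$, exploiting the fact that $\univcent{g} \simeq S(\lie{h})^W$ is a commutative Noetherian ring and $V$ is a finitely generated module over the (noncommutative) Noetherian ring $\univ{g}$. First I would reduce to the following statement: any non-zero finitely generated $\lie{g}$-module $V$ has a non-zero submodule $V_1$ on which $\univcent{g}/\Ann_{\univcent{g}}(V_1)$ acts torsion-free. Granting this, I set $V_1$ as the first step, pass to $V/V_1$, and iterate; the process terminates because the ascending chain of annihilators $\Ann_{\univcent{g}}(V_1) \subset \Ann_{\univcent{g}}(V_1 \cup V_2) \subset \cdots$ — or more precisely, because one can arrange each successive quotient to strictly enlarge the annihilator or decrease the number of associated primes — stabilizes by Noetherianity of $\univcent{g}$ together with finite generation of $V$. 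The cleaner bookkeeping is: among the (finitely many) associated primes of $V$ as a $\univcent{g}$-module, pick a minimal one $P$, let $V_1$ be the $\univcent{g}$-submodule of $V$ of elements killed by a power of $P$ intersected with the $P$-torsion-free part appropriately — see below — and note $\univ{g}$-stability comes for free because $\univcent{g}$ is central in $\univ{g}$, so every $\univcent{g}$-submodule defined by an annihilator condition is automatically a $\lie{g}$-submodule.

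The key step — producing $V_1$ — goes as follows. View $V$ as a finitely generated $\univ{g}$-module; it need not be finitely generated over $\univcent{g}$, but $\Ann_{\univcent{g}}(V)$ is still a genuine ideal, and I claim $\univcent{g}/\Ann_{\univcent{g}}(V)$ has only finitely many minimal primes (the minimal primes over the ideal $\Ann_{\univcent{g}}(V)$ in the Noetherian ring $\univcent{g}$). Let $I = \Ann_{\univcent{g}}(V)$, let $P_1, \dots, P_m$ be the minimal primes over $I$, and choose $P = P_i$ of maximal dimension, i.e.\ with $\dim \Variety(P)$ maximal. Set
\begin{align*}
	V_1 \coloneq \set{v \in V : J v = 0 \text{ for some ideal } J \subset \univcent{g} \text{ with } \sqrt{J} \supsetneq P \text{ or } V_1 = 0}.
\end{align*}
This parenthetical fallback is awkward; instead, the right construction is: let $S = \univcent{g} \setminus P$, and set $V_1 = \Ker(V \to S^{-1}V)$, the $P$-primary-to-the-complement torsion submodule. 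Then $\Ann_{\univcent{g}}(V_1) \not\subset P$ (every element of $V_1$ is killed by something outside $P$), so $\dim \Variety(\Ann_{\univcent{g}}(V_1)) < \dim \Variety(P)$; while on the quotient $V/V_1$, localization at $P$ is injective, so $\univcent{g}_P/\Ann(V/V_1)_P$ acts torsion-free — but I actually want torsion-freeness over $\univcent{g}/\Ann$, not its localization, so a further refinement is needed: within $V/V_1$ take the submodule of elements annihilated by some power of $P$, which is now honestly $P$-torsion-free as a $\univcent{g}/P^n$-module once one passes to a suitable associated-graded or uses that $\Ass$ is a single point. Working this associated-prime bookkeeping out carefully is where the real content lies.

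The main obstacle I anticipate is precisely this: matching the naive "localize and kill torsion" idea with the exact statement "$\univcent{g}/\Ann_{\univcent{g}}(V_i/V_{i-1})$ acts torsion-free on $V_i/V_{i-1}$," which demands that the annihilator of each layer be a prime (or at least that the layer be torsion-free over $\univcent{g}$ modulo a radical ideal with no embedded issues). The standard fix is to refine the filtration further so that each $V_i/V_{i-1}$ has a single associated prime $P_i$ and is $P_i$-torsion-free — essentially an $\Ass$-stratification of $V$ as a $\univcent{g}$-module — and the finiteness of this process again rests on $\univcent{g}$ being Noetherian. Once the layers are arranged to have a unique associated prime and no embedded primes, torsion-freeness over $\univcent{g}/\Ann_{\univcent{g}}(V_i/V_{i-1})$ follows from the general fact that a module with a single associated prime $P$ over a Noetherian ring, whose annihilator has radical $P$, is $\univcent{g}/P$-torsion-free after one last pass to the $P$-primary part. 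I would present the argument in the order: (i) finiteness of minimal/associated primes over $\Ann_{\univcent{g}}(V)$; (ii) existence of a non-zero submodule with strictly larger-dimensional-complement annihilator, equivalently peeling off one associated prime; (iii) termination by Noetherian induction; (iv) automatic $\lie{g}$-stability from centrality of $\univcent{g}$.
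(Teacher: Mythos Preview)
Your proposal circles around the right ingredients but never lands on a working construction of $V_1$, and the attempts you do write down are either incomplete or aimed in the wrong direction. The paper's argument is a one-liner by comparison: take $P$ to be a \emph{maximal} element of the set $\{\Ann_{\univcent{g}}(v) : 0 \neq v \in V\}$ (which exists because $\univcent{g}$ is Noetherian), and set $V_1 \coloneq V^P$. Such a $P$ is automatically prime, and maximality forces $\Ann_{\univcent{g}}(w) = P$ for every nonzero $w \in V^P$, so $V_1$ is torsion-free over $\univcent{g}/P$ on the nose; centrality of $\univcent{g}$ makes $V_1$ a $\lie{g}$-submodule. Iterate on $V/V_1$, and terminate because $V$ is Noetherian as a $\univ{g}$-module.

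Your approach via \emph{minimal} primes over $\Ann_{\univcent{g}}(V)$ and localization is going backwards. The kernel $\Ker(V \to S^{-1}V)$ you write down is the part supported \emph{away} from the generic locus of $P$, so it is exactly the piece that is \emph{not} torsion-free over $\univcent{g}/P$; you notice this and try to patch it on the quotient, but ``torsion-free over $\univcent{g}/P^n$'' or ``torsion-free after localizing at $P$'' is not the condition in the lemma. Note also that torsion-freeness over $\univcent{g}/\Ann$ forces $\Ann$ to be prime, so the goal really is that each layer has a single associated prime equal to its annihilator---and the cheapest way to produce such a layer is to pick a \emph{maximal} associated prime, not a minimal one. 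Finally, your termination bookkeeping (enlarging annihilators, counting associated primes) is fragile because $V$ need not be finitely generated over $\univcent{g}$ and could in principle have infinitely many associated primes; the paper sidesteps this entirely by using the ascending chain condition on $\univ{g}$-submodules.
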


\begin{proof}
	Take a maximal element $P$ of $\set{\Ann_{\univcent{g}}(v) : 0\neq v \in V}$.
	Then $P$ is a prime ideal such that $V_1 \coloneq V^P$ is a torsion-free $\univcent{g}/P$-module.
	Applying this argument for $V/V_{i}$ iteratively, we obtain an increasing sequence $0 = V_0 \subset V_1 \subset \cdots$ such that the action of $\univcent{g}/\Ann_{\univcent{g}}(V_i/V_{i-1})$ on $V_i/V_{i-1}$ is torsion-free for any $i \geq 1$.
	Since $V$ is noetherian, there is $r \in \NN$ such that $V_r = V$.
	This shows the lemma.
\end{proof}

\begin{lemma}\label{lem:FiniteGenInfChar}
	Let $V$ be a non-zero finitely generated $\lie{g}$-module and $\chi \in \lie{h}^*/W$.
	Assume that $U^{\Ker(\chi)} \neq 0$ for any non-zero submodule $U\subset V$.
	Then $V$ has the generalized infinitesimal character $\chi$.
\end{lemma}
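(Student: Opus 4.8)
The plan is to show that the ideal $\Ann_{\univcent{g}}(V)$ contains a power of $\Ker(\chi)$; since $V$ is finitely generated, this forces $V$ to be $\univcent{g}$-finite, and then the primary decomposition together with the hypothesis will pin down the unique infinitesimal character. First I would reduce to the case where $\univcent{g}/\Ann_{\univcent{g}}(V)$ acts torsion-freely. Indeed, by Lemma \ref{lem:FiniteGenFiltration} there is a filtration $0 = V_0 \subset V_1 \subset \cdots \subset V_r = V$ whose subquotients $V_i/V_{i-1}$ are torsion-free over $\univcent{g}/\Ann_{\univcent{g}}(V_i/V_{i-1})$, with $P_i \coloneqq \Ann_{\univcent{g}}(V_i/V_{i-1})$ prime. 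Each nonzero submodule $U$ of $V_i/V_{i-1}$ pulls back to a nonzero submodule of $V$, so by hypothesis has $U^{\Ker(\chi)} \neq 0$; in particular $V_i/V_{i-1}$ itself has a nonzero vector $v$ killed by $\Ker(\chi)$, so $\Ker(\chi) \subset \Ann_{\univcent{g}}(v)$. But in a torsion-free $\univcent{g}/P_i$-module, $\Ann_{\univcent{g}}(v) = P_i$ for every nonzero $v$ (anything annihilating $v$ annihilates all of $\univcent{g}/P_i \cdot v$, hence lies in $P_i$ by torsion-freeness, using that $\univcent{g}/P_i$ is a domain). Hence $\Ker(\chi) \subset P_i$, and since $\Ker(\chi)$ is a maximal ideal of $\univcent{g}$ while $P_i \neq \univcent{g}$ (as $V_i/V_{i-1} \neq 0$), we get $P_i = \Ker(\chi)$ for every $i$.

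Next I would assemble these subquotient statements back up the filtration. Since $\Ker(\chi)$ annihilates each $V_i/V_{i-1}$, a standard argument gives $\Ker(\chi)^r \cdot V = 0$: one shows by induction on $i$ that $\Ker(\chi)^i \cdot V \subset V_{r-i}$, the base case being $V_r = V$ and the inductive step being $\Ker(\chi)^{i+1} V = \Ker(\chi)(\Ker(\chi)^i V) \subset \Ker(\chi) V_{r-i} \subset V_{r-i-1}$, using that $\Ker(\chi)$ kills $V_{r-i}/V_{r-i-1}$. Therefore $\Ker(\chi)^r \subset \Ann_{\univcent{g}}(V)$, so $\univcent{g}/\Ann_{\univcent{g}}(V)$ is a quotient of $\univcent{g}/\Ker(\chi)^r$, which is finite-dimensional since $\Ker(\chi)$ is a maximal ideal of the finitely generated $\CC$-algebra $\univcent{g} \cong S(\lie{h})^W$. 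Thus $V$ is $\univcent{g}$-finite.

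Finally, being $\univcent{g}$-finite, $V$ has a primary decomposition $V = \bigoplus_{\psi} V_\psi$ over the finitely many maximal ideals $\Ker(\psi) \supset \Ann_{\univcent{g}}(V)$, by \cite[Proposition 7.20]{KnVo95_cohomological_induction}. Every primary component $V_\psi$ with $\psi \neq \chi$ satisfies $(V_\psi)^{\Ker(\chi)} = 0$, because on $V_\psi$ the ideal $\Ker(\chi)$ acts invertibly (its image in the local artinian ring $\univcent{g}/\Ann_{\univcent{g}}(V_\psi)$, whose unique maximal ideal is $\Ker(\psi)/\Ann_{\univcent{g}}(V_\psi) \neq \Ker(\chi)$-image, is a unit). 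If some $V_\psi$ with $\psi \neq \chi$ were nonzero, it would be a nonzero submodule of $V$ with vanishing $\Ker(\chi)$-fixed part, contradicting the hypothesis. Hence $V = V_\chi$, i.e.\ $V$ has the generalized infinitesimal character $\chi$, which is the assertion. I expect the main obstacle to be the bookkeeping in the reduction step — specifically, verifying carefully that "torsion-free over $\univcent{g}/P$ with $P$ prime" forces $\Ann_{\univcent{g}}(v) = P$ for all nonzero $v$, and cleanly propagating the vanishing of $\Ker(\chi)$-fixed vectors from $V$ down to subquotients and the primary-component behaviour back up; the ring-theoretic inputs (maximal ideals of $S(\lie{h})^W$ have finite-codimensional powers, localization at a maximal ideal of an artinian quotient) are routine.
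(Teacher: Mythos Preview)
Your proposal has a real gap in the first paragraph. You assert that a nonzero submodule $U$ of $V_i/V_{i-1}$ ``pulls back to a nonzero submodule of $V$, so by hypothesis has $U^{\Ker(\chi)} \neq 0$''. The hypothesis, however, only says that the \emph{preimage} $\widetilde{U} \subset V$ satisfies $\widetilde{U}^{\Ker(\chi)} \neq 0$; for $i \geq 2$ this fixed subspace may sit entirely inside $V_{i-1}$ and hence vanish in the quotient. So you have not produced a nonzero $\Ker(\chi)$-fixed vector in $V_i/V_{i-1}$, and the conclusion $P_i = \Ker(\chi)$ is unjustified for $i \geq 2$. This is not mere bookkeeping: the hypothesis does not pass to quotients for free. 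A clean repair bypasses the filtration: show $\Ass_{\univcent{g}}(V) = \{\Ker(\chi)\}$ directly (if $P = \Ann_{\univcent{g}}(v)$ is prime, apply the hypothesis to the cyclic submodule $\univcent{g}v \cong \univcent{g}/P$ to get $r \notin P$ with $\Ker(\chi)\cdot r \subset P$, hence $\Ker(\chi) \subset P$ by primality, hence $P = \Ker(\chi)$ by maximality), and then use that a finitely generated module over a Noetherian ring whose only associated prime is a maximal ideal $\mathfrak m$ is annihilated by some $\mathfrak m^n$.

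The paper's own argument is shorter and avoids Lemma~\ref{lem:FiniteGenFiltration} altogether. For each $X \in \Ker(\chi)$ it uses Noetherianity of $V$ to find $k$ with $V^{X^{k+1}} = V^{X^k}$; if $X^k V \neq 0$, the hypothesis applied to this submodule yields a nonzero $X$-fixed vector $X^k v$, giving $v \in V^{X^{k+1}} \setminus V^{X^k}$, a contradiction. Thus each $X$ is nilpotent on $V$, and finite generation of $\Ker(\chi)$ gives $\Ker(\chi)^n V = 0$. Note also that once you have $\Ker(\chi)^r V = 0$, your third paragraph is redundant: that identity is already the statement $V = V_\chi$.
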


\begin{proof}
	Let $X \in \Ker(\chi)$.
	We shall see that $X$ is nilpotent on $V$.
	Since $V$ is noetherian, there is $k \in \NN$ such that $V^{X^{k+1}} = V^{X^k}$.
	Assume that $X^k V\neq 0$.
	Since $X^k V$ is a non-zero $\lie{g}$-submodule of $V$, we have $0\neq (X^k V)^{\Ker(\chi)} \subset (X^k V)^{X}$ by assumption.
	This implies $V^{X^{k+1}} \neq V^{X^k}$ and this contradicts the choice of $k$.
	Hence we obtain $X^k V = 0$.

	Since the ideal $\Ker(\chi)\subset \univcent{g}$ is finitely generated,
	we can take $n \in \NN$ such that $\Ker(\chi)^n V = 0$.
	This shows the lemma.
\end{proof}

\begin{lemma}\label{lem:ExistenceQuotient}
	Let $V$ be a $\lie{g}$-module of at most countable dimension, and $P$ a prime ideal of $\univcent{g}$.
	Assume $V_P/ PV_P \neq 0$, where $V_P$ is the localization at $P$.
	Set
	\begin{align*}
		X\coloneq \set{\chi \in \Variety(P) : V/\Ker(\chi)V \neq 0}.
	\end{align*}
	Then $\Variety(P) - X$ is contained in an at most countable union of closed subvarieties strictly contained in $\Variety(P)$.
	If, moreover, $\Ann_{\univcent{g}}(V) = P$ and $V$ is torsion-free as a $\univcent{g}/P$-module,
	then $\bigcap_{\chi \in X} \Ker(\chi) V = 0$.
\end{lemma}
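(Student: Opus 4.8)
The plan is to analyze the set $X$ by relating the condition $V/\Ker(\chi)V \neq 0$ to the nonvanishing of certain $\univcent{g}/P$-module quotients, exploiting the countable dimension hypothesis. First I would reduce to the case $\Ann_{\univcent{g}}(V) = P$: since $V_P/PV_P \neq 0$, the submodule $V^{(P)} := \{v \in V : Pv = 0\}$ is nonzero after localizing, so by passing to a suitable subquotient (using Lemma \ref{lem:FiniteGenFiltration} applied to finitely generated submodules, or directly taking $V' = V^P$ as in that lemma's proof) we may assume $V$ is a torsion-free $\univcent{g}/P$-module with $\Ann_{\univcent{g}}(V) = P$; this is exactly the extra hypothesis in the second assertion, and the first assertion will follow because shrinking $V$ only shrinks $X$.

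Next, with $R := \univcent{g}/P$ an integral domain of countable type over $\CC$ (so $R$ has at most countable dimension over $\CC$), I would fix a nonzero $v \in V$ and consider the submodule $Rv \subset V$. The key observation is that for $\chi \in \Variety(P)$ corresponding to a maximal ideal $\mathfrak{m}_\chi \subset R$, the quotient $Rv/\mathfrak{m}_\chi Rv$ is nonzero unless $v \in \mathfrak{m}_\chi Rv$, i.e.\ unless $v$ lies in the (countably many) ``bad'' fibers. More precisely, $Rv \cong R/I$ for some ideal $I \subsetneq R$ (torsion-freeness forces $I = 0$, so $Rv \cong R$), and $R/\mathfrak{m}_\chi R = R/\mathfrak{m}_\chi \neq 0$ always. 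So $v \notin \Ker(\chi)V$ whenever the image of $v$ in $V/\Ker(\chi)V$ is nonzero; the failure locus is governed by when $v$ becomes divisible. Running over a countable spanning set $\{v_n\}$ of $V$ (which exists by countable dimension) and invoking generic freeness / the fact that $V$ is a direct limit of finitely generated torsion-free $R$-modules, each of which is free over a dense open of $\Variety(P)$, I would show that $\Variety(P) - X$ is contained in $\bigcup_n Z_n$ where each $Z_n$ is the proper closed subvariety of $\Variety(P)$ over which the finitely generated submodule $\sum_{j \le n} Rv_j$ fails to be free.

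For the final assertion $\bigcap_{\chi \in X}\Ker(\chi)V = 0$: suppose $0 \neq v \in \bigcap_{\chi \in X}\Ker(\chi)V$. Since $Rv \cong R$ is torsion-free, the set of $\chi \in \Variety(P)$ with $v \in \mathfrak{m}_\chi V$ — equivalently $v \in \mathfrak{m}_\chi \cdot (\text{some finitely generated submodule containing } v)$ — is contained in a proper closed subvariety union a countable bad set; but $X$ is the complement of a countable union of proper closed subvarieties in the irreducible variety $\Variety(P)$, hence dense and in particular Zariski-dense, so it cannot be contained in such a small set unless $v = 0$, a contradiction. The main obstacle I anticipate is the bookkeeping around the countable union of proper subvarieties and making precise the passage from ``$V$ is a countable-dimensional torsion-free $R$-module'' to ``$V$ is generically free'' — one cannot invoke generic freeness directly for non-finitely-generated modules, so I would write $V = \varinjlim V_\alpha$ over finitely generated submodules, apply generic freeness to each $V_\alpha$, and carefully check that the countably many exceptional subvarieties assemble correctly and that torsion-freeness is inherited. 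A secondary subtlety is confirming $\dim Z_n < \dim \Variety(P)$ rather than merely $Z_n \subsetneq \Variety(P)$, which uses that $\Variety(P)$ is irreducible (as $P$ is prime) so every proper closed subset has strictly smaller dimension.
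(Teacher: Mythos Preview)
Your high-level plan (reduce to the torsion-free case, then exploit countable dimension through a generic-freeness argument) is the same as the paper's, but the execution has two genuine gaps.

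\textbf{The reduction step is wrong.} Passing to the submodule $V^P = \{v : Pv = 0\}$ fails outright because $V^P$ may be zero even when $V_P/PV_P \neq 0$ (take $V = \univcent{g}$ and $P$ any non-maximal prime). More seriously, your claim that ``shrinking $V$ only shrinks $X$'' is false for submodules: with $\univcent{g} = \CC[t]$, $V' = \CC[t] \subset V = \CC[t,t^{-1}]$ and $\chi = 0$ one has $V'/tV' \neq 0$ but $V/tV = 0$, so $0 \in X(V') \setminus X(V)$. The paper instead replaces $V$ by the \emph{quotient} $\operatorname{image}(V \to V_P/PV_P)$; for quotients the inclusion $X' \subset X$ does hold, and the image sits inside the vector space $V_P/PV_P$ over the residue field, hence is automatically torsion-free over $\univcent{g}/P$.

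\textbf{Generic freeness on each $V_n$ is not enough for the first assertion.} Once in the torsion-free case, your plan is to show $\Variety(P)\setminus X \subset \bigcup_n Z_n$ where $Z_n$ is the non-free locus of $V_n = \sum_{j\le n} R v_j$. The same example kills this: with $V = \CC[t,t^{-1}]$ and $V_n = t^{-n}\CC[t]$, every $V_n$ is globally free (so each $Z_n = \emptyset$), yet $V/tV = 0$ and $0 \notin X$. The point is that local freeness of each $V_n$ at $\chi$ gives $V_n/\mathfrak m_\chi V_n \neq 0$, but says nothing about $V/\mathfrak m_\chi V$ because the transition maps $V_n \hookrightarrow V_{n+1}$ need not split. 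The paper sidesteps this by producing, directly from the countable dimension of $V$, a single countable multiplicative set $S \subset \univcent{g}/P$ with $S^{-1}V$ free over $S^{-1}(\univcent{g}/P)$; then for every $\chi$ avoiding $\bigcup_{f \in S}\Variety((f))$ one gets both $V/\Ker(\chi)V \neq 0$ and $\bigcap_{\chi \in X}\Ker(\chi)V = 0$ in one stroke.

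Your contradiction argument for the second assertion (showing that $\{\chi : v \in \mathfrak m_\chi V\}$ is a countable union of proper closed sets, incompatible with containing $X$) is essentially correct, but it relies on the first assertion, so it is currently unsupported.
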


\begin{proof}
	Replacing $V$ with the image of $V\rightarrow V_P/PV_P$, we can assume that 
	$V$ is torsion-free as a $\univcent{g}/P$-module.
	Note that $V_P/PV_P$ is a vector space over the field $\univcent{g}_P/P \univcent{g}_P$.
	Since $V$ has at most countable dimension, we can take at most countable multiplicative subset $S\subset \univcent{g}/P$ such that
	$S^{-1}V$ is free over $S^{-1}(\univcent{g}/P)$.
	Hence we obtain $\Variety(P) - X \subset \bigcup_{f \in S} \Variety((f))$ and $\bigcap_{\chi \in X} \Ker(\chi) V = 0$.
\end{proof}

\begin{remark}\label{remark:ExistenceQuotient}
	The assumption that $V$ has at most countable dimension is necessary.
	Let $K$ denote the quotient field of $\univcent{g}/P$.
	Then $X$ in Lemma \ref{lem:ExistenceQuotient} is empty for the left $\lie{g}$-module $V = \univ{g}/P \univ{g}\otimes_{\univcent{g}/P}K$.
\end{remark}

\begin{lemma}\label{lem:FiniteGenDenseQuot}
	Let $V$ be a finitely generated $\lie{g}$-module.
	Set
	\begin{align*}
		X\coloneq \set{\chi \in \lie{h}^*/W : V/\Ker(\chi)V \neq 0}.
	\end{align*}
	Then $X$ is a Zariski dense subset of $\Variety(\Ann_{\univcent{g}}(V))$.
\end{lemma}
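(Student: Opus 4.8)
The plan is to reduce to working on one irreducible component of $\Variety(\Ann_{\univcent{g}}(V))$ at a time: on each component I would produce a nonempty Zariski open subset contained in $X$, and then take the union over components. Write $Z \coloneq \univcent{g}$, which by the Harish-Chandra isomorphism $Z \simeq S(\lie{h})^W$ is a polynomial ring, in particular a Noetherian domain, and set $I \coloneq \Ann_Z(V)$. Two preliminary remarks. First, $X \subseteq \Variety(I)$: if a maximal ideal $\mathfrak{m}$ does not contain $I$, pick $z \in I \setminus \mathfrak{m}$; then $\mathfrak{m} + Zz = Z$, so $V = \mathfrak{m}V + zV = \mathfrak{m}V$, i.e.\ $\mathfrak{m} \notin X$. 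Second, $\operatorname{Supp}_Z(V) = \Variety(I)$, and this is where finite generation over $\univ{g}$ enters: if $V_{\mathfrak{p}} = 0$ for some prime $\mathfrak{p} \supseteq I$, choose for each of the finitely many $\univ{g}$-generators of $V$ an element of $Z \setminus \mathfrak{p}$ annihilating it and let $s$ be the product of these elements; since $Z$ is central, $sV = 0$, so $s \in I \subseteq \mathfrak{p}$, a contradiction.

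Now fix a minimal prime $\mathfrak{p}$ of $I$ and put $R \coloneq Z/\mathfrak{p}$, again a Noetherian domain. Because $\mathfrak{p}$ is minimal over $I$, the only prime of $Z_{\mathfrak{p}}$ containing $IZ_{\mathfrak{p}}$ is $\mathfrak{p}Z_{\mathfrak{p}}$, so $\mathfrak{p}^N Z_{\mathfrak{p}} \subseteq I Z_{\mathfrak{p}}$ for some $N$, whence $\mathfrak{p}^N V_{\mathfrak{p}} \subseteq (IV)_{\mathfrak{p}} = 0$; since $V_{\mathfrak{p}} \neq 0$ by the second remark, a Nakayama-by-nilpotence argument gives $\mathfrak{p}V_{\mathfrak{p}} \subsetneq V_{\mathfrak{p}}$, which says exactly that the $R$-module $V/\mathfrak{p}V$ is nonzero after tensoring with the fraction field of $R$. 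Next, $V/\mathfrak{p}V$ is annihilated by $\mathfrak{p}$, hence is a finitely generated module over $\univ{g}/\mathfrak{p}\univ{g}$, which is a finitely generated algebra over $R$ (generated over $R$ by a basis of $\lie{g}$). Applying the generic freeness lemma — which holds for finitely generated modules over finitely generated, possibly noncommutative, algebras over a Noetherian domain, as one sees by reducing to the commutative case (e.g.\ Eisenbud, \emph{Commutative Algebra}, Thm.~14.4) via a suitable $R$-linear filtration with commutative associated graded — I obtain $0 \neq \bar f \in R$ such that $(V/\mathfrak{p}V)_{\bar f}$ is free over $R_{\bar f}$, and of positive rank by the previous sentence. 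Consequently, for every maximal ideal $\mathfrak{m}$ of $Z$ with $\mathfrak{p} \subseteq \mathfrak{m}$ and $\bar f \notin \mathfrak{m}/\mathfrak{p}$, reduction modulo $\mathfrak{m}$ gives $V/\mathfrak{m}V \neq 0$, since it is a nonzero free module over the residue field; thus $\mathfrak{m} \in X$, and such $\mathfrak{m}$ form a dense open subset of the irreducible variety $\Variety(\mathfrak{p})$.

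Since $Z$ is Noetherian, $\Variety(I)$ is the union of the finitely many $\Variety(\mathfrak{p})$ over the minimal primes $\mathfrak{p}$ of $I$, so the union of the dense open subsets constructed above is a dense subset of $\Variety(I)$ contained in $X$; together with $X \subseteq \Variety(I)$ this proves the claim. I expect the main obstacle to be conceptual rather than computational: $V$ is typically a torsion $Z$-module, so generic freeness applied to $V$ itself is vacuous, and the crucial move is to replace $V$ by $V/\mathfrak{p}V$ for a minimal prime $\mathfrak{p}$ and to verify it is nonzero and not $R$-torsion — precisely where minimality of $\mathfrak{p}$ is used, through the nilpotence argument. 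A secondary technical point is running generic freeness over the noncommutative algebra $\univ{g}/\mathfrak{p}\univ{g}$; one could instead pass first to the torsion-free subquotients supplied by Lemma~\ref{lem:FiniteGenFiltration}, but relating the $\mathfrak{m}$-action on a subquotient back to $V$ is awkward, so the direct argument above seems cleanest.
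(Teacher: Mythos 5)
Your proof is correct in outline but takes a genuinely different route from the paper's, and the one nontrivial step you invoke deserves a closer look.

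The paper's argument proceeds representation-theoretically: it first shows (equation \eqref{eqn:DenseSubquotient}) that $X$ coincides with the set of $\chi$ such that $V$ has a \emph{subquotient} with infinitesimal character $\chi$ (this uses Lemma~\ref{lem:FiniteGenInfChar}), then filters $V$ via Lemma~\ref{lem:FiniteGenFiltration} to reduce to the case where $\Ann_{\univcent{g}}(V)$ is prime and $V$ is torsion-free over the quotient, and finally invokes Lemma~\ref{lem:ExistenceQuotient}, whose proof uses the at-most-countable dimensionality of $V$ to produce a \emph{countable} multiplicative set $S$ making $S^{-1}V$ free, so that the complement of $X$ in $\Variety(P)$ is contained in a countable union of proper closed subsets. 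Your argument replaces the countability device entirely by generic freeness over the filtered noncommutative algebra $\univ{g}/\mathfrak{p}\,\univ{g}$, and avoids the detour through \eqref{eqn:DenseSubquotient} and Lemma~\ref{lem:FiniteGenFiltration}. That is a clean and somewhat more structural approach, and the preliminary reductions (the support computation, the minimality-of-$\mathfrak{p}$ argument giving $(V/\mathfrak{p}V)\otimes_R\mathrm{Frac}(R)\neq 0$, and the observation $V/\mathfrak{m}V=(V/\mathfrak{p}V)/\mathfrak{m}(V/\mathfrak{p}V)$ for $\mathfrak{m}\supseteq\mathfrak{p}$) are all correct.

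The step to flag is the ``generic freeness over a finitely generated noncommutative $R$-algebra,'' which you treat as folklore. The filtration reduction is right in spirit, but be careful: after choosing a good $R$-linear filtration $F$ on $M\coloneq V/\mathfrak{p}V$ and applying commutative generic freeness (Eisenbud 14.4 / Grothendieck) to $\gr^F M$ over $\gr(\univ{g}/\mathfrak{p}\univ{g})$, what you get cheaply is that $(\gr^F M)_f$ is free over $R_f$, hence each graded piece $(\gr_n^F M)_f$ is a finitely generated \emph{projective} $R_f$-module (a direct summand of a free module), not obviously free. Fortunately, projective is all you need: each $(F_n M)_f$ is then an iterated extension of projectives, hence projective, and flatness of the graded pieces makes the transition maps $(F_n M)_f\otimes\kappa(\mathfrak{m})\hookrightarrow (F_{n+1}M)_f\otimes\kappa(\mathfrak{m})$ injective, so $M_f\otimes\kappa(\mathfrak{m})=\varinjlim (F_nM)_f\otimes\kappa(\mathfrak{m})$ is nonzero whenever some $(\gr_n^F M)_f$ has positive rank, which is guaranteed by $M\otimes_R\mathrm{Frac}(R)\neq 0$. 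So I would recast your invocation as ``generic flatness of $\gr^F M$ over $R$, then local freeness of the graded pieces,'' which is both more standard and exactly sufficient. Also note that generic \emph{flatness} alone applied directly to $M$ would not suffice (a flat module such as $\mathrm{Frac}(R)$ can have all closed fibers zero), so the passage through a good filtration with $R$-finite graded pieces is essential, not a convenience. With that clarification, your proof is sound; the trade-off against the paper is that yours relies on an external generic-freeness input not developed in the paper, whereas the paper's countability argument is self-contained but requires the auxiliary Lemmas~\ref{lem:FiniteGenFiltration}, \ref{lem:FiniteGenInfChar} and \ref{lem:ExistenceQuotient}.
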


\begin{proof}
	First, we shall show
	\begin{align}
		X = \set{\chi \in \lie{h}^*/W : V \text{ has a subquotient with the infinitesimal character $\chi$}}. \nonumber \\
		\label{eqn:DenseSubquotient}
	\end{align}
	Let $V_1$ and $V_2$ be submodules of $V$ such that $V_2\subset V_1$ and $V_1 / V_2$ has an infinitesimal character $\chi$.
	Take a maximal submodule $W$ of $V$ satisfying $W \cap V_1 = V_2$.
	Then the intersection of any non-zero submodule of $V/W$ with $V_1/V_2$ is non-zero.
	By Lemma \ref{lem:FiniteGenInfChar}, $V/W$ has the generalized infinitesimal character $\chi$
	and hence $V$ has a quotient with the infinitesimal character $\chi$.
	This shows \eqref{eqn:DenseSubquotient}.

	Put $P\coloneq \Ann_{\univcent{g}}(V)$.
	By Lemma \ref{lem:FiniteGenFiltration} and \eqref{eqn:DenseSubquotient},
	it suffices to show the lemma in the case that $P$ is prime and $V$ is a torsion-free $\univcent{g}/P$-module.
	We have already shown this in Lemma \ref{lem:ExistenceQuotient}.
\end{proof}

Let $q\colon \lie{h}^*\rightarrow \lie{h}^*/W$ denote the quotient map.
The following lemma is a generalization of Kostant's theorem (Fact \ref{fact:Kostant}).
The lemma plays a central role in this paper.

\begin{lemma}\label{lem:TranslationSupport}
	Let $V$ be a finitely generated $\lie{g}$-module and $F$ a completely reducible finite-dimensional $\lie{g}$-module.
	Set $X\coloneq q^{-1}(\Variety(\Ann_{\univcent{g}}(V)))$.
	\begin{enumerate}
		\item $\Variety(\Ann_{\univcent{g}}(F\otimes V)) \subset \bigcup_{\mu \in R} q(X + \mu)$, where $R$ is the set of all $\lie{h}$-weights in $F$.
		\item $\Rrankmax(F\otimes V) = \Rrankmax(V)$, $\Rrankmin(F\otimes V) = \Rrankmin(V)$.
	\end{enumerate}
\end{lemma}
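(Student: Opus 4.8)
The plan is to deduce both parts from Kostant's theorem (Fact~\ref{fact:Kostant}); the only genuine difficulty is that $V$ need not carry an infinitesimal character, and the whole point of Part~1 is to circumvent this. We assume $V\neq 0$ and $F\neq 0$, the remaining cases being trivial. For Part~1, I will first reduce to the situation where $\Ann_{\univcent{g}}(V)$ is a prime ideal $P$ and $V$ is torsion-free over $\univcent{g}/P$. By Lemma~\ref{lem:FiniteGenFiltration} there is a finite $\lie{g}$-module filtration $0=V_0\subset\cdots\subset V_r=V$ with each $V_i/V_{i-1}$ of that form, and $F\otimes(-)$ turns it into a filtration $F\otimes V_i$ of $F\otimes V$ with subquotients $F\otimes(V_i/V_{i-1})$. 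Since $\univcent{g}$ is central, for any finite $\lie{g}$-module filtration $0=M_0\subset\cdots\subset M_r=M$ one has $\prod_i\Ann_{\univcent{g}}(M_i/M_{i-1})\subset\Ann_{\univcent{g}}(M)\subset\bigcap_i\Ann_{\univcent{g}}(M_i/M_{i-1})$, hence $\Variety(\Ann_{\univcent{g}}(M))=\bigcup_i\Variety(\Ann_{\univcent{g}}(M_i/M_{i-1}))$; applying this to $V$ and to $F\otimes V$, and noting that $q^{-1}$ commutes with the resulting finite unions, the general case follows from the prime, torsion-free one.

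So let $\Ann_{\univcent{g}}(V)=P$ be prime with $V$ torsion-free over $\univcent{g}/P$. By Lemma~\ref{lem:ExistenceQuotient}, the set $X_0\coloneq\{\chi\in\Variety(P):V/\Ker(\chi)V\neq 0\}$ is Zariski dense in $\Variety(P)$ and satisfies $\bigcap_{\chi\in X_0}\Ker(\chi)V=0$; the latter gives a $\lie{g}$-module injection $V\hookrightarrow\prod_{\chi\in X_0}V/\Ker(\chi)V$, and hence, $F$ being finite-dimensional, an injection $F\otimes V\hookrightarrow\prod_{\chi\in X_0}F\otimes(V/\Ker(\chi)V)$. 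Each $V/\Ker(\chi)V$ has infinitesimal character $\chi$, so Fact~\ref{fact:Kostant} shows that $F\otimes(V/\Ker(\chi)V)$ is annihilated by $\prod_{\mu\in R}\Ker([\widetilde{\chi}+\mu])^{|W|}$ for any representative $\widetilde{\chi}\in\lie{h}^*$ of $\chi$ — the set $\{[\widetilde{\chi}+\mu]:\mu\in R\}$ being independent of the representative because the weight set $R$ of $F$ is $W$-stable. Therefore $\Ann_{\univcent{g}}(F\otimes V)$ contains $\bigcap_{\chi\in X_0}\prod_{\mu\in R}\Ker([\widetilde{\chi}+\mu])^{|W|}$, and passing to the associated subvarieties gives
\[
\Variety(\Ann_{\univcent{g}}(F\otimes V))\subset\overline{\bigcup_{\mu\in R}\;\bigcup_{\chi\in X_0}\{[\widetilde{\chi}+\mu]\}}=\overline{\bigcup_{\mu\in R}q\big(q^{-1}(X_0)+\mu\big)}.
\]
Since $q$ is finite (hence closed) and flat, $q^{-1}(X_0)$ is Zariski dense in $q^{-1}(\Variety(P))=X$, so $q(q^{-1}(X_0)+\mu)$ is dense in $q(X+\mu)$; taking the closure of the finite union now gives $\Variety(\Ann_{\univcent{g}}(F\otimes V))\subset\bigcup_{\mu\in R}q(X+\mu)$, which is Part~1.

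For Part~2, write $d(w)\coloneq\dim\Variety(\Ann_{\univcent{g}}(w))$, so that $\Rrankmax(M)=\max_{0\neq w\in M}d(w)$ and $\Rrankmin(M)=\min_{0\neq w\in M}d(w)$ for $M\neq 0$. Writing $w=\sum_k f_k\otimes v_k\in F\otimes V$ with $\{f_k\}$ a basis of $F$, one has $\univ{g}w\subset F\otimes M_w$ for $M_w\coloneq\sum_k\univ{g}v_k$, so $\Ann_{\univcent{g}}(w)\supset\Ann_{\univcent{g}}(F\otimes M_w)$ and Part~1 gives $d(w)\leq\dim\Variety(\Ann_{\univcent{g}}(F\otimes M_w))\leq\dim\Variety(\Ann_{\univcent{g}}(M_w))=\max_k d(v_k)$; this yields $\Rrankmax(F\otimes V)\leq\Rrankmax(V)$, and, taking $w=f\otimes v$, also $\Rrankmin(F\otimes V)\leq\Rrankmin(V)$. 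For the reverse inequalities, embed $V$ into $F^*\otimes F\otimes V=F^*\otimes(F\otimes V)$ by coevaluation. For $0\neq v\in V$ this gives $\univ{g}v\subset F^*\otimes(F\otimes\univ{g}v)$, hence $\Ann_{\univcent{g}}(v)\supset\Ann_{\univcent{g}}(F^*\otimes(F\otimes\univ{g}v))$, and Part~1 applied with $F^*$ in place of $F$ gives $d(v)\leq\dim\Variety(\Ann_{\univcent{g}}(F^*\otimes(F\otimes\univ{g}v)))\leq\dim\Variety(\Ann_{\univcent{g}}(F\otimes\univ{g}v))\leq\Rrankmax(F\otimes V)$, the last step because $F\otimes\univ{g}v$ is finitely generated, so its $\univcent{g}$-annihilator is a finite intersection of $\Ann_{\univcent{g}}(w_j)$ over generators $w_j\in F\otimes V$; hence $\Rrankmax(V)\leq\Rrankmax(F\otimes V)$. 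Finally, for $0\neq w\in F\otimes V$ the evaluation $F^*\otimes F\otimes V\to V$ maps $F^*\otimes\univ{g}w\subset F^*\otimes(F\otimes M_w)$ onto a submodule $M_1\subset V$ that is nonzero (its image already contains each $v_k$) and is a quotient of $F^*\otimes\univ{g}w$; so any $0\neq v\in M_1$ satisfies, by Part~1 with $F^*$, $d(v)\leq\dim\Variety(\Ann_{\univcent{g}}(M_1))\leq\dim\Variety(\Ann_{\univcent{g}}(F^*\otimes\univ{g}w))\leq d(w)$, which gives $\Rrankmin(V)\leq\Rrankmin(F\otimes V)$.

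The hard part is entirely in Part~1 and is caused by $V$ not having an infinitesimal character: the reduction to the prime, torsion-free case and the ``spreading out'' of Kostant's theorem over the dense set $X_0$ (via Lemmas~\ref{lem:FiniteGenFiltration} and~\ref{lem:ExistenceQuotient}) is where the real content lies, and the concluding density step — that the finitely many translates $q(X+\mu)$ actually exhaust $\Variety(\Ann_{\univcent{g}}(F\otimes V))$ — must be handled carefully, using the $W$-stability of $R$ and the behaviour of closures under the finite morphism $q$.
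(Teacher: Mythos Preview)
Your proof is correct, but it takes a different route from the paper's in both parts.

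For Part~1, the paper applies Lemma~\ref{lem:FiniteGenDenseQuot} to $F\otimes V$ (which is finitely generated) rather than to $V$: the set of $[\lambda]$ for which $F\otimes V$ has a quotient with infinitesimal character $[\lambda]$ is dense in $\Variety(\Ann_{\univcent{g}}(F\otimes V))$, and for each such quotient $W$ the adjunction $\Hom_{\lie{g}}(F\otimes V,W)\simeq\Hom_{\lie{g}}(V,F^*\otimes W)$ together with Fact~\ref{fact:Kostant} forces $[\lambda-\mu]\in\Variety(\Ann_{\univcent{g}}(V))$ for some $\mu\in R$, hence $[\lambda]\in q(X+\mu)$. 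This avoids both your reduction via Lemma~\ref{lem:FiniteGenFiltration} and the closure/density argument for $q$. Your embedding $F\otimes V\hookrightarrow\prod_\chi F\otimes(V/\Ker(\chi)V)$ is the ``dual'' picture and is exactly the technique the paper uses in the next lemma (Lemma~\ref{lem:AnnTranslation}), so your approach is not wasted effort --- it just arrives one lemma early.

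For Part~2, the paper first reduces to equi-rank $V$ via Proposition~\ref{prop:RankFiltr}, so that only the chain $\Rrankmax(F\otimes V)\leq\Rrank(V)\leq\Rrankmin(F\otimes V)$ needs to be shown; the second inequality comes from a single use of the adjunction $\Hom_{\lie{g}}(W,F\otimes V)\simeq\Hom_{\lie{g}}(F^*\otimes W,V)$ applied to an equi-rank submodule $W\subset F\otimes V$ of minimal rank. Your elementwise argument with coevaluation/evaluation does the same job without the reduction, at the cost of four separate inequalities. Both arguments are clean; the paper's is shorter, yours is more self-contained.
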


\begin{proof}
	Let $W$ be a quotient of $F\otimes V$ with an infinitesimal character $[\lambda]\in \lie{h}^*/W$.
	By $\Hom_{\lie{g}}(F\otimes V, W) \simeq \Hom_{\lie{g}}(V, F^*\otimes W)$, there exists a non-zero homomorphism $V\rightarrow F^*\otimes W$.
	By Fact \ref{fact:Kostant}, there exists $\mu \in R$ such that $V$ has a quotient with the infinitesimal character $[\lambda - \mu]$.
	This implies $[\lambda] \in \bigcup_{\mu \in R} q(X + \mu)$.
	This and Lemma \ref{lem:FiniteGenDenseQuot} show the first assertion.

	To show the second assertion, we can assume that $V$ is equi-rank by Proposition \ref{prop:RankFiltr}.
	By the first assertion and Proposition \ref{prop:RankFinitelyGen}, we have
	$\Rrankmax(F\otimes V) \leq \Rrank(V)$.
	Take an equi-rank submodule $W\subset F\otimes V$ with $\Rrank(W) = \Rrankmin(F\otimes V)$.
	By $\Hom_{\lie{g}}(W, F\otimes V) \simeq \Hom_{\lie{g}}(F^*\otimes W, V)$, there exists a non-zero homomorphism $F^*\otimes W \rightarrow V$.
	This shows
	\begin{align*}
		\Rrank(V) \leq \Rrankmax(F^*\otimes W) \leq \Rrank(W) = \Rrankmin(F\otimes V).
	\end{align*}
	Combining this with $\Rrankmax(F\otimes V) \leq \Rrank(V)$, we have shown the lemma.
\end{proof}

\begin{corollary}\label{cor:TranslationSupportNonFinGen}
	Let $V$ be a $\lie{g}$-module and $F$ a completely reducible finite-dimensional $\lie{g}$-module.
	\begin{enumerate}
		\item $\Rrankmax(F\otimes V) = \Rrankmax(V)$, $\Rrankmin(F\otimes V) = \Rrankmin(V)$.
		\item $\glrank(F\otimes V) = \glrank(V)$.
	\end{enumerate}
\end{corollary}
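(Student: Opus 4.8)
The plan is to reduce everything to the finitely generated case handled in Lemma \ref{lem:TranslationSupport}, exploiting that tensoring with $F$ commutes with arbitrary direct limits and that the three invariants $\Rrankmax$, $\Rrankmin$, $\glrank$ are determined by (co)restriction to finitely generated submodules. For the first assertion, note that $V$ is the directed union of its finitely generated $\lie{g}$-submodules $V_\alpha$, and $F\otimes V$ is correspondingly the directed union of the $F\otimes V_\alpha$ (here $F$ is finite-dimensional, so $F\otimes(-)$ is exact and preserves filtered colimits). I would first treat $\Rrankmin$: since $\rfilt{k}{(-)}$ is compatible with the $\lie{g}$-module structure and any nonzero element of $F\otimes V$ lies in some $F\otimes V_\alpha$, one has $\rfilt{k}{F\otimes V}\neq 0$ iff $\rfilt{k}{F\otimes V_\alpha}\neq 0$ for some $\alpha$, i.e.\ (by Lemma \ref{lem:TranslationSupport}(2)) iff $\rfilt{k}{V_\alpha}\neq 0$ for some $\alpha$, i.e.\ iff $\rfilt{k}{V}\neq 0$; hence $\Rrankmin(F\otimes V)=\Rrankmin(V)$. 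The subtlety is that I should phrase this via Proposition \ref{prop:RankEasy}(1): take a submodule of $V$ that is equi-rank of rank $\Rrankmin(V)$, replace it by a finitely generated nonzero submodule $V'$ (still of that $\Rrankmin$, by Proposition \ref{prop:RankEasy}(4) applied to the inclusion, since rank can only go down), apply Lemma \ref{lem:TranslationSupport}(2) to $V'$, and use that $F\otimes V'\hookrightarrow F\otimes V$ together with Proposition \ref{prop:RankEasy}(4) to conclude $\Rrankmin(F\otimes V)\leq \Rrankmin(V)$; the reverse inequality comes symmetrically from $F^*\otimes(F\otimes V)\twoheadrightarrow V$ via the coevaluation, or directly from the same colimit argument.

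For $\Rrankmax$ I would argue dually. Given $k=\Rrankmax(V)$, we have $\rfilt{k}{V}=V$, so every finitely generated submodule $V_\alpha$ satisfies $\Rrankmax(V_\alpha)\leq k$ (Proposition \ref{prop:RankEasy}(4)), hence $\Rrankmax(F\otimes V_\alpha)\leq k$ by Lemma \ref{lem:TranslationSupport}(2), i.e.\ $\rfilt{k}{F\otimes V_\alpha}=F\otimes V_\alpha$ for every $\alpha$; since $F\otimes V=\bigcup_\alpha F\otimes V_\alpha$, this gives $\rfilt{k}{F\otimes V}=F\otimes V$, so $\Rrankmax(F\otimes V)\leq\Rrankmax(V)$. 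For the reverse, apply the same reasoning to $F^*$ and the module $F\otimes V$, using the canonical surjection $F^*\otimes(F\otimes V)\twoheadrightarrow V$ and Proposition \ref{prop:RankEasy}(3) to get $\Rrankmax(V)\leq\Rrankmax(F^*\otimes(F\otimes V))=\Rrankmax(F\otimes V)$.

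For the second assertion, $\glrank(F\otimes V)=\dim\Variety(\Ann_{\univcent{g}}(F\otimes V))$. Here I would use that $\Ann_{\univcent{g}}(F\otimes V)=\bigcap_\alpha \Ann_{\univcent{g}}(F\otimes V_\alpha)$ and $\Ann_{\univcent{g}}(V)=\bigcap_\alpha\Ann_{\univcent{g}}(V_\alpha)$, so $\Variety(\Ann_{\univcent{g}}(F\otimes V))=\overline{\bigcup_\alpha\Variety(\Ann_{\univcent{g}}(F\otimes V_\alpha))}$ and likewise for $V$. By Lemma \ref{lem:TranslationSupport}(1) each $\Variety(\Ann_{\univcent{g}}(F\otimes V_\alpha))\subset\bigcup_{\mu\in R}q(q^{-1}(\Variety(\Ann_{\univcent{g}}(V_\alpha)))+\mu)$, a finite union of translates, which has the same dimension as $\Variety(\Ann_{\univcent{g}}(V_\alpha))$ since $q$ is finite and translation by $\mu$ is an isomorphism of $\lie{h}^*$; conversely $V_\alpha\hookrightarrow F^*\otimes(F\otimes V_\alpha)$ (via coevaluation) gives $\glrank(V_\alpha)\leq\glrank(F\otimes V_\alpha)$ by Proposition \ref{prop:RankEasy}(4), applied after noting $F^*\otimes(F\otimes V_\alpha)$ is again covered by Lemma \ref{lem:TranslationSupport}(1). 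Taking the supremum over $\alpha$ of these equal dimensions yields $\glrank(F\otimes V)=\glrank(V)$. The main obstacle I anticipate is bookkeeping the directed-colimit compatibility cleanly—in particular justifying that $\Variety$ of an intersection of ideals equals the Zariski closure of the union of the varieties, and that the monotonicity statements in Proposition \ref{prop:RankEasy} are being applied in the correct direction for each of the six inequalities—rather than any genuinely new idea.
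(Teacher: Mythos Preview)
Your argument for part (1) is correct and is exactly the paper's: reduce to finitely generated submodules via the directed union $V=\bigcup_\alpha V_\alpha$ and invoke Lemma~\ref{lem:TranslationSupport}(2).

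Your argument for part (2) has a real gap at the last step. From $\glrank(F\otimes V_\alpha)=\glrank(V_\alpha)$ for every $\alpha$ you conclude, by ``taking the supremum over $\alpha$'', that $\glrank(F\otimes V)=\glrank(V)$. But for a directed union $W=\bigcup_\alpha W_\alpha$ one has $\sup_\alpha \glrank(W_\alpha)=\Rrankmax(W)$, \emph{not} $\glrank(W)$ in general. The identity $\Variety\!\left(\bigcap_\alpha I_\alpha\right)=\overline{\bigcup_\alpha \Variety(I_\alpha)}$ you quote is fine; what fails is that the dimension of a Zariski closure of an infinite directed union can strictly exceed the supremum of the dimensions of the pieces. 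Concretely, if $V=\bigoplus_{i\in\NN}M_{\lambda_i}$ with the infinitesimal characters $[\lambda_i]$ Zariski-dense in a line in $\lie{h}^*/W$, then every finitely generated submodule has $\glrank=0$ while $\glrank(V)=1$; cf.\ the inequality $\Rrankmax(V)\le\glrank(V)$ after Definition~\ref{def:Rank} and Example~\ref{ex:R}.

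Your route can be repaired: instead of comparing dimensions termwise, feed the containment from Lemma~\ref{lem:TranslationSupport}(1) through $\Variety(\Ann_{\univcent{g}}(V_\alpha))\subset\Variety(\Ann_{\univcent{g}}(V))$ to get $\Variety(\Ann_{\univcent{g}}(F\otimes V_\alpha))\subset\bigcup_{\mu\in R}q\!\left(q^{-1}(\Variety(\Ann_{\univcent{g}}(V)))+\mu\right)$ for every $\alpha$; the right side is closed of dimension $\glrank(V)$, so taking the union over $\alpha$ and closing yields $\glrank(F\otimes V)\le\glrank(V)$, with the reverse from $F^*$. The paper instead takes a slicker, genuinely different route: it replaces $V$ by the cyclic module $V'\coloneq\univ{g}/\Ann_{\univ{g}}(V)$ and observes that $\Ann_{\univ{g}}(F\otimes V)=\Ann_{\univ{g}}(F\otimes V')$, since both equal the kernel of $\univ{g}\xrightarrow{\Delta}\univ{g}\otimes\univ{g}\to\End_{\CC}(F)\otimes\univ{g}/\Ann_{\univ{g}}(V)$; as $V'$ is finitely generated, Proposition~\ref{prop:RankFinitelyGen} and Lemma~\ref{lem:TranslationSupport} apply directly.
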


\begin{proof}
	The first assertion is clear from Lemma \ref{lem:TranslationSupport} since $V$ is a union of finitely generated submodules.
	Regard $V'\coloneq \univ{g}/\Ann_{\univ{g}}(V)$ as a $\lie{g}$-module via the left action.
	Then we have $\Ann_{\univ{g}}(F\otimes V) = \Ann_{\univ{g}}(F\otimes V')$ because
	they are the kernel of the composition $\univ{g}\xrightarrow{\Delta} \univ{g}\otimes \univ{g}\rightarrow \End_{\CC}(F)\otimes \univ{g}/\Ann_{\univ{g}}(V)$, where $\Delta$ is the coproduct.
	Since $V'$ is finitely generated, the second assertion follows from Proposition \ref{prop:RankFinitelyGen} and Lemma \ref{lem:TranslationSupport}.
\end{proof}

As a consequence of Lemma \ref{lem:TranslationSupport}, we obtain the following equi-rank theorem.

\begin{theorem}\label{thm:RankFiltration}
	Let $V$ be an $\alg{A}$-module.
	Then the rank filtration of $V$ defined in Definition \ref{def:Rank} is a filtration as an $\alg{A}$-module.
	If, moreover, $V$ is irreducible, then $V$ is equi-rank.
\end{theorem}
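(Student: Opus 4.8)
The plan is to deduce the equi-rank statement for irreducible $\alg{A}$-modules from the structural properties of the rank filtration (Proposition~\ref{prop:RankFiltr}) together with the translation-invariance of the ranks under tensoring with a finite-dimensional module (Corollary~\ref{cor:TranslationSupportNonFinGen}) and the fact (from the definition of a generalized pair) that multiplication $\alg{A}\otimes V\to V$ is a $\lie{g}$-homomorphism.

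First I would show that each piece $\rfilt{k}{V}$ of the rank filtration is an $\alg{A}$-submodule. The point is that for $a\in\alg{A}$ the left-multiplication operator $v\mapsto av$ need not be a $\lie{g}$-homomorphism, but $a$ lies in a finite-dimensional $G$-stable (hence $\lie{g}$-stable, completely reducible) subspace $F\subset\alg{A}$, and the multiplication map restricts to a $\lie{g}$-homomorphism $F\otimes V\to V$. Composing with the $\lie{g}$-embedding $V\to F\otimes V$, $v\mapsto \sum (\text{basis of }F)\otimes(\cdot)$ — more precisely, for a fixed $v\in\rfilt{k}{V}$ the submodule $F\otimes \univ{g}v$ maps onto $\univ{g}(Fv)$, and since $\univ{g}v\subset\rfilt{k}{V}$ has $\glrank\le k$, Corollary~\ref{cor:TranslationSupportNonFinGen} gives $\glrank(F\otimes\univ{g}v)=\glrank(\univ{g}v)\le k$, so by Proposition~\ref{prop:RankEasy}(3) (surjections do not increase $\glrank$) every element of $\univ{g}(Fv)$, in particular $av$, lies in $\rfilt{k}{V}$. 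Hence $\rfilt{k}{V}$ is $\alg{A}$-stable, and the rank filtration is a filtration of $V$ as an $\alg{A}$-module.

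For the second assertion, if $V$ is irreducible as an $\alg{A}$-module then this $\alg{A}$-module filtration is trivial: there is a single $k$ with $\rfilt{k-1}{V}=0$ and $\rfilt{k}{V}=V$. That is exactly $\Rrankmin(V)=\Rrankmax(V)=k$, i.e.\ $V$ is equi-rank.

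The main obstacle is the first step: one must be careful that ``left multiplication by $a\in\alg{A}$'' is handled through the $\lie{g}$-homomorphism $F\otimes V\to V$ rather than directly, and that the noetherian/finitely-generated hypotheses needed to invoke Corollary~\ref{cor:TranslationSupportNonFinGen} and Proposition~\ref{prop:RankEasy}(3) are met — which is why I reduce to the cyclic submodule $\univ{g}v$ (automatically finitely generated) and use that $V$ is a union of such finitely generated $\lie{g}$-submodules. Once the filtration is seen to consist of $\alg{A}$-submodules, irreducibility does the rest immediately.
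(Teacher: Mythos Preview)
Your argument is correct and is essentially identical to the paper's own proof: pick a finite-dimensional completely reducible $\lie{g}$-submodule $F\subset\alg{A}$ containing the given element, use that multiplication $F\otimes V\to V$ is a $\lie{g}$-homomorphism, reduce to the cyclic module $\univ{g}v$, and apply the translation-invariance of $\glrank$ under $F\otimes(-)$ together with the fact that $\glrank$ does not increase under surjections. The only cosmetic difference is that the paper cites Lemma~\ref{lem:TranslationSupport} (the finitely-generated version) directly rather than Corollary~\ref{cor:TranslationSupportNonFinGen}, which is immaterial since you have already reduced to the cyclic submodule $\univ{g}v$.
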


\begin{proof}
	Fix $k \in \NN$.
	Let $v \in \rfilt{k}{V}$ and $X \in \alg{A}$.
	We shall show $X\cdot v \in \rfilt{k}{V}$.
	Recall that the adjoint action of $\lie{g}$ on $\alg{A}$ is locally finite and completely reducible.
	Hence there exists a finite-dimensional $\lie{g}$-submodule $F\subset \alg{A}$ such that $X \in F$.
	Let $\varphi \colon F\otimes V\rightarrow V$ denote the multiplication map.
	Then we have $X\cdot v \in F\cdot \univ{g}v = \varphi(F\otimes \univ{g}v)$.
	By Lemma \ref{lem:TranslationSupport}, we obtain
	\begin{align*}
		k = \glrank(\univ{g}v) &= \glrank(F\otimes \univ{g}v) \\
		&\geq \glrank(F\cdot \univ{g}v) \geq \glrank(\univ{g}Xv).
	\end{align*}
	This implies $Xv \in \rfilt{k}{V}$.
\end{proof}

\begin{corollary}\label{cor:GenPrincipalComponent}
	Let $V$ be an irreducible $\alg{A}$-module.
	The sum $\sum_{P \in \Ass_{\univcent{g}}(V)} V^P$ is a direct sum.
\end{corollary}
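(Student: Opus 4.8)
The plan is to show that for distinct primes $P_1, \dots, P_m \in \Ass_{\univcent{g}}(V)$, the sum $V^{P_1} + \cdots + V^{P_m}$ is direct by verifying that each $V^{P_j}$ meets the sum of the others trivially; by symmetry it suffices to treat $V^{P_m} \cap (V^{P_1} + \cdots + V^{P_{m-1}})$. The key structural input is Theorem \ref{thm:RankFiltration}: since $V$ is an irreducible $\alg{A}$-module it is equi-rank, so every nonzero $\lie{g}$-submodule $U \subset V$ has $\glrank(U) = \Rrank(V) =: r$. In particular the rank filtration is either $0$ or all of $V$, which tells us that every nonzero submodule has top-dimensional support; but more is needed, because $V^{P_j}$ could a priori still overlap. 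The refinement I would use is that for each $P_j \in \Ass_{\univcent{g}}(V)$, the submodule $V^{P_j}$ carries the generalized infinitesimal character data "along $\Variety(P_j)$" in the following sense: any nonzero $\lie{g}$-submodule $U \subset V^{P_j}$ satisfies $\Variety(\Ann_{\univcent{g}}(U)) \subset \Variety(P_j)$ (since $P_j \subset \Ann_{\univcent{g}}(U)$), and by equi-rank this inclusion is an equality of varieties of the same dimension $r$, hence (passing to the radical) $\sqrt{\Ann_{\univcent{g}}(U)} = P_j$ because $P_j$ is prime, so $\Variety(P_j)$ is irreducible and a closed subvariety of the same dimension forces equality of the varieties and hence of the radicals.

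Granting this, suppose $0 \neq v \in V^{P_m} \cap (V^{P_1} + \cdots + V^{P_{m-1}})$ and write $v = v_1 + \cdots + v_{m-1}$ with $v_i \in V^{P_i}$; after discarding zero terms and renumbering, assume all $v_i \neq 0$ and $m-1$ is minimal. Consider $U := \univ{g}v \subset V^{P_m}$. On one hand $\sqrt{\Ann_{\univcent{g}}(U)} = P_m$ by the previous paragraph. On the other hand, $U \subset \univ{g}v_1 + \cdots + \univ{g}v_{m-1}$, and since $P_1 \cdots P_{m-1}$ annihilates each $\univ{g}v_i$ (as $P_i$ kills $v_i$ and $\univcent{g}$ is central, $P_i \univ{g}v_i = \univ{g}P_i v_i = 0$), we get $P_1 \cdots P_{m-1} \subset \Ann_{\univcent{g}}(U)$, hence $\Variety(\Ann_{\univcent{g}}(U)) \subset \bigcup_{i<m} \Variety(P_i)$. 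Combining, $\Variety(P_m) \subset \bigcup_{i<m}\Variety(P_i)$; since $\Variety(P_m)$ is irreducible, it lies in some $\Variety(P_i)$, $i < m$. But both are irreducible of dimension $r$, so $\Variety(P_m) = \Variety(P_i)$, forcing $P_m = \sqrt{P_m} = \sqrt{P_i} = P_i$, contradicting distinctness. Therefore the intersection is zero and the sum is direct.

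The step I expect to be the main obstacle is establishing cleanly that $\glrank$ is not merely constant on nonzero submodules but that the associated varieties are genuinely equal as sets (and then that their radical ideals coincide with the single prime $P_j$). The delicate point is that equi-rank a priori only controls the dimension of $\Variety(\Ann_{\univcent{g}}(U))$, not which top-dimensional components appear; one must use that $U \subset V^{P_j}$ already forces $\Variety(\Ann_{\univcent{g}}(U)) \subset \Variety(P_j)$ with $\Variety(P_j)$ irreducible, so a closed subset of the same dimension inside an irreducible variety must be the whole thing. I would double-check that $\Variety(P_j)$ has dimension exactly $r$: this follows because $V^{P_j} \neq 0$ is itself a $\lie{g}$-submodule of the equi-rank module $V$, so $\glrank(V^{P_j}) = r$, while $\Ann_{\univcent{g}}(V^{P_j})$ has radical $P_j$ (it contains $P_j$ and is contained in $\Ann_{\univcent{g}}(u)$ for each $u$, whose radical is $P_j$), giving $\dim \Variety(P_j) = r$. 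With that in hand the combinatorial clean-up above is routine.
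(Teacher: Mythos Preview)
Your proof is correct and follows essentially the same strategy as the paper: both use Theorem~\ref{thm:RankFiltration} (equi-rank) to force $\dim \Variety(P_j) = r$ for every associated prime, and then argue that a nonzero vector in $V^{P_m} \cap \sum_{j<m} V^{P_j}$ would have annihilator whose variety is a proper closed subset of $\Variety(P_m)$, contradicting equi-rank. The paper's write-up is more compact---it notes directly that such a vector lies in $V^{(P_1\cap\cdots\cap P_{m-1})+P_m}$ and that this space is zero since $\dim \Variety\bigl((P_1\cap\cdots\cap P_{m-1})+P_m\bigr)<r$---so you could streamline by dropping the passage through $U=\univ{g}v$ and the minimal-$m$ reduction, but the content is the same.
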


\begin{proof}
	Let $P_1, \ldots, P_r \in \Ass_{\univcent{g}}(V)$ such that $P_i\neq P_j$ ($i\neq j$).
	Since $V$ is equi-rank, $V^{(P_1 \cap P_2 \cap \cdots \cap P_i) + P_{i+1}} = 0$
	for any $1\leq i < r$.
	This shows
	\begin{align*}
		\left(\sum_{j=1}^i V^{P_j}\right) \cap V^{P_{i+1}} \subset V^{(P_1 \cap P_2 \cap \cdots \cap P_i) + P_{i+1}} = 0 \quad (1\leq \forall i < r)
	\end{align*}
	and hence $\sum_{P \in \Ass_{\univcent{g}}(V)} V^P$ is a direct sum.
\end{proof}

\subsection{Cartan subalgebra}

In the previous subsection, we have shown that any irreducible $\alg{A}$-module is equi-rank as a $\lie{g}$-module.
In this subsection, we shall consider the support $\Variety(\Ann_{\univcent{g}}(v))$ and define `Cartan subalgebra' for $\alg{A}$-modules.

Let $(\alg{A}, G)$ be a generalized pair with connected reductive algebraic group $G$.
Assume that $\alg{A}$ has at most countable dimension.
Then any irreducible $\alg{A}$-module has at most countable dimension.
This assumption is used only for the existence of enough many $\lie{g}$-module quotients with infinitesimal characters.
Fix a Cartan subalgebra $\lie{h}$ of $\lie{g}$ and write $W$ for the Weyl group of $\lie{g}$.
Let $q \colon \lie{h}^* \rightarrow \lie{h}^*/W$ denote the quotient map.

\begin{lemma}\label{lem:AnnTranslation}
	Let $V$ be a $\lie{g}$-module and let $F$ be a completely reducible finite-dimensional $\lie{g}$-module.
	Assume that $P\coloneq \Ann_{\univcent{g}}(V)$ is a prime ideal and $V$ is torsion-free as a $\univcent{g}/P$-module.
	Fix an irreducible component $A$ of $q^{-1}(\Variety(P))$.
	Then there exist prime ideals $Q_1, \ldots, Q_r\subset \univcent{g}$ satisfying
	\begin{enumerate}
		\item $Q_1^{|W|}\cap \cdots \cap Q_r^{|W|}(F\otimes V) = 0$,
		\item for each $Q_i$, there exists an $\lie{h}$-weight $\mu$ of $F$ such that
		$\Variety(Q_i) = q(A + \mu)$.
	\end{enumerate}
\end{lemma}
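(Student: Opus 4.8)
The plan is to apply Kostant's theorem (Fact \ref{fact:Kostant}) fibrewise over the infinitesimal characters lying in the component $A$. First I would invoke Lemma \ref{lem:ExistenceQuotient}: since $P$ is prime, $\Ann_{\univcent{g}}(V) = P$, and $V$ is torsion-free as a $\univcent{g}/P$-module, the set $X_A \coloneq \set{\chi \in q(A) : V/\Ker(\chi)V \neq 0}$ (the part of $X$ landing in the image of the chosen irreducible component $A$) is Zariski dense in $q(A)$ — in fact $\bigcap_{\chi \in X_A} \Ker(\chi)V = 0$ after intersecting with the relevant component, using that $V$ has at most countable dimension so only countably many subvarieties are removed. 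For each such $\chi = [\lambda]$ with $\lambda \in A$, pick a non-zero quotient $V \twoheadrightarrow V_\chi$ with infinitesimal character $\chi$; then $F \otimes V_\chi$ is a quotient of $F \otimes V$, and by Fact \ref{fact:Kostant} it is $\univcent{g}$-finite with primary decomposition indexed by $\set{[\lambda + \mu] : \mu \in R}$, where $R$ is the set of $\lie{h}$-weights of $F$, and moreover $\Ker([\lambda+\mu])^{|W|}$ kills the corresponding primary component.

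Next I would assemble these local statements. For each weight $\mu$ of $F$, the set $\set{q(\lambda + \mu) : \lambda \in A,\ [\lambda] \in X_A}$ is Zariski dense in $q(A + \mu)$, because translation by $\mu$ and the finite map $q$ are continuous and closed. So I would like to say that $\Ann_{\univcent{g}}(F \otimes V)$ is controlled by $\bigcap_\mu$ of the pullbacks of $\Ker(\chi)^{|W|}$ over $\chi \in q(A+\mu)$. Concretely: let $Q_1, \ldots, Q_r$ be the minimal primes over $\Ann_{\univcent{g}}(F\otimes V)$ whose varieties meet $\bigcup_\mu q(A+\mu)$ — actually one should instead argue that for each $\mu$ the ideal $J_\mu \coloneq \bigcap_{\chi \in X_A} \Ker([\lambda+\mu])$ (product/intersection of maximal ideals along a dense subset of $q(A+\mu)$) has $\Variety(J_\mu) = q(A+\mu)$, which is irreducible since $A$ is irreducible and translation/quotient preserve irreducibility, hence $J_\mu$ has a unique minimal prime $Q_\mu$ with $\Variety(Q_\mu) = q(A+\mu)$. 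Taking $\set{Q_1,\dots,Q_r}$ to be $\set{Q_\mu : \mu \in R}$ (with repetitions removed) gives condition (2) immediately.

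For condition (1), the point is that $\bigl(\prod_\mu \Ker([\lambda+\mu])\bigr)^{|W|}$ annihilates $F \otimes V_{[\lambda]}$ for every relevant $\lambda$ — indeed by Fact \ref{fact:Kostant} each $\Ker([\lambda+\mu])^{|W|}$ kills the $\mu$-primary component and these sum to $F \otimes V_{[\lambda]}$, so the product of the $|W|$-th powers (hence a fortiori $\bigl(\prod_\mu \Ker([\lambda+\mu])\bigr)^{|W|}$, or just $\prod_\mu \Ker([\lambda+\mu])^{|W|}$) kills the whole tensor product. Then $\bigcap_{\lambda}\bigl(Q_1^{|W|} \cap \cdots \cap Q_r^{|W|}\bigr)$-type reasoning: since $Q_i \subset \Ker(\chi)$ for all $\chi$ in the dense set $X_A$-translate defining it, $Q_i^{|W|} \subset \Ker(\chi)^{|W|}$ there, so $Q_1^{|W|}\cap\cdots\cap Q_r^{|W|}$ annihilates $F \otimes V_{[\lambda]}$ for each $\lambda$ with $[\lambda] \in X_A$; and $\bigcap_{[\lambda] \in X_A}$ of the kernels $\Ker(V \to V_{[\lambda]})$, tensored with $F$, is zero because $\bigcap_{\chi \in X_A}\Ker(\chi)V = 0$ and $F$ is flat (finite-dimensional). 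Hence $Q_1^{|W|}\cap\cdots\cap Q_r^{|W|}$ kills $F \otimes V$, giving (1).

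The main obstacle I anticipate is the bookkeeping in the last step: showing that killing each fibre $F \otimes V_{[\lambda]}$ for $[\lambda]$ in a Zariski-dense subset of $q(A)$ is enough to kill $F \otimes V$ itself. This needs the torsion-freeness hypothesis plus the countable-dimension hypothesis (via Lemma \ref{lem:ExistenceQuotient}) to guarantee $\bigcap_{[\lambda]}\Ker(V \to V_{[\lambda]}) = 0$, and one must be careful that the intersection is taken only over $\lambda$ in the single component $A$ rather than all of $q^{-1}(\Variety(P))$ — so strictly I should first replace $V$ by its image in $\prod_{[\lambda]\in X_A} V_{[\lambda]}$, or equivalently work with the localization at the generic point of $A$, and check this does not change $\Ann_{\univcent{g}}(F\otimes V)$ on the locus over $\bigcup_\mu q(A+\mu)$. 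The identification $\Variety(Q_i) = q(A+\mu)$ (as opposed to a proper subvariety) is then forced by Zariski density of the translated fibre locus together with irreducibility of $q(A+\mu)$.
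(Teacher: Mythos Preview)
Your proposal is correct and follows essentially the same route as the paper: embed $V$ into $\prod_{\lambda \in X} V/\Ker(\chi_\lambda)V$ via Lemma~\ref{lem:ExistenceQuotient}, tensor with $F$, apply Fact~\ref{fact:Kostant} fibrewise to see that $\bigcap_{\lambda\in X,\mu\in R}\Ker(\chi_{\lambda+\mu})^{|W|}$ annihilates $F\otimes V$, and take the minimal primes over this ideal as the $Q_i$. Your anticipated obstacle about restricting to the single component $A$ is not an issue: since $S(\lie{h})^W\subset S(\lie{h})$ is a finite integral extension, every minimal prime over $P\cdot S(\lie{h})$ contracts to $P$, so $q(A)=\Variety(P)$ and the set $\{\chi_\lambda:\lambda\in A,\ V/\Ker(\chi_\lambda)V\neq 0\}$ already coincides with the set $X$ of Lemma~\ref{lem:ExistenceQuotient}.
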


\begin{proof}
	Put $X\coloneq \set{\lambda \in A : V/\Ker(\chi_\lambda)V \neq 0}$.
	By Lemma \ref{lem:ExistenceQuotient}, the natural homomorphism $V\rightarrow \prod_{\lambda \in X} V/\Ker(\chi)V$ is injective.
	Hence we have an injection
	\begin{align*}
		F\otimes V\hookrightarrow \prod_{\lambda \in X} F\otimes V/\Ker(\chi_\lambda)V.
	\end{align*}
	Let $R$ denote the set of all $\lie{h}$-weights in $F$.
	By Fact \ref{fact:Kostant}, we obtain
	\begin{align*}
		\Ann_{\univcent{g}}(F\otimes V) \supset \bigcap_{\lambda \in X, \mu \in R} \Ker(\chi_{\lambda + \mu})^{|W|}\eqcolon Q.
	\end{align*}
	Since $X$ is dense in $A$, we have
	\begin{align*}
		\Variety(Q) = \bigcup_{\mu \in R} q(\overline{X} + \mu) = \bigcup_{\mu \in R} q(A + \mu).
	\end{align*}
	Hence the minimal prime ideals containing $Q$ satisfy the desired conditions.
\end{proof}

\begin{lemma}\label{lem:BoundAnn}
	Let $V$ be an irreducible $\alg{A}$-module.
	Then for any $0\neq v \in V$, there exist $P_1, \ldots, P_r \in \Ass_{\univcent{g}}(V)$ such that
	\begin{align*}
		P_1^{|W|}\cap \cdots \cap P_r^{|W|}\subset \Ann_{\univcent{g}}(v) \subset P_1 \cap \cdots \cap P_r.
	\end{align*}
\end{lemma}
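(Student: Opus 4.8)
The plan is to prove the two inclusions separately: the upper one $\Ann_{\univcent{g}}(v)\subseteq P_1\cap\cdots\cap P_r$ is essentially formal once the $P_i$ are chosen correctly, while the lower one $P_1^{|W|}\cap\cdots\cap P_r^{|W|}\subseteq\Ann_{\univcent{g}}(v)$ is where Lemma \ref{lem:AnnTranslation} enters. I would take $P_1,\ldots,P_r$ to be the minimal primes of $\univcent{g}$ over $\Ann_{\univcent{g}}(v)$. Since $\univcent{g}\cong S(\lie{h})^W$ is noetherian, $P_1\cap\cdots\cap P_r=\sqrt{\Ann_{\univcent{g}}(v)}$, which gives the upper inclusion for free; and since $\univcent{g}v\cong\univcent{g}/\Ann_{\univcent{g}}(v)$ is a cyclic $\univcent{g}$-submodule of $V$, the $P_i$, being the minimal associated primes of a finitely generated module, lie in $\Ass_{\univcent{g}}(\univcent{g}v)\subseteq\Ass_{\univcent{g}}(V)$.

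Next I would extract the structural consequence of equi-rankness. Set $k\coloneq\Rrank(V)$, which makes sense by Theorem \ref{thm:RankFiltration}. Every element of $\Ass_{\univcent{g}}(\univcent{g}v)$ is the annihilator of some nonzero vector of $\univcent{g}v\subseteq V$, so its variety has dimension exactly $k$; thus all associated primes of $\univcent{g}v$ have the same dimension, in particular $\Ann_{\univcent{g}}(v)$ has no embedded primes, and its minimal primary decomposition has the form $\Ann_{\univcent{g}}(v)=\mathfrak{q}^{(1)}\cap\cdots\cap\mathfrak{q}^{(r)}$ with $\mathfrak{q}^{(i)}$ being $P_i$-primary. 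Hence it suffices to prove $P_i^{|W|}\subseteq\mathfrak{q}^{(i)}$ for each $i$.

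For that bound I would bring in the $\alg{A}$-module structure via a translation functor. Pick a maximal element $P$ of $\{\Ann_{\univcent{g}}(x):0\neq x\in V\}$; it is prime, $N\coloneq V^{P}$ is a nonzero $\lie{g}$-submodule, torsion-free over $\univcent{g}/P$ with $\Ann_{\univcent{g}}(N)=P$, and $\dim\Variety(P)=k$. Because $V$ is irreducible over $\alg{A}$ and $N\neq0$, we have $V=\alg{A}N$, so $v$ lies in $\varphi(F\otimes N)$ for some finite-dimensional, hence completely reducible, $\lie{g}$-submodule $F\subseteq\alg{A}$, where $\varphi$ is the multiplication map $\alg{A}\otimes V\to V$; since $F\otimes N$ is a $\lie{g}$-module, $\univ{g}v\subseteq\varphi(F\otimes N)$ is a quotient of $F\otimes N$, so $\Ann_{\univcent{g}}(v)\supseteq\Ann_{\univcent{g}}(F\otimes N)$. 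Applying Lemma \ref{lem:AnnTranslation} to $N$ (and a fixed irreducible component $A$ of $q^{-1}(\Variety(P))$) produces primes $Q_1,\ldots,Q_s$ with $Q_1^{|W|}\cap\cdots\cap Q_s^{|W|}\subseteq\Ann_{\univcent{g}}(F\otimes N)\subseteq\Ann_{\univcent{g}}(v)$ and each $\Variety(Q_j)=q(A+\mu_j)$ of dimension $k$.

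The remaining, and I expect hardest, step is to reconcile the $Q_j$ with the $P_i$, since a priori the $Q_j$ are attached to the prime $P$, which has nothing to do with $v$. For each $i$, $P_i\supseteq Q_1^{|W|}\cap\cdots\cap Q_s^{|W|}$ and $P_i$ is prime, so $P_i\supseteq Q_j$ for some $j$; but $\Variety(P_i)\subseteq\Variety(Q_j)$ are both irreducible of dimension $k$, which forces $P_i=Q_j$. Localizing the inclusion $Q_1^{|W|}\cap\cdots\cap Q_s^{|W|}\subseteq\Ann_{\univcent{g}}(v)$ at $P_i$, the factors with $Q_j\not\subseteq P_i$ become the unit ideal while those with $Q_j\subseteq P_i$ equal $P_i$ by the same dimension argument, so the left side localizes to $(P_i\univcent{g}_{P_i})^{|W|}$ and the right side to $\mathfrak{q}^{(i)}\univcent{g}_{P_i}$; contracting back to $\univcent{g}$ gives $P_i^{|W|}\subseteq\mathfrak{q}^{(i)}$. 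Intersecting over $i$ yields $P_1^{|W|}\cap\cdots\cap P_r^{|W|}\subseteq\mathfrak{q}^{(1)}\cap\cdots\cap\mathfrak{q}^{(r)}=\Ann_{\univcent{g}}(v)$, which completes the proof. The genuinely delicate points are the two dimension-plus-primality comparisons and the appeal to equi-rankness to exclude embedded primes; without them the primes supplied by Lemma \ref{lem:AnnTranslation} could not be matched with the minimal primes over $\Ann_{\univcent{g}}(v)$.
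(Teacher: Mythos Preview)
Your proof is correct and follows essentially the same approach as the paper's: fix an associated prime $P$ of $V$, use irreducibility of $V$ over $\alg{A}$ to write $v\in F\cdot V^P$ for a finite-dimensional $\lie{g}$-submodule $F\subset\alg{A}$, apply Lemma~\ref{lem:AnnTranslation} to obtain $\bigcap_j Q_j^{|W|}\subset\Ann_{\univcent{g}}(v)$ with all $\Variety(Q_j)$ of dimension $k=\Rrank(V)$, and then invoke equi-rankness (Theorem~\ref{thm:RankFiltration}) to identify the minimal primes $P_i$ over $\Ann_{\univcent{g}}(v)$ among the $Q_j$.

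The one place where you go beyond the paper is the final passage from $\bigcap_{j=1}^{s}Q_j^{|W|}\subset\Ann_{\univcent{g}}(v)$ to $\bigcap_{i}P_i^{|W|}\subset\Ann_{\univcent{g}}(v)$. The paper simply writes ``This shows the assertion'' after observing $\Variety(\Ann_{\univcent{g}}(v))=\bigcup_{i\in S}\Variety(Q_i)$, leaving the reader to supply the commutative algebra. Your argument --- using that $\Ann_{\univcent{g}}(v)$ has no embedded primes (since all associated primes of $\univcent{g}v$ lie in $V$ and hence have variety of dimension exactly $k$), then localizing at each $P_i$ to kill the extraneous $Q_j$ and contract back to $P_i^{|W|}\subset\mathfrak{q}^{(i)}$ --- is a clean and fully justified way to close this step. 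Your choice of $P$ as a \emph{maximal} annihilator is slightly more restrictive than necessary (any $P\in\Ass_{\univcent{g}}(V)$ works, since equi-rankness already forces $V^P$ to be torsion-free over $\univcent{g}/P$), but this does no harm.
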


\begin{proof}
	Let $0\neq v \in V$.
	Fix $P \in \Ass_{\univcent{g}}(V)$.
	By Theorem \ref{thm:RankFiltration}, $V^P$ is torsion-free as a $\univcent{g}/P$-module.
	Since $V$ is an irreducible $\alg{A}$-module, we can take a finite-dimensional $\lie{g}$-submodule $F\subset \alg{A}$ such that $v \in F\cdot V^P$.
	Take prime ideals $Q_1, \ldots, Q_s\subset \univcent{g}$ as in Lemma \ref{lem:AnnTranslation}
	for $F\otimes V^P$.
	Then we have $\Ann_{\univcent{g}}(v) \supset \bigcap_{i} Q_i^{|W|}$ and, by Theorem \ref{thm:RankFiltration},
	\begin{align*}
		\dim(\Variety(\Ann_{\univcent{g}}(v))) = \dim(\Variety(P)) = \dim(\Variety(Q_i)) 
	\end{align*}
	for any $i$.
	Remark that all the irreducible components of $\Variety(\Ann_{\univcent{g}}(v))$ have the same dimension by Theorem \ref{thm:RankFiltration}.
	Hence there exists a subset $S\subset \set{1, \ldots, s}$ such that
	$\Variety(\Ann_{\univcent{g}}(v)) = \bigcup_{i \in S} \Variety(Q_i)$.
	This shows the assertion.
\end{proof}

\begin{lemma}\label{lem:IrrExistenceQuot}
	Let $V$ be an irreducible $\alg{A}$-module and $P \in \Ass_{\univcent{g}}(V)$.
	Then $V_P/PV_P \neq 0$ holds.
\end{lemma}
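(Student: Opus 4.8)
The statement to prove is that for an irreducible $\alg{A}$-module $V$ and $P \in \Ass_{\univcent{g}}(V)$, the localization satisfies $V_P/PV_P \neq 0$. By definition of the associated primes, $P$ being in $\Ass_{\univcent{g}}(V)$ means $P = \Ann_{\univcent{g}}(w)$ for some $w \in V$; equivalently, $\univcent{g}/P$ embeds into $V$ as a $\univcent{g}$-submodule. The plan is to show that $V^P$, the submodule killed by $P$, remains nonzero after localizing at $P$ and reducing mod $P$.

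First I would recall from Theorem \ref{thm:RankFiltration} (via Corollary \ref{cor:GenPrincipalComponent} and the proof of Lemma \ref{lem:BoundAnn}) that $V^P$ is torsion-free as a $\univcent{g}/P$-module, since $V$ is equi-rank. Torsion-freeness over the integral domain $\univcent{g}/P$ immediately gives that the localization map $V^P \to (V^P)_P$ is injective, so $(V^P)_P \neq 0$; thus the natural map $V_P \to V_P$ restricted to $(V^P)_P$ is a nonzero submodule of $V_P$ annihilated by $P\univcent{g}_P$. Now $(V^P)_P$ is a nonzero module over the field $\univcent{g}_P/P\univcent{g}_P$ — indeed it is a vector space over that field because it is a torsion-free module over $\univcent{g}/P$ that has been localized at the generic point and is already killed by $P$. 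Hence $(V^P)_P = (V^P)_P / P(V^P)_P \hookrightarrow V_P/PV_P$, which forces $V_P/PV_P \neq 0$.

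The one point requiring care — and the likely main obstacle — is justifying cleanly that $(V^P)_P$ injects into $V_P/PV_P$ rather than merely into $V_P$: one must check that the composite $(V^P)_P \hookrightarrow V_P \twoheadrightarrow V_P/PV_P$ is still injective, i.e. that $(V^P)_P \cap PV_P = 0$. This holds because every element of $(V^P)_P$ is killed by $P$, so $(V^P)_P \cap PV_P$ is a submodule of $(V^P)_P$ on which $P$ acts both as zero and (being inside $PV_P$, and $\univcent{g}_P/P\univcent{g}_P$ being a field) invertibly after clearing denominators — more precisely, $PV_P = P\univcent{g}_P \cdot V_P$, and intersecting with the $P$-torsion part $(V^P)_P$, any such element $\sum p_i x_i$ with $p_i \in P\univcent{g}_P$ that is also $P$-killed must vanish since $\univcent{g}_P$ is local with maximal ideal $P\univcent{g}_P$ and the relevant graded piece is a vector space over the residue field. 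I would phrase this last step concisely using Nakayama or the flatness of localization: $V_P/PV_P \cong (V/PV)_P$ contains $(V^P)_P$ as the image of the nonzero torsion-free $\univcent{g}/P$-module $V^P$ under the exact localization functor, and the map $V^P \to V/PV \to (V/PV)_P$ factors as $V^P \hookrightarrow V^P/PV^P = V^P$, which localizes to the nonzero $(V^P)_P$. This completes the argument.
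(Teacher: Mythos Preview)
Your argument correctly establishes that $(V^P)_P$ is a nonzero subspace of $V_P$ annihilated by $P\univcent{g}_P$, but the step you flag as ``requiring care'' is in fact a genuine gap that your proposed fixes do not close. The claim that $(V^P)_P \hookrightarrow V_P/PV_P$ is injective, i.e.\ that $(V^P)_P \cap PV_P = 0$, is false in general: take the local ring $R = k[t]/(t^2)$ with maximal ideal $\mathfrak{m} = (t)$ and $M = R$; then $M^{\mathfrak{m}} = tR = \mathfrak{m}M$, so the composite $M^{\mathfrak{m}} \to M \to M/\mathfrak{m}M$ is zero, not injective. Your appeals to ``$P$ acts invertibly on $PV_P$'' and to the factorization $V^P \to V^P/PV^P = V^P$ do not establish injectivity; the latter is a tautology since $PV^P = 0$, and says nothing about the kernel of $V^P \to V/PV$.

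What you are missing is precisely the uniform nilpotence bound that Lemma~\ref{lem:BoundAnn} provides beyond mere torsion-freeness. The paper's proof observes that, by Lemma~\ref{lem:BoundAnn}, every element of $V_P$ is annihilated by $(P\univcent{g}_P)^{|W|}$; hence if $V_P = PV_P$ one iterates to get $V_P = P^{|W|}V_P = 0$, contradicting $V_P \neq 0$. You cite Lemma~\ref{lem:BoundAnn} only to extract that $V^P$ is torsion-free, but the equi-rank property alone is not enough: the Pr\"ufer-type module $k[t,t^{-1}]/k[t]$ over $k[t]_{(t)}$ is equi-rank (every element has annihilator $(t^n)$ for some $n$), satisfies $M^{(t)} \neq 0$, yet $M = tM$. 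The bounded exponent $|W|$ is exactly what rules such examples out.
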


\begin{proof}
	From $P \in \Ass_{\univcent{g}}(V)$, we have $V_P\neq 0$.
	Put $\alg{Z}\coloneq \univcent{g}_P$.
	By Lemma \ref{lem:BoundAnn}, $P^{|W|}\alg{Z}\subset \Ann_{\alg{Z}}(v) \subset P\alg{Z}$ holds
	for any $v \in V_P$.
	Hence we can take $k \in \NN$ such that $P^{k+1}V_P = 0$.
	By $V_P \neq 0$, this implies $V_P \neq PV_P$.
\end{proof}

\begin{theorem}\label{thm:Affinity}
	Let $V$ be an irreducible $\alg{A}$-module.
	Let $R$ denote the set of all $\lie{h}$-weights in $\alg{A}$.
	Then there exist a subspace $\lie{a}^*\subset \lie{h}^*$ and $\lambda_0 \in \lie{h}^*$ satisfying the following properties.
	\begin{enumerate}
		\item $\lie{a}^*$ is spanned by some elements of the form $\mu - \mu'$ ($\mu, \mu'\in R$).
		\item For any $0\neq v \in V$, there exist finite elements $\alpha_1, \ldots, \alpha_r \in R$ such that
		\begin{align*}
			\Variety(\Ann_{\univcent{g}}(v)) = q\left(\bigcup_{i} (\lie{a}^* + \lambda_0 + \alpha_i)\right).
		\end{align*}
	\end{enumerate}
\end{theorem}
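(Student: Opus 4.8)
The strategy is to combine Lemma~\ref{lem:BoundAnn}, which already tells us that every $\Variety(\Ann_{\univcent{g}}(v))$ is a finite union of members of $\Variety(\Ass_{\univcent{g}}(V))$, with a careful analysis of how a single prime $P \in \Ass_{\univcent{g}}(V)$ propagates under tensoring by finite-dimensional submodules of $\alg{A}$. First I would fix one associated prime $P_0 \in \Ass_{\univcent{g}}(V)$ and one irreducible component $A_0$ of $q^{-1}(\Variety(P_0))$, choosing $\lambda_0$ to be a sufficiently generic point of $A_0$ (generic enough to avoid the countably many proper subvarieties appearing in Lemma~\ref{lem:ExistenceQuotient}), and then I would set $\lie{a}^* \coloneq A_0 - \lambda_0$ after arguing that $A_0$ is in fact an affine subspace. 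That affinity is the crux: a priori $A_0$ is just an irreducible component of a preimage under $q$, so I must show it is a translate of a linear subspace spanned by differences of $\lie{h}$-weights of $\alg{A}$.

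**The key mechanism.** The engine is the following bootstrap. Given $P \in \Ass_{\univcent{g}}(V)$ and $0 \neq v \in V$ with $v \in F \cdot V^P$ for some finite-dimensional $\lie{g}$-submodule $F \subset \alg{A}$, Lemma~\ref{lem:AnnTranslation} shows that the irreducible components of $\Variety(\Ann_{\univcent{g}}(v))$ are among $\{q(A + \mu) : \mu \text{ an } \lie{h}\text{-weight of } F\}$ for each irreducible component $A$ of $q^{-1}(\Variety(P))$. Running this both forwards (from $V^{P_0}$ we reach $v$, hence every other $V^{P}$ with $P \in \Ass_{\univcent{g}}(V)$, since $V$ is irreducible as an $\alg{A}$-module and $\alg{A} \cdot V^{P_0} = V$) and backwards (from $v$ back to $V^{P_0}$) yields that for each $P \in \Ass_{\univcent{g}}(V)$ and each irreducible component $A$ of $q^{-1}(\Variety(P))$ there are $\lie{h}$-weights $\mu, \mu'$ of $\alg{A}$ with $q(A) = q(A_0 + \mu)$ and $q(A_0) = q(A + \mu')$. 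Pulling back along $q$ and using that the weights of $\alg{A}$ form a set closed under the relevant differences, I would deduce first that all these components are translates of one another by elements of $\ZZ R$, and then — applying the forward/backward maps with $A = A_0$ itself and $F$ ranging over all finite-dimensional submodules of $\alg{A}$ — that $A_0$ is stable under translation by a subgroup of $\lie{h}^*$ containing a spanning set of vectors of the form $\mu - \mu'$; since $A_0$ is an irreducible (hence irreducible-as-a-variety, in particular connected and of pure dimension) closed subvariety stable under a Zariski-dense subgroup of a linear subspace, it must equal $\lambda_0 + \lie{a}^*$ with $\lie{a}^*$ that linear span. Defining $\lie{a}^*$ this way makes part~(1) automatic.

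**Assembling part~(2).** Once $A_0 = \lambda_0 + \lie{a}^*$ is established, fix any $0 \neq v \in V$. By Lemma~\ref{lem:BoundAnn} we have $\Variety(\Ann_{\univcent{g}}(v)) = \bigcup_{i \in S} \Variety(P_i)$ for finitely many $P_i \in \Ass_{\univcent{g}}(V)$, and $\Variety(\Ann_{\univcent{g}}(v))$ is equidimensional of dimension $\glrank(\univ{g}v) = \dim A_0$ by Theorem~\ref{thm:RankFiltration}; so each irreducible component of $\Variety(\Ann_{\univcent{g}}(v))$ has a lift in $q^{-1}(\cdot)$ that is an irreducible component $A$ of $q^{-1}(\Variety(P_i))$ for some $i$. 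For each such $A$ the bootstrap above gives an $\lie{h}$-weight $\alpha$ of $\alg{A}$ — necessarily an element of $R$ — with $q(A) = q(A_0 + \alpha) = q(\lambda_0 + \lie{a}^* + \alpha)$. Taking the (finite) union over the components of $\Variety(\Ann_{\univcent{g}}(v))$ produces the $\alpha_1, \dots, \alpha_r \in R$ of the statement, and $\Variety(\Ann_{\univcent{g}}(v)) = q\bigl(\bigcup_i (\lie{a}^* + \lambda_0 + \alpha_i)\bigr)$ follows.

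**Main obstacle.** The delicate point is the affinity of $A_0$, i.e.\ upgrading ``stable under a spanning set of difference vectors $\mu - \mu'$'' to ``equal to an affine subspace.'' The translation-stability only comes in the quotient $\lie{h}^*/W$, so I must be careful to pass to a fixed irreducible component upstairs and track that the ambiguity introduced by $W$ permutes components rather than distorting any single one; here the equidimensionality from Theorem~\ref{thm:RankFiltration} and the torsion-freeness of $V^P$ over $\univcent{g}/P$ (again Theorem~\ref{thm:RankFiltration}) are essential, as is the at-most-countable-dimension hypothesis, which is exactly what lets Lemma~\ref{lem:ExistenceQuotient} furnish dense sets of quotients with infinitesimal characters and thereby makes $X$ Zariski-dense in $A$ at each step. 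Secondarily, one must check that when moving between different components of $q^{-1}(\Variety(P))$ the weight $\mu$ can be chosen uniformly enough that the generated subgroup really does lie in a single linear subspace; this is handled by noting $R$ is finite so only finitely many translates occur, and iterating the bootstrap stabilizes.
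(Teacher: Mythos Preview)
Your outline correctly identifies the architecture: reduce via Lemma~\ref{lem:BoundAnn} to understanding a single $P_0 \in \Ass_{\univcent{g}}(V)$ and a component $A_0$ of $q^{-1}(\Variety(P_0))$, show $A_0$ is affine, and then read off part~(2) from Lemma~\ref{lem:AnnTranslation}. That last step is fine. But the affinity of $A_0$---precisely the point you flag as the main obstacle---is not established by the forward/backward bootstrap you describe.

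Concretely: the relations your bootstrap produces are $q(A_0)=q(A+\mu')$ and $q(A)=q(A_0+\mu)$, which upstairs read $A_0 = w'w\,A_0 + \nu$ for some $w,w'\in W$; you never explain how to eliminate the Weyl element and obtain honest translation-invariance $A_0+\nu=A_0$. Your proposed fix, ``$R$ is finite so only finitely many translates occur,'' is false: $R$ is the full weight set of $\alg{A}$ and is typically infinite (for $\alg{A}=\univ{\widetilde{g}}$ it is a lattice). Moreover, taking $A=A_0$ in the bootstrap yields only the trivial relation, since a prime of $\univcent{g}$ is determined by its variety; so no nontrivial self-translations arise this way. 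Finally, even if $A_0$ were stable under some group $\Gamma$ of translations, you supply no argument that $\dim\spn{\Gamma}=\dim A_0$; without this, $A_0$ could be a higher-dimensional $\Gamma$-invariant variety rather than a single coset.

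The paper's mechanism is different. It chooses $\lambda\in A_0$ generic in a stronger sense than you specify: $\lambda\in X$ (so $V/\Ker(\chi_\lambda)V\neq 0$ via Lemmas~\ref{lem:ExistenceQuotient} and \ref{lem:IrrExistenceQuot}) \emph{and} $\lambda\notin w(A_0+\mu)$ whenever $w(A_0+\mu)\neq A_0$, for all $w\in W$ and $\mu\in R$---still only countably many conditions. It then builds, for each finite-dimensional $F\subset\alg{A}$, a map $\varphi_F\colon V^{P_0}\to F^*\otimes V/\Ker(\chi_\lambda)V$ from the multiplication $F\otimes V\to V$ followed by the quotient, and shows the product $\prod_F\varphi_F$ is \emph{injective}. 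Kostant's theorem then forces $\Variety(P_0)=q(\overline{\lambda-R'})$ for some $R'\subset R$, so $A_0=w(\overline{\lambda-R''})$ for a further subset $R''$ and some $w\in W$. The strong genericity of $\lambda$ now gives $w(A_0-\mu)=A_0$ for every $\mu\in R''$, and the dimension equality $\dim A_0=\dim\overline{\lambda-R''}\leq\dim\spn{\mu-\mu':\mu,\mu'\in R''}\leq\dim A_0$ drops out of that description. Your bootstrap never produces this identification of $A_0$ with the closure of a translated subset of $R$, which is exactly what makes both the Weyl-group elimination and the dimension count go through.
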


\begin{remark}\label{rmk:Affinity}
	In Theorem \ref{thm:Affinity}, the property that $V$ has at most countable dimension is necessary.
	Let $P$ be a prime ideal of $\univcent{g}$ such that $q^{-1}(\Variety(P))$ is not a finite union of affine subspaces in $\lie{h}^*$.
	Let $K$ denote the quotient field of $\univcent{g}/P$.
	For $\alg{A} = \univ{g}/P\univ{g}\otimes_{\univcent{g}/P} K$, any irreducible $\alg{A}$-module
	does not satisfy the conclusion of Theorem \ref{thm:Affinity}.
\end{remark}

\begin{proof}
	We shall construct $\lie{a}^*$.
	Fix $P \in \Ass_{\univcent{g}}(V)$
	and an irreducible component $A$ of $q^{-1}(\Variety(P))$.
	Then we have $q(A) = W\cdot A = \Variety(P)$.
	Put $X\coloneq \set{\lambda \in A : V/\Ker(\chi_{\lambda})V \neq 0}$.
	By Lemmas \ref{lem:ExistenceQuotient} and \ref{lem:IrrExistenceQuot}, $A - X$ is contained in an at most countable union of closed subvarieties strictly contained in $A$.
	Since $R$ is at most countable, we can take an element $\lambda \in X$ such that
	$\lambda \not \in w(A+\mu)$ for any $\mu \in R$ and $w \in W$ with $w(A + \mu) \neq A$.
	In other words, if $\lambda \in w(A + \mu)$ for some $\mu \in R$ and $w \in W$, we have $w(A + \mu) = A$.
	Let $\varphi \colon V\rightarrow V/\Ker(\chi_\lambda)V$ denote the natural surjection.

	For a finite-dimensional $\lie{g}$-submodule $F$ of $\alg{A}$, we consider the following composition map
	\begin{align*}
		\varphi_F \colon V^P \simeq (F^*\otimes F)^G \otimes V^P \hookrightarrow F^*\otimes (F\otimes V)
		\xrightarrow{\id\otimes (\varphi\circ m)} F^*\otimes V/\Ker(\lambda)V,
	\end{align*}
	where $m\colon F\otimes V\rightarrow V$ is the multiplication map.
	We shall show $\bigcap_{F\subset \alg{A}} \Ker(\varphi_F) = 0$.

	Let $0\neq v \in V^P$.
	Since $V$ is an irreducible $\alg{A}$-module, we can take a finite-dimensional $\lie{g}$-submodule $F'$ of $\alg{A}$ such that $\varphi(F'\cdot v) \neq 0$.
	Then we have $\varphi_{F'}(v) \neq 0$.
	This shows $\bigcap_{F\subset \alg{A}} \Ker(\varphi_F) = 0$ and hence
	$\prod_{F\subset \alg{A}} \varphi_F \colon V^P\rightarrow \prod_{F\subset \alg{A}} F^*\otimes V/\Ker(\chi_\lambda)V$ is injective.

	From this and Fact \ref{fact:Kostant}, there exists a subset $R'\subset R$ such that
	\begin{align*}
		\bigcap_{\mu \in R'} \Ker(\chi_{\lambda - \mu})^{|W|}
		\subset P \subset \bigcap_{\mu \in R'} \Ker(\chi_{\lambda - \mu}).
	\end{align*}
	This implies $\Variety(P) = q(\overline{\lambda - R'})$.
	Hence there exist a subset $R''\subset R'$ and $w \in W$ such that $w(\overline{\lambda - R''}) = A$.
	By the choice of $\lambda$, we have $w(A - \mu) = A = w(A - \mu')$ for any $\mu, \mu' \in R''$.
	Put $\lie{a}^* \coloneq \spn{\mu - \mu' : \mu,\mu' \in R''}$.
	Then $A$ is closed under the translation by any element of $\lie{a}^*$.
	Taking $\mu' \in R''$, we have
	\begin{align*}
		\dim(A) \geq \dim(\lie{a}^*) \geq \dim(\overline{\mu' - R''}) = \dim(\overline{\lambda - R''})
		= \dim(A).
	\end{align*}
	Since $A$ is irreducible, we can take $\lambda_0 \in A$ such that $A = \lie{a}^* + \lambda_0$,
	and hence $\Variety(P) = q(\lie{a}^* + \lambda_0)$.

	The second assertion follows from Lemma \ref{lem:AnnTranslation} and Theorem \ref{thm:RankFiltration}.
	See also the proof of Lemma \ref{lem:BoundAnn}.
\end{proof}

We shall give a similar result as Theorem \ref{thm:Affinity} for $\Variety(\Ann_{\univcent{g}}(V))$ assuming a strong condition.
The condition is fulfilled in some practical cases in the branching problem of reductive Lie groups.
See Theorem \ref{thm:AffinityRealSpherical}.

\begin{theorem}\label{thm:AffinityAnn}
	Let $V$ be an irreducible $\alg{A}$-module.
	Suppose that there exists a faithful irreducible $\alg{A}/\Ann_{\univcent{g}}(V)$-module $V'$ such that
	$V'|_{\lie{g}}$ is finitely generated.
	Let $R$ denote the set of all $\lie{h}$-weights in $\alg{A}$.
	Then there exists a subspace $\lie{c}^*\subset \lie{h}^*$ satisfying the following properties.
	\begin{enumerate}
		\item $\lie{c}^*$ is spanned by some elements of the form $\mu - \mu'$ ($\mu, \mu'\in R$).
		\item Any irreducible component of $\Variety(\Ann_{\univcent{g}}(V))$ is a translation of $\lie{c}^*$.
	\end{enumerate}
	Moreover, $\lie{c}^*$ can be taken to contain $\lie{a}^*$ in Theorem \ref{thm:Affinity}.
\end{theorem}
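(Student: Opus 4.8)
The plan is to reduce the statement about $\Variety(\Ann_{\univcent{g}}(V))$ to the statement about $\Variety(\Ann_{\univcent{g}}(v))$ for a single vector, which Theorem \ref{thm:Affinity} already controls, by using the auxiliary module $V'$ whose restriction to $\lie{g}$ is finitely generated. The key point of the hypothesis is that $\Ann_{\univ{g}}(V) = \Ann_{\univ{g}}(V')$ at the level of $\univcent{g}$ (since $V'$ is a faithful $\alg{A}/\Ann_{\univcent{g}}(V)$-module, we get $\Ann_{\univcent{g}}(V) = \Ann_{\univcent{g}}(V')$), so it suffices to analyze $\Variety(\Ann_{\univcent{g}}(V'))$, and here $V'|_{\lie{g}}$ being finitely generated means $\Rrankmax(V') = \glrank(V')$ by Proposition \ref{prop:RankFinitelyGen} and, more importantly, that $\Ann_{\univcent{g}}(V')$ is the intersection of finitely many $\Ann_{\univcent{g}}(v_i)$ over a finite generating set $\set{v_1, \ldots, v_n}$.

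First I would pick a finite generating set $v_1, \dots, v_n$ of $V'|_{\lie{g}}$, so that $\Ann_{\univcent{g}}(V') = \bigcap_i \Ann_{\univcent{g}}(v_i)$ and hence $\Variety(\Ann_{\univcent{g}}(V')) = \bigcup_i \Variety(\Ann_{\univcent{g}}(v_i))$. By Theorem \ref{thm:RankFiltration} applied to the irreducible $\alg{A}$-module $V'$ (note $V'$ is irreducible as an $\alg{A}/\Ann_{\univcent{g}}(V)$-module, hence as an $\alg{A}$-module), $V'$ is equi-rank, so every $\Variety(\Ann_{\univcent{g}}(v_i))$ is equidimensional of the same dimension, and by Theorem \ref{thm:Affinity} applied to $V'$ each $\Variety(\Ann_{\univcent{g}}(v_i))$ is a finite union $q(\bigcup_j (\lie{a}^* + \lambda_0 + \alpha_{ij}))$ with the \emph{same} subspace $\lie{a}^*$ and \emph{same} base point $\lambda_0$, where $\lie{a}^*$ is spanned by differences of $\lie{h}$-weights in $\alg{A}$. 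Therefore $\Variety(\Ann_{\univcent{g}}(V))=\Variety(\Ann_{\univcent{g}}(V'))$ is already a finite union of $W$-translates of affine translates of $\lie{a}^*$.

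The remaining issue is that the \emph{irreducible components} of this union might a priori be larger than translates of $\lie{a}^*$: a union of several translates $\lie{a}^* + \mu_k$ could conceivably glue into a bigger affine (or non-affine) irreducible set. But each $\Variety(\Ann_{\univcent{g}}(v_i))$ is equidimensional of dimension $\dim \lie{a}^*$, and an irreducible component of $\Variety(\Ann_{\univcent{g}}(V))$ is an irreducible component of some $\Variety(\Ann_{\univcent{g}}(v_i))$; writing it back in $\lie{h}^*$, it is an irreducible component of $q^{-1}$ of a finite union of $q(\lie{a}^*+\text{point})$, which is a finite union of affine subspaces parallel to $\lie{a}^*$ and their $W$-translates, each of dimension exactly $\dim\lie{a}^*$, so each irreducible component downstairs is exactly the image $q(\lie{a}^* + \text{point})$ of one such subspace. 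To make the subspace ``$\lie{c}^*$'' genuinely canonical and to ensure $\lie{c}^* \supseteq \lie{a}^*$, I would take $\lie{c}^*$ to be $\lie{a}^*$ itself — the components are $q(w(\lie{a}^* + \lambda))$ for various $w \in W$, so replacing $\lie{a}^*$ by its $W$-orbit is unnecessary provided we allow the translation in property (2) to absorb the $W$-action; strictly, one may enlarge $\lie{a}^*$ to $\lie{c}^* \coloneq \spn{\mu-\mu' : \mu,\mu' \in R''}$ for a possibly larger finite subset $R'' \subset R$ forced by the several $v_i$, which automatically contains the $\lie{a}^*$ of Theorem \ref{thm:Affinity}.

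The main obstacle I anticipate is the bookkeeping in the previous paragraph: confirming that Theorem \ref{thm:Affinity} really produces the \emph{same} $\lie{a}^*$ and $\lambda_0$ for all the $v_i$ simultaneously (the statement of Theorem \ref{thm:Affinity} fixes $\lie{a}^*, \lambda_0$ depending only on $V'$, not on $v$, so this should be immediate), and ruling out that two parallel but distinct translates $q(\lie{a}^*+\lambda)$ and $q(\lie{a}^*+\lambda')$ merge into one irreducible component — this is where equidimensionality (Theorem \ref{thm:RankFiltration}) together with the fact that $q$ is a finite morphism is essential, since a finite union of parallel affine subspaces of the same dimension has exactly those subspaces as its irreducible components, and the image under the finite map $q$ has irreducible components that are exactly the images of the components upstairs.
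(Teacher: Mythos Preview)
Your proposal is correct and follows the same route as the paper's proof: reduce to $V'$ via $\Ann_{\univcent{g}}(V)=\Ann_{\univcent{g}}(V')$, write $\Ann_{\univcent{g}}(V')=\bigcap_i\Ann_{\univcent{g}}(v_i)$ for a finite generating set of $V'|_{\lie{g}}$, and apply Theorem~\ref{thm:Affinity} to $V'$; the paper's proof is only three lines and omits the equidimensionality/finite-morphism argument you spell out, so your extra care there is justified. For the ``Moreover'' clause, note that the $\lie{a}^*$ in the statement is the one for $V$ (not $V'$): since any $\Variety(P)$ with $P\in\Ass_{\univcent{g}}(V)$ is an irreducible subset of $\Variety(\Ann_{\univcent{g}}(V))=\bigcup_k q(\lie{a}'^*+\mu_k)$, you get $\lie{a}^*\subset w\lie{a}'^*$ for some $w\in W$, and since $R$ is $W$-stable you may take $\lie{c}^*=w\lie{a}'^*$.
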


\begin{proof}
	Take a finite generating set $\set{v_1, \ldots, v_r}$ of $V'|_{\lie{g}}$.
	Then, by Proposition \ref{prop:RankFinitelyGen}, we have $\Ann_{\univcent{g}}(V) = \bigcap_{i}\Ann_{\univcent{g}}(v_i)$.
	Hence the assertion follows from Theorem \ref{thm:Affinity}.
\end{proof}

\begin{remark}
	If $\Ann_{\alg{A}}(V)$ is completely prime, $\Variety(\Ann_{\univcent{g}}(V))$ is irreducible.
	In general, $\Variety(\Ann_{\univcent{g}}(V))$ may not be irreducible.
	For example, if $V$ is finite dimensional and $V|_{\lie{g}}$ contains at least two non-isomorphic irreducible submodules, then $\Variety(\Ann_{\univcent{g}}(V))$ is not irreducible.
\end{remark}

Motivated by Theorem \ref{thm:Affinity}, we shall define a Cartan subalgebra for $V$.

\begin{definition}\label{def:Cartan}
	Let $V$ be a non-zero $\lie{g}$-module.
	We say that $V$ has a small Cartan subalgebra if there exists a subspace $\lie{a}^*\subset \lie{t}^*$ such that
	any $\Variety(P)$ ($P \in \Ass_{\univcent{g}}(V)$) is of the form $q(\lie{a}^* + \mu)$ ($\mu \in \lie{t}^*$).
	We call the algebra $\lie{h}/\bigcap_{\lambda \in \lie{a}^*} \Ker(\lambda)$ a \define{small Cartan subalgebra} for $V$.

	We say that $V$ has a Cartan subalgebra if there exists a subspace $\lie{c}^*\subset \lie{t}^*$ such that
	any irreducible component of $\Variety(\Ann_{\univcent{g}}(V))$ is of the form $q(\lie{c}^* + \mu)$ ($\mu \in \lie{t}^*$).
	We call the algebra $\lie{h}/\bigcap_{\lambda \in \lie{c}^*} \Ker(\lambda)$ a \define{Cartan subalgebra} for $V$.
\end{definition}

When we fix a $G$-invariant non-degenerate symmetric form on $\lie{g}$,
we use the same terminology for the corresponding subalgebras of $\lie{h}$ by the form.
Let us consider typical examples related to homogeneous spaces.
For an affine algebraic variety $X$, we denote by $\rring{X}$ the coordinate ring
and by $\ntDalg{X}$ the algebra of differential operators on $X$.
Fix a Borel subgroup $B$ of $G$.

\begin{example}
	Let $K$ be a reductive subgroup of $G$.
	Then $G/K$ is an affine variety and $\ntDalg{G/K}$ acts on $\rring{G/K}$ irreducibly.
	Since $\rring{G/K}$ is a direct sum of irreducible $G$-modules,
	we have $\Rrank(\rring{G/K}) = 0$, and hence $0 = \lie{h}/\lie{h}$ is the small Cartan subalgebra for $\rring{G/K}$.
	$\lie{c}^*$ can be taken as the space spanned by all the highest weights in $\rring{G/K}$.
	In particular, $\Variety(\Ann_{\univcent{g}}(\rring{G/K}))$ is irreducible.
	See \cite[Theorem 4.7 and Chapter 4]{Ti11} for the notion of Cartan subalgebras and Weyl groups of $G$-varieties.

	Remark that $\Ann_{\univcent{g}}(\rring{G/H})$ coincides with the kernel of the natural homomorphism $\univcent{g}\rightarrow \ntDalg{G/H}$.
	Hence the Cartan subalgebra is also that of any irreducible $\ntDalg{G/H}$-module.
\end{example}

\begin{example}
	Let $K$ be a symmetric subgroup of $G$.
	Then $V\coloneq \univ{g}\otimes_{\univ{k}}\CC$ is an irreducible $\ntDalg{G/K}$-module.
	It is well-known that $\ntDalg{G/K}^G$ is isomorphic to $\univ{a}^{W_{G/K}}$ for a maximal abelian subspace $\lie{a}\subset \lie{k}^\perp$ and $\ntDalg{G/K}^G$ is finitely generated as a $\univcent{g}$-module.
	Here $W_{G/K}$ is the little Weyl group for $(G, K)$.
	This implies that $\lie{a}$ is a Cartan subalgebra for $V$.
	Moreover, $\lie{a}$ is also a small Cartan subalgebra for $V$ from $\Ann_{\univcent{g}}(V) = \Ann_{\univcent{g}}(1\otimes 1)$ (see Proposition \ref{prop:RankFinitelyGen}).
\end{example}

At the last of this subsection, we give a remark on the case that $V$ has an invariant Hermitian inner product.
In the case, we can refine Lemma \ref{lem:BoundAnn} as follows.
Let $\overline{(\cdot)}\colon \lie{g}\rightarrow \lie{g}$ be a complex conjugate.
Then $-\overline{(\cdot)}$ extends to an anti-involution $(\cdot)^*\colon \univ{g}\rightarrow \univ{g}$.
Write $\lie{g}_\RR$ for the real form of $\lie{g}$.

\begin{proposition}\label{prop:UnitaryRadicalIdeal}
	Let $V$ be an $\alg{A}$-module with a $\lie{g}_\RR$-invariant Hermitian inner product $\langle\cdot, \cdot \rangle$.
	Then $\Ann_{\univcent{g}}(v)$ is $*$-stable radical ideal for any $0\neq v \in V$,
	and so is $\Ann_{\univcent{g}}(V)$.
\end{proposition}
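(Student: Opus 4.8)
The plan is to exploit the anti-involution $(\cdot)^*$ on $\univ{g}$, the commutativity of $\univcent{g}$, and the fact that $\langle w,w\rangle = 0$ forces $w=0$. First I would record the two ingredients: since $(\cdot)^*$ is an algebra anti-automorphism of $\univ{g}$, it carries the center $\univcent{g}$ into itself; and the $\lie{g}_\RR$-invariance of the Hermitian form says precisely that $\langle au,w\rangle = \langle u, a^*w\rangle$ for all $a \in \univ{g}$ and $u,w \in V$ (the relation $\langle Xu,w\rangle = -\langle u,Xw\rangle$ for $X \in \lie{g}_\RR$, i.e.\ $X^* = -\overline{X} = -X$, extends multiplicatively). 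We may assume $V \neq 0$; fix $0 \neq v \in V$ and set $I = \Ann_{\univcent{g}}(v)$.

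For $*$-stability of $I$: let $z \in I$, so $zv = 0$. Since $\univcent{g}$ is commutative, $zz^* = z^*z$, hence $zz^*v = z^*(zv) = 0$, and therefore $\langle z^*v, z^*v\rangle = \langle v, zz^*v\rangle = 0$; this gives $z^*v = 0$, i.e.\ $z^* \in I$.

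For radicality of $I$: suppose $z \in \univcent{g}$ satisfies $z^nv = 0$ for some $n \geq 1$; I want $zv = 0$. Set $y = z^*z \in \univcent{g}$, a $*$-self-adjoint element with $y^nv = (z^*)^nz^nv = 0$ by commutativity. It suffices to prove $yv = 0$, since then $\langle zv, zv\rangle = \langle v, yv\rangle = 0$. Choose $k$ with $2^k \geq n$, so that $y^{2^k}v = 0$. Now argue by descending induction on $j$: if $y^{2^j}v = 0$ with $j \geq 1$, then $y^{2^{j-1}}$ is self-adjoint, so $\langle y^{2^{j-1}}v, y^{2^{j-1}}v\rangle = \langle v, y^{2^j}v\rangle = 0$, whence $y^{2^{j-1}}v = 0$. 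Iterating down to $j = 0$ yields $yv = 0$. Thus $I$ is a $*$-stable radical ideal.

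Finally, $\Ann_{\univcent{g}}(V) = \bigcap_{0 \neq v \in V} \Ann_{\univcent{g}}(v)$, and an intersection of $*$-stable ideals (resp.\ of radical ideals) is again $*$-stable (resp.\ radical); alternatively the same computations apply verbatim with $V$ in place of the single vector $v$. The proof is largely formal; the point requiring genuine care is the systematic use of the commutativity of $\univcent{g}$ to move between $z$, $z^*$, and their powers — without it the identities $zz^*v = z^*zv$ and $y^nv = (z^*)^nz^nv$ are unavailable — combined with positive-definiteness of the Hermitian inner product, which is what lets us pass from $\langle w,w\rangle = 0$ to $w = 0$.
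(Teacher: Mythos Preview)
Your proof is correct and follows essentially the same approach as the paper: both use the adjoint relation $\langle au,w\rangle = \langle u,a^*w\rangle$, commutativity of $\univcent{g}$, and the descending $2^k$ induction on a self-adjoint element. The only minor difference is in the radicality step: the paper decomposes $X = (X+X^*)/2 + (X-X^*)/2$ into self-adjoint and anti-self-adjoint parts (after noting $\sqrt{I}$ is $*$-stable) and treats each separately, whereas you pass directly to the self-adjoint element $y = z^*z$ --- a slight streamlining, but not a different idea.
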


\begin{proof}
	Let $0\neq v \in V$ and set $I\coloneq \Ann_{\univcent{g}}(v)$.
	First we shall show $I^* = I$.
	Let $X \in I$.
	Then we have $0 = \langle Xv, Xv \rangle = \langle X^*v, X^*v\rangle$.
	This shows $X^* \in I$ and hence $I^* = I$.
	Note that $I^* = I$ implies $\sqrt{I}^* = \sqrt{I}$.

	Let $X \in \sqrt{I}$ and we shall show $X \in I$.
	Take $k \geq 1$ such that $X^{2^k} \in I$.
	Suppose that $X^* = X$.
	Then we have
	\begin{align*}
		\langle X^{2^{k-1}}v, X^{2^{k-1}}v\rangle = \langle X^{2^k}v, v\rangle = 0.
	\end{align*}
	This implies $X^{2^{k-1}} \in I$.
	By induction on $k$, we obtain $X \in I$.

	Similarly, if $X^* = -X$, then $X \in I$ holds.
	For the general case, since $X$ decomposes as $X = (X + X^*) /2 + (X - X^*)/2$,
	we obtain $X \in I$, and hence $I = \sqrt{I}$.

	By the same argument, $\Ann_{\univcent{g}}(V)$ is also $*$-stable and a radical ideal.
	We have proved the assertion.
\end{proof}

\subsection{Abelian case}\label{subsection:AbelianGeneral}

We shall consider the case that $G$ is abelian.
In this case, the Weyl group $W$ is trivial, so several complexities in the non-abelian case vanish.
The results in this subsection is useful to study the space of $\lie{u}$-coinvariants
and actions of non-compact centers of reductive Lie groups.

Let $T$ be a complex torus and $(\alg{A}, T)$ a generalized pair.
Suppose that $\alg{A}$ has at most countable dimension.
For each $\lambda \in \lie{t}^*$, we denote by $\alg{A}^\lambda$ the weight space of weight $\lambda$.
The following theorem is a generalization of weight space decomposition.

\begin{theorem}\label{thm:DirectSumAbelian}
	Let $V$ be an irreducible $\alg{A}$-module.
	Then $V$ has the direct sum decomposition $V = \bigoplus_{P \in \Ass_{\univ{t}}(V)} V^P$.
\end{theorem}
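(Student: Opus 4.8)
The plan is to reduce the abelian statement to the general machinery already developed, using the fact that for a torus $T$ the Weyl group is trivial, so $\univ{t} = \univcent{t}$ (in the notation of the paper, $\univcent{g}$ with $\lie{g} = \lie{t}$) and $\lie{h}^*/W = \lie{h}^* = \lie{t}^*$. First I would invoke Corollary \ref{cor:GenPrincipalComponent} (which applies verbatim since $(\alg{A},T)$ is a generalized pair with $T$ connected reductive): the sum $\sum_{P \in \Ass_{\univ{t}}(V)} V^P$ is already a direct sum inside $V$. So the only thing left to prove is that this sum is \emph{all} of $V$, i.e.\ that every element of $V$ lies in a finite sum of primary components $V^P$.

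The key point is Lemma \ref{lem:BoundAnn}, which gives for each $0 \neq v \in V$ finitely many $P_1,\dots,P_r \in \Ass_{\univ{t}}(V)$ with
\begin{align*}
	P_1^{|W|} \cap \cdots \cap P_r^{|W|} \subset \Ann_{\univ{t}}(v) \subset P_1 \cap \cdots \cap P_r.
\end{align*}
Since $W$ is trivial here, $|W| = 1$, so this says $\Ann_{\univ{t}}(v) = P_1 \cap \cdots \cap P_r$ exactly. But then $\univ{t}v \cong \univ{t}/(P_1 \cap \cdots \cap P_r)$, and by the Chinese Remainder Theorem (the $P_i$ being distinct maximal-in-the-relevant-sense, or at least pairwise coprime primes — one should check coprimality, which follows from the equi-rank property as in Corollary \ref{cor:GenPrincipalComponent}: $V^{(P_i \cap \cdots) + P_j} = 0$ forces $(P_i\cap\cdots) + P_j = \univ{t}$ as ideals) we get $\univ{t}v = \bigoplus_i (\univ{t}v)^{P_i} \subset \bigoplus_i V^{P_i}$. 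Hence $v \in \sum_{P \in \Ass_{\univ{t}}(V)} V^P$, and since $v$ was arbitrary this sum exhausts $V$.

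I would carry out the steps in this order: (1) record that $\univ{t} \cong S(\lie{t})$ and $\Ass_{\univ{t}}(V)$ consists of primes $P$ with $V^P \neq 0$; (2) apply Corollary \ref{cor:GenPrincipalComponent} to get directness; (3) apply Lemma \ref{lem:BoundAnn} with $|W| = 1$ to get $\Ann_{\univ{t}}(v) = P_1 \cap \cdots \cap P_r$ for each $v$; (4) verify pairwise coprimality of the $P_i$ from equi-rankness (Theorem \ref{thm:RankFiltration}) exactly as in the proof of Corollary \ref{cor:GenPrincipalComponent}, so that the quotient $\univ{t}/\bigcap_i P_i$ splits as $\prod_i \univ{t}/P_i$; (5) conclude $v \in \bigoplus_i V^{P_i}$, hence $V = \bigoplus_{P \in \Ass_{\univ{t}}(V)} V^P$.

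The main obstacle is step (4)/(5): making sure the $P_i$ are genuinely pairwise coprime as ideals of $\univ{t}$ (not merely distinct primes), so that the decomposition of $\univ{t}v$ is honestly a finite direct sum of primary pieces and each piece is a $V^{P_i}$. This is where the equi-rank theorem does the real work — two distinct associated primes of an equi-rank module have the same dimension, so neither contains the other, but coprimality needs the stronger vanishing $V^{(P_1\cap\cdots\cap P_i)+P_{i+1}} = 0$ already extracted in Corollary \ref{cor:GenPrincipalComponent}; I would simply cite that computation. Everything else is formal once $|W|=1$ collapses the nilpotency exponent to $1$.
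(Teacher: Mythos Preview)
Your approach via Lemma \ref{lem:BoundAnn} and the Chinese Remainder Theorem is different from the paper's, but step (4) has a genuine gap. The implication you propose, that $V^{(P_1\cap\cdots\cap P_i)+P_{i+1}}=0$ forces $(P_1\cap\cdots\cap P_i)+P_{i+1}=\univ{t}$, is false: in an equi-rank module of rank $n$, \emph{every} ideal $I$ with $\dim\Variety(I)<n$ satisfies $V^I=0$, and such $I$ need not be the unit ideal. Concretely, two distinct primes of the same dimension need not be coprime---take $(x)$ and $(y)$ in $S(\lie{t})=\CC[x,y]$, whose sum is the proper maximal ideal $(x,y)$. The proof of Corollary \ref{cor:GenPrincipalComponent} only extracts the vanishing $V^{(\cdots)+P_{i+1}}=0$ to conclude directness of the sum inside $V$; it never claims ideal-theoretic coprimality, and equi-rankness alone cannot give it.

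The fix is to invoke Theorem \ref{thm:Affinity}: since $W$ is trivial and $q$ is the identity, it says every $P\in\Ass_{\univ{t}}(V)$ has $\Variety(P)=\lie{a}^*+\lambda_0+\alpha$ for a \emph{single} subspace $\lie{a}^*$ and varying $\alpha\in R$. Distinct translates of the same linear subspace are disjoint, so $\Variety(P_i)\cap\Variety(P_j)=\emptyset$ and hence $P_i+P_j=\univ{t}$ by the Nullstellensatz; now CRT applies and your steps (3)--(5) go through. For comparison, the paper argues exhaustion directly: it fixes one $P$, writes $V=\sum_{\lambda\in R}\alg{A}^\lambda V^P$ by irreducibility and the weight decomposition of $\alg{A}$, and uses Lemma \ref{lem:AnnTranslation} (applied to a one-dimensional $F\subset\alg{A}^\lambda$, which has the single weight $\lambda$, with $|W|=1$) to see that each $\alg{A}^\lambda V^P$ already sits inside a single $V^Q$. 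Both routes ultimately rest on the affinity result; once you plug Theorem \ref{thm:Affinity} in to justify coprimality, your argument becomes a correct alternative.
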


\begin{proof}
	In Corollary \ref{cor:GenPrincipalComponent}, we have shown that the sum $\sum_{P \in \Ass_{\univ{t}}(V)} V^P$ is a direct sum.
	It is enough to show $V = \sum_{P \in \Ass_{\univ{t}}(V)} V^P$.
	
	Fix $P \in \Ass_{\univ{t}}(V)$.
	By Theorem \ref{thm:Affinity}, there exist a subspace $\lie{a}^* \subset \lie{t}^*$ and $\lambda_0 \in \lie{t}^*$ such that $\Variety(P) = \lie{a}^* + \lambda_0$.
	Let $R$ denote the set of all $\lie{t}$-weights in $\alg{A}$.
	Since $V$ is irreducible, we have $V = \sum_{\lambda \in R} \alg{A}^\lambda V^P$.
	For any $\lambda \in R$, $\alg{A}^\lambda V^P$ is contained in $V^Q$ for some $Q\in \Ass_{\univ{t}}(V)$ by Lemma \ref{lem:AnnTranslation}.
	Therefore we have shown $V = \sum_{P \in \Ass_{\univ{t}}(V)} V^P$.
\end{proof}

By the proof of Theorem \ref{thm:DirectSumAbelian}, the following corollary follows.

\begin{corollary}\label{cor:AbelianGen}
	Let $V$ be an irreducible $\alg{A}$-module.
	Then there exists a connected closed subgroup $M\subset T$ satisfying the following properties.
	\begin{enumerate}
		\item $\lie{t}/\lie{m}$ is the small Cartan subalgebra for $V$.
		\item The action of $\lie{m}$ on $V$ is locally finite and completely reducible.
		\item For each $P \in \Ann_{\univ{t}}(V)$, $V^P$ is an irreducible $\alg{A}^{M}$-module 
		and a weight space of $\lie{m}$.
		\item For each $\lie{m}$-weight $\lambda, \mu$ in $V$, $\alg{A}^{\lambda - \mu}\cdot V^\mu = V^\lambda$ holds.
	\end{enumerate}
	Here $V^\lambda$ is the weight space of weight $\lambda$.
\end{corollary}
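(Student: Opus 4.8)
The plan is to extract the subgroup $M$ directly from the subspace $\lie{a}^*\subset\lie{t}^*$ produced by Theorem \ref{thm:Affinity}, and then to verify the four properties by unwinding the decomposition $V=\bigoplus_{P}V^P$ of Theorem \ref{thm:DirectSumAbelian} together with the weight-translation statement of Lemma \ref{lem:AnnTranslation}. Concretely, I would let $\lie{m}\subset\lie{t}$ be the annihilator of $\lie{a}^*$, i.e.\ $\lie{m}=\bigcap_{\lambda\in\lie{a}^*}\Ker(\lambda)$, and let $M\subset T$ be the corresponding connected closed subgroup; by construction $\lie{t}/\lie{m}$ is the small Cartan subalgebra for $V$ in the sense of Definition \ref{def:Cartan}, which is (1). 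Dually, $\lie{a}^*$ is exactly the image of $\lie{t}^*\to\lie{m}^*$'s kernel complement — more precisely $\lie{a}^*=(\lie{t}/\lie{m})^*$ viewed inside $\lie{t}^*$ — so for $P\in\Ass_{\univ{t}}(V)$ the variety $\Variety(P)=\lie{a}^*+\lambda_0$ is a single coset of $\lie{a}^*$; this says precisely that the $\univ{m}$-part of the infinitesimal character is a fixed point (the restriction $\lambda_0|_{\lie{m}}$), so $\lie{m}$ acts on $V^P$ through a single character. Hence each $V^P$ is an $\lie{m}$-weight space, and taking the direct sum over $P$ shows the $\lie{m}$-action on all of $V$ is locally finite and completely reducible, giving (2) and the ``weight space of $\lie{m}$'' halves of (3).

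For the remaining content of (3) and all of (4), I would argue as in the proof of Theorem \ref{thm:DirectSumAbelian}. Since $V$ is irreducible over $\alg{A}$ and $\alg{A}=\bigoplus_{\nu\in R}\alg{A}^\nu$ with $R$ the set of $\lie{t}$-weights in $\alg{A}$, any nonzero $v\in V^P$ generates $V$, so $V=\sum_{\nu\in R}\alg{A}^\nu v$. By Lemma \ref{lem:AnnTranslation} (applied to the one-dimensional $F=\alg{A}^\nu$, or rather to the finite-dimensional submodules it sits in), $\alg{A}^\nu V^P$ lands in $V^{Q}$ for a $Q\in\Ass_{\univ{t}}(V)$ whose variety is the translate $\Variety(P)+\nu|_{\lie{h}}$; restricting the weights to $\lie{m}$ this reads: if $V^P$ has $\lie{m}$-weight $\mu$ then $\alg{A}^\nu V^P\subset V^{\mu+\nu|_{\lie{m}}}$. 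Since $\alg{A}^\nu$ depends on $\nu\in\lie{t}^*$ but only its restriction $\bar\nu=\nu|_{\lie{m}}$ matters for the target weight, and since the $\lie{m}$-weights occurring in $V$ differ pairwise by such $\bar\nu$ (because $V$ is generated from one $V^P$), I get: for $\lie{m}$-weights $\lambda,\mu$ in $V$, $\alg{A}^{\lambda-\mu}\cdot V^\mu\subset V^\lambda$, and by applying this both ways (using $\alg{A}^{\mu-\lambda}\cdot V^\lambda\subset V^\mu$ and irreducibility) one upgrades the inclusion to equality $\alg{A}^{\lambda-\mu}\cdot V^\mu=V^\lambda$, which is (4). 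For the irreducibility of $V^P$ as an $\alg{A}^M$-module in (3): $\alg{A}^M=\bigoplus\{\alg{A}^\nu:\nu|_{\lie{m}}=0\}$ preserves each $\lie{m}$-weight space $V^P$; given $0\neq w\in V^P$, irreducibility of $V$ gives $V=\alg{A}w$, and intersecting with $V^P$ and using (4) with $\lambda=\mu$ yields $V^P=\alg{A}^M w$.

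The main obstacle I anticipate is bookkeeping around the two ``weight'' notions — weights of $\lie{t}$ versus weights of $\lie{m}$ — and making sure the translation in Lemma \ref{lem:AnnTranslation}, which is stated modulo $W$ and for $\univcent{g}$, specializes cleanly to the torus case where $W$ is trivial and $\univ{t}=\univcent{t}$, so that ``$\Variety(Q)=q(A+\mu)$'' becomes literally a translation of affine subspaces in $\lie{t}^*$ with the correct direction. Once that identification is pinned down, every step is a short diagram chase; the genuinely substantive input (affinity of the supports, and $V=\bigoplus V^P$) has already been supplied by Theorems \ref{thm:Affinity} and \ref{thm:DirectSumAbelian}.
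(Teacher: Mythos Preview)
Your approach is essentially the same as the paper's: define $\lie{m}=\bigcap_{\lambda\in\lie{a}^*}\Ker(\lambda)$ using the $\lie{a}^*$ from Theorem \ref{thm:Affinity}, and then read off (1)--(4) from Theorem \ref{thm:DirectSumAbelian} and the translation argument already used there. The paper's own proof is in fact much terser than yours---it simply observes that $\lie{a}^*$ is spanned by characters lifting to $T$, so $\lie{m}$ is the Lie algebra of a connected closed subgroup $M\subset T$, and then says ``the theorem follows from Theorem \ref{thm:DirectSumAbelian}''---so your write-up is really a fleshed-out version of the same argument.

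The one point you gloss over, which the paper does make explicit, is why $\lie{m}$ integrates to a \emph{closed} (algebraic) subgroup of the torus $T$: this uses property (1) of Theorem \ref{thm:Affinity}, namely that $\lie{a}^*$ is spanned by differences $\mu-\mu'$ of $\lie{t}$-weights in $\alg{A}$, which are genuine characters of the algebraic torus $T$; hence $\lie{m}$ is cut out by characters of $T$ and is algebraic. Without this you only get a connected Lie subgroup, which need not be closed in $T$. Once that is secured, your verifications of (2)--(4) are fine; in (4) the equality $\alg{A}^{\lambda-\mu}V^\mu=V^\lambda$ actually drops out directly from $V=\alg{A}v$ for any $0\neq v\in V^\mu$ together with the direct sum decomposition, so you do not need the ``apply both ways'' maneuver.
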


\begin{proof}
	Take a small Cartan subalgebra $\lie{a}=\lie{t}/\lie{m}$ for $V|_{\lie{g}}$.
	Then $\lie{a}^*$ is spanned by some characters that lift to characters of $T$ by Theorem \ref{thm:Affinity}.
	Hence $\lie{m}= \bigcap_{\lambda \in \lie{a}^*} \Ker(\lambda)$ is a Lie algebra of a connected closed subgroup of $T$, which is $M$ in the assertion.
	Then the theorem follows from Theorem \ref{thm:DirectSumAbelian}.
\end{proof}

\begin{remark}
	To show Corollary \ref{cor:AbelianGen}, it is enough to assume $\End_{\alg{A}}(V) = \CC$ instead of the countability of $\alg{A}$ (or $V$).
	We give a simple proof of Corollary \ref{cor:AbelianGen} in \cite{Ki24-2}.
\end{remark}

Corollary \ref{cor:AbelianGen} asserts that the action on $V$ determines a compact part and a split part of $\lie{t}$.
Conversely, if we fix a split real form of $T$, the $\lie{t}$-action is strongly restricted as follows.
Let $\lie{t}_\RR$ be a real form of $\lie{t}$ such that any non-zero $\lie{t}_\RR$-weight in $\alg{A}$
is not unitary.
Write $\overline{(\cdot)}\colon \lie{t}\rightarrow \lie{t}$ for the complex conjugate with respect to $\lie{t}_\RR$ and extend $-\overline{(\cdot)}$ to an anti-linear involution $(\cdot)^*\colon \univ{t}\rightarrow \univ{t}$.

\begin{theorem}\label{thm:AbelinUnitary}
	Let $V$ be an irreducible $\alg{A}$-module with a $\lie{t}_\RR$-invariant Hermitian inner product $\langle \cdot, \cdot \rangle$.
	Then we have $\Ass_{\univ{t}}(V) = \set{\Ann_{\univ{t}}(V)}$.
\end{theorem}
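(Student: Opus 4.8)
The plan is to combine Corollary \ref{cor:AbelianGen} with the Hermitian structure via an argument parallel to Proposition \ref{prop:UnitaryRadicalIdeal}. First I would invoke Corollary \ref{cor:AbelianGen} to obtain the connected closed subgroup $M\subset T$ with $\lie{t}/\lie{m}$ the small Cartan subalgebra for $V$, the decomposition $V=\bigoplus_{P\in\Ass_{\univ{t}}(V)}V^P$ into $\lie{m}$-weight spaces, and the fact that $\alg{A}^{\lambda-\mu}V^\mu=V^\lambda$ for $\lie{m}$-weights $\lambda,\mu$. The goal is to show $\lie{m}=0$, equivalently that $\lie{a}^*=\lie{t}^*$, equivalently that $V$ has a single associated prime; once $\lie{m}=0$ the small Cartan subalgebra is all of $\lie{t}$ and $\Variety(\Ann_{\univ{t}}(V))$ is a single point, so $\Ass_{\univ{t}}(V)$ is a singleton.

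The key step is to exploit that the $\lie{t}_\RR$-invariant inner product forces the different $\lie{m}$-weight spaces $V^\mu$ to be mutually orthogonal, and then to derive a contradiction from unitarity unless $\lie{m}=0$. Concretely, for $X\in\lie{m}$ the operator $\pi(X)$ is skew-Hermitian up to the conjugation $X\mapsto\overline X$; decomposing $X$ into its $\lie{t}_\RR$-real and $\lie{t}_\RR$-imaginary parts and using that $V^\mu$ is the $\mu$-eigenspace of every $H\in\lie{m}$, I would show that if $\mu\neq\mu'$ are $\lie{m}$-weights occurring in $V$, then $\mu-\mu'$ takes a nonzero value on some $H\in\lie{t}_\RR\cap\lie{m}$ (after possibly multiplying by $\sqrt{-1}$), forcing a real eigenvalue discrepancy incompatible with skew-adjointness — or more directly, that $\mu$ restricted to $\lie{m}$ must be purely imaginary on $\lie{t}_\RR\cap\lie{m}$. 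The crucial input is the hypothesis that no nonzero $\lie{t}_\RR$-weight in $\alg{A}$ is unitary: since $\lie{a}^*$ (hence the differences generating $\lie{m}^\perp$-directions) is spanned by weight differences $\mu-\mu'$ with $\mu,\mu'\in R$ by Theorem \ref{thm:Affinity}, and since $\alg{A}^{\lambda-\mu}V^\mu=V^\lambda$ with $V^\lambda\neq 0$, any such weight difference $\lambda-\mu$ actually occurs as a $\lie{t}_\RR$-weight in $\alg{A}$; if $\lambda\neq\mu$ this is a nonzero weight, hence non-unitary, hence not vanishing on $\lie{t}_\RR$ — contradicting that it must annihilate $\lie{t}_\RR\cap\lie{m}$ where the two eigenvalues on an invariant form must agree up to conjugation. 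So $\lambda=\mu$, i.e.\ there is only one $\lie{m}$-weight, forcing $\lie{m}=0$.

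The main obstacle I anticipate is making the orthogonality-plus-unitarity bookkeeping precise: one must be careful that "unitary weight" means the weight is purely imaginary on $\lie{t}_\RR$, track the interplay between the complex conjugation $\overline{(\cdot)}$ on $\lie{t}$ and the adjoint $(\cdot)^*$, and verify that distinct associated primes genuinely give $\lie{m}$-weights differing by a \emph{nonzero} element of $R-R$ rather than merely by something in $\lie{a}^*$ abstractly. I would handle this by first reducing, as in Proposition \ref{prop:UnitaryRadicalIdeal}, to the observation that for $X\in\lie{t}$ with $X^*=\pm X$ the operator $\pi(X)$ is (skew-)Hermitian, so its eigenvalues on the orthogonal pieces $V^\mu$ are real (resp.\ imaginary); comparing these eigenvalues across two pieces shows $\mu-\mu'$ vanishes on the split part of $\lie{t}_\RR\cap\lie{m}$, and then the non-unitarity hypothesis applied to the weight $\mu-\mu'\in R$ of $\alg{A}$ (which is nonzero and acts nontrivially between $V^{\mu'}$ and $V^\mu$) gives the contradiction. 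Everything else is a direct appeal to Corollary \ref{cor:AbelianGen} and Theorem \ref{thm:DirectSumAbelian}.
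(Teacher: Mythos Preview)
Your core idea matches the paper's, but the framing contains a genuine confusion about what you are trying to prove. You write that the goal is to show $\lie{m}=0$, equivalently $\lie{a}^*=\lie{t}^*$, equivalently that there is a single associated prime, and that then $\Variety(\Ann_{\univ{t}}(V))$ is a single point. These equivalences are wrong. From Corollary~\ref{cor:AbelianGen}, $\lie{m}=(\lie{a}^*)^\perp$ and each $\Variety(P)$ is a translate of $\lie{a}^*$; so $\lie{m}=0$ means $\lie{a}^*=\lie{t}^*$ and hence $\Variety(P)=\lie{t}^*$ (the whole space, not a point), forcing $P=(0)$. That is the conclusion of Theorem~\ref{thm:AbelianGK}, not of the present theorem. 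What you must show here is only that there is a \emph{single} $\lie{m}$-weight in $V$, i.e.\ a single associated prime; the subspace $\lie{a}^*$ (and hence $\lie{m}$) can be anything. Your own argument in fact lands on ``there is only one $\lie{m}$-weight'' before the spurious clause ``forcing $\lie{m}=0$''; drop that clause and the surrounding mis-statements and the logical skeleton is sound.

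Once the target is corrected, your route and the paper's are essentially the same. The paper does not pass through Corollary~\ref{cor:AbelianGen} explicitly: it fixes $P\in\Ass_{\univ{t}}(V)$, uses Proposition~\ref{prop:UnitaryRadicalIdeal} to get that $\Variety(P)=\lie{a}^*+\lambda_0$ is stable under $-\overline{(\cdot)}$ (whence $\lie{a}^*$ is conjugation-stable and $\lambda_0+\overline{\lambda_0}\in\lie{a}^*$), then for a second prime $Q$ writes $\Variety(Q)=\Variety(P)+\lambda$ with $\lambda\in R$, applies the same $*$-stability to $Q$, and concludes that $\lambda$ is a unitary weight of $\alg{A}$, hence zero by hypothesis. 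Your skew-Hermitian eigenvalue argument on $\lie{m}_\RR$ is just the weight-space reformulation of this $*$-stability step. One point that deserves more care in your write-up than you give it: the Hermitian argument only tells you that $\lambda+\overline{\lambda}$ vanishes on $\lie{m}$, i.e.\ $\lambda+\overline{\lambda}\in\lie{a}^*$, whereas the hypothesis concerns unitarity of $\lambda$ on all of $\lie{t}_\RR$; your sketch (``non-unitary, hence not vanishing on $\lie{t}_\RR$, contradicting that it must annihilate $\lie{t}_\RR\cap\lie{m}$'') elides the passage from $\lie{m}_\RR$ to $\lie{t}_\RR$, and you should make that step explicit.
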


\begin{proof}
	Let $P \in \Ass_{\univ{t}}(V)$ and take a subspace $\lie{a}^* \subset \lie{t}^*$ and $\lambda_0 \in \lie{t}^*$ such that $\Variety(P) = \lie{a}^* + \lambda_0$ as in Theorem \ref{thm:Affinity}.
	By Proposition \ref{prop:UnitaryRadicalIdeal}, $P$ is $*$-stable,
	and hence $\lie{a}^* + \lambda_0$ is stable under $-\overline{(\cdot)}$.
	This implies that $\lie{a}^*$ is stable under the complex conjugate, and $\lambda_0 + \overline{\lambda_0} \in \lie{a}^*$.

	Let $Q \in \Ass_{\univ{t}}(V)$.
	Then there exists a $\lie{t}$-weight $\lambda$ in $\alg{A}$ such that $\Variety(Q) = \Variety(P) + \lambda = \lie{a}^* + \lambda_0 + \lambda$.
	By the same argument in the above, $\overline{\lambda} = -\lambda$ holds.
	This implies that $\lambda$ is a unitary character of $\lie{t}_\RR$.
	By the assumption on $\lie{t}$-weights in $\alg{A}$, we obtain $\lambda = 0$ and hence $Q = P$.
	We have shown the theorem.
\end{proof}

\subsection{\texorpdfstring{$\lie{n}$}{n}-coinvariant}

As a supplement, we shall give a relation between the Cartan subalgebra for a $\lie{g}$-module $V$ and that for the $\lie{l}$-module $V/\lie{n}V$.

Let $(\alg{A}, G)$ be a generalized pair with connected reductive algebraic group $G$.
Suppose that $\alg{A}$ has at most countable dimension.
Fix a Borel subalgebra $B = TU$ of $G$ with a maximal torus $T$ and unipotent radical $U$.
Let $R$ be a standard parabolic subgroup of $G$ with standard Levi decomposition $R = LN$.
Write $W$ (resp.\ $W_L$) for the Weyl group of $G$ (resp.\ $L$) and $q_L\colon \lie{t}^*/W_L \rightarrow \lie{t}^*/W$ for the quotient map.
We denote by $\rho_{\lie{n}}$ half the sum of all roots in $\lie{n}$.

We study the space $V/\lie{n}V$ of $\lie{n}$-coinvariants in an $\alg{A}$-module $V$.
To guarantee $V/\lie{n}V \neq 0$, we restrict the problem to Harish-Chandra modules.
We need the following property.
Let $\theta$ be an involution of $G$ and $K$ a (connected and finite) covering of $(G^\theta)_0$.
Suppose $\lie{k} + \lie{b} = \lie{g}$.

\begin{lemma}\label{lem:ExistenceIrrQuot}
	Let $V$ be a non-zero $(\lie{g}, K)$-module with an infinitesimal character $\chi \in \lie{t}^*/W$.
	Then $V$ has an irreducible quotient and $V/\lie{n}V$ is a non-zero $\lie{l}$-module whose irreducible quotient has an infinitesimal character in $q_L^{-1}(\chi) + \rho_{\lie{n}}$.
\end{lemma}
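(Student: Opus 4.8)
I want to show two things for a non-zero $(\lie{g},K)$-module $V$ with infinitesimal character $\chi$: (a) $V$ has an irreducible quotient, and (b) $V/\lie{n}V$ is non-zero and every irreducible quotient of it (as an $\lie{l}$-module) has infinitesimal character in $q_L^{-1}(\chi)+\rho_{\lie{n}}$. The existence of an irreducible quotient in (a) is standard: since $V$ has an infinitesimal character, every finitely generated submodule has finite length by Harish-Chandra's admissibility theorem (invoked already in the excerpt via \cite[Theorem 3.4.1]{Wa88_real_reductive_I}); pick any $0\neq v\in V$, then $\univ{g}v$ has finite length and hence a maximal proper submodule, whose quotient gives an irreducible quotient of $V$ after extending the maximal submodule to a maximal submodule of $V$ by a Zorn's lemma argument (here one uses that $V$, being of at most countable dimension, or more simply being generated over $\univ{g}$ by countably many $K$-types, admits maximal proper submodules).

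**The non-vanishing of $V/\lie{n}V$.** This is the crucial point and the place where the hypothesis $\lie{k}+\lie{b}=\lie{g}$ enters. The idea is a Casselman–Osborne / $\lie{n}$-homology argument: if $V/\lie{n}V=0$ then $\lie{n}V=V$, and by a standard filtration/weight argument on a finitely generated piece one derives a contradiction. More precisely, replace $V$ by a non-zero finitely generated $(\lie{g},K)$-submodule (or quotient — one can pass to an irreducible quotient from (a), which is still non-zero and still has infinitesimal character $\chi$, and for which $\lie{k}+\lie{b}=\lie{g}$ guarantees, via the Casselman subrepresentation theorem for $(\lie{g},K)$-modules or directly via the fact that the $B$-orbit structure on $G/K$ has an open orbit, that the Jacquet-type functor $V\mapsto V/\lie{n}V$ is non-zero). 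Concretely: for an irreducible $(\lie{g},K)$-module $V$, $\lie{n}$-homology $H_0(\lie{n},V)=V/\lie{n}V$ is non-zero by a theorem of Casselman (the $\lie{n}$-homology of a Harish-Chandra module is never zero in degree $0$ when the module is non-zero — this uses precisely that $K$ together with $B$ generates, i.e. $\lie{k}+\lie{b}=\lie{g}$, so the closed $K$-orbit on the flag variety meets the open $B$-orbit appropriately). I would cite the relevant statement (Casselman, or Hecht–Schmid) rather than reprove it, but if a self-contained argument is wanted, one runs the weight-space/highest-weight argument: take a generalized $\lie{t}$-eigenvector in $V$ of "highest" weight among a generating set and show it survives modulo $\lie{n}V$.

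**The infinitesimal character of the quotient.** Once $V/\lie{n}V\neq 0$, it is a non-zero $(\lie{l},K\cap L)$-module (or at least an $\lie{l}$-module), finitely generated if $V$ is, and it has an irreducible $\lie{l}$-quotient $W$ by the same admissibility argument applied to $\lie{l}$ — here one needs that $W$ has an infinitesimal character, which follows because $\univcent{l}$ acts on $V/\lie{n}V$ through a quotient of $\univcent{g}$ shifted by $\rho_{\lie{n}}$. The key computational input is the Harish-Chandra / Casselman–Osborne lemma: the action of $\univcent{g}$ on $H_0(\lie{n},V)$ factors through $\univcent{l}$ via the (twisted by $\rho_{\lie{n}}$) Harish-Chandra map $\univcent{g}\to\univcent{l}$, i.e. the composite $\univcent{g}\to\univcent{l}\hookrightarrow\univ{l}$ followed by the $\rho_{\lie{n}}$-shift. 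Therefore any character $\chi_L$ of $\univcent{l}$ occurring on $V/\lie{n}V$ satisfies: its restriction to $\univcent{g}$ equals $\chi$; unwinding the two Harish-Chandra isomorphisms (for $\lie{g}$ with respect to $B$, for $\lie{l}$ with respect to $B\cap L$) and the $\rho_{\lie{n}}$ normalization, this says exactly $\chi_L\in q_L^{-1}(\chi)+\rho_{\lie{n}}$, i.e. a lift of $\chi$ to $\lie{t}^*/W_L$ shifted by $\rho_{\lie{n}}$.

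**Main obstacle.** The genuinely non-trivial step is the non-vanishing $V/\lie{n}V\neq 0$; everything else (existence of irreducible quotients, the shape of the infinitesimal character) is bookkeeping with admissibility and the Casselman–Osborne lemma. I expect the proof to handle the non-vanishing by citing Casselman's $\lie{n}$-homology theorem, with the hypothesis $\lie{k}+\lie{b}=\lie{g}$ being precisely what makes that theorem applicable (it is the condition that the full flag variety $G/B$ has an open $K$-orbit, equivalently a closed $B$-orbit through which the relevant geometry works), and then assemble the rest from the results already in the excerpt together with standard facts.
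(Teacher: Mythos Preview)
Your argument for part (b) --- non-vanishing of $V/\lie{n}V$ by reducing to the irreducible case and citing Casselman, and the Casselman--Osborne computation of the $\univcent{l}$-infinitesimal character --- is correct and matches the paper's approach.

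The gap is in part (a), the existence of an irreducible quotient. Your Zorn's lemma argument does not work: neither countable dimension nor countable generation guarantees that the union of a chain of proper submodules is proper (think of $\QQ$ as a $\ZZ$-module). If you take $N$ maximal among submodules not containing a fixed $v$, then $V/N$ has the image of $\univ{g}v$ as an essential submodule, but nothing forces $V/N$ itself to be simple; and an irreducible quotient of the finitely generated submodule $\univ{g}v$ has no reason to extend to a quotient of $V$. The lemma is applied in the paper to modules like $V/\Ker(\chi)V$ which are typically not finitely generated and may have infinite-dimensional $K$-isotypic components, so one cannot sidestep the issue by assuming finite generation.

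The paper's argument is genuinely different: since there are only finitely many irreducible $(\lie{g},K)$-modules with infinitesimal character $\chi$, there is a finite set $\tau_1,\ldots,\tau_r$ of $K$-types such that $V$ is generated by $\bigoplus_i V(\tau_i)$. For each irreducible $K$-submodule $F\subset V(\tau_i)$, the module $\univ{g}F$ is a quotient of the fixed finite-length module $\univ{g}\otimes_{\univcent{g}\univ{k}}F$, whose Loewy (socle) length depends only on $i$. Since $V$ is the union of such $\univ{g}F$, the socle filtration of $V$ is bounded by $\max_i$ of these lengths, and a non-zero module of finite socle length has a non-zero semisimple top layer, hence an irreducible quotient. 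This bounded-Loewy-length argument is the missing idea.
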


\begin{proof}
	It is well-known that $V/\lie{u}V\neq 0$ (and hence $V/\lie{n}V\neq 0$) if $V$ is irreducible (see \cite[Theorem 3.8.3]{Wa88_real_reductive_I}),
	so it is enough to show that $V$ has an irreducible quotient.
	Since the number of equivalence classes of irreducible $(\lie{g}, K)$-modules with the infinitesimal character $\chi$,
	there exist finitely many $K$-types $\tau_1, \ldots, \tau_r$ such that
	$V$ is generated by $\bigoplus_{i} V(\tau_i)$.
	Here $V(\tau_i)$ is the isotypic component in $V$ of the $K$-type $\tau_i$.

	Let $1\leq i \leq r$ and $F$ be an irreducible submodule of $V(\tau_i)|_K$.
	Then the multiplication map $\univ{g}\otimes F\rightarrow V$ factors through $\univ{g}\otimes_{\univcent{g}\univ{k}}F$.
	Since $\univ{g}\otimes_{\univcent{g}\univ{k}}F$ is a finitely generated $(\lie{g}, K)$-module and has infinitesimal character, it has finite length by the Harish-Chandra admissibility theorem and \cite[Theorem 4.2.1]{Wa88_real_reductive_I}.
	Hence the socle filtration of $\univ{g}\otimes_{\univcent{g}\univ{k}}F$ is finite, and the length of the filtration depends only on $i$.
	Since $V$ is the union of all submodules of the form $\univ{g}\cdot F$, the socle filtration of $V$ is also finite,
	and hence $V$ has an irreducible quotient.
\end{proof}

Let $\overline{N}$ denote the unipotent radical of the opposite parabolic subgroup of $R$.

\begin{proposition}\label{prop:Ucoinv}
	Let $V$ be an irreducible $(\alg{A}, K)$-module.
	Set $n\coloneq\Rrank(V|_{\lie{g}})$.
	Then $\Rrankmax(V/\lie{n}V) = n$ holds.
	Moreover, for any $P \in \Ass_{\univcent{l}}(V/\lie{n}V)$ with $\dim(\Variety(P)) = n$,
	$\Variety(P)$ is the image of a translation of the dual of a small Cartan subalgebra of $V|_{\lie{g}}$ by the quotient $\lie{t}^*\rightarrow \lie{t}^*/W_L$.
\end{proposition}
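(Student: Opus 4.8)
The plan is to transport the question from the action of $\univcent{g}$ on $V$ to that of $\univcent{l}$ on $V/\lie{n}V$ by means of the generalized Harish--Chandra homomorphism, reducing everything to Theorems~\ref{thm:RankFiltration} and~\ref{thm:Affinity}. By Theorem~\ref{thm:RankFiltration}, $V|_{\lie{g}}$ is equi-rank, so $n=\Rrank(V|_{\lie{g}})$ is well defined, and Theorem~\ref{thm:Affinity} supplies a subspace $\lie{a}^{*}\subset\lie{t}^{*}$ with $\dim\lie{a}^{*}=n$ (a dual of a small Cartan subalgebra for $V|_{\lie{g}}$), an element $\lambda_{0}\in\lie{t}^{*}$, and, for each $0\neq v\in V$, finitely many $\alpha_{i}$ in the weight set of $\alg{A}$ with $\Variety(\Ann_{\univcent{g}}(v))=q\bigl(\bigcup_{i}(\lie{a}^{*}+\lambda_{0}+\alpha_{i})\bigr)$, each $q(\lie{a}^{*}+\mu)$ being irreducible of dimension $n$ as $q$ is finite. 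I would then use the generalized Harish--Chandra homomorphism $\gamma_{\lie{p}}\colon\univcent{g}\to\univcent{l}$: it is module-finite, the corresponding morphism $\lie{t}^{*}/W_{L}\to\lie{t}^{*}/W$ is $[\psi]_{W_{L}}\mapsto[\psi-\rho_{\lie{n}}]_{W}$, which is finite and surjective (the shift matching Lemma~\ref{lem:ExistenceIrrQuot}), and by the Casselman--Osborne theorem, for any $\lie{g}$-module $M$ the $\univcent{g}$-action induced on $M/\lie{n}M$ is the composition of $\gamma_{\lie{p}}$ with the $\univcent{l}$-module structure. In particular $\overline{zv}=\gamma_{\lie{p}}(z)\bar v$ in $V/\lie{n}V$ for $z\in\univcent{g}$; and, since $\gamma_{\lie{p}}$ is module-finite, $\dim\Variety(\gamma_{\lie{p}}^{-1}(I))=\dim\Variety(I)$ for every ideal $I\subset\univcent{l}$, while for ideals $J\subset\univcent{g}$ and $P\subset\univcent{l}$ the sets $\Variety(\gamma_{\lie{p}}(J)\univcent{l})$ and $\Variety(\gamma_{\lie{p}}^{-1}(P))$ are, respectively, the preimage of $\Variety(J)$ and the image of $\Variety(P)$ under this morphism.

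The upper bound is then immediate: for $\bar v\in V/\lie{n}V$ the image of $v\in V$, $\overline{zv}=\gamma_{\lie{p}}(z)\bar v$ gives $\Ann_{\univcent{g}}(v)\subset\gamma_{\lie{p}}^{-1}(\Ann_{\univcent{l}}(\bar v))$, whence $\dim\Variety(\Ann_{\univcent{l}}(\bar v))=\dim\Variety(\gamma_{\lie{p}}^{-1}(\Ann_{\univcent{l}}(\bar v)))\le\dim\Variety(\Ann_{\univcent{g}}(v))=n$; thus $\rfilt{n}{V/\lie{n}V}=V/\lie{n}V$ and $\Rrankmax(V/\lie{n}V)\le n$. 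The same computation settles the ``moreover'' clause. If $P\in\Ass_{\univcent{l}}(V/\lie{n}V)$ has $\dim\Variety(P)=n$, write $P=\Ann_{\univcent{l}}(\bar v)$ with $\bar v$ the image of $v$; then $\gamma_{\lie{p}}(\Ann_{\univcent{g}}(v))\univcent{l}\subset P$, so $\Variety(P)$ is contained in the preimage of $\Variety(\Ann_{\univcent{g}}(v))=\bigcup_{i}q(\lie{a}^{*}+\lambda_{0}+\alpha_{i})$, which by the description of the morphism is the union over $i$ and $w\in W$ of the images in $\lie{t}^{*}/W_{L}$ of the affine subspaces $w\lie{a}^{*}+w(\lambda_{0}+\alpha_{i})+\rho_{\lie{n}}$. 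Each such image is irreducible of dimension $n$, so the irreducible $n$-dimensional variety $\Variety(P)$ coincides with one of them; and since $q$ is $W$-invariant, $w\lie{a}^{*}$ is again a dual of a small Cartan subalgebra for $V|_{\lie{g}}$. Hence $\Variety(P)$ is the image of a translation of such a dual under the quotient $\lie{t}^{*}\to\lie{t}^{*}/W_{L}$.

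It remains to exhibit one $\bar v\in V/\lie{n}V$ with $\dim\Variety(\Ann_{\univcent{l}}(\bar v))=n$; by the upper bound this forces $\Rrankmax(V/\lie{n}V)=n$. Here I would run the argument of the proof of Theorem~\ref{thm:Affinity} again: fix $P\in\Ass_{\univcent{g}}(V)$ (all of which have $\dim\Variety(P)=n$ by equi-rankness) and an irreducible component $A$ of $q^{-1}(\Variety(P))$, a translate of $\lie{a}^{*}$, and choose $\lambda\in A$ generic enough --- in particular, by Lemmas~\ref{lem:ExistenceQuotient} and~\ref{lem:IrrExistenceQuot}, $V/\Ker(\chi_{\lambda})V\neq0$ outside an at most countable union of proper subvarieties of $A$. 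Applying Lemma~\ref{lem:ExistenceIrrQuot} to the nonzero $(\lie{g},K)$-module $V/\Ker(\chi_{\lambda})V$, whose infinitesimal character is $[\lambda]$, gives $(V/\Ker(\chi_{\lambda})V)/\lie{n}(V/\Ker(\chi_{\lambda})V)\neq0$, that is, $V\neq\lie{n}V+\Ker(\chi_{\lambda})V$; one then feeds this into the $\alg{A}$-module structure, running the coevaluate-and-multiply maps $\varphi_{F}$ from the proof of Theorem~\ref{thm:Affinity} now modulo $\lie{n}V$, so as to produce from a suitable nonzero $v\in V^{P}$ an element of $V/\lie{n}V$ whose $\univcent{l}$-annihilator still cuts out an $n$-dimensional variety. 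I expect this last step to be the main obstacle: the functor $M\mapsto M/\lie{n}M$ is only right exact, so the injectivity arguments of Theorem~\ref{thm:Affinity} do not pass to $\lie{n}$-coinvariants verbatim, and one must rule out a drop of the support dimension --- precisely the place where the assumption that $\alg{A}$ has at most countable dimension is used, as illustrated by Remark~\ref{rmk:Affinity}.
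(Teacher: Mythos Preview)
Your upper bound and your ``moreover'' argument are both correct. For the latter the paper takes a slightly different route --- it deduces it as a second pass of the covering argument described below rather than by lifting $\bar v$ to $v$ and invoking Theorem~\ref{thm:Affinity} for $\Variety(\Ann_{\univcent{g}}(v))$ --- but your direct argument via a preimage works and is arguably cleaner, since it only needs a \emph{finite} union of affine images rather than a countable one.

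The genuine gap is exactly where you flag it: the lower bound $\Rrankmax(V/\lie{n}V)\ge n$. The paper does \emph{not} try to manufacture a single $\bar v$ with $n$-dimensional support, and it does not rerun the $\varphi_F$ maps modulo $\lie{n}V$; your worry that right-exactness of $M\mapsto M/\lie{n}M$ blocks that route is well founded. The missing idea is a covering argument. Fix $P\in\Ass_{\univcent{g}}(V)$ and set $X=\{\chi\in\Variety(P):V/\Ker(\chi)V\neq0\}$. For each $\chi\in X$, Lemma~\ref{lem:ExistenceIrrQuot} applied to the $(\lie{g},K)$-module $V/\Ker(\chi)V$ (whose $\lie{n}$-coinvariants are a quotient of $V/\lie{n}V$) produces an irreducible $\lie{l}$-quotient of $V/\lie{n}V$ with infinitesimal character in $q_L^{-1}(\chi)+\rho_{\lie{n}}$, hence
\[
X\ \subset\ \bigcup_{Q\in\Ass_{\univcent{l}}(V/\lie{n}V)}q_L\bigl(\Variety(Q)-\rho_{\lie{n}}\bigr).
\]
By Lemmas~\ref{lem:ExistenceQuotient} and~\ref{lem:IrrExistenceQuot}, the complement $\Variety(P)\setminus X$ lies in an at most countable union of proper closed subvarieties of $\Variety(P)$; the index set $\Ass_{\univcent{l}}(V/\lie{n}V)$ is also at most countable since $V$ has countable dimension, and each $q_L(\Variety(Q)-\rho_{\lie{n}})$ has dimension $\le n$ by your upper bound. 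An irreducible variety over $\CC$ is not a countable union of proper closed subsets, so some term $q_L(\Variety(Q')-\rho_{\lie{n}})\cap\Variety(P)$ must equal all of $\Variety(P)$, forcing $\dim\Variety(Q')=n$ and hence $\Rrankmax(V/\lie{n}V)=n$. The point is that one only ever uses \emph{existence} of quotients with prescribed infinitesimal character, so the obstacle you anticipated --- controlling kernels under passage to $\lie{n}$-coinvariants --- simply never arises.
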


\begin{proof}
	Write $p\colon V\rightarrow V/\lie{n}V$ for the quotient map.
	Then we have $\Ann_{\univcent{g}}(v)\subset \Ann_{\univcent{l}}(p(v))$ for any $v \in V$, where $\univcent{g}$ is identified with a subalgebra of $\univcent{l}$ via the quotient $\univ{g}\rightarrow \univ{g}/(\lie{n}\univ{g} + \univ{g}\lie{\overline{n}})\simeq \univ{l}$.
	This implies $\Rrankmax(V/\lie{n}V) \leq n$ and
	\begin{align}
		\bigcup_{P \in \Ass_{\univcent{g}}(V)} \Variety(P) \supset \bigcup_{Q \in \Ass_{\univ{t}}(V/\lie{n}V)} q_L(\Variety(Q)- \rho_{\lie{n}}).
		\label{eqn:Ucoinv}
	\end{align}

	Let $P \in \Ass_{\univcent{g}}(V)$ and put $X\coloneq \set{\chi \in \Variety(P): V/\Ker(\chi)V\neq 0}$.
	By Lemma \ref{lem:ExistenceIrrQuot}, we have
	\begin{align*}
		X \subset \bigcup_{Q \in \Ass_{\univcent{l}}(V/\lie{n}V)} q_L(\Variety(Q) - \rho_{\lie{n}}),
	\end{align*}
	and by Lemmas \ref{lem:ExistenceQuotient} and \ref{lem:IrrExistenceQuot}, $\Variety(P) - X$ is contained in an at most countable union of closed subvarieties strictly contained in $\Variety(P)$.
	Hence we have
	\begin{align*}
		\Variety(P) = (\Variety(P) - X) \cup \bigcup_{Q \in \Ass_{\univcent{l}}(V/\lie{n}V)} (q_L(\Variety(Q) - \rho_{\lie{n}}) \cap \Variety(P)).
	\end{align*}
	Note that $\Ass_{\univcent{l}}(V/\lie{n}V)$ is an at most countable set since $V$ has at most countable dimension.
	Since $\Variety(P)$ is irreducible and $\dim(\Variety(Q)) \leq n = \dim(\Variety(P))$ for any $Q \in \Ass_{\univcent{l}}(V/\lie{n}V)$, there exists $Q' \in \Ass_{\univcent{l}}(V/\lie{n}V)$ such that $q_L(\Variety(Q') - \rho_{\lie{n}}) = \Variety(P)$.
	This shows the desired equality $\Rrankmax(V/\lie{n}V) = n$.

	Let $Q \in \Ass_{\univcent{l}}(V/\lie{n}V)$ with $\dim(\Variety(Q)) = n$.
	By \eqref{eqn:Ucoinv} and the same argument as in the above, there exists $P \in \Ass_{\univcent{g}}(V)$ such that $\Variety(P) = q_L(\Variety(Q) - \rho_{\lie{n}})$.
	Since $\Variety(Q)$ is irreducible, $\Variety(Q)$ is an irreducible component of $q_L^{-1}(\Variety(P)) + \rho_{\lie{n}}$.
	This shows the last assertion.
\end{proof}

In the proof of Proposition \ref{prop:Ucoinv}, we have shown the following result.

\begin{corollary}
	Let $V$ be an irreducible $(\alg{A}, K)$-module.
	Set $n\coloneq \Rrank(V|_{\lie{g}})$.
	Then we have
	\begin{align*}
		\bigcup_{P \in \Ass_{\univcent{g}}(V)} \Variety(P) &= \bigcup_{\substack{Q \in \Ass_{\univcent{l}}(V/\lie{n}V) \\ \dim(\Variety(Q)) = n}} q_L(\Variety(Q) - \rho_{\lie{n}})
	\end{align*}
	and
	\begin{align*}
		\Variety(\Ann_{\univcent{g}}(V)) &= q_L(\Variety(\Ann_{\univcent{l}}(V/\lie{n}V)) - \rho_{\lie{n}}) \\
		&= q_L(\Variety(\Ann_{\univcent{l}}(\rfilt{n}{V/\lie{n}V}/\rfilt{n-1}{V/\lie{n}V})) - \rho_{\lie{n}}).
	\end{align*}
\end{corollary}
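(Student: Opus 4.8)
The plan is to repackage what is already contained in the proof of Proposition~\ref{prop:Ucoinv}; no new idea is required, only some bookkeeping. Write $W\coloneq V/\lie{n}V$ and $p\colon V\to W$ for the quotient map, and recall the identification of $\univcent{g}$ with a subalgebra of $\univcent{l}$ coming from $\univ{g}\to\univ{g}/(\lie{n}\univ{g}+\univ{g}\overline{\lie{n}})\simeq\univ{l}$, under which the character $\chi^{\lie{l}}_\lambda$ of $\univcent{l}$ restricts to $\chi^{\lie{g}}_{\lambda-\rho_{\lie{n}}}$ on $\univcent{g}$. Since $S(\lie{t})^{W_L}$ is a finite module over $S(\lie{t})^W$, the morphism $q_L$ is finite, hence closed, and for any $\univcent{l}$-module $M$ one has $\Variety(\Ann_{\univcent{g}}(M))=q_L(\Variety(\Ann_{\univcent{l}}(M))-\rho_{\lie{n}})$ by lying over. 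From the proof of Proposition~\ref{prop:Ucoinv} I extract the two facts: (i) for every $P\in\Ass_{\univcent{g}}(V)$ there is $Q\in\Ass_{\univcent{l}}(W)$ with $\dim\Variety(Q)=n$ and $q_L(\Variety(Q)-\rho_{\lie{n}})=\Variety(P)$; and (ii) for every $Q\in\Ass_{\univcent{l}}(W)$ with $\dim\Variety(Q)=n$ there is $P\in\Ass_{\univcent{g}}(V)$ with $\Variety(P)=q_L(\Variety(Q)-\rho_{\lie{n}})$. (In (i) I also use that $V$ is equi-rank, by Theorem~\ref{thm:RankFiltration}, so every $P\in\Ass_{\univcent{g}}(V)$ has $\dim\Variety(P)=n$.) Taken together, (i) and (ii) are precisely the first displayed equality.

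For the second equality I would argue as follows. Since $p$ is $\univ{g}$-linear, $zV=0$ implies $zW=0$ for $z\in\univcent{g}$, so $\Ann_{\univcent{g}}(V)\subset\Ann_{\univcent{g}}(W)=\univcent{g}\cap\Ann_{\univcent{l}}(W)$, which gives $q_L(\Variety(\Ann_{\univcent{l}}(W))-\rho_{\lie{n}})=\Variety(\Ann_{\univcent{g}}(W))\subset\Variety(\Ann_{\univcent{g}}(V))$. Conversely, Lemma~\ref{lem:BoundAnn} shows $\sqrt{\Ann_{\univcent{g}}(V)}=\bigcap_{P\in\Ass_{\univcent{g}}(V)}P$, hence $\Variety(\Ann_{\univcent{g}}(V))=\overline{\bigcup_{P\in\Ass_{\univcent{g}}(V)}\Variety(P)}$, and by (i) each $\Variety(P)$ is contained in the closed set $q_L(\Variety(\Ann_{\univcent{l}}(W))-\rho_{\lie{n}})$, so the closure of their union is too. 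This proves the second equality.

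For the third equality, set $W'\coloneq\rfilt{n}{W}/\rfilt{n-1}{W}=W/\rfilt{n-1}{W}$; by Proposition~\ref{prop:Ucoinv} $\Rrankmax(W)=n$, so $W'\neq 0$, and by Proposition~\ref{prop:RankFiltr} $W'$ is equi-rank of rank $n$. The inclusion $\Ann_{\univcent{l}}(W)\subset\Ann_{\univcent{l}}(W')$ gives one containment. For the other, let $Q\in\Ass_{\univcent{l}}(W)$ with $\dim\Variety(Q)=n$; by definition $Q=\Ann_{\univcent{l}}(w_0)$ for some $w_0\in W$, and $\dim\Variety(\Ann_{\univcent{l}}(w_0))=n$ forces $w_0\notin\rfilt{n-1}{W}$, so its image $\overline{w_0}\in W'$ is nonzero. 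Since $W'$ is equi-rank of rank $n$, $\dim\Variety(\Ann_{\univcent{l}}(\overline{w_0}))=n$; as $\Variety(\Ann_{\univcent{l}}(\overline{w_0}))\subset\Variety(Q)$ with $\Variety(Q)$ irreducible of dimension $n$, this forces $\Variety(Q)=\Variety(\Ann_{\univcent{l}}(\overline{w_0}))\subset\Variety(\Ann_{\univcent{l}}(W'))$. Combining this with the first and second equalities yields
\begin{align*}
	\Variety(\Ann_{\univcent{g}}(V))=\bigcup_{\substack{Q\in\Ass_{\univcent{l}}(W)\\ \dim\Variety(Q)=n}}q_L(\Variety(Q)-\rho_{\lie{n}})\subset q_L(\Variety(\Ann_{\univcent{l}}(W'))-\rho_{\lie{n}}),
\end{align*}
which together with the reverse containment above finishes the proof.

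Most of this is routine; the two points that I expect to need care are, first, the $\rho_{\lie{n}}$-shifted dictionary between supports over $\univcent{g}$ and over $\univcent{l}$ together with the behaviour of (possibly infinite) unions under Zariski closure, and second, the observation in the third equality that a maximal-dimensional associated prime of $W$ is realized by an element of $W$ lying outside $\rfilt{n-1}{W}$, so that it persists in the top graded piece $W'$. This second point is the crux of the third equality.
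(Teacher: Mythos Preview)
Your argument is correct and follows the same route as the paper, which simply records that the corollary is contained in the proof of Proposition~\ref{prop:Ucoinv}; you have written out the bookkeeping the paper omits, in particular the use of Lemma~\ref{lem:BoundAnn} (with its uniform exponent $|W|$) to identify $\sqrt{\Ann_{\univcent{g}}(V)}$ with $\bigcap_{P\in\Ass_{\univcent{g}}(V)}P$, and the persistence of top-dimensional associated primes in $W'=W/\rfilt{n-1}{W}$. One minor imprecision: in your final display the equality $\Variety(\Ann_{\univcent{g}}(V))=\bigcup_{Q}q_L(\Variety(Q)-\rho_{\lie{n}})$ should carry a Zariski closure on the union, since $\Ass_{\univcent{l}}(W)$ may be infinite; but as the right-hand side is contained in the closed set $q_L(\Variety(\Ann_{\univcent{l}}(W'))-\rho_{\lie{n}})$, so is its closure, and the conclusion is unaffected.
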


\section{Branching problem}\label{section:Branching}

In the previous section, we introduce the notions of ranks and Cartan subalgebras for $\lie{g}$-modules.
In this section, we consider the branching problem of reductive Lie groups.

\subsection{Hilbert representation}\label{subsection:GKBasic}

Let $\widetilde{G}_\RR$ be a connected semisimple Lie group with finite center and $G_\RR$ a connected reductive subgroup of $\widetilde{G}_\RR$.
Fix a Cartan involution $\theta$ of $\widetilde{G}_\RR$ that stabilizes $G_\RR$
and set $\widetilde{K}_\RR \coloneq \widetilde{G}_\RR^\theta$ and $K_\RR \coloneq G_\RR \cap \widetilde{K}_\RR$.
Write $\widetilde{K}$ (resp.\ $K$) for the complexification of $\widetilde{K}_\RR$ (resp.\ $K_\RR$).

Write $G$ for the Zariski closure of $\Ad(G_\RR)$ in $\Aut(\widetilde{\lie{g}})$.
Suppose that $\Lie(G)$ is the complexification of $\lie{g}_\RR$.
This implies that $\lie{g}$ is algebraic in $\widetilde{\lie{g}}$.
Then $(\univ{\widetilde{g}}, G)$ is a generalized pair.
Although many results hold without the algebraicity, we assume this for simplicity.

For an admissible Hilbert representation $\hrep{H}$ of $\widetilde{G}_\RR$, we denote by $\hrep{H}_{\widetilde{K}}$ (resp.\ $\hrep{H}^\infty$, $\hrep{H}^\omega$) the space of all $\widetilde{K}$-finite vectors (resp.\ smooth vectors, analytic vectors).
Then $\hrep{H}_{\widetilde{K}}$, $\hrep{H}^\infty$ and $\hrep{H}^\omega$ are $\widetilde{\lie{g}}$-modules.
It is well-known that any $(\widetilde{\lie{g}}, \widetilde{K})$-module of finite length has such a realization by the Harish-Chandra subquotient theorem.

For a $(\widetilde{\lie{g}}, \widetilde{K})$-module $V$ of finite length, we set $V^\infty \coloneq \hrep{H}^\infty$
and $V^\omega \coloneq \hrep{H}^\omega$, where $\hrep{H}$ is a Hilbert globalization of $V$, i.e.\ $\hrep{H}_{\widetilde{K}}\simeq V$.
$V^\infty$ and $V^\omega$ are independent of the choice of $\hrep{H}$.

We shall consider the ranks of the restrictions of $\hrep{H}$, $\hrep{H}^\infty$, $\hrep{H}^\omega$ and $\hrep{H}_{\widetilde{K}}$ to $G_\RR$ and $\lie{g}$.
At first, we give a remark about Hilbert globalizations of a $(\widetilde{\lie{g}}, \widetilde{K})$-module.
By Proposition \ref{prop:RankEasy}, a Hilbert globalization with finer topology has smaller ranks.
In fact, the ranks depend strongly on the choice of a Hilbert globalization as follows.

\begin{proposition}\label{prop:RankDependGlobalization}
	Let $V$ be an irreducible $(\widetilde{\lie{g}}, \widetilde{K})$-module.
	There exists a Hilbert globalization $\hrep{H}$ of $V$ such that $\Rrankmin(\hrep{H}|_{G_\RR}) = 0$.
	If, moreover, $V$ is unitarizable, there exists a Hilbert globalization $\hrep{H}$ of $V$ such that $\Rrankmax(\hrep{H}|_{G_\RR}) = 0$.
\end{proposition}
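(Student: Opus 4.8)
The strategy is to play Propositions~\ref{prop:ExistenceQuotient} and~\ref{prop:ExistenceQuotientUnitary} against duality. Let $U$ denote the contragredient $(\widetilde{\lie{g}},\widetilde{K})$-module of $V$; it is again irreducible, unitarizable whenever $V$ is, and $\check{U}\simeq V$. The mechanism is this: a continuous $G_\RR$-equivariant \emph{surjection} from a Hilbert globalization of $U$ onto an irreducible admissible representation of $G_\RR$ dualizes, via the Hilbert-space adjoint, to a closed $G_\RR$-equivariant embedding of an irreducible admissible representation into the contragredient Hilbert globalization, which is a Hilbert globalization of $\check{U}\simeq V$. The gain is that an irreducible admissible $(\lie{g},K_\RR)$-module has an infinitesimal character (Harish-Chandra), so its $K_\RR$-finite vectors are $G_\RR$-smooth and are annihilated by a maximal ideal of $\univcent{g}$; hence each such vector lies in $\rfilt{0}{(\hrep{H}|_{G_\RR})^\infty}$ for the constructed globalization $\hrep{H}$.

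For the first assertion I would argue as follows. Fix a Hilbert globalization $\hrep{H}'$ of $U$ (an irreducible admissible Hilbert representation of $\widetilde{G}_\RR$). By Proposition~\ref{prop:ExistenceQuotient} there are a finite-dimensional $\widetilde{G}_\RR$-stable $F\subset\univ{\widetilde{g}}$ with $1\in F$ and a continuous $G_\RR$-equivariant surjection $T\colon\hrep{H}'_F\to\hrep{W}$ onto an irreducible admissible Hilbert representation $\hrep{W}$ of $G_\RR$. Set $\hrep{H}\coloneq(\hrep{H}'_F)^*$, a Hilbert globalization of $V$. The adjoint $T^*$ embeds the contragredient $\check{\hrep{W}}$ as a closed $G_\RR$-subrepresentation of $\hrep{H}|_{G_\RR}$; for any nonzero $K_\RR$-finite vector $w$ in $\check{\hrep{W}}$ the element $w\in\hrep{H}$ is $G_\RR$-smooth and satisfies $\dim\Variety(\Ann_{\univcent{g}}(w))=0$, so $\rfilt{0}{(\hrep{H}|_{G_\RR})^\infty}\neq 0$ and $\Rrankmin(\hrep{H}|_{G_\RR})=0$.

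For the second assertion, assume $V$ (hence $U$) unitarizable and fix the unitary globalization $\hrep{H}'$ of $U$. By Proposition~\ref{prop:ExistenceQuotientUnitary} there are a finite-dimensional $\widetilde{G}_\RR$-stable $F\subset\univ{\widetilde{g}}$ with $1\in F$ and a family $\{\hrep{W}_x\}_{x\in X}$ of closed $G_\RR$-subrepresentations of $\hrep{H}'_F|_{G_\RR}$ with each quotient $\hrep{Q}_x\coloneq\hrep{H}'_F/\hrep{W}_x$ irreducible admissible and $\bigcap_{x}\hrep{W}_x=0$. Again put $\hrep{H}\coloneq(\hrep{H}'_F)^*$, a Hilbert globalization of $V$. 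The adjoints of the quotient maps realize the irreducible admissible representations $\check{\hrep{Q}}_x$ as closed $G_\RR$-subrepresentations of $\hrep{H}$, and the condition $\bigcap_{x}\hrep{W}_x=0$ is, by taking annihilators (equivalently orthogonal complements), equivalent to these subrepresentations spanning a dense subspace of $\hrep{H}$. Inside each of them the $K_\RR$-finite vectors are $G_\RR$-smooth, $\univcent{g}$-finite, and dense; and a finite sum of $\univcent{g}$-finite vectors is again $\univcent{g}$-finite. Hence $\rfilt{0}{(\hrep{H}|_{G_\RR})^\infty}$ contains a dense subspace of $\hrep{H}$, so $\overline{\rfilt{0}{(\hrep{H}|_{G_\RR})^\infty}}=\hrep{H}$ and $\Rrankmax(\hrep{H}|_{G_\RR})=0$.

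The step I expect to require the most care is the duality bookkeeping: that the contragredient of a Hilbert globalization of $U$ is a Hilbert globalization of $\check{U}$ (so that $\widetilde{K}$-finiteness is preserved both under the graph-norm completion and under passing to the adjoint representation); that the adjoint of a continuous $G_\RR$-equivariant surjection of Hilbert representations is a closed $G_\RR$-equivariant embedding of the contragredients, with image the annihilator of the kernel; and that a $K_\RR$-finite vector of a closed admissible subrepresentation is genuinely a smooth vector of the ambient Hilbert space. These are all routine, but they must be lined up correctly. Everything of substance then reduces to the existence of infinitesimal characters for irreducible admissible $(\lie{g},K_\RR)$-modules, which is precisely the mechanism forcing the rank down to $0$ for these tailored globalizations, in contrast with, for instance, the unitary globalization of $V$ itself.
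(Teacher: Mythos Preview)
Your argument is correct and is precisely the paper's proof spelled out in detail: the paper simply says ``The first assertion follows from Proposition~\ref{prop:ExistenceQuotient} and the second assertion from Proposition~\ref{prop:ExistenceQuotientUnitary}, by taking the dual,'' and you have unpacked exactly what that duality step entails.
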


\begin{proof}
	The first assertion follows from Proposition \ref{prop:ExistenceQuotient}
	and the second assertion from Proposition \ref{prop:ExistenceQuotientUnitary}, by taking the dual.
\end{proof}

We shall state easy consequences of the results in the previous section.

\begin{proposition}\label{prop:BranchingEasy}
	Let $\hrep{H}$ be an irreducible admissible Hilbert representation of $\widetilde{G}_\RR$.
	\begin{enumerate}
		\item $\hrep{H}_{\widetilde{K}}|_{\lie{g}}$ and $\hrep{H}^\omega|_{G_\RR}$ are equi-rank.
		\item $\Rrankmin(\hrep{H}|_{G_\RR}) \leq \Rrankmax(\hrep{H}|_{G_\RR}) \leq \Rrankmax(\hrep{H}^\infty|_{G_\RR}) \leq \Rrank(\hrep{H}^\omega|_{G_\RR}) \leq \Rrank(\hrep{H}_{\widetilde{K}}|_{\lie{g}})$.
		\item $\glrank(\hrep{H}|_{G_\RR}) = \glrank(\hrep{H}^\infty|_{G_\RR}) = \glrank(\hrep{H}^\omega|_{G_\RR}) = \glrank(\hrep{H}_{\widetilde{K}}|_{\lie{g}})$
	\end{enumerate}
\end{proposition}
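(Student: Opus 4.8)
The plan is to deduce all three statements from Theorem \ref{thm:RankFiltration} applied to the generalized pair $(\univ{\widetilde{g}},G)$, together with the chain of globalizations $\hrep{H}_{\widetilde{K}}\subseteq \hrep{H}^\omega\subseteq \hrep{H}^\infty\subseteq \hrep{H}$ and the density of $\hrep{H}_{\widetilde{K}}$ in each of the others. First I would record that $\hrep{H}_{\widetilde{K}}$ is an irreducible $(\widetilde{\lie{g}},\widetilde{K})$-module, and that $\widetilde{K}$ is connected since $\widetilde{G}_\RR$ is connected; a standard argument then shows any $\widetilde{\lie{g}}$-submodule of $\hrep{H}_{\widetilde{K}}$ is automatically $\widetilde{K}$-stable, so $\hrep{H}_{\widetilde{K}}$ is already irreducible as a $\univ{\widetilde{g}}$-module. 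Theorem \ref{thm:RankFiltration} then gives that $\hrep{H}_{\widetilde{K}}|_{\lie{g}}$ is equi-rank; put $n\coloneq \Rrank(\hrep{H}_{\widetilde{K}}|_{\lie{g}})$. This is half of assertion~(1).

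For the other half of (1), that $\hrep{H}^\omega|_{G_\RR}$ is equi-rank, the crucial point is that by Theorem \ref{thm:RankFiltration} each $\rfilt{k}{\hrep{H}^\omega}$ is a $\univ{\widetilde{g}}$-submodule of $\hrep{H}^\omega$, hence in particular a $\widetilde{\lie{g}}$-submodule. Since every vector of $\hrep{H}^\omega$ is analytic, the closure in $\hrep{H}^\omega$ of a $\widetilde{\lie{g}}$-submodule is a closed $\widetilde{G}_\RR$-subrepresentation; and $\hrep{H}^\omega$ is topologically irreducible as a $\widetilde{G}_\RR$-representation (from topological irreducibility of $\hrep{H}$, admissibility, and density of $\hrep{H}_{\widetilde{K}}$), so that closure is $0$ or $\hrep{H}^\omega$. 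Taking $m\coloneq \Rrankmin(\hrep{H}^\omega|_{G_\RR})$ we have $\rfilt{m}{\hrep{H}^\omega}\neq 0$, whence $\overline{\rfilt{m}{\hrep{H}^\omega}}=\hrep{H}^\omega$ and so $\Rrankmax(\hrep{H}^\omega|_{G_\RR})\leq m$; together with the trivial $\Rrankmin\leq\Rrankmax$ this shows $\hrep{H}^\omega|_{G_\RR}$ is equi-rank. (I would also record $(\hrep{H}^\omega)^\infty=\hrep{H}^\omega$, so that the ranks of $\hrep{H}^\omega|_{G_\RR}$ in the sense of Definition \ref{def:RankContinous} are read off from the abstract $\lie{g}$-module $\hrep{H}^\omega$, with closures taken in $\hrep{H}^\omega$ for $\Rrankmax$.)

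Assertion (2) then assembles from pieces already in hand. The inequality $\Rrankmin(\hrep{H}|_{G_\RR})\leq \Rrankmax(\hrep{H}|_{G_\RR})$ is immediate, the smooth vectors being dense. The inclusions $\hrep{H}^\omega\hookrightarrow \hrep{H}^\infty\hookrightarrow \hrep{H}$ are continuous with dense image, all being globalizations of the same Harish-Chandra module, so Proposition \ref{prop:RankEasyConti}(3) gives $\Rrankmax(\hrep{H}|_{G_\RR})\leq \Rrankmax(\hrep{H}^\infty|_{G_\RR})\leq \Rrankmax(\hrep{H}^\omega|_{G_\RR})=\Rrank(\hrep{H}^\omega|_{G_\RR})$. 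It remains to see $\Rrank(\hrep{H}^\omega|_{G_\RR})\leq n$: for $v\in \hrep{H}_{\widetilde{K}}$ the ideal $\Ann_{\univcent{g}}(v)$ is computed inside $\univ{g}v\subseteq \hrep{H}_{\widetilde{K}}$ and so is the same whether $v$ is viewed in $\hrep{H}_{\widetilde{K}}$ or in $\hrep{H}^\omega$; since $\hrep{H}_{\widetilde{K}}|_{\lie{g}}$ is equi-rank of rank $n$ this gives $\hrep{H}_{\widetilde{K}}\subseteq \rfilt{n}{\hrep{H}^\omega}$, and density of $\hrep{H}_{\widetilde{K}}$ forces $\overline{\rfilt{n}{\hrep{H}^\omega}}=\hrep{H}^\omega$, i.e.\ $\Rrankmax(\hrep{H}^\omega|_{G_\RR})\leq n$.

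For assertion (3) the observation is that $\Ann_{\univcent{g}}$ does not see the globalization: $\hrep{H}_{\widetilde{K}}$ is a dense $\lie{g}$-submodule of $\hrep{H}^\omega$, of $\hrep{H}^\infty$, and of the $G_\RR$-smooth globalization $(\hrep{H}|_{G_\RR})^\infty$, and each element of $\univcent{g}$ acts on each of these by a continuous operator; hence an element of $\univcent{g}$ annihilating $\hrep{H}_{\widetilde{K}}$ annihilates the whole space, so $\Ann_{\univcent{g}}(\hrep{H}_{\widetilde{K}})=\Ann_{\univcent{g}}(\hrep{H}^\omega)=\Ann_{\univcent{g}}(\hrep{H}^\infty)=\Ann_{\univcent{g}}((\hrep{H}|_{G_\RR})^\infty)$, and the four quantities $\glrank$ coincide (the one for $\hrep{H}|_{G_\RR}$ by Definition \ref{def:RankContinous}). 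The step I expect to cost the most work is not the representation theory but the functional-analytic bookkeeping: equipping $\hrep{H}^\omega$ with a quasi-complete locally convex topology for which $\widetilde{G}_\RR$ acts by analytic (hence smooth) maps, $\hrep{H}_{\widetilde{K}}$ is dense, and the inclusions $\hrep{H}^\omega\hookrightarrow \hrep{H}^\infty\hookrightarrow \hrep{H}$ are continuous with dense image; and, for (3), the density of $\hrep{H}_{\widetilde{K}}$ in $(\hrep{H}|_{G_\RR})^\infty$. These are standard consequences of the Casselman subrepresentation theorem and minimal/analytic globalization theory, but they are what the argument genuinely rests on; once they are granted, the algebraic core — the $\widetilde{\lie{g}}$-stability of the rank filtration coming from Theorem \ref{thm:RankFiltration} — is very short.
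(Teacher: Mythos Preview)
Your proposal is correct and follows essentially the same approach as the paper: apply Theorem~\ref{thm:RankFiltration} to get $\widetilde{\lie{g}}$-stability of the rank filtration (hence equi-rank for $\hrep{H}_{\widetilde{K}}$ and, via closure and topological irreducibility of $\hrep{H}^\omega$, for $\hrep{H}^\omega$), and then deduce (2) and (3) from Proposition~\ref{prop:RankEasyConti} together with the density of $\hrep{H}_{\widetilde{K}}$ in each globalization. The paper's proof is terser but identical in substance; your extra care with the functional-analytic underpinning (quasi-completeness and density in $(\hrep{H}|_{G_\RR})^\infty$) is justified and not spelled out in the paper either.
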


\begin{remark}
	$\hrep{H}^\infty|_{G_\RR}$ may not coincide with $(\hrep{H}|_{G_\RR})^\infty$.
\end{remark}

\begin{proof}
	Since $\hrep{H}$ is irreducible, $\hrep{H}_{\widetilde{K}}$ is an irreducible $\widetilde{\lie{g}}$-module
	and $\hrep{H}^\omega$ is an irreducible representation of $\widetilde{G}_\RR$.
	Theorem \ref{thm:RankFiltration} shows that $\hrep{H}_{\widetilde{K}}|_{\lie{g}}$ is equi-rank.
	Set $k\coloneq \Rrankmin(\hrep{H}^\omega|_{G_\RR})$.
	By Theorem \ref{thm:RankFiltration}, $\rfilt{k}{\hrep{H}^\omega|_{G_\RR}}$
	is $\widetilde{\lie{g}}$-stable, and hence the closure $\overline{\rfilt{k}{\hrep{H}^\omega|_{G_\RR}}}$ in $\hrep{H}^\omega$ is $\widetilde{G}_\RR$-stable.
	Hence we have $k = \Rrankmax(\hrep{H}^\omega|_{G_\RR})$.

	The second and third assertions follow from Proposition \ref{prop:RankEasyConti} and the fact that $\hrep{H}_{\widetilde{K}}$ is dense in $\hrep{H}$, $\hrep{H}^\infty$ and $\hrep{H}^\omega$.
\end{proof}

The ranks are preserved by the translation functor.
More generally, we have the following results.

\begin{proposition}\label{prop:TranslationStable}
	Let $V$ be an irreducible $(\widetilde{\lie{g}}, \widetilde{K})$-module and let $F$ be a finite-dimensional $(\widetilde{\lie{g}}, \widetilde{K})$-module.
	\begin{enumerate}
		\item $\glrank(W|_{\lie{g}}) = \glrank(V|_{\lie{g}})$ for any non-zero submodule or quotient $W$ of $F\otimes V$.
		\item $W$ is equi-rank and $\Rrank(W|_{\lie{g}}) = \Rrank(V|_{\lie{g}})$ for any non-zero submodule $W$ of $F\otimes V$.
		\item $\Rrankmax(W|_{\lie{g}}) = \Rrank(V|_{\lie{g}})$ for any non-zero quotient $W$ of $F\otimes V$.
	\end{enumerate}
\end{proposition}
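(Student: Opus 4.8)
The plan is to reduce all three assertions to the corresponding statements for the full tensor product $F\otimes V$, and to use tensor--Hom adjunction together with the irreducibility of $V$ to upgrade inequalities to equalities. Two facts are already in hand. Since $\widetilde{K}$ is connected, $V$ is irreducible as a $\univ{\widetilde{g}}$-module, so Theorem~\ref{thm:RankFiltration} applied to the generalized pair $(\univ{\widetilde{g}},G)$ shows that $V|_{\lie{g}}$ is equi-rank. Also, $F|_{\lie{g}}$ (and hence $F^*|_{\lie{g}}$) is a completely reducible finite-dimensional $\lie{g}$-module, because $\lie{g}$ is reductive in the semisimple Lie algebra $\widetilde{\lie{g}}$; thus Corollary~\ref{cor:TranslationSupportNonFinGen} applies to tensoring an arbitrary $\lie{g}$-module by $F$ or $F^*$. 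Combining these, $\Rrankmin(F\otimes V|_{\lie{g}})=\Rrankmin(V|_{\lie{g}})=\Rrankmax(V|_{\lie{g}})=\Rrankmax(F\otimes V|_{\lie{g}})$, so $F\otimes V|_{\lie{g}}$ is itself equi-rank with $\Rrank(F\otimes V|_{\lie{g}})=\Rrank(V|_{\lie{g}})$, and similarly $\glrank(F\otimes V|_{\lie{g}})=\glrank(V|_{\lie{g}})$.

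Assertion~2 then follows at once: a nonzero submodule $W$ of $F\otimes V$ embeds into the equi-rank $\lie{g}$-module $F\otimes V$, so Proposition~\ref{prop:RankEasy}(4) makes $W|_{\lie{g}}$ equi-rank with $\Rrank(W|_{\lie{g}})=\Rrank(F\otimes V|_{\lie{g}})=\Rrank(V|_{\lie{g}})$. For one half of assertions~1 and~3, apply Proposition~\ref{prop:RankEasy}(3),(4) to the surjection $F\otimes V\twoheadrightarrow W$ (if $W$ is a quotient) or the injection $W\hookrightarrow F\otimes V$ (if $W$ is a submodule): this gives $\Rrankmax(W|_{\lie{g}})\leq\Rrankmax(F\otimes V|_{\lie{g}})=\Rrank(V|_{\lie{g}})$ for a quotient, and $\glrank(W|_{\lie{g}})\leq\glrank(F\otimes V|_{\lie{g}})=\glrank(V|_{\lie{g}})$ in both cases.

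For the reverse inequalities I would invoke the adjunctions $\Hom_{\widetilde{\lie{g}}}(F\otimes V,W)\simeq\Hom_{\widetilde{\lie{g}}}(V,F^*\otimes W)$ and $\Hom_{\widetilde{\lie{g}}}(W,F\otimes V)\simeq\Hom_{\widetilde{\lie{g}}}(F^*\otimes W,V)$. If $W$ is a nonzero quotient of $F\otimes V$, the induced nonzero map $V\to F^*\otimes W$ is injective because $V$ is irreducible; since by Corollary~\ref{cor:TranslationSupportNonFinGen} the module $F^*\otimes W|_{\lie{g}}$ shares the $\Rrankmax$ and $\glrank$ of $W|_{\lie{g}}$, Proposition~\ref{prop:RankEasy}(4) yields $\Rrank(V|_{\lie{g}})=\Rrankmax(V|_{\lie{g}})\leq\Rrankmax(F^*\otimes W|_{\lie{g}})=\Rrankmax(W|_{\lie{g}})$ and $\glrank(V|_{\lie{g}})\leq\glrank(W|_{\lie{g}})$, which establishes assertion~3 and the quotient case of assertion~1. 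Dually, if $W$ is a nonzero submodule of $F\otimes V$, the induced nonzero map $F^*\otimes W\to V$ is surjective by irreducibility of $V$, so Proposition~\ref{prop:RankEasy}(3) gives $\glrank(V|_{\lie{g}})\leq\glrank(F^*\otimes W|_{\lie{g}})=\glrank(W|_{\lie{g}})$, completing assertion~1.

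The step needing the most care is the last one: since $W|_{\lie{g}}$ and $F^*\otimes W|_{\lie{g}}$ need not be finitely generated, the identity $\Rrankmax=\glrank$ of Proposition~\ref{prop:RankFinitelyGen} is unavailable, and the stability of $\Rrankmax$ and $\glrank$ under tensoring with $F^*$ must be quoted in the non-finitely-generated form of Corollary~\ref{cor:TranslationSupportNonFinGen}; this is precisely where the complete reducibility of $F^*|_{\lie{g}}$ and the irreducibility of $V$ interact. No genuine obstacle beyond this bookkeeping arises.
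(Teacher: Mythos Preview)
Your proof is correct and follows essentially the same route as the paper's: reduce to $F\otimes V$ via Corollary~\ref{cor:TranslationSupportNonFinGen}, use Proposition~\ref{prop:RankEasy} for the easy inequalities, and then tensor--Hom adjunction together with the irreducibility of $V$ for the reverse inequalities. The paper in fact treats only the submodule case explicitly and leaves the quotient case as ``similar''; you have filled in both directions, and your closing remark about needing the non-finitely-generated form of Corollary~\ref{cor:TranslationSupportNonFinGen} is a fair point of bookkeeping.
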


\begin{remark}
	In the third assertion, $W|_{\lie{g}}$ may not be equi-rank.
	In fact, if $V$ is the irreducible highest weight module of $\classicalg{sl}(2,\CC)$ with highest weight $-2$ and $F$ is the irreducible module with highest weight $2$, then a quotient of $V\otimes F$ has a finite-dimensional non-zero submodule.
	Then a split Cartan subgroup $G_\RR$ of $\classicalG{SL}(2,\RR)$ gives an example. 
\end{remark}

\begin{proof}
	We shall consider a non-zero submodule $W$ of $V$.
	The proof for quotients is similar, so we omit the details.

	By Corollary \ref{cor:TranslationSupportNonFinGen}, $\Rrank((V\otimes F)|_{\lie{g}}) = \Rrank(V|_{\lie{g}})$.
	Hence the second assertion follows from Proposition \ref{prop:RankEasy}.

	Let $\iota\colon W\rightarrow F\otimes V$ denote the inclusion.
	Using the canonical isomorphism $\Hom_{\widetilde{\lie{g}}, \widetilde{K}}(W, F\otimes V) \simeq \Hom_{\widetilde{\lie{g}}, \widetilde{K}}(F^*\otimes W, V)$,
	we obtain a non-zero homomorphism $\iota' \colon F^* \otimes W\rightarrow V$.
	Since $V$ is irreducible, $\iota'$ is surjective.
	By Proposition \ref{prop:RankEasy} and Corollary \ref{cor:TranslationSupportNonFinGen}, we obtain
	$\glrank(W|_{\lie{g}}) = \glrank((F\otimes V)|_{\lie{g}}) = \glrank(V|_{\lie{g}})$.
\end{proof}

We need to modify Proposition \ref{prop:TranslationStable} for continuous representations because of the difference between Propositions \ref{prop:RankEasy} and \ref{prop:RankEasyConti}.
We state the result only for direct summands.

\begin{proposition}
	Let $V$ be an irreducible $(\widetilde{\lie{g}}, \widetilde{K})$-module and let $F$ be a finite-dimensional $(\widetilde{\lie{g}}, \widetilde{K})$-module.
	Let $W$ be a direct summand of $F\otimes V$.
	\begin{enumerate}
		\item $W^\omega|_{G_\RR}$ is equi-rank and $\Rrank(W^\omega|_{G_\RR}) = \Rrank(V^\omega|_{G_\RR})$.
		\item We have $\Rrankmax(W^\infty|_{G_\RR}) = \Rrankmax(V^\infty|_{G_\RR})$ and $\Rrankmin(W^\infty|_{G_\RR}) = \Rrankmin(V^\infty|_{G_\RR})$.
	\end{enumerate}
\end{proposition}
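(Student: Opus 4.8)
The plan is to transport the translation-functor argument of Proposition~\ref{prop:TranslationStable} to the level of smooth, resp.\ analytic, globalizations, using Proposition~\ref{prop:RankEasyConti} in place of Proposition~\ref{prop:RankEasy}. Since $F\otimes V$ is admissible and $\univcent{\widetilde{g}}$-finite (Fact~\ref{fact:Kostant}), it has finite length, so $W^\infty$ and $W^\omega$ are defined, as are $(W')^\infty$, $(W')^\omega$ for a complement $W'$ with $F\otimes V=W\oplus W'$. Fix $\sharp\in\set{\infty,\omega}$. The smooth and analytic globalization functors are additive and commute with $-\otimes F$ and $-\otimes F^*$, so $(F\otimes V)^\sharp=F\otimes V^\sharp=W^\sharp\oplus(W')^\sharp$; thus $W^\sharp$ is simultaneously a closed subrepresentation and a quotient of $F\otimes V^\sharp$, with inclusion $i\colon W^\sharp\hookrightarrow F\otimes V^\sharp$ and projection $p\colon F\otimes V^\sharp\twoheadrightarrow W^\sharp$. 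The $\Hom$--tensor adjunction for the finite-dimensional $F$ turns $i$ and $p$ into continuous $\widetilde{G}_\RR$-maps $j\colon F^*\otimes W^\sharp\to V^\sharp$ and $k\colon V^\sharp\to F^*\otimes W^\sharp$, both non-zero. Since the smooth and analytic globalizations of the irreducible Harish-Chandra module $V$ are topologically irreducible (every non-zero closed $\widetilde{G}_\RR$-invariant subspace is the whole space), $j$ has dense image and $k$ is injective.

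The heart of the proof is a refinement of Corollary~\ref{cor:TranslationSupportNonFinGen} that I would establish first: for every $\lie{g}$-module $N$ and every completely reducible finite-dimensional $\lie{g}$-module $E$, one has $\rfilt{k}{E\otimes N}=E\otimes\rfilt{k}{N}$ for all $k$. The inclusion $\supseteq$ follows by placing $\zeta\in E\otimes\rfilt{k}{N}$ inside $E\otimes N_0$, where $N_0\subset\rfilt{k}{N}$ is the finitely generated submodule generated by the tensor components of $\zeta$, and using $\glrank(E\otimes N_0)=\glrank(N_0)\le k$ from Corollary~\ref{cor:TranslationSupportNonFinGen}. For $\subseteq$, given $\xi\in\rfilt{k}{E\otimes N}$, apply the canonical $\lie{g}$-equivariant contraction $E^*\otimes E\otimes N\to N$ (coming from the invariant pairing $E^*\otimes E\to\CC$) to the finitely generated submodule $E^*\otimes\univ{g}\xi$; its image is a finitely generated submodule of $N$ of $\glrank$ at most $k$ that contains every tensor component of $\xi$, so each such component lies in $\rfilt{k}{N}$, whence $\xi\in E\otimes\rfilt{k}{N}$. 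Applying this identity to the space of smooth vectors of a continuous representation $M$ of $\widetilde{G}_\RR$, and using $(E\otimes M)^\infty=E\otimes M^\infty$ together with $\overline{E\otimes A}=E\otimes\overline{A}$ (valid because $E$ is finite-dimensional), one obtains the continuous analogue of Corollary~\ref{cor:TranslationSupportNonFinGen}: $\Rrankmin((E\otimes M)|_{G_\RR})=\Rrankmin(M|_{G_\RR})$ and $\Rrankmax((E\otimes M)|_{G_\RR})=\Rrankmax(M|_{G_\RR})$.

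Combining the two ingredients finishes the proof. From $i$ injective, Proposition~\ref{prop:RankEasyConti} and the translation identity give $\Rrankmin(V^\sharp|_{G_\RR})=\Rrankmin((F\otimes V)^\sharp|_{G_\RR})\le\Rrankmin(W^\sharp|_{G_\RR})$; from $k$ injective they give $\Rrankmin(W^\sharp|_{G_\RR})=\Rrankmin((F^*\otimes W)^\sharp|_{G_\RR})\le\Rrankmin(V^\sharp|_{G_\RR})$, so $\Rrankmin(W^\sharp|_{G_\RR})=\Rrankmin(V^\sharp|_{G_\RR})$. Symmetrically, $p$ and $j$ have dense image, so Proposition~\ref{prop:RankEasyConti} and the translation identity give $\Rrankmax(W^\sharp|_{G_\RR})\le\Rrankmax((F\otimes V)^\sharp|_{G_\RR})=\Rrankmax(V^\sharp|_{G_\RR})\le\Rrankmax((F^*\otimes W)^\sharp|_{G_\RR})=\Rrankmax(W^\sharp|_{G_\RR})$, hence $\Rrankmax(W^\sharp|_{G_\RR})=\Rrankmax(V^\sharp|_{G_\RR})$. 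Taking $\sharp=\infty$ is the second assertion. Taking $\sharp=\omega$, since $V^\omega|_{G_\RR}$ is equi-rank by Proposition~\ref{prop:BranchingEasy}, these two equalities collapse to $\Rrankmin(W^\omega|_{G_\RR})=\Rrankmax(W^\omega|_{G_\RR})=\Rrank(V^\omega|_{G_\RR})$, which is the first assertion.

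I expect the main obstacle to be the refinement of Corollary~\ref{cor:TranslationSupportNonFinGen}, and within it the inclusion $\subseteq$: recovering the tensor components of a vector of $E\otimes N$ by a $\lie{g}$-equivariant construction, which is exactly where Kostant's theorem (Fact~\ref{fact:Kostant}) is genuinely used. The remaining points are standard: the functorial properties of the smooth and analytic globalizations (additivity, compatibility with $-\otimes F$, topological irreducibility of $V^\infty$ and $V^\omega$), which come from the Casselman--Wallach theory and its analytic counterpart, together with the elementary identity $\overline{E\otimes A}=E\otimes\overline{A}$ used to move closures past the finite-dimensional factor.
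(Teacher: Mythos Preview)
Your proof is correct and follows essentially the same route as the paper: establish the identity $\rfilt{k}{E\otimes N}=E\otimes\rfilt{k}{N}$ (which the paper states as a consequence of Corollary~\ref{cor:TranslationSupportNonFinGen} without spelling out the contraction argument you give), pass to the continuous translation identity, and then compare $W^\sharp$ with $F\otimes V^\sharp$ and $V^\sharp$ with $F^*\otimes W^\sharp$ via the adjunction maps and Proposition~\ref{prop:RankEasyConti}. Your separate use of the four maps $i,p,j,k$ is in fact a bit cleaner than the paper's formulation, which asserts that $V^\infty$ is a direct summand of $F^*\otimes W^\infty$.
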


\begin{proof}
	We shall show the second assertion.
	The first assertion is shown by the same way.

	By Corollary \ref{cor:TranslationSupportNonFinGen}, $\rfilt{k}{F\otimes V^\infty|_{\lie{g}}} = F\otimes \rfilt{k}{V^\infty|_{\lie{g}}}$ for any $k$.
	This implies
	\begin{align*}
		\Rrankmin(F\otimes V^\infty|_{G_\RR}) &= \Rrankmin(V^\infty|_{G_\RR}),\\
		\Rrankmax(F\otimes V^\infty|_{G_\RR}) &= \Rrankmax(V^\infty|_{G_\RR}).
	\end{align*}

	Recall that any homomorphism from a $(\widetilde{\lie{g}}, \widetilde{K})$-module $V_1$ to a $(\widetilde{\lie{g}}, \widetilde{K})$-module $V_2$ canonically extends to a continuous $\widetilde{G}_\RR$-linear map from $V_1^\infty$ to $V_2^\infty$ (see e.g.\ \cite[Theorem 11.6.7]{Wa92_real_reductive_II}).
	This implies that $W^\infty$ is a direct summand of $(F\otimes V)^\infty = F\otimes V^\infty$.
	Moreover, using the isomorphism $\Hom_{\lie{g}, K}(W, F\otimes V) \simeq \Hom_{\lie{g}, K}(F^*\otimes W, V)$,
	we obtain a surjective homomorphism $F^* \otimes W\rightarrow V$,
	and similarly an injective homomorphism $V\rightarrow F^* \otimes W$.
	Hence $V^\infty$ is a direct summand of $F^*\otimes W^\infty$.
	Therefore, Proposition \ref{prop:RankEasyConti} shows the desired equalities.
\end{proof}

\subsection{Rank on isotypic component}

In this subsection, we introduce a new rank, which comes from the $\univcent{g}$-action on each $K$-type.
The rank is more `stable' than the previous ones.
Retain the notation in the previous subsection.

For a reductive algebraic group $H$, let $\widehat{H}$ denote the set of all equivalence classes of irreducible $H$-module.
For $\tau \in \widehat{H}$ and an $H$-module $V$, we denote by $V(\tau)$ the isotypic component.

\begin{definition}
	Let $V$ be a $(\lie{g}, K)$-module.
	We define
	\begin{align*}
		\Krankmin(V) &\coloneq \min\set{\dim(\Variety(\Ann_{\univcent{g}}(V(\tau)))): \tau \in \widehat{K}}, \\
		\Krankmax(V) &\coloneq \max\set{\dim(\Variety(\Ann_{\univcent{g}}(V(\tau)))): \tau \in \widehat{K}}.
	\end{align*}
	If $\Krankmin(V) = \Krankmax(V)$, we write $\Krank(V) \coloneq \Krankmin(V)$.
\end{definition}

Obviously, we have
\begin{align*}
	\Rrankmin(V) \leq \Krankmin(V), \quad \Rrankmax(V) \leq \Krankmax(V).
\end{align*}
The equality $\Krankmin(V) = \Krankmax(V)$ often holds in the context of the branching problem.

\begin{proposition}\label{prop:EquiKrank}
	Let $V$ be an irreducible $(\lie{\widetilde{g}}, K)$-module.
	Then $\Krankmin(V|_{\lie{g}, K}) = \Krankmax(V|_{\lie{g}, K})$ holds.
\end{proposition}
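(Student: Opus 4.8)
The plan is to reduce the statement to the equi-rank theorem (Theorem \ref{thm:RankFiltration}) applied to a suitable generalized pair, and the key point is that for a fixed $K$-type $\tau$, the isotypic component $V(\tau)$ is an irreducible module over the algebra $\End_{\lie{g}}(V(\tau))$ in a way compatible with the $\univcent{g}$-action. First I would observe that $V|_{\lie{g},K}$ is an irreducible $(\univ{\widetilde{g}}, K)$-module, so by Theorem \ref{intro:thm:EquiRank} (equivalently Theorem \ref{thm:RankFiltration}), $V|_{\lie{g}}$ is equi-rank; set $n \coloneq \Rrank(V|_{\lie{g}})$. This already gives $\Krankmax(V|_{\lie{g},K}) \leq \glrank(V|_{\lie{g}}) = n$, since each $V(\tau)$ is a $\lie{g}$-submodule, hence $\dim(\Variety(\Ann_{\univcent{g}}(V(\tau)))) \leq \glrank(V|_{\lie{g}}) = n$. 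So it remains to prove the lower bound $\dim(\Variety(\Ann_{\univcent{g}}(V(\tau)))) \geq n$ for every $\tau \in \widehat{K}$ with $V(\tau) \neq 0$.

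For the lower bound, the idea is to use a $K$-version of the translation argument behind Lemma \ref{lem:TranslationSupport}. Fix $\tau, \tau' \in \widehat{K}$ with $V(\tau), V(\tau') \neq 0$. Because $V$ is an irreducible $(\widetilde{\lie{g}}, K)$-module and $\widetilde{\lie{g}}$ acts through $\univ{\widetilde{g}}$ with locally finite, completely reducible adjoint $\lie{g}$-action (indeed $K$-action), there is a finite-dimensional $K$-submodule $F \subset \univ{\widetilde{g}}$ with $V(\tau') \subset F \cdot V(\tau)$: concretely, pick finitely many vectors in $V(\tau)$ generating a $K$-subspace whose $\univ{\widetilde{g}}$-span meets $V(\tau')$, then take $F$ to be a $K$-stable finite-dimensional piece of $\univ{\widetilde{g}}$ doing the job. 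Then the multiplication map gives a $\lie{g}$-homomorphism $F \otimes V(\tau) \to V$ whose image contains $V(\tau')$; composing with the $K$-equivariant projection onto $V(\tau')$ we get a surjective $\lie{g}$-map onto a finitely generated $\lie{g}$-module (replace $V(\tau)$ by a finitely generated $\lie{g}$-submodule $U$ with $V(\tau') \subset F \cdot U$, which exists by the same argument). Now Lemma \ref{lem:TranslationSupport} applied to $F \otimes U$ yields $\glrank(F \otimes U) = \glrank(U)$, and Proposition \ref{prop:RankEasy}(3) gives $\glrank$ of any quotient is $\leq \glrank(F\otimes U) = \glrank(U)$, while Theorem \ref{thm:RankFiltration} (equi-rank of $V|_{\lie{g}}$) forces $\glrank(U) = n$ whenever $U \neq 0$; symmetrically, swapping $\tau$ and $\tau'$, one gets the reverse inequality between $\dim \Variety(\Ann_{\univcent{g}}(V(\tau)))$ and $\dim \Variety(\Ann_{\univcent{g}}(V(\tau')))$, using that an irreducible $\lie{g}$-submodule of $V(\tau)$ has the same $\glrank$ as $V(\tau)$ by equi-rankness. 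Running this with $\tau'$ arbitrary against a fixed $\tau_0$ with $V(\tau_0) \neq 0$ shows $\dim \Variety(\Ann_{\univcent{g}}(V(\tau)))$ is independent of $\tau$, which is exactly $\Krankmin = \Krankmax$.

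The step I expect to be the main obstacle is making the reduction "there is a finite-dimensional $K$-stable $F \subset \univ{\widetilde{g}}$ with $V(\tau') \subset F \cdot (\text{f.g.\ }\lie{g}\text{-submodule of }V(\tau))$" fully rigorous, together with keeping track of which module is finitely generated so that Lemma \ref{lem:TranslationSupport} and Proposition \ref{prop:RankFinitelyGen} actually apply: $V(\tau)$ itself need not be a finitely generated $\lie{g}$-module, so one must pass to a finitely generated submodule $U \subset V(\tau)$ chosen after knowing which finitely many vectors of $V(\tau)$ generate enough of $V(\tau')$ under $\univ{\widetilde{g}}$, and then enlarge $F$ accordingly. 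Once this bookkeeping is arranged, the translation estimates are routine given the equi-rank theorem, so the real content is simply that $\glrank$ is transported symmetrically between any two nonzero isotypic components via the $\univ{\widetilde{g}}$-action, forcing it to be constant and equal to $\Rrank(V|_{\lie{g}}) = n$.
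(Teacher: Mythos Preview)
Your overall strategy---compare two nonzero isotypic components $V(\tau)$, $V(\tau')$ via a translation argument and swap---is exactly the paper's. But the step you yourself flag as ``the main obstacle'' is not bookkeeping; it is the entire content of the proof, and your proposed justification does not establish it.

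Concretely, you need $V(\tau')\subset F\cdot V(\tau)$ for some finite-dimensional $G$-stable $F\subset\univ{\widetilde{g}}$ (note: $G$-stable, not merely $K$-stable, so that $F$ is a $\lie{g}$-module and Corollary~\ref{cor:TranslationSupportNonFinGen} applies). Your argument (``pick vectors whose $\univ{\widetilde{g}}$-span \emph{meets} $V(\tau')$'') only produces $F\cdot V(\tau)\cap V(\tau')\neq 0$, whereas $V(\tau')$ may be infinite-dimensional, so hitting one vector is far from containing all of $V(\tau')$. The paper closes this gap with two nontrivial ingredients: first, $V(\tau')$ is an \emph{irreducible} $\univ{\widetilde{g}}^{K}\otimes\univ{k}$-module, so once $\univ{\widetilde{g}}^{K}F\cdot V(\tau)$ meets $V(\tau')$ it contains all of it; second, the bimodule finiteness result \cite[Lemma~7.39]{Ki23} lets one rewrite $\univ{\widetilde{g}}^{K}F\subset F'\univ{\widetilde{g}}^{K}$ for some finite-dimensional $G$-stable $F'$, whence $V(\tau')\subset F'\univ{\widetilde{g}}^{K}V(\tau)=F'\cdot V(\tau)$ using $\univ{\widetilde{g}}^{K}V(\tau)=V(\tau)$. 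Without these two facts your inclusion is unproved.

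Two smaller points. Your first paragraph asserts that $V(\tau)$ is a $\lie{g}$-submodule and that $\glrank(V|_{\lie{g}})=\Rrank(V|_{\lie{g}})$; the former is false ($\lie{g}$ does not commute with $K$), and the latter is unjustified since $V|_{\lie{g}}$ need not be finitely generated (Proposition~\ref{prop:RankFinitelyGen} does not apply). Fortunately your second paragraph does not rely on this. Also, the ``$K$-equivariant projection onto $V(\tau')$'' is not a $\lie{g}$-map, so you cannot compose to get a surjective $\lie{g}$-homomorphism; the correct route is simply $V(\tau')\subset F'\cdot\univ{g}V(\tau)$ as an inclusion of $\univcent{g}$-modules, then apply Corollary~\ref{cor:TranslationSupportNonFinGen} to the $\lie{g}$-module $F'\otimes\univ{g}V(\tau)$---no finite-generation bookkeeping is needed.
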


\begin{proof}
	Let $\tau, \tau' \in \widehat{K}$ such that $V(\tau)$ and $V(\tau')$ are non-zero.
	By completely reducibility, $V(\tau)$ and $V(\tau')$ are irreducible $\univ{\widetilde{g}}^{K}\otimes \univ{k}$-modules.
	Then we can take a finite-dimensional $G$-submodule $F\subset \univ{g}$ such that
	$V(\tau') \subset \univ{\widetilde{g}}^{K}F\cdot V(\tau)$.
	Note that any finitely generated submodule of the $(\univ{\widetilde{g}}^{K}, \univ{\widetilde{g}}^{K})$-bimodule $\univ{\widetilde{g}}$
	is also finitely generated as a right $\univ{\widetilde{g}}^{K}$-module (see \cite[Lemma 7.39]{Ki23}).
	Hence there exists a finite-dimensional $G$-submodule $F'\subset \univ{\widetilde{g}}$ such that
	$V(\tau') \subset F'\univ{\widetilde{g}}^{K}\cdot V(\tau) = F'\cdot V(\tau)\subset F'\cdot \univ{g}V(\tau)$.
	Therefore, by Corollary \ref{cor:TranslationSupportNonFinGen}, we have
	\begin{align*}
		\dim(\Variety(\Ann_{\univcent{g}}(V(\tau')))) &\leq \glrank(\univ{g}V(\tau)) \\
		&=\dim(\Variety(\Ann_{\univcent{g}}(V(\tau)))).
	\end{align*}
	Swapping $\tau$ for $\tau'$, we obtain the converse,
	and hence we have shown the theorem.
\end{proof}

Since there exists a projection onto any $K_\RR$-isotypic component in each globalization, $\Krank$ does not depend on the choice of a globalization.
This is the reason why we consider the rank.

\begin{lemma}\label{lem:PairingKrank}
	Let $V$ and $W$ be $(\lie{g}, K)$-modules and $(\cdot, \cdot) \colon V\times W\rightarrow \CC$ a $(\lie{g}, K)$-invariant pairing.
	Suppose that the pairing is non-degenerate with respect to $W$, i.e.\ the induced map $W\rightarrow V^*$ is injective.
	Then we have $\Krankmax(W) \leq \Krankmax(V)$ and $\Krankmin(W) \leq \Krankmin(V)$.
\end{lemma}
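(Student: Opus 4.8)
The plan is to reduce the statement to a containment of annihilator ideals on each isotypic component, using the $K$-equivariance of the pairing together with the hypothesis that the induced map $W\rightarrow V^*$ is injective. First I would fix $\tau\in\widehat{K}$ and restrict attention to the isotypic components $V(\tau)$ and $W(\tau)$. Since the pairing $(\cdot,\cdot)\colon V\times W\rightarrow\CC$ is $K$-invariant, it pairs $V(\sigma)$ with $W(\sigma^*)$ trivially whenever $\sigma^*\neq\tau$, so the restriction of the pairing to $V(\tau^*)\times W(\tau)$ is again non-degenerate with respect to $W(\tau)$; that is, the induced map $W(\tau)\rightarrow V(\tau^*)^*$ is injective.

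Next I would extract the $\univcent{g}$-module consequence. The pairing being $\lie{g}$-invariant means that for $X\in\univcent{g}$ and $v\in V(\tau^*)$, $w\in W(\tau)$ we have $(Xv, w) = (v, X^\vee w)$ where $X^\vee$ is the image of $X$ under the principal anti-automorphism of $\univ{g}$; but $\univcent{g}$ is fixed pointwise by this anti-automorphism (it is the center), so $(Xv,w) = (v,Xw)$. Hence if $X\in\Ann_{\univcent{g}}(V(\tau^*))$, then $(v, Xw) = (Xv, w) = 0$ for all $v\in V(\tau^*)$, and by the non-degeneracy just established, $Xw = 0$. This gives the inclusion $\Ann_{\univcent{g}}(V(\tau^*))\subset\Ann_{\univcent{g}}(W(\tau))$, hence $\Variety(\Ann_{\univcent{g}}(W(\tau)))\subset\Variety(\Ann_{\univcent{g}}(V(\tau^*)))$ and therefore $\dim(\Variety(\Ann_{\univcent{g}}(W(\tau))))\leq\dim(\Variety(\Ann_{\univcent{g}}(V(\tau^*))))$.

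Finally I would assemble the rank inequalities. For $\Krankmax$: for every $\tau$ with $W(\tau)\neq 0$ we have $\dim(\Variety(\Ann_{\univcent{g}}(W(\tau))))\leq\dim(\Variety(\Ann_{\univcent{g}}(V(\tau^*))))\leq\Krankmax(V)$, so taking the maximum over $\tau$ gives $\Krankmax(W)\leq\Krankmax(V)$. For $\Krankmin$: choose $\tau$ realizing $\Krankmin(W)$, i.e.\ $\dim(\Variety(\Ann_{\univcent{g}}(W(\tau)))) = \Krankmin(W)$ with $W(\tau)\neq 0$; then by the inclusion above $\Krankmin(W)\leq\dim(\Variety(\Ann_{\univcent{g}}(V(\tau^*))))$, and the right side is at least $\Krankmin(V)$ only if $V(\tau^*)\neq 0$ — which holds because the map $W(\tau)\rightarrow V(\tau^*)^*$ is injective and $W(\tau)\neq 0$. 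Hence $\Krankmin(W)\leq\dim(\Variety(\Ann_{\univcent{g}}(V(\tau^*))))$ and this is $\geq\Krankmin(V)$, giving $\Krankmin(W)\leq\Krankmin(V)$.

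The main subtlety to watch is the bookkeeping of contragredients: the pairing matches $W(\tau)$ with $V(\tau^*)$ rather than $V(\tau)$, so one must be careful that "$V(\tau^*)\neq 0$" is guaranteed (it is, by injectivity of $W(\tau)\rightarrow V(\tau^*)^*$ and $W(\tau)\neq0$) before invoking the definition of $\Krankmin(V)$ as a minimum over \emph{nonzero} isotypic components; everything else is a routine transfer of annihilators across a non-degenerate invariant pairing.
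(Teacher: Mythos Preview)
Your overall strategy matches the paper's: restrict the pairing to isotypic components to get an injection $W(\tau)\hookrightarrow V(\tau^*)^*$, extract an inclusion of $\univcent{g}$-annihilators, and read off the per-$\tau$ dimension inequality. However, two steps are faulty.

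First, the principal anti-automorphism does \emph{not} fix $\univcent{g}$ pointwise in general. Under the Harish-Chandra isomorphism $\univcent{g}\simeq S(\lie{h})^W$ it corresponds to $p(\lambda)\mapsto p(-\lambda)$, which is the identity only when $-\id\in W$; for $\lie{g}=\classicalg{sl}_3$ the cubic Casimir is sent to its negative. What the pairing actually gives is $(Xv,w)=(v,X^\vee w)$ with $X^\vee$ the image of $X$ under the anti-automorphism, hence only ${}^t\Ann_{\univcent{g}}(V(\tau^*))\subset\Ann_{\univcent{g}}(W(\tau))$. This is precisely how the paper writes it, as $\Ann_{\univcent{g}}(V(\tau^*)^*)={}^t\Ann_{\univcent{g}}(V(\tau^*))$. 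The dimension inequality is then rescued because ${}^t$ is an automorphism of the commutative ring $\univcent{g}$, so $\dim\Variety({}^tI)=\dim\Variety(I)$; but the step as you wrote it is incorrect.

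Second, your deduction of $\Krankmin(W)\leq\Krankmin(V)$ is a non sequitur: you establish $\Krankmin(W)\leq b$ and $b\geq\Krankmin(V)$ for $b=\dim\Variety(\Ann_{\univcent{g}}(V(\tau^*)))$, but from $a\leq b$ and $b\geq c$ one cannot conclude $a\leq c$. With the paper's definition (the minimum ranges over all $\tau\in\widehat{K}$), the correct argument is simply to take $\min_{\tau}$ on both sides of the per-$\tau$ inequality $\dim\Variety(\Ann_{\univcent{g}}(W(\tau)))\leq\dim\Variety(\Ann_{\univcent{g}}(V(\tau^*)))$ and use that $\tau\mapsto\tau^*$ is a bijection on $\widehat{K}$.
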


\begin{proof}
	Let $\tau \in \widehat{K}$.
	By assumption, there is an injective $\univcent{g}$-homomorphism $W(\tau)\rightarrow V(\tau^*)^*$.
	Note that $\Ann_{\univcent{g}}(V(\tau^*)^*) = {}^t\Ann_{\univcent{g}}(V(\tau^*))$.
	This implies
	\begin{align*}
		\dim(\Variety(\Ann_{\univcent{g}}(W(\tau))))
		&\leq \dim(\Variety(\Ann_{\univcent{g}}(V(\tau^*)^*))) \\
		&= \dim(\Variety(\Ann_{\univcent{g}}(V(\tau^*))))
	\end{align*}
	and we have shown the lemma.
\end{proof}

\begin{theorem}
	Let $\hrep{H}$ be an irreducible admissible Hilbert representation of $\widetilde{G}_\RR$ and $W\subset (\hrep{H}^\infty)_{K}$ a non-zero $(\lie{\widetilde{g}}, K)$-module.
	Then we have $\Krankmax(W|_{\lie{g}, K}) = \Krankmax((\hrep{H}^*)_{\widetilde{K}}|_{\lie{g}, K})$.
	The same is true for $\Krankmin$.
	In particular, we have $\Krankmax(W|_{\lie{g}, K}) = \Krankmin(W|_{\lie{g}, K})$, and they do not depend on the choice of $W$.
\end{theorem}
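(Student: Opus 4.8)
The plan is to reduce everything to the already-established $K$-rank results for irreducible $(\widetilde{\lie{g}}, K)$-modules together with the relation between $W$ and the Harish-Chandra module of $\hrep{H}^*$. First I would recall that, since $\hrep{H}$ is an irreducible admissible Hilbert representation, the underlying Harish-Chandra module $\hrep{H}_{\widetilde{K}}$ is an irreducible $(\widetilde{\lie{g}}, \widetilde{K})$-module; dually, $(\hrep{H}^*)_{\widetilde{K}}$ is an irreducible $(\widetilde{\lie{g}}, \widetilde{K})$-module as well. Applying Proposition~\ref{prop:EquiKrank} to this latter module, we get $\Krankmin((\hrep{H}^*)_{\widetilde{K}}|_{\lie{g}, K}) = \Krankmax((\hrep{H}^*)_{\widetilde{K}}|_{\lie{g}, K})$, so it makes sense to denote this common value by $k$ and aim to prove $\Krankmax(W|_{\lie{g}, K}) = \Krankmin(W|_{\lie{g}, K}) = k$.

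**Key steps.** The main link is a canonical $(\widetilde{\lie{g}}, K)$-invariant (indeed $\widetilde{K}$-invariant) pairing between $W$ and $(\hrep{H}^*)_{\widetilde{K}}$. Concretely, $W$ sits inside $(\hrep{H}^\infty)_K$, and the pairing $\hrep{H}^\infty \times (\hrep{H}^*)^{\widetilde{K}\text{-fin}} \to \CC$ coming from the duality $\hrep{H}^* = (\hrep{H})'$ restricts to a pairing $W \times (\hrep{H}^*)_{\widetilde{K}} \to \CC$ which is nondegenerate in the second variable (because $(\hrep{H}^*)_{\widetilde{K}}$ is irreducible as a $(\widetilde{\lie{g}},\widetilde{K})$-module, its annihilator under this pairing would be a proper submodule, hence zero; equivalently $W$ is dense enough to separate the $\widetilde K$-finite functionals). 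Thus Lemma~\ref{lem:PairingKrank} gives $\Krankmax(W|_{\lie{g},K}) \leq \Krankmax((\hrep{H}^*)_{\widetilde{K}}|_{\lie{g},K}) = k$ and likewise $\Krankmin(W|_{\lie{g},K}) \leq k$. For the reverse inequality, I would note that $W \neq 0$ is a submodule of the irreducible module $\hrep{H}_{\widetilde K}$, hence $W = \hrep{H}_{\widetilde K}$ is itself irreducible as a $(\widetilde{\lie{g}}, K)$-module; applying Proposition~\ref{prop:EquiKrank} directly to $W$ shows $\Krankmax(W|_{\lie{g},K}) = \Krankmin(W|_{\lie{g},K})$, and this common value is $\dim(\Variety(\Ann_{\univcent{g}}(W(\tau))))$ for any $\tau$ with $W(\tau)\neq 0$. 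Since $W(\tau)$ and $(\hrep{H}^*)_{\widetilde K}(\tau^*)$ are paired nondegenerately (in the second variable) by the same argument, the reasoning in Lemma~\ref{lem:PairingKrank} run in the other direction — using that the pairing $W(\tau^*) \times (\hrep H^*)_{\widetilde K}(\tau) \to \CC$ is nondegenerate in $W(\tau^*)$ as well, because $W = \hrep H_{\widetilde K}$ is the full (irreducible, admissible, hence reflexive) Harish-Chandra module — yields $k \leq \Krankmin(W|_{\lie{g},K})$. Combining, all four quantities coincide.

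**Main obstacle.** The subtle point is the nondegeneracy of the pairing $W \times (\hrep{H}^*)_{\widetilde{K}} \to \CC$ on \emph{both} sides, which is what upgrades the one-sided inequality from Lemma~\ref{lem:PairingKrank} into an equality. Nondegeneracy in the $(\hrep{H}^*)_{\widetilde{K}}$-variable is automatic from irreducibility of that module, but to get it in the $W$-variable one must exploit that $W$ is forced to be \emph{all} of $\hrep{H}_{\widetilde{K}}$ (since $\hrep{H}_{\widetilde{K}}$ is irreducible and $W\neq 0$), together with the fact that the Harish-Chandra pairing between $\hrep{H}_{\widetilde{K}}$ and $(\hrep{H}^*)_{\widetilde{K}}$ is perfect on each $\widetilde K$-isotypic component (finite-dimensional, admissible). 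Once this reflexivity is in hand, applying Lemma~\ref{lem:PairingKrank} twice — once to $W \hookrightarrow ((\hrep H^*)_{\widetilde K})^*$ and once to $(\hrep H^*)_{\widetilde K} \hookrightarrow W^*$ — pins down the ranks. The ``in particular'' statements then follow: the independence of $W$ is immediate since the common value $k$ is manufactured solely from $\hrep{H}$, and $\Krankmax(W|_{\lie g,K}) = \Krankmin(W|_{\lie g,K})$ has already been observed along the way.
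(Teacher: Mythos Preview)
There is a genuine gap in the reverse inequality. You assert that ``$W \neq 0$ is a submodule of the irreducible module $\hrep{H}_{\widetilde K}$, hence $W = \hrep{H}_{\widetilde K}$.'' But the hypothesis is $W \subset (\hrep{H}^\infty)_{K}$, the space of $K$-finite smooth vectors, not $W \subset \hrep{H}_{\widetilde{K}} = (\hrep{H}^\infty)_{\widetilde{K}}$. Since $K \subset \widetilde{K}$, the space of $K$-finite vectors is typically strictly larger than the space of $\widetilde{K}$-finite vectors, and $(\hrep{H}^\infty)_K$ is in general not an irreducible $(\widetilde{\lie g}, K)$-module. So there is no reason for $W$ to lie in $\hrep{H}_{\widetilde{K}}$, let alone equal it, and consequently you cannot invoke Proposition~\ref{prop:EquiKrank} for $W$, nor use reflexivity of the Harish-Chandra pairing on $W$. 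The whole point of the theorem is precisely that $\Krank$ is unchanged for \emph{any} such $W$, including those not contained in $\hrep{H}_{\widetilde{K}}$.

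The paper's argument bypasses this entirely: nondegeneracy of the pairing $W \times U \to \CC$ in the $W$-variable comes not from any structural property of $W$ but from the fact that $U = (\hrep{H}^*)_{\widetilde{K}}$ is dense in $\hrep{H}^*$. Indeed, if $w \in W \subset \hrep{H}$ pairs to zero with all of $U$, then by density it pairs to zero with all of $\hrep{H}^*$, forcing $w = 0$. With both nondegeneracies in hand (the other one, in the $U$-variable, you correctly deduced from irreducibility of $U$), Lemma~\ref{lem:PairingKrank} applied in each direction gives $\Krankmax(W) = \Krankmax(U)$ and $\Krankmin(W) = \Krankmin(U)$, and then Proposition~\ref{prop:EquiKrank} is applied only to the irreducible module $U$ to collapse $\Krankmax(U) = \Krankmin(U)$.
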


\begin{proof}
	Set $U\coloneq (\hrep{H}^*)_{\widetilde{K}}$.
	Then $U$ is an irreducible $(\lie{\widetilde{g}}, \widetilde{K})$-module.
	Consider the restriction $(\cdot, \cdot)\colon W\times U\rightarrow \CC$ of the canonical pairing between $\hrep{H}$ and $\hrep{H}^*$.
	Then $(\cdot, \cdot)$ is $(\lie{\widetilde{g}}, K)$-invariant.

	The pairing induces two homomorphisms $\varphi\colon W\rightarrow U^*$ and $\varphi'\colon U\rightarrow W^*$.
	Since $U$ is dense in $\hrep{H}^*$, $\varphi$ is injective, and
	since $U$ is irreducible, $\varphi'$ is also injective.
	Hence $(\cdot, \cdot)$ is non-degenerate with respect to both factors.
	The assertion follows from Lemma \ref{lem:PairingKrank} and Proposition \ref{prop:EquiKrank}.
\end{proof}

As a consequence, we see that $\Krank$ is preserved under taking the dual.

\begin{corollary}\label{cor:DualKrank}
	Let $V$ be an irreducible $(\lie{\widetilde{g}}, \widetilde{K})$-module.
	Then $\Krank(V|_{\lie{g}, K}) = \Krank((V^*)_{\widetilde{K}}|_{\lie{g}, K})$ holds.
\end{corollary}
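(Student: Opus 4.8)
The plan is to read this off directly from the preceding theorem. First I would fix a Hilbert globalization $\hrep{H}$ of $V$, so that $\hrep{H}_{\widetilde{K}} \simeq V$; since $V$ is irreducible and Harish-Chandra modules are admissible, $\hrep{H}$ is an irreducible admissible Hilbert representation of $\widetilde{G}_\RR$. The $\widetilde{K}$-finite vectors of an admissible representation are smooth, and a $\widetilde{K}$-finite vector is a fortiori $K$-finite; hence $V \simeq \hrep{H}_{\widetilde{K}} = (\hrep{H}^\infty)_{\widetilde{K}} \subset (\hrep{H}^\infty)_K$, so $V$ is a non-zero $(\widetilde{\lie{g}}, K)$-submodule of $(\hrep{H}^\infty)_K$ and the preceding theorem applies with $W = V$.

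That application gives that $\Krank(V|_{\lie{g}, K})$ is well-defined and
\[
	\Krank(V|_{\lie{g}, K}) = \Krankmax((\hrep{H}^*)_{\widetilde{K}}|_{\lie{g}, K}) = \Krankmin((\hrep{H}^*)_{\widetilde{K}}|_{\lie{g}, K}).
\]
Now $(\hrep{H}^*)_{\widetilde{K}}$, the $\widetilde{K}$-finite part of the Hilbert dual, is canonically the contragredient Harish-Chandra module $(V^*)_{\widetilde{K}}$, and $\hrep{H}^*$ is again an irreducible admissible Hilbert representation of $\widetilde{G}_\RR$ with $(\hrep{H}^*)_{\widetilde{K}} \simeq (V^*)_{\widetilde{K}}$. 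Applying the preceding theorem a second time, now to $\hrep{H}^*$ with $W = (\hrep{H}^*)_{\widetilde{K}}$, shows that $\Krank((V^*)_{\widetilde{K}}|_{\lie{g}, K})$ is well-defined and equals $\Krankmax((\hrep{H}^*)_{\widetilde{K}}|_{\lie{g}, K}) = \Krankmin((\hrep{H}^*)_{\widetilde{K}}|_{\lie{g}, K})$. Combining the two identities yields $\Krank(V|_{\lie{g}, K}) = \Krank((V^*)_{\widetilde{K}}|_{\lie{g}, K})$, as desired.

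There is no genuine obstacle here: the statement is a formal consequence of the preceding theorem, and the only points needing a word are standard — the existence of an irreducible admissible Hilbert globalization of a finite-length Harish-Chandra module, the inclusion of the $\widetilde{K}$-finite vectors among the $K$-finite smooth vectors, and the identification of the $\widetilde{K}$-finite part of the Hilbert dual with the contragredient module. Alternatively, one may phrase the whole argument as a single invocation of the theorem, using that $\Krank$ was shown there not to depend on the chosen $(\widetilde{\lie{g}}, K)$-submodule of $(\hrep{H}^\infty)_K$ respectively of $((\hrep{H}^*)^\infty)_K$.
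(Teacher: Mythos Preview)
Your argument is correct and is exactly the deduction the paper intends: the corollary is stated without proof, introduced only by ``As a consequence, we see that $\Krank$ is preserved under taking the dual.'' Your proof spells out precisely this consequence---choose a Hilbert globalization $\hrep{H}$ of $V$, apply the preceding theorem with $W=V=\hrep{H}_{\widetilde{K}}\subset(\hrep{H}^\infty)_K$, and identify $(\hrep{H}^*)_{\widetilde{K}}$ with $(V^*)_{\widetilde{K}}$. The second invocation of the theorem (to confirm that $\Krank((V^*)_{\widetilde{K}}|_{\lie{g},K})$ is well-defined) is harmless, though strictly speaking already contained in the first application or, alternatively, in Proposition~\ref{prop:EquiKrank} applied to the irreducible module $(V^*)_{\widetilde{K}}$.
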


\subsection{Comparison of ranks}

In this section, we treat the problem whether the ranks are equal or not.
We have no complete answer to this problem, so we give partial results.
Retain the notation in the previous subsection.

Fix a $\theta$-stable Cartan subalgebra $\lie{h}_\RR$ of $\lie{g}_\RR$ and $W$ denotes the Weyl group of $\lie{g}$.
Set $\lie{h}_c\coloneq \lie{h}^{-\theta} \oplus \sqrt{-1}\lie{h}^{\theta}$.
Then any $\lie{h}$-weight in $\univ{\widetilde{g}}$ is real valued on $\lie{h}_c$.
For $\lambda \in \lie{h}$, we denote by $\Re(\lambda)$ and $\Im(\lambda)$ the real part and the imaginary part of $\lambda$, respectively.
In other words, $\lambda = \Re(\lambda) + \sqrt{-1}\Im(\lambda)$, and $\Re(\lambda)$ and $\Im(\lambda)$ are real valued on $\lie{h}_c$.
Since $\lie{h}_c$ is $W$-stable, the decomposition $\lambda = \Re(\lambda) + \sqrt{-1}\Im(\lambda)$ induces that of $[\lambda]$.
We set $\Re([\lambda]) \coloneq [\Re(\lambda)]$ and $\Im([\lambda]) \coloneq [\Im(\lambda)]$.

Write $q\colon \lie{h}^*\rightarrow \lie{h}^*/W$ for the quotient map.
For a $\lie{g}$-module or a Hilbert representation $V$ of $G_\RR$ with an infinitesimal character,
we denote by $\Lambda(V)$ the infinitesimal character.

We have proved the following lemma in \cite[Proposition 14]{Ki24}.
The lemma is proved by using a realization of a unitary representation by unitary parabolic induction.

\begin{lemma}\label{lem:BoundUnitaryRep}
	Let $\hrep{H}$ be an irreducible unitary representation of $G_\RR$ with infinitesimal character and $K$-type $\tau \in \widehat{K}$.
	Then there exists a constant $C$ depending only on $\tau$ such that $\|\Re(\Lambda(\hrep{H}))\| \leq C$.
\end{lemma}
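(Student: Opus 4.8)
The plan is to use the Langlands classification of $\hrep{H}$ to split the estimate into a bound on the ``tempered part'' of the parameter, controlled by $\tau$, and a bound on the ``continuous part'', controlled by $G_\RR$ alone through unitarity. First I would realize $\hrep{H}$ as the Langlands quotient $J(P_\RR,\sigma,\nu)$ of a standard module $\mathrm{Ind}_{P_\RR}^{G_\RR}(\sigma\otimes e^{\nu})$, where $P_\RR=M_\RR A_\RR N_\RR$ is a cuspidal parabolic subgroup, $\sigma$ is an irreducible tempered representation of $M_\RR$, and $\nu$ lies in the complexification of $(\lie{a}_\RR)^*$ with dominant real part. Under the identification $\lie{h}=\lie{h}_M\oplus\lie{a}$ for a suitable $\theta$-stable Cartan $\lie{h}_M\subset\lie{m}$, one has $\Lambda(\hrep{H})=\Lambda(\sigma)+\nu$, hence $\Re(\Lambda(\hrep{H}))=\Re(\Lambda(\sigma))+\Re(\nu)$, and it suffices to bound the two summands separately.

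For $\Re(\Lambda(\sigma))$: the $K$-type multiplicities of the standard module are $\dim\Hom_{K}(\tau,\mathrm{Ind}_{K\cap M_\RR}^{K}(\sigma|_{K\cap M_\RR}))$ and do not depend on $\nu$, so the occurrence of $\tau$ in $\hrep{H}\subset\mathrm{Ind}_{P_\RR}^{G_\RR}(\sigma\otimes e^{\nu})$ forces, by Frobenius reciprocity, some $(K\cap M_\RR)$-type of $\sigma$ to occur in $\tau|_{K\cap M_\RR}$. Since $\sigma$ is tempered, $\Lambda(\sigma)$ is real valued on the relevant real Cartan, so $\Re(\Lambda(\sigma))=\Lambda(\sigma)$, and its Harish-Chandra parameter is determined up to a finite ambiguity by a lowest $(K\cap M_\RR)$-type via the Blattner formula, with norm at most that of the lowest $(K\cap M_\RR)$-type plus $\|\rho\|$. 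Hence $\|\Re(\Lambda(\sigma))\|\le C_1(\tau)$ with $C_1$ depending only on $\tau$.

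For $\Re(\nu)$: the standard module, and therefore its Langlands quotient $\hrep{H}$, has leading exponent along the positive chamber of $\lie{a}_\RR$ equal to $\Re(\nu)-\rho_P$ plus the leading exponent of the tempered representation $\sigma$, which is bounded. Since $\hrep{H}$ is unitary, its $K$-finite matrix coefficients are bounded, forcing this leading exponent to be $\leq 0$ on the positive chamber; together with dominance of $\Re(\nu)$ this confines $\Re(\nu)$ to the convex hull of $W\rho_P$, so $\|\Re(\nu)\|\le\|\rho\|$, a constant depending only on $G_\RR$. This is exactly the place where the argument of \cite{Ki24} invokes a realization of $\hrep{H}$ by unitary parabolic induction: it permits one to take $\nu$ purely imaginary whenever $\hrep{H}$ is (essentially) induced, reducing the remaining complementary-series contribution to an induction on $\dim\lie{g}_\RR$. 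Combining the two bounds gives $\|\Re(\Lambda(\hrep{H}))\|\le C_1(\tau)+\|\rho\|=:C$.

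The main obstacle is the bound on $\Re(\nu)$: deducing an effective estimate from the abstract unitarity of $\hrep{H}$ is what requires the leading-exponent analysis of matrix coefficients of unitary representations (equivalently, the unitary-parabolic-induction realization together with the boundedness of the complementary-series region of $\lie{a}_\RR^*$). By contrast, the $\nu$-independence of the $K$-type multiplicities and the Blattner-formula bookkeeping in the first step are routine, though they must be carried out carefully to make the dependence of $C_1$ on $\tau$ explicit.
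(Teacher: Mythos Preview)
Your proposal is correct and aligns with the approach the paper cites from \cite{Ki24}. The paper does not reprove the lemma here; it only records that the argument proceeds ``by using a realization of a unitary representation by unitary parabolic induction,'' which is exactly the role your $\Re(\nu)$ step plays. Your two-step decomposition via the Langlands data $(P_\RR,\sigma,\nu)$, bounding $\Re(\Lambda(\sigma))$ through Frobenius reciprocity and the $K$-type, and bounding $\Re(\nu)$ through boundedness of matrix coefficients (equivalently, reduction to unitary induction plus the compactness of the complementary-series region), is the intended argument.

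Two small points of care, neither fatal. First, your sentence ``$\Re(\Lambda(\sigma))=\Lambda(\sigma)$'' is literally correct only because you took $P_\RR$ cuspidal, so that tempered representations of $M_\RR$ are (limits of) discrete series; for a general parabolic one would instead say that the continuous tempered parameter is purely imaginary on $\lie{h}_c$ and hence does not contribute to $\Re$. Second, the $(K\cap M_\RR)$-type produced by Frobenius need not be the \emph{lowest} $(K\cap M_\RR)$-type of $\sigma$, so the clean Blattner identity does not apply directly; the bound $\|\Lambda(\sigma)\|\le C_1(\tau)$ is still valid, but the cleanest justification is the Dirac/Parthasarathy inequality (or the finiteness of discrete series containing a prescribed $K$-type) rather than the Blattner formula alone.
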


The following result is useful to check $\Rrank(V) = \Krank(V)$.
Write $R$ for the set of all $\lie{h}$-weights in $\univ{\widetilde{g}}$.
Note that $R$ is the lattice in $\lie{h}_c^*$ generated by all $\lie{h}$-weights in $\lie{\widetilde{g}}$.

\begin{lemma}\label{lem:AffineToEquiRank}
	Let $\hrep{H}$ be an irreducible unitary representation of $\widetilde{G}_\RR$
	and $V\subset (\hrep{H}^\infty)_{K}$ a finitely generated $(\lie{\widetilde{g}}, K)$-submodule.
	Assume that
	\begin{enumerate}
		\item $V|_{\lie{g}, K}$ has a small Cartan subalgebra $\lie{a} = \lie{h}/\lie{m}$ and
		\item $\lie{a}^*\subset \lie{h}^*$ is spanned by some elements in $R$.
	\end{enumerate}
	Then $\Rrank(V|_{\lie{g}, K}) = \Krank(V|_{\lie{g}, K})$ holds.
\end{lemma}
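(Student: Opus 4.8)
The plan is to show that $V|_{\lie{g},K}$ is equi-rank with $\Rrank(V|_{\lie{g},K})=\Krank(V|_{\lie{g},K})$, where the right-hand side is already well defined: $\hrep{H}$, being irreducible unitary, is admissible, so the theorem proved just before Corollary \ref{cor:DualKrank} applies to the nonzero $(\widetilde{\lie{g}},K)$-module $V$ and gives $\Krankmin(V|_{\lie{g},K})=\Krankmax(V|_{\lie{g},K})=:d$. By Definition \ref{def:Rank} one always has $\Rrankmin(V|_{\lie{g}})\le\Krankmin(V|_{\lie{g},K})$ and $\Rrankmax(V|_{\lie{g}})\le\Krankmax(V|_{\lie{g},K})$, hence $\Rrankmin(V|_{\lie{g}})\le\Rrankmax(V|_{\lie{g}})\le d$; so it suffices to prove the reverse inequality $\Rrankmin(V|_{\lie{g}})\ge d$, which then forces all four quantities to equal $d$.

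First I would record that each $\rfilt{k}{V|_{\lie{g}}}$ is a $(\widetilde{\lie{g}},K)$-submodule of $V$. $K$-stability is immediate since $\univcent{g}$ commutes with $\lie{k}$ and so preserves every $K$-isotypic component, giving $\Ann_{\univcent{g}}(kv)=\Ann_{\univcent{g}}(v)$; $\widetilde{\lie{g}}$-stability is exactly the argument of Theorem \ref{thm:RankFiltration}: for $X\in\widetilde{\lie{g}}$ and $v\in\rfilt{k}{V|_{\lie{g}}}$ one picks a finite-dimensional $\lie{g}$-submodule $F\subseteq\widetilde{\lie{g}}$ with $X\in F$ (the adjoint $\lie{g}$-action on $\widetilde{\lie{g}}$ is locally finite and completely reducible, $\lie{g}$ being reductive and algebraic in $\widetilde{\lie{g}}$), notes that $\univ{g}(Xv)\subseteq F\cdot\univ{g}v$ is a $\lie{g}$-module quotient of $F\otimes\univ{g}v$, and applies Lemma \ref{lem:TranslationSupport} to get $\glrank(\univ{g}(Xv))\le\glrank(F\otimes\univ{g}v)=\glrank(\univ{g}v)\le k$. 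Second, hypothesis (1) yields the lower bound $\Rrankmin(V|_{\lie{g}})\ge\dim\lie{a}^*$: for any $0\ne v\in V$, a minimal prime $P$ over $\Ann_{\univcent{g}}(v)$ with $\dim\Variety(P)=\dim\Variety(\Ann_{\univcent{g}}(v))$ lies in $\Ass_{\univcent{g}}(\univcent{g}v)\subseteq\Ass_{\univcent{g}}(V)$, so $\Variety(P)=q(\lie{a}^*+\mu_P)$ has dimension $\dim\lie{a}^*$; the same applies to any nonzero $(\widetilde{\lie{g}},K)$-submodule.

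The crux is then the equality $\dim\lie{a}^*=d$. Suppose $\Rrankmin(V|_{\lie{g}})=k<d$; then $\rfilt{k}{V|_{\lie{g}}}\ne 0$, and choosing $0\ne v$ in it, the $(\widetilde{\lie{g}},K)$-submodule $V_1$ generated by $v$ is a nonzero finitely generated $(\widetilde{\lie{g}},K)$-module contained in $\rfilt{k}{V|_{\lie{g}}}$, so that $\Rrankmax(V_1|_{\lie{g}})\le k$; moreover $V_1\subseteq(\hrep{H}^\infty)_K$ inherits a $\lie{g}_\RR$-invariant inner product, and $\Ass_{\univcent{g}}(V_1)\subseteq\Ass_{\univcent{g}}(V)$ shows $V_1$ also satisfies hypotheses (1) and (2) with the same $\lie{a}^*$. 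By the theorem preceding Corollary \ref{cor:DualKrank}, $\Krank(V_1|_{\lie{g},K})=\Krank(V|_{\lie{g},K})=d$, hence $\dim\Variety(\Ann_{\univcent{g}}(V_1(\tau)))=d$ for every $\tau$ with $V_1(\tau)\ne 0$. On the other hand I claim $\Variety(\Ann_{\univcent{g}}(V_1(\tau)))$ is a \emph{finite} union of translates $q(\lie{a}^*+\mu)$, hence of dimension $\dim\lie{a}^*$, which by the lower bound of the previous paragraph applied to $V_1$ is $\le\Rrankmax(V_1|_{\lie{g}})\le k<d$ — a contradiction. To prove the claim one combines three inputs: $V_1(\tau)$ is finitely generated over $\univ{\widetilde{g}}^{K}$ (the bimodule finiteness used in the proof of Proposition \ref{prop:EquiKrank}, cf.\ \cite[Lemma 7.39]{Ki23}), which together with hypothesis (2) and Kostant's theorem (Fact \ref{fact:Kostant}, Lemma \ref{lem:TranslationSupport}) confines the irreducible components of $\Variety(\Ann_{\univcent{g}}(V_1(\tau)))$ to translates $q(\lie{a}^*+\mu)$ with $\mu$ lying in finitely many cosets of the root lattice $R$ modulo $\lie{a}^*$; and — since (2) makes $R$ project to a genuine lattice in $\lie{h}^*/\lie{a}^*$ — the bound on real parts of infinitesimal characters from Lemma \ref{lem:BoundUnitaryRep} (applied to the unitary constituents carrying the $K$-type $\tau$, whose infinitesimal characters densely fill that support) forces only finitely many of these cosets, hence finitely many translates, to actually occur.

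The main obstacle is precisely this finiteness estimate. A priori the $\univcent{g}$-module $V_1(\tau)$ need not be finitely generated over $\univcent{g}$, so $\Variety(\Ann_{\univcent{g}}(V_1(\tau)))$ could be the Zariski closure of an infinite family of $(\dim\lie{a}^*)$-dimensional affine translates and thus have strictly larger dimension; Remark \ref{rmk:Affinity} shows that some control of this type is genuinely necessary. Ruling it out requires the hypothesis of finite generation over $(\widetilde{\lie{g}},K)$, the integrality hypothesis (2) on $\lie{a}^*$, and unitarity through Lemma \ref{lem:BoundUnitaryRep} to be used simultaneously — the conclusion fails if any one is dropped — while everything else is bookkeeping with the already-established inequalities among $\Rrankmin$, $\Rrankmax$, $\glrank$, $\Krankmin$, and $\Krankmax$.
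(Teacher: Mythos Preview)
Your argument is correct and rests on the same mechanism as the paper's proof: finiteness of $\Ass_{\univcent{g}}(V(\tau))$, obtained by combining the discreteness of the image of $R$ in $\lie{h}^*_c/\lie{a}^*_c$ (which is exactly what hypothesis (2) buys) with the bound on $\Re(\Lambda)$ from Lemma~\ref{lem:BoundUnitaryRep} applied to an irreducible unitary quotient carrying the $K$-type $\tau$.

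Two points where you can streamline. First, the contradiction setup and the auxiliary module $V_1$ are unnecessary: hypothesis~(1) already forces $V$ itself to be equi-rank with $\Rrank(V|_{\lie{g}})=\dim\lie{a}^*$, since every minimal prime over any $\Ann_{\univcent{g}}(v)$ lies in $\Ass_{\univcent{g}}(V)$; so one may work directly with $V$ and prove $\dim\Variety(\Ann_{\univcent{g}}(V(\tau)))=\dim\lie{a}^*$. The paper does this, first treating the cyclic case $V=\univ{\widetilde{g}}V^P$ (the analogue of your $V_1$) and then reducing the general finitely generated case by noetherianity. Second, finite generation of $V_1(\tau)$ over $\univ{\widetilde{g}}^{K}$ is not the relevant input for the finiteness step and does not follow from the reference you cite (Proposition~\ref{prop:EquiKrank} is for irreducible modules). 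What is actually used is simpler: for each $Q\in\Ass_{\univcent{g}}(V(\tau))$ one produces, via Proposition~\ref{prop:ExistenceQuotientUnitary}, a single irreducible unitary representation containing $\tau$ whose infinitesimal character lies in $\Variety(Q)$, and then Lemma~\ref{lem:BoundUnitaryRep} bounds its real part; your phrase ``densely fill that support'' overstates what is needed and what is proved.
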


\begin{proof}
	First we shall show the assertion when $V = \univ{\widetilde{g}}V^P$ for some $P \in \Ass_{\univcent{g}}(V)$.
	Take $\lambda_0 \in \lie{h}^*$ such that $\Variety(P) = q(\lie{a}^* + \lambda_0)$.
	Since $V$ is equi-rank and $V= \univ{\widetilde{g}}V^P$, there exists $\mu \in R$
	such that $\Variety(Q) = q(\lie{a}^* + \lambda_0 + \mu)$ for any $Q \in \Ass_{\univcent{g}}(V)$ by Lemma \ref{lem:AnnTranslation}.

	Let $\tau \in \widehat{K}$ and show that $\Ass_{\univcent{g}}(V(\tau))$ is a finite set.
	Fix $C > 0$ in Lemma \ref{lem:BoundUnitaryRep} for $\tau$.
	Let $Q \in \Ass_{\univcent{g}}(V(\tau))$,
	and take $\mu \in R$ such that $\Variety(Q) = q(\lie{a}^* + \lambda_0 + \mu)$.
	By Proposition \ref{prop:ExistenceQuotientUnitary}, $V^Q$ has a unitarizable irreducible quotient $W$.
	By Lemma \ref{lem:BoundUnitaryRep}, we have $\|\Re(\Lambda(W))\| \leq C$.
	This implies
	\begin{align*}
		\Re(\lie{a}^* + \lambda_0 + \mu) \cap \set{\lambda \in \lie{h}^*_c : \|\lambda\| \leq C} \neq \emptyset.
	\end{align*}

	Set $\lie{a}^*_c\coloneq \lie{a}^* \cap \lie{h}^*_c$.
	Then $\lie{a}^* = \lie{a}^*_c \oplus \sqrt{-1} \lie{a}^*_c$, and we have
	\begin{align}
		(\lie{a}^*_c + \Re(\lambda_0) + \mu) \cap \set{\lambda \in \lie{h}^*_c : \|\lambda\| \leq C} \neq \emptyset. \label{eqn:Bound}
	\end{align}
	By assumption, the sublattice $\lie{a}^*_c\cap R$ of $R$ spans $\lie{a}^*_c$.
	Hence $R/(\lie{a}^*_c \cap R)$ is discrete in $\lie{h}^*_c / \lie{a}^*_c$.
	This shows that \eqref{eqn:Bound} holds for only finitely many $Q \in \Ass_{\univcent{g}}(V(\tau))$.
	Hence $\Ass_{\univcent{g}}(V(\tau))$ is a finite set.

	Next we shall consider the general case.
	Since $V$ is noetherian, there are finitely many ideals $Q_1, \ldots, Q_r \in \Ass_{\univcent{g}}(V(\tau))$ such that
	\begin{align*}
		\Ass_{\univcent{g}}\left(\left(\sum_i \univ{\widetilde{g}}V^{Q_i}\right)(\tau)\right) = \Ass_{\univcent{g}}(V(\tau)).
	\end{align*}
	By using the first case, $\Ass_{\univcent{g}}(V(\tau))$ is a finite set.
	By Proposition \ref{prop:UnitaryRadicalIdeal}, $\Ann_{\univcent{g}}(V(\tau)) = \bigcap_{Q\in \Ass_{\univcent{g}}(V(\tau))} Q$ holds.
	Therefore we obtain $\Rrank(V) = \Krank(V)$.
\end{proof}

As we have shown in Theorem \ref{thm:Affinity}, the assumption in Lemma \ref{lem:AffineToEquiRank} holds
if $V$ is irreducible.
In particular, we obtain the following theorem.

\begin{theorem}\label{thm:KrankRrankGK}
	Let $V$ be a unitarizable irreducible $(\lie{\widetilde{g}}, \widetilde{K})$-module.
	Then we have $\Krank(V|_{\lie{g}, K}) = \Rrank(V|_{\lie{g}, K})$.
\end{theorem}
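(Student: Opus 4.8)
The plan is to deduce the theorem from Lemma~\ref{lem:AffineToEquiRank}, after verifying its hypotheses with Theorem~\ref{thm:Affinity}. Since $V$ is unitarizable, I would fix a $\widetilde{G}_\RR$-invariant inner product and let $\hrep{H}$ be the resulting irreducible unitary representation of $\widetilde{G}_\RR$, so that $\hrep{H}_{\widetilde{K}}\simeq V$; then $V$ sits inside $(\hrep{H}^\infty)_K$ and, being a finite-length Harish-Chandra module, it is finitely generated as a $(\lie{\widetilde{g}},K)$-module. As an irreducible $\univ{\widetilde{g}}$-module (and $\univ{\widetilde{g}}$ has countable dimension), $V|_{\lie{g}}$ is equi-rank by Theorem~\ref{thm:RankFiltration} and $\Krank(V|_{\lie{g},K})$ is well defined by Proposition~\ref{prop:EquiKrank}; the inequality $\Rrank(V|_{\lie{g},K})\le\Krank(V|_{\lie{g},K})$ is immediate from the definitions, so only the reverse inequality requires work.

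Next I would apply Theorem~\ref{thm:Affinity} to the irreducible $\univ{\widetilde{g}}$-module $V$: it produces a small Cartan subalgebra $\lie{a}=\lie{h}/\lie{m}$ for $V|_{\lie{g},K}$ with $\lie{a}^*\subset\lie{h}^*$ spanned by differences $\mu-\mu'$ of $\lie{h}$-weights occurring in $\univ{\widetilde{g}}$. Such differences lie in the root lattice of $\lie{\widetilde{g}}$ relative to $\lie{h}$, i.e.\ in the lattice $R$ of Lemma~\ref{lem:AffineToEquiRank}, so both hypotheses of that lemma are met by $V$. Lemma~\ref{lem:AffineToEquiRank} then yields exactly $\Rrank(V|_{\lie{g},K})=\Krank(V|_{\lie{g},K})$, which is the assertion.

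It is worth recording what makes this work, since the genuine content sits inside Lemma~\ref{lem:AffineToEquiRank}. The missing inequality $\Krank(V|_{\lie{g},K})\le\Rrank(V|_{\lie{g},K})$ amounts to showing that for each $K$-type $\tau$ with $V(\tau)\neq 0$ the variety $\Variety(\Ann_{\univcent{g}}(V(\tau)))$ has dimension equal to $\Rrank(V|_{\lie{g},K})$ and not larger. The obstruction is that $V(\tau)$ is far from finitely generated over $\univcent{g}$: each of its associated primes cuts out a variety of dimension exactly $\Rrank(V|_{\lie{g},K})$, by Lemma~\ref{lem:BoundAnn} together with equi-rank, but a priori there could be infinitely many of them, accumulating onto a variety of strictly larger dimension. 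Unitarity is exactly what excludes this: each $V^P$ with $P\in\Ass_{\univcent{g}}(V(\tau))$ has a unitarizable irreducible quotient, and by Lemma~\ref{lem:BoundUnitaryRep} the real part of its infinitesimal character is bounded by a constant depending only on $\tau$; since $\lie{a}^*$ is spanned by lattice vectors, $R$ is discrete modulo $\lie{a}^*$, so only finitely many translates $q(\lie{a}^*+\lambda_0+\mu)$ of $q(\lie{a}^*+\lambda_0)$ can meet a ball of fixed radius, forcing $\Ass_{\univcent{g}}(V(\tau))$ to be finite. Since $V$ is unitarizable, $\Ann_{\univcent{g}}(V(\tau))$ is moreover radical by Proposition~\ref{prop:UnitaryRadicalIdeal}, so $\Variety(\Ann_{\univcent{g}}(V(\tau)))$ is the union of the finitely many $\Variety(P)$ with $P\in\Ass_{\univcent{g}}(V(\tau))\subset\Ass_{\univcent{g}}(V)$, all of dimension $\Rrank(V|_{\lie{g},K})$, and the dimension count closes.

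The main obstacle is precisely this finiteness of $\Ass_{\univcent{g}}(V(\tau))$ for every $K$-type; everything else is bookkeeping resting on Theorems~\ref{thm:RankFiltration} and~\ref{thm:Affinity}. The finiteness is false without unitarizability — the $\univcent{g}$-spectrum on a single $K$-type can be Zariski-dense in a higher-dimensional variety — and recovering it genuinely requires the archimedean input of Lemma~\ref{lem:BoundUnitaryRep}, namely boundedness of the real infinitesimal character of a unitary representation in terms of a $K$-type, combined with the affine-lattice description of the support furnished by Theorem~\ref{thm:Affinity}.
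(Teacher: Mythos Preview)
Your proof is correct and follows precisely the paper's approach: verify the hypotheses of Lemma~\ref{lem:AffineToEquiRank} using Theorem~\ref{thm:Affinity}, then apply that lemma. The additional paragraphs unpacking the mechanism behind Lemma~\ref{lem:AffineToEquiRank} are accurate and helpful, but the paper itself dispatches the theorem in a single sentence once that lemma is in hand.
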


\begin{corollary}\label{cor:KrankRrankGK}
	Let $\hrep{H}$ be an irreducible unitary representation of $\widetilde{G}_\RR$.
	Assume that there exists $P \in \Ass_{\univcent{g}}(\hrep{H}^\infty)$ such that $\dim(\Variety(P)) = \Rrankmin(\hrep{H}^\infty|_{G_\RR})$ and $\Variety(P)$ is the image by $q$ of a translation of a subspace spanned by some elements in $R$.
	Then $\hrep{H}^\infty|_{G_\RR}$ is equi-rank and $\Rrank(\hrep{H}^\infty|_{G_\RR}) = \Rrank(\hrep{H}_K|_{\lie{g}, K})$ holds.
\end{corollary}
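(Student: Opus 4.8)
The plan is to run everything through the Harish--Chandra module together with the isotypic-rank results of the two preceding subsections.

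Write $V \coloneq \hrep{H}_{\widetilde{K}}$; it is an irreducible unitarizable $(\widetilde{\lie{g}},\widetilde{K})$-module, hence equi-rank over $\lie{g}$ by Theorem \ref{thm:RankFiltration} for the generalized pair $(\univ{\widetilde{g}},G)$, and I set $n \coloneq \Rrank(V|_{\lie{g},K})$; by Theorem \ref{thm:KrankRrankGK} and Corollary \ref{cor:DualKrank}, $n = \Krank(V|_{\lie{g},K}) = \Krank((\hrep{H}^*)_{\widetilde{K}}|_{\lie{g},K})$. Put $m \coloneq \Rrankmin(\hrep{H}^\infty|_{G_\RR})$. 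Proposition \ref{prop:BranchingEasy} gives $\Rrankmax(\hrep{H}^\infty|_{G_\RR}) \le \Rrank(\hrep{H}_{\widetilde{K}}|_{\lie{g}}) = \Rrank(\hrep{H}_{\widetilde{K}}|_{\lie{g},K}) = n$ (the rank filtration of a $(\lie{g},K)$-module being $K$-stable), so $m \le n$; and once $m \ge n$ is known, the chain $n \le m = \Rrankmin(\hrep{H}^\infty|_{G_\RR}) \le \Rrankmax(\hrep{H}^\infty|_{G_\RR}) \le n$ shows $\hrep{H}^\infty|_{G_\RR}$ to be equi-rank with $\Rrank = n$, which is the assertion. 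So everything reduces to proving $m \ge n$.

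For this I would use the hypothesis to manufacture a good finitely generated $(\widetilde{\lie{g}},K)$-submodule of $(\hrep{H}^\infty)_K$. Fix $P \in \Ass_{\univcent{g}}(\hrep{H}^\infty)$ as in the statement, so $\dim(\Variety(P)) = m$ and $\Variety(P) = q(\lie{b}^* + \mu_0)$ with $\lie{b}^* \subset \lie{h}^*$ spanned by elements of $R$, and take $0 \neq v \in \hrep{H}^\infty$ with $\Ann_{\univcent{g}}(v) = P$. Since $\widetilde{\lie{g}}$ is finite-dimensional, $(\hrep{H}^\infty)_K$ is a $\univ{\widetilde{g}}$-module. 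As $v$ is smooth it has a nonzero projection $w$ onto some $K_\RR$-isotypic component, and $w \in (\hrep{H}^\infty)_K$; because $G$ is connected with Lie algebra $\lie{g}$, $\Ad(K_\RR)$ fixes $\univcent{g}$ pointwise, so this projection commutes with the $\univcent{g}$-action and $P \subset \Ann_{\univcent{g}}(w)$. No nonzero vector of $\hrep{H}^\infty$ has $\univcent{g}$-annihilator of dimension $< m$, hence $\dim(\Variety(\Ann_{\univcent{g}}(w))) = m$, and $\Variety(P)$ being irreducible, $\Variety(\Ann_{\univcent{g}}(w)) = \Variety(P)$. By Theorem \ref{thm:RankFiltration}, $\rfilt{m}{(\hrep{H}^\infty)_K}$ is a $(\widetilde{\lie{g}},K)$-submodule of $(\hrep{H}^\infty)_K$; let $W \subset \rfilt{m}{(\hrep{H}^\infty)_K}$ be the $(\widetilde{\lie{g}},K)$-submodule generated by the finite-dimensional $K$-submodule generated by $w$. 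Then $W$ is finitely generated, and since $(\hrep{H}^\infty)_K \subset \hrep{H}^\infty$ has no nonzero vector of rank $< m$, every nonzero vector of $W$ has rank exactly $m$; so $W$ is equi-rank with $\Rrank(W|_{\lie{g},K}) = m$.

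Next I would identify the small Cartan subalgebra of $W$. For $x \in \univ{\widetilde{g}}$ one has $xw \in F\cdot w$ for a finite-dimensional $\lie{g}$-submodule $F \subset \univ{\widetilde{g}}$, so the translation estimate of Lemma \ref{lem:TranslationSupport} applied to the finitely generated module $\univ{g}w$ (whose $\univcent{g}$-annihilator has variety $\Variety(P)$), together with the Weyl-invariance of $q^{-1}(\Variety(P))$, gives $\Variety(\Ann_{\univcent{g}}(xw)) \subset \bigcup_{\nu \in R} q(\lie{b}^* + \mu_0 + \nu)$. Hence for $P' \in \Ass_{\univcent{g}}(W)$ the variety $\Variety(P')$ is irreducible, of dimension $m$, and contained in an at most countable union of $m$-dimensional irreducible closed subvarieties; a Baire-category argument forces $\Variety(P') = q(\lie{b}^* + \mu_0 + \nu)$ for some $\nu \in R$. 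Thus $W|_{\lie{g},K}$ has a small Cartan subalgebra with $\lie{a}^* = \lie{b}^*$ in the sense of Definition \ref{def:Cartan}, and $\lie{b}^*$ is spanned by elements of $R$ by hypothesis, so Lemma \ref{lem:AffineToEquiRank} yields $\Krank(W|_{\lie{g},K}) = \Rrank(W|_{\lie{g},K}) = m$. On the other hand $W$ is a nonzero $(\widetilde{\lie{g}},K)$-submodule of $(\hrep{H}^\infty)_K$ and $\hrep{H}$ is admissible, so the theorem preceding Corollary \ref{cor:DualKrank} gives $\Krankmax(W|_{\lie{g},K}) = \Krankmax((\hrep{H}^*)_{\widetilde{K}}|_{\lie{g},K})$, and the right-hand side equals $\Krank((\hrep{H}^*)_{\widetilde{K}}|_{\lie{g},K}) = n$ by Proposition \ref{prop:EquiKrank}. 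Therefore $m = \Krank(W|_{\lie{g},K}) = \Krankmax(W|_{\lie{g},K}) = n$, which completes the argument.

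I expect the main obstacle to be the middle step: passing from an abstract smooth vector with prescribed prime annihilator $P$ to a genuinely finitely generated $(\widetilde{\lie{g}},K)$-module $W \subset (\hrep{H}^\infty)_K$ that is still equi-rank of rank $m$ and still carries $\lie{b}^*$ as a small Cartan subalgebra. The delicate bookkeeping there is that $\rfilt{m}{(\hrep{H}^\infty)_K}$ is legitimately a $\widetilde{\lie{g}}$-submodule (Theorem \ref{thm:RankFiltration} applied to a module very far from finitely generated), that $K_\RR$-isotypic projection commutes with $\univcent{g}$, and that the Kostant translation estimate plus a Baire argument pins down $\Ass_{\univcent{g}}(W)$. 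Unitarity itself enters only through Lemma \ref{lem:AffineToEquiRank}, whose proof already packages the needed boundedness of real parts of infinitesimal characters, and through the admissibility used in the theorem before Corollary \ref{cor:DualKrank}.
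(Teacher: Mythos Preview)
Your argument is correct and follows essentially the same route as the paper's proof. The paper's proof is the two-line sketch ``take $0\neq v\in ((\hrep{H}^\infty)^P)_K$ and apply Lemma~\ref{lem:AffineToEquiRank} to $\univ{\widetilde{g}}v$''; you have simply unpacked what that sentence hides, namely (i) producing a $K$-finite vector annihilated by $P$, (ii) checking that the resulting cyclic $(\widetilde{\lie{g}},K)$-module has a small Cartan subalgebra $\lie{b}^*$ via the translation estimate and a countability/Baire argument, and (iii) chaining $\Rrank = \Krank$ through the unnamed theorem before Corollary~\ref{cor:DualKrank}, Corollary~\ref{cor:DualKrank}, and Theorem~\ref{thm:KrankRrankGK}. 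The only cosmetic difference is how you obtain the $K$-finite vector: the paper uses that $(\hrep{H}^\infty)^P$ is closed and $K_\RR$-stable, hence contains nonzero $K$-finite vectors, while you project onto an isotypic component; both yield a $K$-finite $w$ with $\Variety(\Ann_{\univcent{g}}(w)) = \Variety(P)$.
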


\begin{proof}
	Since $(\hrep{H}^\infty)^P$ is a closed in $\hrep{H}^\infty$ and $K_\RR$-stable,
	we can take $0 \neq v \in ((\hrep{H}^\infty)^P)_K$.
	Applying Lemma \ref{lem:AffineToEquiRank} to $\univ{\widetilde{g}}v$, we obtain the corollary.
\end{proof}

The assumption of affinity is fulfilled if $\Rrankmin(\hrep{H}^\infty|_{G'_\RR}) = 0$.
In this case, we have proved the equi-rank property for non-unitary representations in \cite[Theorem 9]{Ki24}.

\begin{theorem}\label{thm:Rank0}
	Let $V$ be an irreducible $(\lie{\widetilde{g}}, \widetilde{K})$-module.
	Then $\Rrankmin(V^\infty|_{G_\RR}) = 0$ if and only if $\Rrank(V|_{\lie{g}, K}) = 0$.
\end{theorem}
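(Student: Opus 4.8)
This is an equivalence, and one implication is immediate. If $\Rrank(V|_{\lie g,K})=0$, then every nonzero $v\in V$ satisfies $\glrank(\univ g v)=0$, a condition intrinsic to $v$; since $V\subseteq V^\infty$ this gives $\rfilt{0}{V^\infty}\supseteq V\neq 0$, hence $\Rrankmin(V^\infty|_{G_\RR})=0$. For the converse I would first reduce to producing a single nonzero rank-zero vector inside the Harish-Chandra module $V$ itself: since $\widetilde K$ is connected, the irreducible $(\widetilde{\lie g},\widetilde K)$-module $V$ is irreducible over $\univ{\widetilde g}$, hence is an irreducible $\alg A$-module for the generalized pair $(\univ{\widetilde g},G)$, so by Theorem~\ref{thm:RankFiltration} the restriction $V|_{\lie g}$ is equi-rank; thus $\rfilt{0}{V}\neq 0$ already forces $\Rrank(V|_{\lie g,K})=0$.

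So assume $\Rrankmin(V^\infty|_{G_\RR})=0$, i.e.\ $\rfilt{0}{V^\infty}\neq 0$, and pick $0\neq v\in\rfilt{0}{V^\infty}$; set $I\coloneq\Ann_{\univcent g}(v)$, an ideal of finite codimension in $\univcent g$. Because $G_\RR$ normalizes $\lie g$ and acts trivially on $\univcent g$, the subspace $(V^\infty)^{I}$ is $G_\RR$-stable; it is closed, being a finite intersection of kernels of the continuous operators $\pi(X)$ ($X$ a generator of $I$), and it is contained in $\rfilt{0}{V^\infty}$. Being a nonzero representation of the compact group $K_\RR$, it contains a nonzero $K$-finite vector $w$, which we may moreover take to lie in a single $K$-type. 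Then $w$ is smooth, $\Ann_{\univcent g}(w)$ has finite codimension, and $\univ g w$ is a finitely generated $\univcent g$-finite $(\lie g,K)$-module, hence of finite length by Harish-Chandra's admissibility theorem.

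Now set $M\coloneq\univ{\widetilde g}w$, a $(\widetilde{\lie g},K)$-submodule of the space $(V^\infty)_K$ of $K$-finite smooth vectors. Writing $\univ{\widetilde g}$ as the union of its finite-dimensional $\ad\lie g$-submodules $F$ (completely reducible, as $(\univ{\widetilde g},G)$ is a generalized pair) and using that $\glrank(\univ g w)=0$, Corollary~\ref{cor:TranslationSupportNonFinGen} together with the surjectivity of the multiplication map $F\otimes\univ g w\to F\cdot\univ g w$ gives $\glrank(F\cdot\univ g w)\le\glrank(F\otimes\univ g w)=\glrank(\univ g w)=0$; since $M=\bigcup_F F\cdot\univ g w$ and each $F\cdot\univ g w$ is a $\lie g$-submodule, every vector of $M$ generates a $\univcent g$-finite $\lie g$-module. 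By the theorem of Section~\ref{section:Branching} identifying $\Krank$ of any nonzero $(\widetilde{\lie g},K)$-submodule of $(V^\infty)_K$ with $\Krank$ of the contragredient Harish-Chandra module, together with Corollary~\ref{cor:DualKrank}, one obtains $\Krank(M|_{\lie g,K})=\Krank(V|_{\lie g,K})$. The remaining step — the heart of the matter, not a formality — is to deduce $\Krank(M|_{\lie g,K})=0$ from the fact that every vector of $M$ is rank zero: a $K$-isotypic component $M(\tau)$ is merely an increasing union of the $\univcent g$-finite subspaces $(F\cdot\univ g w)(\tau)$, whose infinitesimal characters may a priori range over the whole weight lattice of $\widetilde{\lie g}$, so one must still show each $M(\tau)$ is $\univcent g$-finite. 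This admissibility-type statement, for a $(\widetilde{\lie g},K)$-module generated by a single $K$-type whose underlying $\lie g$-module has finite length, is precisely \cite[Theorem~9]{Ki24}. Granting it, $\Krank(V|_{\lie g,K})=0$, whence $\Rrank(V|_{\lie g,K})\le\Krankmax(V|_{\lie g,K})=0$, which completes the proof. I expect the passage from $K$-finiteness to $\widetilde K$-finiteness to be the only genuine obstacle; everything else is assembled from results already established.
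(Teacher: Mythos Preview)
Your proposal and the paper's treatment coincide in substance: the paper does not prove this theorem internally but simply records that it is \cite[Theorem~9]{Ki24}. Your easy direction and your reductions for the converse (passing to a $K$-finite vector in $(V^\infty)^I$, forming $M=\univ{\widetilde g}w$, and using the $\Krank$ theorem together with Corollary~\ref{cor:DualKrank}) are all correct and are the natural steps one would write out, but at the decisive point you too invoke \cite[Theorem~9]{Ki24}. So there is no genuine difference in approach; you have supplied the surrounding scaffolding that the paper omits.

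One caveat: the paper cites \cite[Theorem~9]{Ki24} as the \emph{entire} statement of Theorem~\ref{thm:Rank0}, not merely as the admissibility-type substep (that each $M(\tau)$ is $\univcent g$-finite) you isolate. Your reduction is valid, but your description of what the cited result actually asserts may not match its statement; it may already be formulated at the level of the full equivalence rather than as the $\Krank$ computation for $M$. Since the external input is the same either way, this does not affect correctness, only the packaging.
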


Recall $\Rrankmin(V^\infty|_{G_\RR}) \leq \Rrank(V|_{\lie{g}, K})$.
If $\Rrank(V|_{\lie{g}, K}) \leq 1$, the rank $\Rrankmin(V^\infty|_{G_\RR})$ is $0$ or $1$.
Hence, as a consequence of Theorem \ref{thm:Rank0}, we obtain the following result.

\begin{proposition}\label{prop:Rank1}
	Let $V$ be an irreducible $(\lie{\widetilde{g}}, \widetilde{K})$-module.
	If $\Rrank(V|_{\lie{g}, K}) \leq 1$, then $V^\infty|_{G_\RR}$ is equi-rank and $\Rrank(V^\infty|_{G_\RR}) = \Rrank(V|_{\lie{g}, K})$.
\end{proposition}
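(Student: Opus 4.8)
The plan is to read the statement off from Theorem \ref{thm:Rank0} together with the estimates collected in Proposition \ref{prop:BranchingEasy}; the point is simply that the set $\{0,1\}$ is too short to leave any room once both ranks are trapped between the same bounds. Put $n \coloneq \Rrank(V|_{\lie{g},K})$, which is a well-defined nonnegative integer (with $n \geq 0$ since $V \neq 0$), and by hypothesis $n \in \{0,1\}$. Realizing $V$ as $\hrep{H}_{\widetilde{K}}$ for an irreducible admissible Hilbert representation $\hrep{H}$ of $\widetilde{G}_\RR$, we have $V^\infty = \hrep{H}^\infty$, so Proposition \ref{prop:BranchingEasy} applies. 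It gives, together with the trivial inequality $\Rrankmin \leq \Rrankmax$ that is immediate from the definitions (see also Lemma \ref{lem:EquiRankSub}),
\begin{align*}
	\Rrankmin(V^\infty|_{G_\RR}) \leq \Rrankmax(V^\infty|_{G_\RR}) \leq \Rrank(V|_{\lie{g},K}) = n \leq 1 .
\end{align*}
Since $V^\infty \neq 0$, both of these ranks lie in $\{0,1\}$.

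I would then split into two cases. If $n = 0$, the upper bound forces $\Rrankmax(V^\infty|_{G_\RR}) = 0$, hence $\Rrankmin(V^\infty|_{G_\RR}) = 0$ as well, so $V^\infty|_{G_\RR}$ is equi-rank of rank $0 = n$. If $n = 1$, Theorem \ref{thm:Rank0} rules out $\Rrankmin(V^\infty|_{G_\RR}) = 0$, so $\Rrankmin(V^\infty|_{G_\RR}) = 1$; combined with $1 = \Rrankmin(V^\infty|_{G_\RR}) \leq \Rrankmax(V^\infty|_{G_\RR}) \leq 1$, this yields $\Rrankmax(V^\infty|_{G_\RR}) = 1$, so $V^\infty|_{G_\RR}$ is equi-rank of rank $1 = n$. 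In either case $V^\infty|_{G_\RR}$ is equi-rank with $\Rrank(V^\infty|_{G_\RR}) = n = \Rrank(V|_{\lie{g},K})$, which is exactly the assertion.

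There is no real obstacle at this stage: once Theorem \ref{thm:Rank0} and Proposition \ref{prop:BranchingEasy} are available, the argument reduces to the elementary observation that a nonnegative integer which is at most $1$ and known to be nonzero must equal $1$. The genuine content is therefore entirely imported, namely Theorem \ref{thm:Rank0} (from \cite[Theorem 9]{Ki24}), which settles the vanishing threshold $\Rrankmin(V^\infty|_{G_\RR}) = 0$ even for non-unitary $V$, and the inequality $\Rrankmax(V^\infty|_{G_\RR}) \leq \Rrank(V|_{\lie{g},K})$ of Proposition \ref{prop:BranchingEasy}. The only point to check with care is that these results apply verbatim, i.e.\ that an irreducible $(\lie{\widetilde{g}}, \widetilde{K})$-module admits an irreducible admissible Hilbert globalization $\hrep{H}$ with $V^\infty = \hrep{H}^\infty$ — this is the Harish-Chandra subquotient theorem together with the definition of $V^\infty$ recalled in Subsection \ref{subsection:GKBasic}.
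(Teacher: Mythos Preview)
Your proof is correct and follows the same approach as the paper: the text preceding the proposition sketches exactly this argument, using the chain of inequalities from Proposition \ref{prop:BranchingEasy} together with Theorem \ref{thm:Rank0} to pin down the rank in the two cases $n=0$ and $n=1$. You have simply written out the case split in full detail.
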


In Corollary \ref{cor:BoundByRank}, we will show $\Rrank(V|_{\lie{g}, K}) \leq \rank_\RR(G_\RR)$.
Hence the assumption in Proposition \ref{prop:Rank1} always holds if $\rank_\RR(G_\RR) = 1$.

\subsection{Associated variety}\label{subsection:AV}

In this subsection, we deal with a relation between the rank and the associated variety of a $(\lie{g}, K)$-module.
We assume that all negative parts of a filtration of a vector space (e.g.\ $\CC$-algebra or its module) are zero.
We return to the setting of generalized pairs.
Let $(\alg{A}, G)$ be a generalized pair with connected reductive algebraic group $G$.

We shall recall the definition of associated varieties of $\lie{g}$-modules.
We refer the reader to \cite{BoBr82}, \cite{Vo89} and \cite{Ko98_discrete_decomposable_3}.
Let $V$ be a finitely generated module of a filtered $\CC$-algebra $\alg{R}$ and $F$ an exhaustive filtration of $V$ satisfying that $\alg{R}_i F_j(V)\subset F_{i+j}(V)$ for any $i, j\in \NN$.
Then $\gr_F V$ is a $\gr \alg{R}$-module.
The filtration $F$ is said to be a good filtration if $\gr_F V$ is finitely generated.
It is well-known that any finitely generated module has a good filtration.

Suppose that $F$ is a good filtration and $\gr \alg{R}$ is a finitely generated commutative $\CC$-algebra
and an integral domain.
The variety $\AV(V)\coloneq \Variety(\Ann_{\gr \alg{R}}(\gr_F V))$ is called the associated variety of $V$.
Note that $\AV(V)$ does not depend on the choice of a good filtration.
Since $\gr_F V$ is a graded module, $\AV(V)$ is a $\CC^\times$-variety.

The associated variety is preserved by tensoring with finite-dimensional modules.
See \cite[Lemma 4.1]{BoBr82}.

\begin{fact}\label{fact:TranslationAV}
	Let $V$ be a finitely generated $\lie{g}$-module and $F$ a finite-dimensional $\lie{g}$-module.
	Then $\AV(V\otimes F) = \AV(V)$ holds.
\end{fact}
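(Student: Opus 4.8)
The plan is to exhibit a good filtration on $V\otimes F$ whose associated graded $S(\lie{g})$-module is isomorphic to a finite direct sum of copies of $\gr_F V$; the equality of annihilators, and hence of associated varieties, then follows at once. First I would fix a good filtration $F_\bullet$ of $V$, which exists because $V$ is finitely generated. Recall that the $\univ{g}$-module structure on $V\otimes F$ comes from the coproduct, so that $X\cdot(v\otimes f)=Xv\otimes f+v\otimes Xf$ for $X\in\lie{g}$. I would equip $V\otimes F$ with the filtration $G_j\coloneq F_j(V)\otimes F$, that is, I place the whole of the finite-dimensional module $F$ in degree $0$; this is licit precisely because $\dim_\CC F<\infty$. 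Then $\univfilt{i}{g}\cdot G_j\subset G_{i+j}$: it suffices to check the case $i=1$, and for $X\in\lie{g}$ both terms $Xv\otimes f$ and $v\otimes Xf$ lie in $F_{j+1}(V)\otimes F$. Also $V\otimes F$ is finitely generated over $\univ{g}$ (by the products of a generating set of $V$ with a basis of $F$), so the situation in which $\AV$ is defined does apply.

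Next I would compute the associated graded. As graded vector spaces $\gr_G(V\otimes F)=\gr_F(V)\otimes F$. For $X\in\lie{g}\subset S^1(\lie{g})$ and a class $\overline{v\otimes f}$ of degree $j$, the term $v\otimes Xf$ appearing in $X\cdot(v\otimes f)$ already lies in $F_j(V)\otimes F=G_j$, hence vanishes in $\gr^{j+1}_G$; therefore $X$ acts on $\gr_G(V\otimes F)$ simply as the $S(\lie{g})$-action on $\gr_F V$ tensored with the identity on $F$. Consequently $\gr_G(V\otimes F)\cong(\gr_F V)^{\oplus\dim_\CC F}$ as $S(\lie{g})$-modules. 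In particular $\gr_G(V\otimes F)$ is finitely generated, so $G_\bullet$ is a good filtration, and $\Ann_{S(\lie{g})}(\gr_G(V\otimes F))=\Ann_{S(\lie{g})}(\gr_F V)$, which yields $\AV(V\otimes F)=\Variety(\Ann_{S(\lie{g})}(\gr_F V))=\AV(V)$.

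The argument is entirely routine; the only point that deserves a moment's care is the assertion that the $S(\lie{g})$-action on the associated graded ``forgets'' $F$, i.e.\ that the contribution $v\otimes Xf$ is killed in $\gr_G$, and this is exactly the place where putting $F$ in degree $0$ is used, which in turn relies on $F$ being finite-dimensional. One could alternatively stay closer to \cite[Lemma 4.1]{BoBr82}: the same computation shows that $-\otimes F$ is exact and commutes with $\gr$ up to the trivial twist by $F$, from which independence of $\AV$ from the good filtration gives the claim directly.
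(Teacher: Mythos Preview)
Your proof is correct and is precisely the standard argument; the paper does not give its own proof of this fact but merely cites \cite[Lemma~4.1]{BoBr82}, whose argument is essentially the one you have written out.
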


The following proposition follows from Fact \ref{fact:TranslationAV}.
The proposition is a prototype of Theorems \ref{thm:RankFiltration} and \ref{thm:Affinity}.
The proof is the same as \cite[Theorem 3.7]{Ko98_discrete_decomposable_3}.

\begin{proposition}\label{prop:AmoduleAV}
	Let $V$ be an irreducible $\alg{A}$-module.
	Then $\AV(\univ{g}v)$ is independent of $0\neq v \in V$.
\end{proposition}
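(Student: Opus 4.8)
The plan is to follow the strategy of \cite[Theorem 3.7]{Ko98_discrete_decomposable_3}, using Fact \ref{fact:TranslationAV} as the geometric replacement for Lemma \ref{lem:TranslationSupport} in the proof of Theorem \ref{thm:RankFiltration}. First I would fix two nonzero vectors $v, w \in V$. Since $V$ is an irreducible $\alg{A}$-module, $\alg{A} w = V \ni v$, so there is some $a \in \alg{A}$ with $aw = v$. Because the adjoint action of $\lie{g}$ on $\alg{A}$ is locally finite and completely reducible, $a$ lies in some finite-dimensional $\lie{g}$-submodule $F \subset \alg{A}$; then $v = aw \in F \cdot \univ{g}w$, and the multiplication map $\varphi\colon F \otimes \univ{g}w \to V$ is a $\lie{g}$-homomorphism whose image contains $v$, hence contains $\univ{g}v$.

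The key step is then the chain of inclusions on associated varieties. The module $F \otimes \univ{g}w$ is finitely generated over $\univ{g}$ (since $\univ{g}w$ is, and $F$ is finite-dimensional), so $\varphi(F \otimes \univ{g}w) = F \cdot \univ{g}w$ is a finitely generated $\lie{g}$-module containing $\univ{g}v$ as a submodule. Using that $\AV$ of a submodule (resp.\ quotient) is contained in $\AV$ of the ambient module, together with Fact \ref{fact:TranslationAV} which gives $\AV(F \otimes \univ{g}w) = \AV(\univ{g}w)$, I obtain
\begin{align*}
	\AV(\univ{g}v) \subset \AV(F \cdot \univ{g}w) \subset \AV(F \otimes \univ{g}w) = \AV(\univ{g}w).
\end{align*}
By symmetry (swapping the roles of $v$ and $w$), $\AV(\univ{g}w) \subset \AV(\univ{g}v)$, and therefore $\AV(\univ{g}v) = \AV(\univ{g}w)$.

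The main thing to be careful about is that $\AV$ is only defined for finitely generated modules, and that the basic monotonicity properties ($\AV$ of a subquotient sits inside $\AV$ of the whole) are available in this generality; these hold because a good filtration on the ambient module induces a (good) filtration on any submodule or quotient, so $\gr$ of the subquotient is a subquotient of $\gr$ of the ambient module, and taking $\Variety$ of annihilators reverses inclusions appropriately. One should also note that the statement $\AV(\univ{g}v)$ implicitly presupposes $\univ{g}v$ is finitely generated over $\univ{g}$ — which it is, being cyclic — and that the hypotheses ensuring $\AV$ makes sense ($\gr \univ{g} = S(\lie{g})$ finitely generated and a domain) are in force. No genuine obstacle is expected; the argument is essentially formal once Fact \ref{fact:TranslationAV} is invoked, exactly paralleling the proof of Theorem \ref{thm:RankFiltration} with $\glrank$ replaced by $\AV$.
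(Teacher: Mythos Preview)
Your proof is correct and is precisely the argument the paper has in mind: the text explicitly says the proposition follows from Fact \ref{fact:TranslationAV}, is a prototype of Theorem \ref{thm:RankFiltration}, and that the proof is the same as \cite[Theorem 3.7]{Ko98_discrete_decomposable_3}. Your chain $\AV(\univ{g}v) \subset \AV(F\cdot \univ{g}w) \subset \AV(F\otimes \univ{g}w) = \AV(\univ{g}w)$ plus symmetry is exactly that argument, and your caveats about finite generation and monotonicity of $\AV$ are the right ones (the induced filtration on a submodule being good uses that $S(\lie{g})$ is Noetherian, which is fine here).
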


\begin{definition}
	Let $\alg{R}$ be a filtered $\CC$-algebra whose associated graded algebra is an integral domain and a finitely generated $\CC$-algebra.
	For a non-zero $\alg{R}$-module $V$, if $\AV(\alg{R}v)$ does not depend on the choice of $0\neq v \in V$, we set $\AV(V)\coloneq \AV(\alg{R}v)$ taking $0\neq v \in V$.
\end{definition}

Suppose that $\alg{A}$ has at most countable dimension.
Let $\theta$ be an involution of $G$ and $K$ a (connected and finite) covering of $(G^\theta)_0$.
Then $(\alg{A}, K)$ forms a generalized pair.
Fix a Cartan subalgebra $\lie{h}$ of $\lie{g}$.
Let $W$ denote the Weyl group of $\lie{g}$.
Write $q\colon \lie{h}^* \rightarrow \lie{h}^*/W$ for the quotient map.

For an affine $L$-variety $X$ of a connected reductive algebraic group $L$,
we denote by $X\GIT L$ the GIT quotient of $X$, i.e.\ $X\GIT L$ is the affine variety with the coordinate ring $\rring{X}^L$.
The isomorphism $S(\lie{g})^G\simeq S(\lie{h})^W$ induces an isomorphism $\lie{g}^*\GIT G \xrightarrow{\simeq} \lie{h}^* \GIT  W$.
Note that there exists a natural bijection between $\lie{h}^*\GIT W$ and $\lie{h}^*/W$ as sets.
Write $\widetilde{q}\colon \lie{g}^*\rightarrow \lie{g}^*\GIT G\simeq \lie{h}^*\GIT W$ for the quotient morphism induced from the inclusion $S(\lie{g})^G\rightarrow S(\lie{g})$.

For an $(\alg{A}, K)$-module $V$, since we can take a $K$-stable good filtration $F$ of $V$ (i.e.\ each $F_i(V)$ are $K$-stable), the associated variety $\AV(V)$ is $K$-stable and contained in $(\lie{g}/\lie{k})^*$.

\begin{lemma}\label{lem:RestrictionGoodFiltration}
	Let $V$ be a finitely generated $(\lie{g}, K)$-module with a $K$-stable good filtration $F$ and a $K$-type $\tau \in \widehat{K}$.
	Then $V(\tau)$ is a finitely generated $\univcent{g}$-module, and the filtration $F'$ of $V(\tau)$ defined by $F'_i(V(\tau))\coloneq (F_i V)(\tau)$
	is a good filtration as a $\univcent{g}$-module.
\end{lemma}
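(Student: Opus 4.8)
The plan is to reduce everything to the associated graded module $\gr_F V$ and to exploit the fact that a $K$-stable filtration forces $\gr_F V$ to be supported on $(\lie{g}/\lie{k})^*$.

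First I would record the key structural point: since each $F_i(V)$ is a $K$-submodule, for $X\in\lie{k}\subset\lie{g}=\gr_1\univ{g}$ and $v\in F_i(V)$ the vector $Xv$ is the derivative at $0$ of the curve $t\mapsto\exp(tX)v$ in $F_i(V)$, hence $Xv\in F_i(V)$; therefore $\lie{k}$ annihilates $\gr_F V$, so (as $F$ is good over $\univ{g}$) $\gr_F V$ is a finitely generated $S(\lie{g}/\lie{k})$-module carrying a compatible rational $K$-action. Recall also that $\gr\univcent{g}$ is, via symmetrization, identified with $S(\lie{g})^{G}$, and that it acts on $\gr_F V$ through $S(\lie{g})^{G}\hookrightarrow S(\lie{g})\twoheadrightarrow S(\lie{g}/\lie{k})$. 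The substance of the lemma is that $V(\tau)$ is finitely generated over $\univcent{g}$; granting this, everything else is formal.

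For the finite-generation statement I would work on the graded level. By the finiteness theorem for modules of covariants in invariant theory, applied to the finitely generated $K$-equivariant $S(\lie{g}/\lie{k})$-module $\gr_F V$, the isotypic component $(\gr_F V)(\tau)$ is a finitely generated $S(\lie{g}/\lie{k})^{K}$-module. By the theorem of Kostant and Rallis for the symmetric pair $(\lie{g},\lie{k})$, the ring $S(\lie{g}/\lie{k})^{K}$ is module-finite over the image of the restriction map $S(\lie{g})^{G}\to S(\lie{g}/\lie{k})^{K}$. Composing, $(\gr_F V)(\tau)$ is finitely generated over $\gr\univcent{g}$. (Alternatively, one may invoke the classical fact that the $\widehat{K}$-isotypic components of a finitely generated $(\lie{g},K)$-module are finitely generated over $\univcent{g}$.)

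Finally I would put the pieces together. Since $K$ is reductive the passage to the $\tau$-isotypic component is exact, so $F'_i(V(\tau))=F_i(V)\cap V(\tau)$ is an exhaustive, nonnegative filtration of the $\univcent{g}$-module $V(\tau)$ with $\univcent{g}_i F'_j\subset F'_{i+j}$ (one uses here that $\univcent{g}$ commutes with $K$, hence stabilizes $V(\tau)$), and there is a canonical identification $\gr_{F'}V(\tau)\cong(\gr_F V)(\tau)$ of graded $\gr\univcent{g}$-modules. By the preceding step this graded module is finitely generated over $\gr\univcent{g}$, so $F'$ is a good filtration; lifting a finite homogeneous generating set then shows $V(\tau)$ is finitely generated over $\univcent{g}$. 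The main obstacle is exactly this finite-generation step: one must obtain finiteness over $\univcent{g}$ itself, not merely over the much larger algebra $\univ{g}^{K}$, and for that both the $K$-stability of $F$ (which confines $\gr_F V$ to $(\lie{g}/\lie{k})^*$) and the Kostant--Rallis finiteness theorem are needed, since $S(\lie{g})^{K}$ is in general not module-finite over $S(\lie{g})^{G}$.
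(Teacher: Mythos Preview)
Your proof is correct and follows essentially the same route as the paper's: both reduce to showing that $(\gr_F V)(\tau)$ is finitely generated over $S(\lie{g})^G$ by first observing that $\lie{k}$ kills $\gr_F V$, then invoking finiteness of $(\gr_F V)(\tau)$ over $S(\lie{g}/\lie{k})^K$ (modules of covariants) together with module-finiteness of $S(\lie{g}/\lie{k})^K$ over $S(\lie{g})^G$ (Kostant--Rallis). The only cosmetic difference is that the paper cites Wallach directly for the finite generation of $V(\tau)$ over $\univcent{g}$, whereas you obtain it by lifting generators from the graded module; both are fine.
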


\begin{proof}
	For the first assertion, see \cite[Theorem 3.4.1]{Wa88_real_reductive_I}.
	It suffices to show that $(\gr_F V)(\tau)$ is finitely generated as an $S(\lie{g})^G$-module.
	It is well-known that $(\gr_F V)(\tau)$ is finitely generated as an $S(\lie{g}/\lie{k})^K$-module
	(\cite[Lemma 3.4.3]{Wa88_real_reductive_I}).
	Note that $\lie{k}\subset S(\lie{g})$ acts on $(\gr_F V)(\tau)$ trivially.
	Moreover, $S(\lie{g}/\lie{k})^K$ is a finitely generated $S(\lie{g})^G$-module via the natural homomorphism $S(\lie{g})^G\rightarrow S(\lie{g}/\lie{k})^K$ (\cite[Lemma 3.4.4]{Wa88_real_reductive_I}).
	Therefore $(\gr_F V)(\tau)$ is finitely generated as a $\univcent{g}$-module.
\end{proof}

For an ideal $I$ of a filtered algebra $\alg{R}$, we provide $I$ with the filtration induced from that of $\alg{R}$.
Then $\gr I$ is an ideal of $\gr \alg{R}$, and $\gr(\alg{R})/\gr(I)$ is isomorphic to $\gr(\alg{R}/I)$.

\begin{lemma}\label{lem:GrAffine}
	Let $P$ be a prime ideal of $S(\lie{h})$ such that $\Variety(P)$ is a translation of a subspace $\lie{a}^*$ of $\lie{h}^*$.
	Then $\Variety(\gr (P\cap S(\lie{h})^W)) = q(\lie{a}^*)$ holds, where $\gr$ is taken for the filtration defined by degree.
\end{lemma}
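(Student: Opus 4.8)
The plan is to identify $\gr(P\cap S(\lie{h})^W)$ with the initial ideal for the degree filtration and to pin down its zero locus by squeezing it between two ideals that both have $q(\lie{a}^*)$ as their variety. Write $A:=S(\lie{h})$ and $B:=S(\lie{h})^W=\univcent{g}$, both graded by degree (so $\gr A=A$, $\gr B=B$), and for a polynomial $f$ let $\mathrm{in}(f)$ denote its top-degree homogeneous component; recall that $\gr(J)$ is the ideal generated by $\{\mathrm{in}(f):f\in J\}$, and that for $f\in B$ the leading form $\mathrm{in}(f)$ is again $W$-invariant, hence the same whether computed in $A$ or in $B$. Since $P$ is prime and $\Variety(P)$ is the affine subspace $v_0+\lie{a}^*$ for some $v_0\in\lie{h}^*$, the ideal $P$ is exactly the vanishing ideal of $v_0+\lie{a}^*$, so it is generated by $\{\ell-\ell(v_0):\ell\in\lie{a}^\perp\}$, where $\lie{a}^\perp\subseteq\lie{h}$ is the space of linear forms vanishing on $\lie{a}^*$.

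One inclusion is elementary. If $f\in P$ and $\xi\in\lie{a}^*$, then the polynomial function $s\mapsto f(v_0+s\xi)$ of $s\in\CC$ vanishes identically, and its coefficient of $s^{\deg f}$ is $\mathrm{in}(f)(\xi)$; hence $\mathrm{in}(f)$ vanishes on $\lie{a}^*$. Applying this to $h\in P\cap B$ and using that a $W$-invariant polynomial vanishes on $\lie{a}^*$ if and only if it vanishes on $q(\lie{a}^*)$, every generator $\mathrm{in}(h)$ of $\gr(P\cap B)$ vanishes on $q(\lie{a}^*)$, so $\Variety(\gr(P\cap B))\supseteq q(\lie{a}^*)$.

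For the reverse inclusion I would manufacture $W$-invariant elements of $P$ with controlled leading forms by a norm construction. For $\ell\in\lie{a}^\perp$ set $g_\ell:=\prod_{w\in W}(w\ell-\ell(v_0))$. Then $g_\ell$ is $W$-invariant, hence lies in $B$, and it lies in $P$ because its $w=\id$ factor does; thus $g_\ell\in P\cap B$. Since $A$ is a domain, $\mathrm{in}(g_\ell)=\prod_{w\in W}w\ell$, so $\gr(P\cap B)$ contains all of these products, and it suffices to show that the common zero locus of $\{\prod_{w}w\ell:\ell\in\lie{a}^\perp\}$ is contained in $W\cdot\lie{a}^*$, whose image in $\lie{h}^*/W$ is $q(\lie{a}^*)$. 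If $\xi\notin W\cdot\lie{a}^*$ then $w^{-1}\xi\notin\lie{a}^*$ for every $w$, so each $\{\ell\in\lie{a}^\perp:\ell(w^{-1}\xi)=0\}$ is a proper subspace of $\lie{a}^\perp$; since a vector space over $\CC$ is not a finite union of proper subspaces, some $\ell\in\lie{a}^\perp$ satisfies $(\prod_w w\ell)(\xi)=\prod_w\ell(w^{-1}\xi)\neq 0$, so $\xi$ is not in that zero locus. Combining the two inclusions gives $\Variety(\gr(P\cap S(\lie{h})^W))=q(\lie{a}^*)$, where $q(\lie{a}^*)$ is closed because $q$ is finite.

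I expect the main obstacle to be precisely the non-exactness of $\gr$: one cannot write $\gr(P\cap B)=\gr(P)\cap B$, nor compute $\gr(P\cap B)$ from generators of $P\cap B$, so the heart of the matter is the norm element $g_\ell$ — note that ordinary $W$-averaging of $\ell-\ell(v_0)$ need not stay in $P$, since $\lie{a}^*$ is not $W$-stable, so the multiplicative norm is essential — together with the ``not a finite union of proper subspaces'' step that forces the resulting leading forms to cut out exactly $W\cdot\lie{a}^*$. One should also dispatch the trivial case $\lie{a}^*=\lie{h}^*$, where $P=0$, separately.
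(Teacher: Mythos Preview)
Your proof is correct. It differs from the paper's in how the hard inclusion $\Variety(\gr(P\cap B))\subseteq q(\lie{a}^*)$ is obtained. The paper sets $Q:=\bigcap_{w\in W}wP$, observes $P\cap B=Q^W$, squeezes $\gr Q$ between $\prod_w w\,\gr P$ and $\bigcap_w w\,\gr P$ to get $\Variety(\gr Q)=W\cdot\lie{a}^*$, and then invokes complete reducibility of the $W$-action on the $W$-stably filtered space $S(\lie{h})$ to conclude $\gr(P\cap B)=\gr(Q^W)=(\gr Q)^W=\gr Q\cap B$, from which the result follows. You sidestep the $\gr$-versus-invariants commutation entirely: your norm elements $g_\ell=\prod_w(w\ell-\ell(v_0))$ are already $W$-invariant elements of $P$, and the finite-union-of-proper-subspaces argument shows directly that their leading forms $\prod_w w\ell$ cut out $W\cdot\lie{a}^*$ inside $\lie{h}^*$. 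Your route is more hands-on and self-contained; the paper's is shorter once one accepts the structural fact that $\gr$ commutes with $(\cdot)^W$, which is a reusable observation but requires a word of justification. Both arguments ultimately rely on $S(\lie{h})$ being a domain so that leading forms of products are products of leading forms.
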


\begin{proof}
	It is easy to see $\Variety(\gr P) = \lie{a}^*$.
	Set $Q\coloneq \bigcap_{w \in W} w P$.
	We shall show $\Variety(\gr Q) = W\cdot \lie{a}^*$.
	Note that taking $\gr$ preserves the inclusion relation.
	$\gr Q\subset \bigcap_{w \in W} w\cdot \gr P$ is clear.
	By $Q\supset \prod_{w \in W} wP$, we have
	\begin{align*}
		\gr Q \supset \gr\left(\prod_{w \in W} wP\right) \supset \prod_{w\in W} w\cdot \gr(P).
	\end{align*}
	Therefore we obtain $\Variety(\gr Q) = W\cdot \lie{a}^*$.

	Since the $W$-action on $S(\lie{h})$ is completely reducible, we have
	\begin{align*}
		\gr(P \cap S(\lie{h})^W) = \gr(Q \cap S(\lie{h})^W) = \gr(Q^W)
		= \gr(Q)^W = \gr(Q) \cap S(\lie{h})^W.
	\end{align*}
	This shows the desired equality $\Variety(\gr(P \cap S(\lie{h})^W)) = q(\Variety(\gr(Q))) = q(\lie{a}^*)$.
\end{proof}

\begin{theorem}\label{thm:RankAV}
	Let $V$ be an irreducible $(\alg{A}, K)$-module.
	Take a small Cartan subalgebra $\lie{a}=\lie{h}/\lie{m}$ for $V|_{\lie{g}}$.
	Then $\overline{\widetilde{q}(\AV(V|_{\lie{g}}))} = q(\lie{a}^*)$ holds.
	Moreover, we have $\dim(\AV(V|_{\lie{g}})\GIT K) = \Rrank(V|_{\lie{g}})$.
\end{theorem}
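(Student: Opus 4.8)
The plan is to reduce the statement to the interplay between the associated variety $\AV(V|_{\lie{g}})\subseteq(\lie{g}/\lie{k})^{*}$, its image under the adjoint quotient $\widetilde{q}$, and the $\univcent{g}$-supports governed by Theorem~\ref{thm:Affinity}. First I would replace $V$ by a finitely generated $(\lie{g},K)$-submodule with the same associated variety: for $0\neq v\in V$ let $M$ be the $(\lie{g},K)$-submodule generated by $v$; it is generated over $\univ{g}$ by finitely many vectors $u_{1},\dots,u_{n}$, and since $\AV(\univ{g}u)=\AV(V|_{\lie{g}})$ for every $0\neq u\in V$ (Proposition~\ref{prop:AmoduleAV}), taking unions over the $u_{i}$ and using that $\AV$ shrinks under quotients and is monotone under submodules gives $\AV(M)=\AV(V|_{\lie{g}})$. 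Fix a $K$-stable good filtration $F$ of $M$ over $\univ{g}$. Because $\widetilde{q}$ corresponds to the inclusion $S(\lie{g})^{G}=\gr \univcent{g}\hookrightarrow S(\lie{g})=\gr \univ{g}$, the closure of the image of a closed set is the variety of the contracted ideal, so $\overline{\widetilde{q}(\AV(M))}=\Variety(\Ann_{S(\lie{g})^{G}}(\gr_{F}M))$.

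Now I would bring in the $K$-isotypic decomposition. As $K$ is reductive, $\gr_{F}M=\bigoplus_{\tau\in\widehat{K}}(\gr_{F}M)(\tau)$, and by Lemma~\ref{lem:RestrictionGoodFiltration} each $(\gr_{F}M)(\tau)$ is the associated graded of a good $\univcent{g}$-filtration on the finitely generated $\univcent{g}$-module $M(\tau)$; since $S(\lie{g})^{G}\subseteq S(\lie{g})^{K}$ preserves each isotypic summand, $\Ann_{S(\lie{g})^{G}}(\gr_{F}M)=\bigcap_{\tau}\Ann_{S(\lie{g})^{G}}((\gr_{F}M)(\tau))$ and hence $\overline{\widetilde{q}(\AV(M))}=\overline{\bigcup_{\tau}\AV_{\univcent{g}}(M(\tau))}$, where $\AV_{\univcent{g}}(M(\tau))\subseteq\operatorname{Spec}(\gr \univcent{g})\simeq\lie{h}^{*}\GIT W$ is the associated variety of $M(\tau)$ as a $\univcent{g}$-module. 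Fix $\tau$ with $M(\tau)\neq 0$. Writing $M(\tau)$ as generated over $\univcent{g}$ by finitely many $w_{i}\in M\subseteq V$, Theorem~\ref{thm:Affinity} gives $\Variety(\Ann_{\univcent{g}}(M(\tau)))=\bigcup_{i}\Variety(\Ann_{\univcent{g}}(w_{i}))$, a finite union of sets $q(\lie{a}^{*}+\beta)$; running the computation in the proof of Lemma~\ref{lem:GrAffine} on the minimal primes of this radical ideal, together with the fact that $\Variety(\gr I)=\Variety(\gr \sqrt{I})$, shows $\Variety(\gr \Ann_{\univcent{g}}(M(\tau)))=q(\lie{a}^{*})$, so $\AV_{\univcent{g}}(M(\tau))\subseteq q(\lie{a}^{*})$. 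On the other hand the dimension of an associated variety equals that of the support, so $\dim\AV_{\univcent{g}}(M(\tau))=\dim\Variety(\Ann_{\univcent{g}}(M(\tau)))=\dim\lie{a}^{*}$; since $q(\lie{a}^{*})$ is irreducible of dimension $\dim\lie{a}^{*}$, this forces $\AV_{\univcent{g}}(M(\tau))=q(\lie{a}^{*})$. Taking the union over $\tau$ gives $\overline{\widetilde{q}(\AV(V|_{\lie{g}}))}=q(\lie{a}^{*})$, the first assertion; moreover $\dim\lie{a}^{*}=\Rrank(V|_{\lie{g}})$ because $V|_{\lie{g}}$ is equi-rank.

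For the second assertion, the restriction of $\widetilde{q}$ to the $K$-stable closed subvariety $\AV(V|_{\lie{g}})\subseteq(\lie{g}/\lie{k})^{*}$ is $K$-invariant, hence factors through a morphism $\pi\colon\AV(V|_{\lie{g}})\GIT K\to\lie{g}^{*}\GIT G$ whose image has closure $\overline{\widetilde{q}(\AV(V|_{\lie{g}}))}=q(\lie{a}^{*})$ by the previous step. Every fiber of $\widetilde{q}\colon\lie{g}^{*}\to\lie{g}^{*}\GIT G$ is a finite union of coadjoint $G$-orbits, and each such orbit meets $(\lie{g}/\lie{k})^{*}$ in finitely many $K$-orbits (a classical fact of Kostant--Rallis type), so the fibers of $\pi$ are finite. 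A quasi-finite dominant morphism onto a variety preserves dimension, so $\dim(\AV(V|_{\lie{g}})\GIT K)=\dim q(\lie{a}^{*})=\dim\lie{a}^{*}=\Rrank(V|_{\lie{g}})$.

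I expect the main obstacle to be the second paragraph: the identification $\overline{\widetilde{q}(\AV(M))}=q(\lie{a}^{*})$ requires combining four ingredients that must be kept consistent --- the contracted-ideal description of the image closure, the passage from $M$ to good $\univcent{g}$-filtrations on the pieces $M(\tau)$ (Lemma~\ref{lem:RestrictionGoodFiltration}), the leading-term computation of Lemma~\ref{lem:GrAffine}, and the equality of the dimensions of an associated variety and of a support. The finiteness of the fibers of $\pi$ in the last paragraph is classical but must be applied with care, since $\AV(V|_{\lie{g}})$ need not lie in the nilpotent cone.
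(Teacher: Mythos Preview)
Your proof of the first assertion is essentially the paper's proof with more detail filled in: both reduce to a finitely generated $(\lie{g},K)$-submodule with a $K$-stable good filtration, split the computation of the $S(\lie{g})^{G}$-annihilator along $K$-isotypic components via Lemma~\ref{lem:RestrictionGoodFiltration}, and then invoke Lemma~\ref{lem:GrAffine} together with Theorem~\ref{thm:Affinity} to identify each $\AV_{\univcent{g}}(M(\tau))$ with $q(\lie{a}^{*})$. The paper simply asserts this last identification directly from Lemma~\ref{lem:GrAffine}, whereas you supply the two-step justification (the containment $\AV\subseteq\Variety(\gr\Ann)$ together with the standard equality $\dim\AV=\dim\supp$), which is a reasonable clarification. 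One small point: the paper restricts at the outset to \emph{finitely many} $K$-types $\tau_{1},\dots,\tau_{r}$ such that $\bigoplus_{i}(\gr U)(\tau_{i})$ generates $\gr U$, so that the intersection of annihilators is finite and $\Variety(\bigcap_{i}J_{\tau_{i}})=\bigcup_{i}\Variety(J_{\tau_{i}})$ is immediate; your version over all $\tau$ works only because each term already equals $q(\lie{a}^{*})$, so nothing is lost in the closure.

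For the second assertion your route diverges from the paper's. The paper argues purely algebraically: since $S(\lie{g}/\lie{k})^{K}$ is module-finite over $S(\lie{g})^{G}$ (the Wallach reference recalled inside Lemma~\ref{lem:RestrictionGoodFiltration}), the ring $(S(\lie{g})/I)^{K}$ is module-finite over $S(\lie{g})^{G}/(S(\lie{g})^{G}\cap I)$, so the morphism $\AV(V|_{\lie{g}})\GIT K\to\overline{\widetilde{q}(\AV(V|_{\lie{g}}))}$ is finite and hence dimension-preserving. You instead count orbits: each fiber of $\widetilde{q}$ is a finite union of $G$-orbits, and each such orbit meets $(\lie{g}/\lie{k})^{*}$ in finitely many $K$-orbits, so $\pi$ has finite fibers. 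This is correct, but your ``Kostant--Rallis type'' finiteness for \emph{arbitrary} (not just nilpotent) orbits is itself most cleanly deduced from the same module-finiteness of $S(\lie{g}/\lie{k})^{K}$ over $S(\lie{g})^{G}$. So the paper's one-line algebraic argument actually underlies your geometric one; your approach is valid but takes the scenic route to the same fact.
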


\begin{proof}
	Fix a finitely generated $(\lie{g}, K)$-submodule $U$ of $V$ with a $K$-stable good filtration.
	By Theorem \ref{thm:Affinity}, for any $P \in \Ass_{\univcent{g}}(U)$, there exists $\lambda \in \lie{h}^*$ such that $\Variety(P) = q(\lie{a}^* + \lambda)$.
	Since $\gr U$ is finitely generated, there exist $\tau_1, \tau_2, \ldots, \tau_r \in \widehat{K}$
	such that $\gr U$ is generated by $\bigoplus_{i}(\gr U)(\tau_i) = \bigoplus_{i}\gr(U(\tau_i))$.
	By Lemma \ref{lem:RestrictionGoodFiltration}, we have
	\begin{align*}
		\widetilde{q}(\AV(U)) &= \Variety(\Ann_{S(\lie{g})^G}(\gr U)) \\
		&= \Variety\left(\bigcap_{i} \Ann_{S(\lie{g})^G}(\gr(U(\tau_i)))\right) = \bigcup_i \AV(U(\tau_i)|_{\univcent{g}}).
	\end{align*}
	Lemma \ref{lem:GrAffine} shows $\AV(U(\tau_i)|_{\univcent{g}}) = q(\lie{a}^*)$ for any $i$.
	Therefore we obtain the desired equality $\overline{\widetilde{q}(\AV(U))} = q(\lie{a}^*)$.

	Set $I\coloneq \sqrt{\Ann_{S(\lie{g})}(\gr U)}$.
	As in Lemma \ref{lem:RestrictionGoodFiltration}, $S(\lie{g}/\lie{k})^K$ is finitely generated as an $S(\lie{g})^G$-module.
	Hence $(S(\lie{g})/I)^K$ is finitely generated over the subalgebra $S(\lie{g})^G/(S(\lie{g})^G \cap I)$.
	This implies $\dim(\AV(V|_{\lie{g}})\GIT K) = \dim(\overline{\widetilde{q}(\AV(V|_{\lie{g}}))}) = \Rrank(V|_{\lie{g}})$
\end{proof}

As a consequence of Theorem \ref{thm:RankAV}, we obtain a upper bound of the rank.
Fix a connected reductive Lie group $G_\RR$ corresponding to the pair $(\lie{g}, K)$.
Recall that $\dim(S(\lie{g}/\lie{k})^K) = \rank_{\RR}(G_\RR)$, where $\rank_{\RR}(\cdot)$ means the real rank.

\begin{corollary}\label{cor:BoundByRank}
	Let $V$ be an irreducible $(\alg{A}, K)$-module.
	Then $\Rrank(V|_{\lie{g}}) \leq \rank_{\RR}(G_\RR)$ holds.
\end{corollary}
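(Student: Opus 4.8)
The plan is to read the bound directly off Theorem \ref{thm:RankAV}. Recall that, as observed just before Lemma \ref{lem:RestrictionGoodFiltration}, for an irreducible $(\alg{A},K)$-module $V$ the associated variety $\AV(V|_{\lie{g}})$ is well-defined (Proposition \ref{prop:AmoduleAV}), and computing it from a $K$-stable good filtration of a finitely generated $(\lie{g},K)$-submodule of $V$ shows that $\AV(V|_{\lie{g}})$ is a $K$-stable closed subvariety of $(\lie{g}/\lie{k})^*$.

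First I would pass to GIT quotients. Since $K$ is reductive and $\AV(V|_{\lie{g}})\hookrightarrow(\lie{g}/\lie{k})^*$ is a closed $K$-equivariant embedding of affine varieties, the corresponding surjection of coordinate rings remains surjective on $K$-invariants by complete reducibility, so $\AV(V|_{\lie{g}})\GIT K$ embeds as a closed subvariety of $(\lie{g}/\lie{k})^*\GIT K$. In particular
\[
\dim\bigl(\AV(V|_{\lie{g}})\GIT K\bigr)\leq \dim\bigl((\lie{g}/\lie{k})^*\GIT K\bigr)=\dim S(\lie{g}/\lie{k})^K=\rank_{\RR}(G_\RR),
\]
where the last equality is the identity recalled immediately above the statement of the corollary.

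Finally, Theorem \ref{thm:RankAV} gives $\Rrank(V|_{\lie{g}})=\dim(\AV(V|_{\lie{g}})\GIT K)$, and combining this with the displayed inequality yields $\Rrank(V|_{\lie{g}})\leq\rank_{\RR}(G_\RR)$. I do not expect any genuine obstacle here: the corollary is a one-line consequence of Theorem \ref{thm:RankAV} together with the elementary fact that an invariant-theoretic quotient cannot increase dimension; the only point deserving a line of care is that the quotient of the closed subvariety $\AV(V|_{\lie{g}})$ sits as a closed subvariety of $(\lie{g}/\lie{k})^*\GIT K$, which is where reductivity of $K$ is used.
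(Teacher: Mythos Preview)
Your proof is correct and follows exactly the approach the paper intends: the corollary is stated as an immediate consequence of Theorem \ref{thm:RankAV} together with the identity $\dim S(\lie{g}/\lie{k})^K=\rank_\RR(G_\RR)$ recalled just before it, and your argument fleshes this out with the one observation needed, namely that $\AV(V|_{\lie{g}})\GIT K$ is a closed subvariety of $(\lie{g}/\lie{k})^*\GIT K$ by reductivity of $K$.
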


It is hard to compute $\AV(V|_{\lie{g}})$ directly.
We give a lower bound of $\AV(V|_{\lie{g}})$ using the associated variety for $\alg{A}$.
Suppose that $\alg{A}$ is a filtered $\CC$-algebra such that the homomorphism $\univ{g}\rightarrow \alg{A}$ is a morphism of filtered algebras, and $\gr \alg{A}$ is an integral domain and a finitely generated $\CC$-algebra.
Universal enveloping algebras and algebras of twisted differential operators are typical examples.
Let $X$ denote the affine variety with coordinate ring $\gr\alg{A}$.
Then the homomorphism $S(\lie{g})\simeq \gr\univ{g} \rightarrow \gr \alg{A}$ induces a morphism $\res_{\lie{g}}\colon X\rightarrow \lie{g}^*$.

The following proposition is a generalization of \cite[Theorem 3.1]{Ko98_discrete_decomposable_3},
which is stated for the discretely decomposable case.
The proof is essentially the same.

\begin{proposition}\label{prop:LowerBoundAV}
	Let $V$ be an irreducible $\alg{A}$-module.
	Then $\overline{\res_{\lie{g}}(\AV(V))} \subset \AV(V|_{\lie{g}})$ holds.
	If, moreover, $V$ is an $(\alg{A}, K)$-module and the filtration of $\alg{A}$ is $K$-stable, then $\Rrank(V|_{\lie{g}})\geq \dim(\overline{\res_{\lie{g}}(\AV(V))}\GIT K)$.
\end{proposition}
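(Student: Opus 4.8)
The plan is to follow the argument of \cite[Theorem 3.1]{Ko98_discrete_decomposable_3}, organising everything around a comparison of two good filtrations. Fix $0\neq v\in V$. Since $V$ is irreducible we have $\alg{A}v=V$, so $F_i(V)\coloneq \alg{A}_i v$ defines a good filtration of $V$ as an $\alg{A}$-module with $F_{-1}(V)=0$, and $\gr_F V$ is the cyclic $\gr\alg{A}$-module generated by the symbol $\sigma_0(v)$ of $v$ in $F_0(V)/F_{-1}(V)$; hence $\AV(V)=\Variety(\Ann_{\gr\alg{A}}(\sigma_0(v)))$. Let $\psi\colon S(\lie{g})=\gr\univ{g}\to \gr\alg{A}$ be the graded homomorphism induced by the filtered map $\iota$; it is the comorphism of $\res_{\lie{g}}$. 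Since the (radical) ideal of $\overline{\res_{\lie{g}}(\AV(V))}$ is $\psi^{-1}$ of the ideal of $\AV(V)$, and $\psi^{-1}$ commutes with taking radicals, I get $\overline{\res_{\lie{g}}(\AV(V))}=\Variety(\Ann_{S(\lie{g})}(\sigma_0(v)))$, where $\gr_F V$ is viewed as an $S(\lie{g})$-module through $\psi$.

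Next I compute the other side. The submodule $\univ{g}v\subseteq V$ is cyclic, hence finitely generated over $\univ{g}$, and $G_i\coloneq \univfilt{i}{g}v$ is a good $\univ{g}$-filtration of it with $G_{-1}=0$; thus $\AV(V|_{\lie{g}})=\AV(\univ{g}v)=\Variety(\Ann_{S(\lie{g})}(\gr_G \univ{g}v))$ and $\gr_G\univ{g}v$ is the cyclic $S(\lie{g})$-module generated by the class $\bar{v}$ of $v$ in $G_0/G_{-1}$. Since $\iota$ is filtered, $G_i=\iota(\univfilt{i}{g})v\subseteq \alg{A}_i v=F_i(V)$, so the inclusion $\univ{g}v\hookrightarrow V$ respects filtrations and induces a graded $S(\lie{g})$-linear map $\gr_G\univ{g}v\to \gr_F V$ carrying $\bar{v}$ to $\sigma_0(v)$; its image is therefore the cyclic submodule $S(\lie{g})\cdot\sigma_0(v)$. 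Consequently $\Ann_{S(\lie{g})}(\gr_G\univ{g}v)\subseteq \Ann_{S(\lie{g})}(\sigma_0(v))$, and applying $\Variety$ reverses this to $\overline{\res_{\lie{g}}(\AV(V))}\subseteq \AV(V|_{\lie{g}})$, which is the first assertion. In particular $\res_{\lie{g}}(\AV(V))\subseteq (\lie{g}/\lie{k})^*$ when $V$ is an $(\alg{A},K)$-module.

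For the dimension inequality, assume now that $V$ is an $(\alg{A},K)$-module and that the filtration of $\alg{A}$ is $K$-stable. Then $K$ acts on $\gr\alg{A}$, hence on $X$, and $\psi$ is $K$-equivariant, so $\res_{\lie{g}}$ is $K$-equivariant; choosing $F$ to be a $K$-stable good filtration of $V$ shows that $\AV(V)$, and therefore $\overline{\res_{\lie{g}}(\AV(V))}$, is a closed $K$-stable subvariety of $\AV(V|_{\lie{g}})$. Since $K$ is reductive, restriction of functions gives a surjection $\rring{\AV(V|_{\lie{g}})}^K\to \rring{\overline{\res_{\lie{g}}(\AV(V))}}^K$, so $\overline{\res_{\lie{g}}(\AV(V))}\GIT K$ is a closed subvariety of $\AV(V|_{\lie{g}})\GIT K$; in particular $\dim(\overline{\res_{\lie{g}}(\AV(V))}\GIT K)\leq \dim(\AV(V|_{\lie{g}})\GIT K)$, and the right-hand side equals $\Rrank(V|_{\lie{g}})$ by Theorem \ref{thm:RankAV}.

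The only points requiring care are bookkeeping ones: one must choose the good filtrations $F_i(V)=\alg{A}_i v$ and $G_i(\univ{g}v)=\univfilt{i}{g}v$ so that $G_i\subseteq F_i(V)$ and the induced map on associated gradeds runs from $\gr_G\univ{g}v$ onto a cyclic submodule of $\gr_F V$ --- this is precisely what yields the inclusion of annihilators in the direction needed --- and one must invoke reductivity of $K$ to push the inclusion $\overline{\res_{\lie{g}}(\AV(V))}\subseteq \AV(V|_{\lie{g}})$ through the GIT quotient. All the substantive input is already available, namely Proposition \ref{prop:AmoduleAV} (so that $\AV(V|_{\lie{g}})$ is well defined) and Theorem \ref{thm:RankAV}; the computation is then parallel to \cite[Theorem 3.1]{Ko98_discrete_decomposable_3}.
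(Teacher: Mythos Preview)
Your proof is correct and follows essentially the same route as the paper: you choose the same two good filtrations $F_i(V)=\alg{A}_i v$ and $G_i=\univfilt{i}{g}v$, compare them via the induced map on associated gradeds, and deduce the annihilator inclusion $\Ann_{S(\lie{g})}(\bar{v})\subset\Ann_{S(\lie{g})}(\sigma_0(v))$ exactly as in the paper; for the second assertion you make explicit the step (passing the inclusion of $K$-stable varieties through $\GIT K$ via reductivity of $K$, then invoking Theorem~\ref{thm:RankAV}) that the paper compresses into a single sentence.
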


\begin{proof}
	Take a non-zero vector $v \in V$ and define a good filtration $F$ of $V$ via $F_i(V)\coloneq \alg{A}_i v$ ($i \in \NN$).
	Similarly, define a good filtration $F'$ of $V'\coloneq \univ{g}v$ via $F'_i(V)\coloneq \univfilt{i}{g} v$ ($i \in \NN$).
	Write $\overline{v}$ for the image of $v$ via the inclusion $F'_0(V)\hookrightarrow \gr_{F'} {V'}$.
	The inclusion $V'\hookrightarrow V$ induces a homomorphism $\varphi\colon \gr_{F'} V' \rightarrow \gr_F V$.
	Then the $\gr \alg{A}$-module $\gr_F V$ is generated by $\varphi(\overline{v})$.
	Therefore we have
	\begin{align*}
		\Ann_{S(\lie{g})}(\gr_F V) = \Ann_{S(\lie{g})}(\varphi(\overline{v})) \supset \Ann_{S(\lie{g})}(\overline{v}) = \Ann_{S(\lie{g})}(\gr_{F'} V').
	\end{align*}
	This shows the desired inclusion $\overline{\res_{\lie{g}}(\AV(V))} \subset \AV(V|_{\lie{g}})$.
	The second assertion follows from the first one and Theorem \ref{thm:RankAV}.
\end{proof}

Let $(\widetilde{\lie{g}}, \widetilde{K})$ be a pair constructed from a connected reductive Lie group $\widetilde{G}_\RR$.
Suppose that $(\lie{g}, K)$ is a subpair of $(\widetilde{\lie{g}}, \widetilde{K})$,
that is, $\lie{g}\subset \widetilde{\lie{g}}$ and $K\subset \widetilde{K}$.
In the case, it is conjectured in \cite[Conjecture 5.11]{Ko11} that $\res_{\lie{g}}(\AV(V)) = \AV(V|_{\lie{g}})$ holds
when $V$ is an irreducible $(\widetilde{\lie{g}}, \widetilde{K})$-module and $V|_{\lie{g}, K}$ is discretely decomposable.
If the conjecture holds without discrete decomposability, the rank is computed from the associated variety $\AV(V)$.
We shall state the conjecture explicitly.

\begin{conjecture}\label{conj:EnhancedKobayashi}
	Let $V$ be an irreducible $(\widetilde{\lie{g}}, \widetilde{K})$-module.
	Then $\overline{\res_{\lie{g}}(\AV(V))} = \AV(V|_{\lie{g}})$ and $\Rrank(V|_{\lie{g}}) = \dim(\overline{\res_{\lie{g}}(\AV(V))} \GIT K)$ hold.
\end{conjecture}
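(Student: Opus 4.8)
I would first reduce the conjecture to a single inclusion of varieties. Proposition \ref{prop:LowerBoundAV} already supplies $\overline{\res_{\lie{g}}(\AV(V))}\subseteq\AV(V|_{\lie{g}})$ together with $\Rrank(V|_{\lie{g}})\geq\dim(\overline{\res_{\lie{g}}(\AV(V))}\GIT K)$, and Theorem \ref{thm:RankAV} gives $\dim(\AV(V|_{\lie{g}})\GIT K)=\Rrank(V|_{\lie{g}})$. Hence once one knows the reverse inclusion
\begin{align*}
	\AV(V|_{\lie{g}})\subseteq\overline{\res_{\lie{g}}(\AV(V))},
\end{align*}
taking the GIT quotient by $K$ forces the dimension equality as well, so the whole conjecture is equivalent to this inclusion. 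For the dimension statement in isolation it is even enough, by the argument of Theorem \ref{thm:RankAV}, to prove that $\widetilde{q}(\res_{\lie{g}}(\AV(V)))$ is Zariski dense in $q(\lie{a}^{*})$, which is a priori weaker; I would organise the attack so as to get this density first and the full inclusion afterwards.

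The difficulty in the inclusion is already visible in the proof of Proposition \ref{prop:LowerBoundAV}. There one fixes $0\neq v\in V$, equips $V$ with the good filtration generated by $v$ over $\widetilde{\lie{g}}$ and $\univ{g}v$ with the good filtration generated by $v$ over $\lie{g}$, and the inclusion $\univ{g}v\hookrightarrow V$ produces a surjection of cyclic associated-graded modules sending the symbol of $v$ to the symbol of $v$; comparing $S(\lie{g})$-annihilators of the two symbols yields $\overline{\res_{\lie{g}}(\AV(V))}\subseteq\AV(V|_{\lie{g}})$, and equality is precisely the statement that this comparison loses nothing, i.e.\ that the filtration induced on $\univ{g}v$ from the $\widetilde{\lie{g}}$-filtration of $V$ is equivalent to a good one. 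In general it is not — a submodule of a finitely generated $S(\widetilde{\lie{g}})$-module need not be finitely generated over $S(\lie{g})$ — so the gap is genuine. I would therefore pass to the microlocal picture: realize $V$ as a subquotient of the global sections of a $\widetilde{K}$-equivariant twisted $\mathcal{D}$-module $\mathcal{M}$ on the flag variety $\widetilde{\mathcal{B}}$ of $\widetilde{\lie{g}}$, so that $\AV(V)=\overline{\mu(\Ch(\mathcal{M}))}$ for the moment map $\mu\colon T^{*}\widetilde{\mathcal{B}}\to\widetilde{\lie{g}}^{*}$, and try to compute $\AV(V|_{\lie{g}})$ geometrically as the $K$-saturation of the image of $\Ch(\mathcal{M})$ under an explicit Lagrangian correspondence attached to the $G$-action on $\widetilde{\mathcal{B}}$ — obtained by base change along a closed $G$-orbit, in the spirit of Proposition \ref{prop:ExistenceQuotient} — together with Theorem \ref{thm:RankAV}. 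The conjecture then becomes the assertion that this correspondence does not drop dimension.

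This dimension-preservation — equivalently, generic finiteness of the induced map $\Ch(\mathcal{M})\to\AV(V|_{\lie{g}})$ — is the crux and, I expect, the main obstacle. It is a properness/transversality property of a Lagrangian correspondence that is not known even in the discretely decomposable case, where it is Kobayashi's Conjecture \cite[Conjecture 5.11]{Ko11}; this is why the present statement is open. What is unconditional: when $\Rrank(V|_{\lie{g}})=0$ the dimension equality is automatic (both GIT quotients have dimension $0$ by Proposition \ref{prop:LowerBoundAV} and Theorem \ref{thm:RankAV}), and for real rank one symmetric subgroups it follows from Corollary \ref{cor:BoundByRank} together with the real-rank-one analysis of Proposition \ref{prop:Rank1ToConj}; moreover, by Theorem \ref{thm:Rank0} the hypothesis $\Rrank(V|_{\lie{g}})=0$ can be checked analytically on $V^{\infty}|_{G_\RR}$, which helps locate the cases in which the conjecture is within reach.

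Beyond this I would single out the case in which $G_\RR$ is real spherical in $\widetilde{G}_\RR$. There Theorem \ref{thm:AffinityRealSpherical} lets one replace $V$, for purposes of the two-sided annihilator, by an irreducible $\widetilde{\lie{g}}$-module whose restriction to $\lie{g}$ is finitely generated, so the induced-filtration issue above disappears for the annihilator-side variety; one can then hope to transport generic finiteness from Yamashita's criterion \cite{Ya94} and the bound on infinitesimal characters of unitarizable subquotients (Lemma \ref{lem:BoundUnitaryRep}), in the manner of the real-rank-one argument. The genuinely hard part, which I do not expect these methods to reach, is the non-spherical situation with $\Rrank(V|_{\lie{g}})\geq 2$: there one seems to need new geometric input, most plausibly a transversality statement for the restriction of the moment map of $\overline{\orbit{O}}$ of the type discussed at the end of the introduction.
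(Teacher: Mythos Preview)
The statement you were asked to prove is labelled a \emph{Conjecture} in the paper and is not proved there; the paper only establishes the one-sided inclusion (Proposition \ref{prop:LowerBoundAV}), the identification $\dim(\AV(V|_{\lie{g}})\GIT K)=\Rrank(V|_{\lie{g}})$ (Theorem \ref{thm:RankAV}), and the dimension equality in the real-rank-one symmetric case (Proposition \ref{prop:Rank1ToConj}). Your write-up correctly recognizes this: it is not a proof but an outline of how one might attack the problem, together with an honest account of where the obstruction lies.

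Your reduction is accurate and matches the paper's framing. From Proposition \ref{prop:LowerBoundAV} and Theorem \ref{thm:RankAV} the conjecture is indeed equivalent to the single inclusion $\AV(V|_{\lie{g}})\subset\overline{\res_{\lie{g}}(\AV(V))}$, and the weaker density statement $\overline{\widetilde{q}(\res_{\lie{g}}(\AV(V)))}=q(\lie{a}^{*})$ suffices for the dimension equality, since the argument at the end of the proof of Theorem \ref{thm:RankAV} (finiteness of $S(\lie{g}/\lie{k})^{K}$ over $S(\lie{g})^{G}$) applies to any $K$-stable closed cone in $(\lie{g}/\lie{k})^{*}$. Your identification of the gap --- that the filtration on $\univ{g}v$ induced from the good $\widetilde{\lie{g}}$-filtration need not be good over $\lie{g}$ --- is exactly the point, and your observation that the dimension equality is automatic when $\Rrank(V|_{\lie{g}})=0$ while the variety equality is then precisely Kobayashi's original conjecture \cite[Conjecture 5.11]{Ko11} is correct.

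The speculative portions (the $\mathcal{D}$-module/Lagrangian-correspondence approach and the hope that real sphericity plus Yamashita's criterion might buy generic finiteness) go beyond anything the paper claims; they are plausible research directions but you should be aware that the paper itself offers no evidence for them beyond the analogy with Losev's work noted after Proposition \ref{prop:AssociatedVarietyAnn}. In particular, even in the real spherical case the paper only obtains information about $\Variety(\gr(\Ann_{\univ{g}}(V)))$, not about $\AV(V|_{\lie{g}})$, so the passage from the annihilator side to the module side remains the essential difficulty.
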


The following result for the real rank one case supports the conjecture.
Similarly to Proposition \ref{prop:Rank1}, this is proved by using the result for the discrete decomposability.

\begin{proposition}\label{prop:Rank1ToConj}
	Let $V$ be an irreducible $(\widetilde{\lie{g}}, \widetilde{K})$-module.
	Suppose $\rank_{\RR}(G_\RR) = 1$ and $(\widetilde{G}_\RR, G_\RR)$ is a symmetric pair.
	Take a small Cartan subalgebra $\lie{a} = \lie{h}/\lie{m}$ for $V|_{\lie{g}}$.
	Then $\overline{\widetilde{q}(\res_{\lie{g}}(\AV(V)))} = q(\lie{a}^*)$ and $\dim(\overline{\res_{\lie{g}}(\AV(V))}\GIT K) = \Rrank(V|_{\lie{g}})$ hold.
\end{proposition}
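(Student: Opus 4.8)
The plan is to split on the value of $\Rrank(V|_{\lie{g}})$, which by Corollary \ref{cor:BoundByRank} and the hypothesis $\rank_{\RR}(G_\RR)=1$ is $0$ or $1$ (it is nonnegative since $V\neq 0$). Preliminarily I would record that $\AV(V)$ is a $\widetilde{K}$-stable conical subvariety of $(\widetilde{\lie{g}}/\widetilde{\lie{k}})^*$, that $\res_{\lie{g}}\colon\widetilde{\lie{g}}^*\to\lie{g}^*$ is the restriction of functionals, which is $K$-equivariant, $\CC^\times$-equivariant, and sends $(\widetilde{\lie{g}}/\widetilde{\lie{k}})^*$ into $(\lie{g}/\lie{k})^*$, and that $\widetilde{q}$ is $\CC^\times$-equivariant; hence $\res_{\lie{g}}(\AV(V))$ is a $K$-stable conical subset of $(\lie{g}/\lie{k})^*$ and $\widetilde{q}(\res_{\lie{g}}(\AV(V)))$ is conical in $\lie{g}^*\GIT G$. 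Using the module-finiteness of $S(\lie{g}/\lie{k})^K$ over $S(\lie{g})^G$ (as in the proof of Theorem \ref{thm:RankAV}) one has $\dim(\overline{\res_{\lie{g}}(\AV(V))}\GIT K)=\dim(\overline{\widetilde{q}(\res_{\lie{g}}(\AV(V)))})$; since $\dim q(\lie{a}^*)=\Rrank(V|_{\lie{g}})$ by Theorem \ref{thm:RankAV}, the second asserted equality will follow once the first is proved.

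The inclusion $\overline{\widetilde{q}(\res_{\lie{g}}(\AV(V)))}\subseteq q(\lie{a}^*)$ holds in all cases: Proposition \ref{prop:LowerBoundAV} gives $\overline{\res_{\lie{g}}(\AV(V))}\subseteq\AV(V|_{\lie{g}})$, and applying $\widetilde{q}$, taking closures, and using $\overline{\widetilde{q}(\AV(V|_{\lie{g}}))}=q(\lie{a}^*)$ (Theorem \ref{thm:RankAV}) yields the claim. Now $q(\lie{a}^*)$ is irreducible, being the image of a linear subspace under the finite morphism $q$, of dimension $\Rrank(V|_{\lie{g}})\le 1$, and the left-hand side is nonempty because $\AV(V)\neq\emptyset$. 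If $\Rrank(V|_{\lie{g}})=0$, the reverse inclusion is automatic; moreover $\dim(\AV(V|_{\lie{g}})\GIT K)=0$ forces $\dim(\overline{\res_{\lie{g}}(\AV(V))}\GIT K)=0$, so both equalities hold. Thus the only case needing work is $\Rrank(V|_{\lie{g}})=1$, where it suffices to show that $\overline{\widetilde{q}(\res_{\lie{g}}(\AV(V)))}$ is not a single point, since a proper closed subvariety of the irreducible curve $q(\lie{a}^*)$ is finite.

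Assume for contradiction that $\widetilde{q}(\res_{\lie{g}}(\AV(V)))$ is finite. Being conical in $\lie{g}^*\GIT G$, whose only $\CC^\times$-fixed point is $\widetilde{q}(0)$, it equals $\{\widetilde{q}(0)\}$, i.e.\ $\res_{\lie{g}}(\AV(V))\subseteq\widetilde{q}^{-1}(\widetilde{q}(0))=\mathcal{N}_{\lie{g}^*}$, the nilpotent cone of $\lie{g}^*$. This is exactly the situation governed by Kobayashi's associated-variety criterion (the content of Proposition \ref{prop:LowerBoundAV}, cf.\ \cite[Theorem 3.1]{Ko98_discrete_decomposable_3}); here the theory of discrete decomposability enters through its converse, which in the real rank one symmetric case forces $V|_{\lie{g},K}$ to be discretely decomposable and hence $\Rrank(V|_{\lie{g}})=0$, contradicting the standing assumption. (One may also phrase this via the continuous picture: by Proposition \ref{prop:Rank1} and Theorem \ref{thm:Rank0}, $\Rrank(V|_{\lie{g}})=1$ forces $\Rrankmin(V^\infty|_{G_\RR})=1$, so $V^\infty|_{G_\RR}$ is not discretely decomposable.) With this ruled out, $\overline{\widetilde{q}(\res_{\lie{g}}(\AV(V)))}=q(\lie{a}^*)$, and the dimension equality follows.

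The main obstacle is precisely this last input: the converse of Kobayashi's criterion for real rank one symmetric pairs, namely that $\res_{\lie{g}}(\AV(V))\subseteq\mathcal{N}_{\lie{g}^*}$ implies discrete decomposability of $V|_{\lie{g},K}$. I would obtain it from the classification of real rank one symmetric pairs together with Kobayashi's analysis of discretely decomposable restrictions (\cite{Ko98_discrete_decomposable_2, Ko98_discrete_decomposable_3}); everything else reduces to routine manipulations with conical varieties, GIT quotients, and the already-established Corollary \ref{cor:BoundByRank}, Theorem \ref{thm:RankAV}, and Proposition \ref{prop:LowerBoundAV}.
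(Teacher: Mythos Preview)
Your proposal is correct and follows essentially the same route as the paper: the inclusion $\overline{\widetilde{q}(\res_{\lie{g}}(\AV(V)))}\subseteq q(\lie{a}^*)$ from Proposition \ref{prop:LowerBoundAV} and Theorem \ref{thm:RankAV}, irreducibility of $q(\lie{a}^*)$, the conical argument forcing $\widetilde{q}(\res_{\lie{g}}(\AV(V)))=\{0\}$ in the remaining case, and then the converse of Kobayashi's criterion to derive $\Rrank(V|_{\lie{g}})=0$. The only difference is in the source you cite for this last step: the paper invokes \cite[Theorem 21]{Ki24} directly, rather than appealing to the classification of real rank one symmetric pairs together with \cite{Ko98_discrete_decomposable_2, Ko98_discrete_decomposable_3}; and the paper frames the case split on $\dim\overline{\widetilde{q}(\res_{\lie{g}}(\AV(V)))}$ rather than on $\Rrank(V|_{\lie{g}})$, which is cosmetic.
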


\begin{proof}
	As in Theorem \ref{thm:RankAV}, the second equality follows from the first one.
	Recall that we have seen $\widetilde{q}(\res_{\lie{g}}(\AV(V)))\subset q(\lie{a}^*) \leq \rank_{\RR}(G_\RR) = 1$ in Corollary \ref{cor:BoundByRank} and Proposition \ref{prop:LowerBoundAV}.
	Note that $q(\lie{a}^*)$ is irreducible.
	Hence it is enough to show the assertion when $\dim(\overline{\widetilde{q}(\res_{\lie{g}}(\AV(V))}) = 0$.

	Since $\AV(V)$ is defined by a graded ideal, $\widetilde{q}(\res_{\lie{g}}(\AV(V))$ is $\CC^\times$-stable.
	From this and $\dim(\widetilde{q}(\res_{\lie{g}}(\AV(V))) = 0$, we have $\widetilde{q}(\res_{\lie{g}}(\AV(V)) = \set{0}$.
	This implies that $\res_{\lie{g}}(\AV(V)$ is contained in the nilpotent cone in $(\lie{g}/\lie{k})^*$.
	This shows that $V|_{\lie{g}}$ is discretely decomposable and hence $\Rrank(V|_{\lie{g}}) = 0$ as we have proved in \cite[Theorem 21]{Ki24}.
\end{proof}

\begin{remark}
	The assumption that $(\widetilde{G}_\RR, G_\RR)$ is a symmetric pair is used to apply \cite[Theorem 21]{Ki24}.
	Proposition \ref{prop:Rank1ToConj} holds under the assumption in \cite[Theorem 21]{Ki24} (and $\rank_{\RR}(G_\RR) = 1$).
	We omit the details.
\end{remark}

\subsection{Cartan subalgebra}

We shall check the condition of Theorem \ref{thm:AffinityAnn} under some assumptions.
Let $\widetilde{G}_\RR$, $\widetilde{K}_\RR$, $\widetilde{K}$, $G_\RR$, $K_\RR$ and $K$ be as in Subsection \ref{subsection:GKBasic}.

If $(\widetilde{G}_\RR, G_\RR)$ is a symmetric pair, then the condition of Theorem \ref{thm:AffinityAnn} is fulfilled for any irreducible $\lie{\widetilde{g}}$-module $V$.
More generally, we have the following criterion.

\begin{theorem}
	Suppose that $\lie{g}$ is spherical in $\lie{\widetilde{g}}$, that is, there exists a Borel subalgebra $\lie{\widetilde{b}}$ of $\lie{\widetilde{g}}$ such that $\lie{\widetilde{b}} + \lie{g} = \lie{\widetilde{g}}$.
	Then $V|_{\lie{g}}$ has a Cartan subalgebra in the sense of Definition \ref{def:Cartan} for any irreducible $\lie{\widetilde{g}}$-module $V$.
\end{theorem}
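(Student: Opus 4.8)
The plan is to deduce the statement from Theorem \ref{thm:AffinityAnn}, applied to the generalized pair $(\univ{\widetilde{g}}, G)$ of Subsection \ref{subsection:GKBasic} (so $G$ is connected reductive with $\Lie(G) = \lie{g}$, acting on $\univ{\widetilde{g}}$ by the adjoint action, and $\univ{\widetilde{g}}$ has countable dimension). Let $V$ be an irreducible $\lie{\widetilde{g}}$-module and put $I \coloneq \Ann_{\univ{\widetilde{g}}}(V)$, a primitive ideal. By Theorem \ref{thm:AffinityAnn} it suffices to exhibit an irreducible $\lie{\widetilde{g}}$-module $V'$ with $\Ann_{\univ{\widetilde{g}}}(V') = I$ such that $V'|_{\lie{g}}$ is finitely generated: such a $V'$ is a faithful irreducible $\univ{\widetilde{g}}/I$-module, so the theorem then yields a subspace $\lie{c}^* \subset \lie{h}^*$, spanned by differences of $\lie{h}$-weights occurring in $\univ{\widetilde{g}}$, with every irreducible component of $\Variety(\Ann_{\univcent{g}}(V))$ of the form $q(\lie{c}^* + \nu)$; since $\Ann_{\univcent{g}}(V) = I \cap \univcent{g}$ depends only on $I$ (and equals $\Ann_{\univcent{g}}(V')$), this exhibits $\lie{h}/\bigcap_{\nu \in \lie{c}^*}\Ker(\nu)$ as a Cartan subalgebra for $V|_{\lie{g}}$ in the sense of Definition \ref{def:Cartan}.

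To produce $V'$, I would pass to highest weight modules relative to the Borel subalgebra $\lie{\widetilde{b}}$ afforded by the sphericity hypothesis, i.e.\ with $\lie{\widetilde{b}} + \lie{g} = \lie{\widetilde{g}}$. By the theorem of Duflo that every primitive ideal of $\univ{\widetilde{g}}$ is the annihilator of an irreducible highest weight module (for any fixed Borel subalgebra), there is a weight $\mu \in \lie{\widetilde{h}}^*$ with $\Ann_{\univ{\widetilde{g}}}(L(\mu)) = I$, where $L(\mu)$ denotes the irreducible $\lie{\widetilde{g}}$-module of $\lie{\widetilde{b}}$-highest weight $\mu$; I take $V' \coloneq L(\mu)$.

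The point is that $V'|_{\lie{g}}$ is in fact cyclic, and this is exactly where sphericity enters. Choosing a subspace $\lie{m} \subset \lie{g}$ with $\lie{g} = \lie{m} \oplus (\lie{g} \cap \lie{\widetilde{b}})$ one gets $\lie{\widetilde{g}} = \lie{m} \oplus \lie{\widetilde{b}}$ as vector spaces, so the Poincar\'e--Birkhoff--Witt theorem, applied to an ordered basis of $\lie{\widetilde{g}}$ whose initial segment is a basis of $\lie{m}$, gives $\univ{\widetilde{g}} = \univ{g} \cdot \univ{\widetilde{b}}$. Writing $v_\mu$ for a highest weight vector of $L(\mu)$, the subalgebra $\lie{\widetilde{b}} = \lie{\widetilde{h}} \oplus \lie{\widetilde{n}}$ acts on $v_\mu$ through the character $\mu$ (trivially on $\lie{\widetilde{n}}$), so $\univ{\widetilde{b}} v_\mu = \CC v_\mu$ and hence $L(\mu) = \univ{\widetilde{g}} v_\mu = \univ{g}\,\univ{\widetilde{b}}\, v_\mu = \univ{g} v_\mu$. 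Thus $V'|_{\lie{g}}$ is generated by the single element $v_\mu$, which completes the reduction.

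Given Theorem \ref{thm:AffinityAnn} and the existence of $V'$, the argument is essentially formal; the sole substantive input is Duflo's theorem, and the only point that needs care is that one must use the \emph{same} Borel subalgebra $\lie{\widetilde{b}}$ for the sphericity hypothesis and for the highest weight module --- which is legitimate, as Duflo's theorem is available for an arbitrary fixed Borel subalgebra. It is also worth recording, from the last clause of Theorem \ref{thm:AffinityAnn}, that $\lie{c}^*$ may be chosen to contain the dual $\lie{a}^*$ of a small Cartan subalgebra for $V|_{\lie{g}}$ produced by Theorem \ref{thm:Affinity}.
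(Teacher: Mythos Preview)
Your proposal is correct and follows essentially the same route as the paper: apply Duflo's theorem to replace $V$ by an irreducible highest weight module $V'$ (with respect to the Borel $\lie{\widetilde{b}}$ from the sphericity hypothesis) having the same annihilator, observe that sphericity makes $V'|_{\lie{g}}$ cyclic on a highest weight vector, and then invoke Theorem~\ref{thm:AffinityAnn}. Your write-up simply spells out in more detail the PBW argument behind $\univ{\widetilde{g}} = \univ{g}\cdot\univ{\widetilde{b}}$ and the reason $\Ann_{\univcent{g}}(V)$ depends only on $I$, which the paper leaves implicit.
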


\begin{proof}
	Since $V$ is irreducible, $\Ann_{\univ{\widetilde{g}}}(V)$ is a primitive ideal.
	By Duflo's theorem (\cite{Du77_primitive_ideal}), there exists an irreducible highest weight module $V'$ of $\widetilde{\lie{g}}$ with respect to $\lie{\widetilde{b}}$ such that $\Ann_{\univ{\widetilde{g}}}(V') = \Ann_{\univ{\widetilde{g}}}(V)$.
	By $\lie{\widetilde{b}} + \lie{g} = \lie{\widetilde{g}}$, $V'|_{\lie{g}}$ is finitely generated.
	In fact, $V'|_{\lie{g}}$ is generated by a highest weight vector.
	Therefore $V$ satisfies the assumption of Theorem \ref{thm:AffinityAnn} and we have shown the assertion.
\end{proof}

For irreducible $(\widetilde{\lie{g}}, \widetilde{K})$-modules, the assumption can be weakened.
To do so, we recall the results by H.\ Yamashita \cite[Theorems I and IV]{Ya94}.
The following criterion is useful to find a $\widetilde{\lie{g}}$-module whose restriction to $\lie{g}$ is finitely generated.

\begin{fact}\label{fact:Yamashita1}
	Let $V$ be a finitely generated $\widetilde{\lie{g}}$-module.
	If $\AV(V)\cap \lie{g}^\perp = \set{0}$, then $V|_{\lie{g}}$ is finitely generated.
\end{fact}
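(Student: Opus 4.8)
This is Yamashita's result, recalled above without proof; let me indicate the argument I would use. The plan is to transport a good filtration of $V$ coming from the $\widetilde{\lie{g}}$-action to the $\lie{g}$-action, reduce finite generation of $V|_{\lie{g}}$ over $\univ{g}$ to finite generation of the associated graded module over $S(\lie{g})$, and then settle the latter by a graded Nakayama argument whose only geometric input is the hypothesis on $\AV(V)$.

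First I would fix a good filtration $F$ of $V$ as a $\univ{\widetilde{g}}$-module, so that $N \coloneq \gr_F V$ is a finitely generated graded $S(\widetilde{\lie{g}})$-module, bounded below in degree, with $\AV(V) = \Variety(\Ann_{S(\widetilde{\lie{g}})}(N))$. Restricting scalars along $\univ{g}\hookrightarrow\univ{\widetilde{g}}$, the same $F$ is a compatible exhaustive filtration of $V|_{\lie{g}}$, and $N$ becomes a graded module over the subalgebra $S(\lie{g})\subset S(\widetilde{\lie{g}})$. By the usual lifting argument, if $N$ is finitely generated over $S(\lie{g})$ then homogeneous generators of $N$ lift to $\univ{g}$-module generators of $V$; so it suffices to prove that $N$ is finitely generated over $S(\lie{g})$.

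For this, let $J \coloneq \lie{g}\,S(\widetilde{\lie{g}})$ be the ideal of $S(\widetilde{\lie{g}})$ generated by $\lie{g}$, viewed as linear functions on $\widetilde{\lie{g}}^*$. Then $JN$ coincides with the submodule $S(\lie{g})_+\,N$, so by the homogeneous Nakayama lemma — applicable since $N$ is bounded below and $S(\widetilde{\lie{g}})_0 = S(\lie{g})_0 = \CC$ — the module $N$ is finitely generated over $S(\lie{g})$ if and only if $N/JN$ is finite-dimensional over $\CC$. Now $N/JN$ is a finitely generated graded $S(\widetilde{\lie{g}})$-module whose support is $\Variety(\Ann_{S(\widetilde{\lie{g}})}(N))\cap\Variety(J) = \AV(V)\cap\lie{g}^\perp$. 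A finitely generated graded $S(\widetilde{\lie{g}})$-module is finite-dimensional over $\CC$ exactly when its support is contained in $\set{0}$; and because $N$ is graded, $\AV(V)$ is conical, so the set $\AV(V)\cap\lie{g}^\perp$ is either $\set{0}$ or infinite. Hence the hypothesis $\AV(V)\cap\lie{g}^\perp = \set{0}$ forces $N/JN$ to be finite-dimensional, whence $N$ is finitely generated over $S(\lie{g})$ and $V|_{\lie{g}}$ is finitely generated over $\univ{g}$.

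The argument is largely bookkeeping once the reduction is set up; the only genuinely non-formal ingredients are the homogeneous Nakayama lemma and the Nullstellensatz identification of the support of $N/JN$ with $\AV(V)\cap\lie{g}^\perp$. The step I would be most careful about is the very first one: $F$ must be chosen as a good filtration of $V$ as a $\widetilde{\lie{g}}$-module, so that $\gr_F V$ is a module over the large symmetric algebra $S(\widetilde{\lie{g}})$ and its vanishing variety is genuinely $\AV(V)$; and one should not overlook that conicality of $\AV(V)$ — a consequence of $N$ being graded — is precisely what promotes the finiteness of $\AV(V)\cap\lie{g}^\perp$ to the equality $\AV(V)\cap\lie{g}^\perp=\set{0}$.
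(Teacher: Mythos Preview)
The paper does not prove this statement; it is cited as a fact from Yamashita \cite{Ya94} and invoked in the proof of Theorem~\ref{thm:AffinityRealSpherical}. Your sketch is a correct reconstruction of the standard argument: pass to a good filtration for the $\univ{\widetilde{g}}$-action, reduce finite generation of $V|_{\lie{g}}$ to finite generation of $N=\gr_F V$ over $S(\lie{g})$, identify $S(\lie{g})_+N$ with $JN$ for $J=\lie{g}\,S(\widetilde{\lie{g}})$, and apply the graded Nakayama lemma after observing that $N/JN$ is a finitely generated $S(\widetilde{\lie{g}})$-module supported in $\AV(V)\cap\lie{g}^\perp=\{0\}$, hence finite-dimensional.

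One small remark: the discussion of conicality at the end is sound but not needed for the argument as stated, since the hypothesis already gives $\AV(V)\cap\lie{g}^\perp=\{0\}$ rather than mere finiteness; it would matter only if you wanted to relax the hypothesis to ``$\AV(V)\cap\lie{g}^\perp$ is finite''. Also, strictly speaking $\supp(N/JN)\subset\AV(V)\cap\lie{g}^\perp$ (not equality), but the inclusion is all you use.
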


\begin{fact}\label{fact:Yamashita2}
	Suppose that $\lie{g}_\RR$ is real spherical in $\lie{\widetilde{g}}_\RR$, i.e.\ there exists a minimal parabolic subalgebra $\lie{\widetilde{p}}_\RR$ of $\lie{\widetilde{g}}_\RR$ such that $\lie{\widetilde{p}}_\RR + \lie{g}_\RR = \lie{\widetilde{g}}_\RR$.
	Let $V$ be an irreducible $(\widetilde{\lie{g}}, \widetilde{K})$-module.
	Then $\AV(V)\cap \Ad^*(g)\lie{g}^\perp = \set{0}$ for some $g \in \widetilde{G}_\RR$.
\end{fact}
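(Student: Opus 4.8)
The plan is to recall the argument of \cite{Ya94}. Observe first that the assertion is exactly the hypothesis of Fact \ref{fact:Yamashita1} for the conjugated subalgebra $\Ad(g)\lie{g}$, since $\Ad^*(g)\lie{g}^\perp = (\Ad(g)\lie{g})^\perp$; so we must produce $g\in\widetilde{G}_\RR$ with $\AV(V)\cap(\Ad(g)\lie{g})^\perp = \set{0}$. I would begin by collecting the structural facts on $\AV(V)$. As $V$ is irreducible it has an infinitesimal character, so $\Variety(\gr \Ann_{\univ{\widetilde{g}}}(V))$ is the closure of a single nilpotent coadjoint orbit $\orbit{O}\subseteq \widetilde{\lie{g}}^*$ (\cite[Theorem 9.3]{Ja04}), and $\AV(V)$ is a $\widetilde{K}$-stable closed conical subvariety of $\overline{\orbit{O}}\cap (\widetilde{\lie{g}}/\widetilde{\lie{k}})^*$; since there are finitely many $\widetilde{K}$-orbits of nilpotents, each irreducible component of $\AV(V)$ is the closure of one nilpotent $\widetilde{K}$-orbit, and $\dim\AV(V) = \tfrac12\dim\orbit{O}$ (Vogan). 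Fixing a nondegenerate invariant form to identify $(\widetilde{\lie{g}}/\widetilde{\lie{k}})^*$ with the $(-1)$-eigenspace $\widetilde{\lie{p}}$ of $\theta$ in $\widetilde{\lie{g}}$, we get $\AV(V)\subseteq \mathcal{N}_\theta\subseteq \widetilde{\lie{p}}$ (the $\theta$-nilpotent cone), so the quantity to be killed is $\AV(V)\cap\mathfrak{W}_g$ with $\mathfrak{W}_g\coloneq (\Ad(g)\lie{g})^\perp\cap\widetilde{\lie{p}}$, the orthocomplement in $\widetilde{\lie{p}}$ of the image of $\Ad(g)\lie{g}$.

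Next I would convert the real-sphericity hypothesis into transversality data. Real sphericity means $G_\RR\widetilde{P}_\RR$ is open, i.e.\ $\widetilde{\lie{g}}_\RR = \lie{g}_\RR + \Ad(g)\widetilde{\lie{p}}_\RR$ for $g$ in the open $G_\RR$-orbit on $\widetilde{G}_\RR/\widetilde{P}_\RR$; dually, with $\overline{\widetilde{\lie{n}}}_\RR$ the nilradical opposite to $\widetilde{\lie{n}}_\RR$ (a cone of nilpotent elements), this gives $\lie{g}_\RR^\perp\cap \Ad(g)\overline{\widetilde{\lie{n}}}_\RR = \set{0}$ for such $g$. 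To connect this with the complex variety $\AV(V)$, I would invoke the Kostant--Sekiguchi correspondence: each nilpotent $\widetilde{K}$-orbit occurring in $\AV(V)$ matches a nilpotent $\widetilde{G}_\RR$-orbit in $\widetilde{\lie{g}}_\RR^*$ of equal dimension, and (Barbasch--Vogan, Schmid--Vilonen) the union of the closures of these real orbits is the asymptotic support of the distribution character of a globalization of $V$; in particular the dimension $\tfrac12\dim\orbit{O}$ of $\AV(V)$ is bounded against $\dim\widetilde{\lie{n}}_\RR$ precisely as real sphericity forces.

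The crux is a generic-position argument for the incidence locus $B\coloneq \set{g\in\widetilde{G}_\RR : \AV(V)\cap\mathfrak{W}_g\neq\set{0}}$. I would take $g$ in the open $G_\RR$-orbit on $\widetilde{G}_\RR/\widetilde{P}_\RR$, use the $\widetilde{K}$-invariance of $\AV(V)$ to discard the $\widetilde{\lie{m}}$- and $\widetilde{\lie{a}}$-directions, and show that the transversality $\lie{g}_\RR^\perp\cap \Ad(g)\overline{\widetilde{\lie{n}}}_\RR = \set{0}$, after complexification and intersection with $\widetilde{\lie{p}}$, forces $\AV(V)\cap\mathfrak{W}_g = \set{0}$ once $g$ is perturbed within the open orbit to avoid the finitely many proper subvarieties cut out by the components of $\AV(V)$. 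The main obstacle is exactly this step: the translating group $\widetilde{G}_\RR$ is only a real form, so a naive dimension count in the complex Grassmannian of $\widetilde{\lie{p}}$ is unavailable, and one must genuinely use both that the components of $\AV(V)$ are nilpotent orbits and that real sphericity supplies a transversal translate of the minimal-parabolic nilradical; reconciling the complex $\AV(V)$ with that real datum---through Kostant--Sekiguchi, equivalently through the wavefront set of the character---is where the real work lies, and is the content of \cite[Theorem IV]{Ya94}.
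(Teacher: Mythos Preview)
The paper does not prove this statement at all: it is labeled as a \emph{Fact} and attributed to Yamashita \cite[Theorems I and IV]{Ya94}, with no argument given. Your proposal likewise ultimately defers to \cite[Theorem IV]{Ya94} for the substantive step, so in that sense you match the paper exactly---both treat this as a black box from the literature.

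What you add beyond the paper is a heuristic sketch of why the result should hold. A couple of remarks on that sketch. Your reduction to finding $g$ with $\AV(V)\cap(\Ad(g)\lie{g})^\perp=\set{0}$, and your reformulation of real sphericity as the transversality $\lie{g}_\RR^\perp\cap\Ad(g)\overline{\widetilde{\lie{n}}}_\RR=\set{0}$ on the open orbit, are both correct and are the right starting points. However, the route you suggest through Kostant--Sekiguchi and the wavefront set of the distribution character is not how Yamashita actually proceeds; his argument in \cite{Ya94} is more direct, working with the inclusion $\AV(V)\subset\widetilde{K}\cdot\widetilde{\lie{n}}$ (every nilpotent $\widetilde{K}$-orbit in $(\widetilde{\lie{g}}/\widetilde{\lie{k}})^*$ meets the nilradical of a minimal parabolic) together with the real-spherical transversality, without passing through the real nilpotent orbits or character theory. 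Your sketch is not wrong in spirit, but it imports heavier machinery than is needed and, as you yourself acknowledge, does not close the gap between the real transversality datum and the complex variety $\AV(V)$; that closure is the actual content of Yamashita's theorem, and neither you nor the paper reproduces it.
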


Combining these facts and Theorem \ref{thm:AffinityAnn}, we obtain the following criterion.

\begin{theorem}\label{thm:AffinityRealSpherical}
	Suppose that $\lie{g}_\RR$ is real spherical in $\lie{\widetilde{g}}_\RR$.
	Then $V|_{\lie{g}, K}$ has a Cartan subalgebra in the sense of Definition \ref{def:Cartan} for any irreducible $(\lie{\widetilde{g}}, \widetilde{K})$-module $V$.
\end{theorem}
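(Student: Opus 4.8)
The plan is to deduce the theorem from Theorem~\ref{thm:AffinityAnn}, applied with $\alg{A} = \univ{\widetilde{g}}$ (which is of at most countable dimension, and $(\univ{\widetilde{g}}, G)$ is a generalized pair as in Subsection~\ref{subsection:GKBasic}). For that I need to exhibit an irreducible $\univ{\widetilde{g}}$-module $V'$ with $\Ann_{\univ{\widetilde{g}}}(V') = \Ann_{\univ{\widetilde{g}}}(V)$ such that $V'|_{\lie{g}}$ is finitely generated; the real sphericity hypothesis will enter only through Yamashita's Fact~\ref{fact:Yamashita2}.

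First I would apply Fact~\ref{fact:Yamashita2}: since $\lie{g}_\RR$ is real spherical in $\widetilde{\lie{g}}_\RR$ and $V$ is irreducible, there is $g \in \widetilde{G}_\RR$ with $\AV(V) \cap \Ad^*(g)\lie{g}^\perp = \set{0}$, equivalently $\Ad^*(g^{-1})\AV(V) \cap \lie{g}^\perp = \set{0}$. Next I would let $V'$ be $V$ with the $\widetilde{\lie{g}}$-action precomposed by the automorphism $\Ad(g)$ of $\widetilde{\lie{g}}$. Twisting by an automorphism preserves irreducibility, and — since it is a filtered automorphism of $\univ{\widetilde{g}}$ whose associated graded is the corresponding automorphism of $S(\widetilde{\lie{g}}) = \gr \univ{\widetilde{g}}$ — it sends good $\widetilde{\lie{g}}$-filtrations to good ones, so $\AV(V') = \Ad^*(g^{-1})\AV(V)$. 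Hence $\AV(V') \cap \lie{g}^\perp = \set{0}$, and Fact~\ref{fact:Yamashita1} gives that $V'|_{\lie{g}}$ is finitely generated.

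The one point needing verification is that $\Ann_{\univ{\widetilde{g}}}(V') = \Ann_{\univ{\widetilde{g}}}(V)$, i.e.\ that $\Ann_{\univ{\widetilde{g}}}(V)$ is fixed by $\Ad(g)$. This holds because $\Ann_{\univ{\widetilde{g}}}(V)$ is a two-sided ideal, so $[\widetilde{\lie{g}}, \Ann_{\univ{\widetilde{g}}}(V)] \subseteq \Ann_{\univ{\widetilde{g}}}(V)$; hence it is stable under $\ad(\widetilde{\lie{g}})$ and, $\widetilde{G}_\RR$ being connected, under the action of $\widetilde{G}_\RR$, so in particular $\Ad(g)$ fixes it. (Note that $V'$ itself is in general \emph{not} isomorphic to $V$ as a $\widetilde{\lie{g}}$-module, since $\Ad(g)$ need not normalize $\widetilde{\lie{k}}$; what is preserved is precisely the two-sided annihilator.) Consequently $\Ann_{\univcent{g}}(V') = \univcent{g} \cap \Ann_{\univ{\widetilde{g}}}(V') = \univcent{g} \cap \Ann_{\univ{\widetilde{g}}}(V) = \Ann_{\univcent{g}}(V)$, so $\Variety(\Ann_{\univcent{g}}(V')) = \Variety(\Ann_{\univcent{g}}(V))$.

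Finally I would apply Theorem~\ref{thm:AffinityAnn} with this $V'$: it produces a subspace $\lie{c}^* \subseteq \lie{h}^*$, spanned by differences of $\lie{h}$-weights occurring in $\univ{\widetilde{g}}$ and containing the $\lie{a}^*$ furnished by Theorem~\ref{thm:Affinity} for $V$, such that every irreducible component of $\Variety(\Ann_{\univcent{g}}(V))$ is of the form $q(\lie{c}^* + \mu)$. By Definition~\ref{def:Cartan} this is exactly the assertion that $V|_{\lie{g}, K}$ has a Cartan subalgebra, namely $\lie{h}/\bigcap_{\lambda \in \lie{c}^*}\Ker(\lambda)$. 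The substantive input is entirely in the cited results of Yamashita and in Theorem~\ref{thm:AffinityAnn}; the genuinely new step is the twisting device, whose only delicate aspect is the bookkeeping — tracking the coadjoint translate of $\AV$ under $\Ad(g)$ and the invariance of the two-sided annihilator — which I expect to be routine.
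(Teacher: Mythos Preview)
Your proposal is correct and follows essentially the same approach as the paper: the paper also chooses $g$ via Fact~\ref{fact:Yamashita2}, twists $V$ by $\Ad(g)$ to obtain $V'$, notes that $\Ann_{\univ{\widetilde{g}}}(V') = \Ann_{\univ{\widetilde{g}}}(V)$ and $\AV(V') = \Ad^*(g)^{-1}\AV(V)$, applies Fact~\ref{fact:Yamashita1}, and concludes by Theorem~\ref{thm:AffinityAnn}. Your write-up is in fact slightly more detailed than the paper's in justifying the $\Ad(g)$-invariance of the two-sided annihilator.
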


\begin{proof}
	Take $g \in \widetilde{G}_\RR$ such that $\AV(V)\cap \Ad^*(g)\lie{g}^\perp = \set{0}$ as in Fact \ref{fact:Yamashita2}.
	Define an irreducible $\lie{\widetilde{g}}$-module $(\pi, V')$ by $V' = V$ as a vector space and $\pi(X)v = \Ad(g)(X)v$ ($v \in V', X \in \lie{\widetilde{g}}$).
	Then it is easy to see that $\Ann_{\univ{\widetilde{g}}}(V') = \Ann_{\univ{\widetilde{g}}}(V)$ and $\AV(V') = \Ad^*(g)^{-1}\AV(V)$.
	Hence we have $\AV(V')\cap \lie{g}^\perp = \set{0}$.
	This shows that $V'|_{\lie{g}}$ is finitely generated by Fact \ref{fact:Yamashita1}.
	Applying Theorem \ref{thm:AffinityAnn} to $V$ and $V'$, we have proved the theorem.
\end{proof}

We shall state similar results to Theorem \ref{thm:RankAV} and Proposition \ref{prop:LowerBoundAV} for $\Variety(\gr (\Ann_{\univcent{g}}(V)))$.
They follow from $\gr(\Ann_{\univ{g}}(V))^G = \gr(\Ann_{\univ{g}}(V)^G)$ easily, so we omit the proof.
See also the proofs of Theorem \ref{thm:RankAV} and Proposition \ref{prop:LowerBoundAV}.

\begin{proposition}\label{prop:AssociatedVarietyAnn}
	Let $V$ be an irreducible $\lie{\widetilde{g}}$-module and set $I\coloneq \Ann_{\univ{\widetilde{g}}}(V)$.
	Suppose that $V|_{\lie{g}}$ has a Cartan subalgebra $\lie{c}$ in the sense of Definition \ref{def:Cartan}.
	\begin{enumerate}
		\item $\overline{\widetilde{q}(\Variety(\gr(I\cap \univ{g})))} = \Variety(\gr(I\cap \univcent{g})) = q(\lie{c}^*)$.
		\item $\dim(\overline{\res_{\lie{g}}(\Variety(\gr I))} \GIT G) \leq \glrank(V)$.
	\end{enumerate}
	Here $\widetilde{q}\colon \lie{g}^* \rightarrow \lie{g}^* \GIT G$, $q\colon \lie{h}^* \rightarrow \lie{h}^*\GIT W$, $\res_{\lie{g}}\colon \lie{\widetilde{g}}^*\rightarrow \lie{g}^*$ are maps defined in the previous subsection.
\end{proposition}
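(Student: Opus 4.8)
The plan is to reduce both assertions to two ingredients already established: the compatibility of $\gr$ with $G$-invariants (for the $\ad$-stable PBW filtration), and the homogenization computation of Lemma~\ref{lem:GrAffine}. Throughout I would write $J\coloneq I\cap\univ{g}=\Ann_{\univ{g}}(V|_{\lie{g}})$; this is a two-sided ideal of $\univ{g}$, hence $\ad\lie{g}$-stable and therefore $G$-stable, and since $\univ{g}^{G}=\univcent{g}$ one has $J\cap\univcent{g}=I\cap\univcent{g}=\Ann_{\univcent{g}}(V)$, while $\gr\univcent{g}=S(\lie{g})^{G}=(\gr\univ{g})^{G}$. First I would settle the left equality in (1): because $G$ is reductive and the PBW filtration is $G$-stable, taking $G$-invariants is exact and commutes with $\gr$, so $(\gr J)^{G}=\gr(J^{G})=\gr(I\cap\univcent{g})$; combining this with the standard fact that $\overline{\widetilde{q}(\Variety(\mathfrak{a}))}=\Variety(\mathfrak{a}^{G})$ for a $G$-stable ideal $\mathfrak{a}\subset S(\lie{g})$ (exactness of invariants and surjectivity of the GIT quotient), applied to $\mathfrak{a}=\gr J$, gives $\overline{\widetilde{q}(\Variety(\gr J))}=\Variety(\gr(I\cap\univcent{g}))$.

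Next I would prove the right equality in (1). Identifying $\univcent{g}\cong S(\lie{h})^{W}$ by the Harish-Chandra isomorphism, which is filtered with associated graded the Chevalley restriction $S(\lie{g})^{G}\xrightarrow{\sim}S(\lie{h})^{W}$, the ideal $\gr(I\cap\univcent{g})$ is carried to the degree-initial ideal of the image $\overline{J}\subset S(\lie{h})^{W}$ of $\Ann_{\univcent{g}}(V)$, and $\Variety(\overline{J})=\Variety(\Ann_{\univcent{g}}(V))$. By the Cartan subalgebra hypothesis each irreducible component of this variety has the form $q(\lie{c}^{*}+\mu_{i})$; letting $P_{i}\subset S(\lie{h})$ be the prime ideal of the affine subspace $\lie{c}^{*}+\mu_{i}$, one gets $\sqrt{\overline{J}}=\bigcap_{i}(P_{i}\cap S(\lie{h})^{W})$, and the argument of Lemma~\ref{lem:GrAffine} (pass to a power of the radical, and compare the product of the component ideals with their intersection both before and after taking $\gr$, using that $S(\lie{h})^{W}$ is a domain) yields $\Variety(\gr\overline{J})=\bigcup_{i}\Variety(\gr(P_{i}\cap S(\lie{h})^{W}))$, which is $q(\lie{c}^{*})$ by Lemma~\ref{lem:GrAffine} itself. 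This proves (1); since Krull dimension is preserved by $\gr$, it also gives $\glrank(V)=\dim\Variety(\Ann_{\univcent{g}}(V))=\dim q(\lie{c}^{*})$.

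For (2) I would use that $\res_{\lie{g}}\colon\widetilde{\lie{g}}^{*}\to\lie{g}^{*}$ is dual to the inclusion $S(\lie{g})\hookrightarrow S(\widetilde{\lie{g}})$, whence $\overline{\res_{\lie{g}}(\Variety(\gr I))}=\Variety(\gr I\cap S(\lie{g}))$; since $\gr J=\gr(I\cap\univ{g})\subset\gr I\cap S(\lie{g})$ this gives the inclusion $\overline{\res_{\lie{g}}(\Variety(\gr I))}\subset\Variety(\gr J)$. Both varieties are $G$-stable: $\gr I$ is a graded $\ad\widetilde{\lie{g}}$-stable ideal, so $\Variety(\gr I)$ is stable under $\Ad(\widetilde{G}_{\RR})$ (indeed it is a nilpotent orbit closure) and hence, being Zariski closed, under $\Ad(G_{\RR})$ and its Zariski closure $G$, while $\res_{\lie{g}}$ is $G$-equivariant. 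Passing to GIT quotients and using $\dim(Y\GIT G)=\dim\overline{\widetilde{q}(Y)}$ together with (1),
\begin{align*}
	\dim\bigl(\overline{\res_{\lie{g}}(\Variety(\gr I))}\GIT G\bigr)
	&\leq\dim\bigl(\Variety(\gr J)\GIT G\bigr) \\
	&=\dim\Variety(\gr(I\cap\univcent{g}))=\dim q(\lie{c}^{*})=\glrank(V).
\end{align*}

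The step I expect to be the main obstacle is bookkeeping rather than conceptual: making sure that $\gr$ is genuinely compatible with each of the three operations in play — taking $G$-invariants, the Harish-Chandra isomorphism, and passing from the radical of $\overline{J}$ to its prime components — so that Lemma~\ref{lem:GrAffine} can be quoted verbatim on each component; once that is checked, the remaining arguments are the standard GIT dictionary (exactness of invariants, surjectivity of the quotient, preservation of dimension by $\gr$).
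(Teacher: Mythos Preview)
Your proposal is correct and follows precisely the approach the paper indicates: the paper omits the proof, stating only that it follows from the identity $\gr(\Ann_{\univ{g}}(V))^G=\gr(\Ann_{\univ{g}}(V)^G)$ together with the arguments of Theorem~\ref{thm:RankAV} and Proposition~\ref{prop:LowerBoundAV}, and your write-up is exactly a detailed execution of that sketch (commuting $\gr$ with $G$-invariants, invoking Lemma~\ref{lem:GrAffine} on each affine component, and using $\gr(I\cap\univ{g})\subset\gr I\cap S(\lie{g})$ for the second part).
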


It is well-known that $\Variety(\gr I)$ is the closure of one nilpotent coadjoint orbit in $\widetilde{\lie{g}}^*$.
See \cite[Theorem 9.3]{Ja04} and references therein.
Moreover, $\Variety(\gr I)$ is a Poisson $G$-variety and $\res_{\lie{g}}$ is the moment map on the variety.
In the case, $\overline{\res_{\lie{g}}(\Variety(\gr I))}$ and related varieties are studied,
and the Cartan subalgebra and the Weyl group for such a Poisson $G$-variety are constructed in \cite{Lo09}.
If the conjectural equality $\overline{\res_{\lie{g}}(\Variety(\gr I))} = \Variety(\gr(I\cap \univ{g}))$
holds (Conjecture \ref{conj:EnhancedKobayashi}), the Cartan subalgebra $\lie{c}$ for $V$ is constructed from the Poisson $G$-variety $\Variety(\gr I)$.
Similarly, we expect that the small Cartan subalgebra $\lie{a}$ for $V$ is constructed from a Lagrangian subvariety of the nilpotent coadjoint orbit.

\subsection{Abelian case}

Let us consider the abelian subgroup case.
We have treated the abelian case in Subsection \ref{subsection:AbelianGeneral}.
We shall state a special result in the branching problem.

Let $G_\RR$ be a connected semisimple Lie group with finite center
and $\theta$ a Cartan involution of $G_\RR$.
Set $K_\RR\coloneq G_\RR^\theta$ and let $K$ denote the complexification of $K_\RR$.
Let $\lie{a}_\RR$ be an maximal abelian subspace in $\lie{g}_\RR^{-\theta}$,
and write $A_\RR$ for the analytic subgroup of $G_\RR$ corresponding to $\lie{a}_\RR$.
Then the algebraic group $A\coloneq \overline{\Ad(A_\RR)}$ in the Zariski topology of $\Aut(\lie{g})$ is a complex torus.
Note that $(\univ{g}, A)$ is a generalized pair.

For a $\lie{g}$-module $V$, we set
\begin{align*}
	\lie{g}[V]\coloneq \set{X \in \lie{g}: \CC X \text{ acts locally finitely on }V}.
\end{align*}
Then it is known that $\lie{g}[V]$ is a subalgebra of $\lie{g}$.
By definition, the $\lie{g}[V]$-action on $V$ is locally finite.
See e.g.\ \cite[Theorem 1.16]{Zu12} and references therein.
If $V$ is a $(\lie{g}, K)$-module, $\lie{g}[V]$ contains $\lie{k}$ by definition.

\begin{theorem}\label{thm:AbelianGK}
	Let $V$ be an irreducible $(\lie{g}, K)$-module.
	Suppose that no simple factor of $\lie{g}$ acts on $V$ locally finitely.
	Then $\Ass_{\univ{a}}(V) = \set{(0)}$ holds.
	In other words, the $\univ{a}$-module $V|_{\lie{a}}$ is torsion-free.
\end{theorem}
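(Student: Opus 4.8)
The statement says exactly that $V|_{\lie{a}}$ is torsion-free over $\univ{a}=S(\lie{a})$, since $S(\lie{a})$ is a domain. The plan is first to reduce to the case where $V$ is irreducible as a $\lie{g}$-module. Because $K_\RR=G_\RR^\theta$ is connected, any $k\in K_\RR$ carries a simple $\lie{g}$-submodule of $V$ to an isomorphic one (the twist is by an inner automorphism of $\lie{g}$), so the $\lie{g}$-isotypic components of $V$ are $(\lie{g},K)$-submodules; by irreducibility $V$ is $\lie{g}$-isotypic, say $V\cong U\otimes_{\CC}M$ with $U$ a simple $\lie{g}$-module and $M$ a vector space (here $\End_{\lie{g}}(U)=\CC$ by Dixmier's lemma). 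Tensoring with $M$ preserves torsion-freeness over the domain $S(\lie{a})$ and does not change which simple ideals of $\lie{g}$ act locally finitely, so it suffices to treat $V=U$: an irreducible $\univ{g}$-module that is moreover $\lie{k}$-locally finite and on which no simple ideal of $\lie{g}$ acts locally finitely. Since $(\univ{g},A)$ is a generalized pair and $U$ is an irreducible $\univ{g}$-module, Corollary \ref{cor:AbelianGen} applies: there is a connected closed subgroup $M\subseteq A$ with $\lie{a}/\lie{m}$ the small Cartan subalgebra for $V|_{\lie{a}}$ and $\lie{m}$ acting locally finitely on $V$.

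If $\lie{m}=0$, the small Cartan subalgebra is all of $\lie{a}$, so Theorem \ref{thm:Affinity} (applied to $(\univ{g},A)$, whose ``Weyl group'' is trivial) gives $\Variety(\Ann_{\univ{a}}(v))=\lie{a}^*$ for every $0\neq v\in V$; as $S(\lie{a})$ is reduced this forces $\Ann_{\univ{a}}(v)=0$, i.e.\ $V|_{\lie{a}}$ is torsion-free. So it remains to rule out $\lie{m}\neq0$. In that case $\lie{m}\subseteq\lie{g}[V]$ (its action is locally finite) and $\lie{k}\subseteq\lie{g}[V]$ ($V$ is $\lie{k}$-locally finite), so for any $0\neq X\in\lie{m}$ the subalgebra $\lie{s}$ of $\lie{g}$ generated by $\lie{k}$ and $X$ lies in $\lie{g}[V]$. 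The crux is the Lie-algebraic lemma: \emph{the subalgebra generated by $\lie{k}$ together with a nonzero element of $\lie{a}$ contains a simple ideal of $\lie{g}$}. Granting it, $\lie{g}[V]$ contains a simple ideal $\lie{g}_j$; since the $\lie{g}[V]$-action on $V$ is locally finite, $\lie{g}_j$ acts locally finitely on $V$, contradicting the hypothesis. Hence $\lie{m}=0$.

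For the lemma, decompose $\lie{g}=\bigoplus_i\lie{g}_{(i)}$ into the complexifications of the simple ideals of $\lie{g}_\RR$, with the compatible decompositions $\lie{k}=\bigoplus_i\lie{k}_{(i)}$, $\lie{p}=\bigoplus_i\lie{p}_{(i)}$, $\lie{a}=\bigoplus_i\lie{a}_{(i)}$, so that each $(\lie{g}_{(i)},\lie{k}_{(i)})$ is an irreducible symmetric pair. Write $X=\sum_iX_{(i)}$ and fix $i$ with $X_{(i)}\neq0$. Since $\lie{k}_{(i)}$ commutes with the other summands, $Y\coloneqq\sum_{m\geq1}\ad(\lie{k}_{(i)})^mX_{(i)}$ lies in $\lie{s}\cap\lie{p}_{(i)}$, and it is a nonzero $\lie{k}_{(i)}$-submodule of $\lie{p}_{(i)}$ (nonzero because $X_{(i)}$ cannot be $\lie{k}_{(i)}$-invariant). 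By the structure of irreducible symmetric pairs, the $\lie{k}_{(i)}$-submodules of $\lie{p}_{(i)}$ are $0$ and $\lie{p}_{(i)}$ in the non-Hermitian case, and $0,\lie{p}^+,\lie{p}^-,\lie{p}_{(i)}$ in the Hermitian case, where $\lie{p}^+\not\cong\lie{p}^-$ as $\lie{k}_{(i)}$-modules (distinguished by the central character of $\lie{k}_{(i)}$). In the Hermitian case $Y$ cannot equal $\lie{p}^\pm$: that would force $\ad(\lie{k}_{(i)})X_{(i)}\subseteq\lie{p}^\pm$, hence the $\lie{p}^\mp$-component of $X_{(i)}$ $\lie{k}_{(i)}$-invariant, hence zero — impossible since $\lie{a}_{(i)}$ meets neither $\lie{p}^+$ nor $\lie{p}^-$ (a standard fact about Hermitian symmetric pairs). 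Therefore $Y=\lie{p}_{(i)}$ in all cases, so $\lie{p}_{(i)}\subseteq\lie{s}$ and $\lie{g}_{(i)}=\lie{k}_{(i)}+\lie{p}_{(i)}\subseteq\lie{s}$; as $\lie{g}_{(i)}$ is a sum of one or two simple ideals of $\lie{g}$, the lemma follows.

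The main obstacle is this last lemma, and inside it the Hermitian case: one must use that the isotropy representation of a non-Hermitian irreducible symmetric pair is $\lie{k}$-irreducible, that in the Hermitian case the halves $\lie{p}^\pm$ are non-isomorphic $\lie{k}$-modules, and that a maximal split subspace $\lie{a}$ meets neither $\lie{p}^+$ nor $\lie{p}^-$ (equivalently $\lie{a}_\RR\cap J\lie{a}_\RR=0$ for the complex structure $J$). The reduction to $\lie{g}$-irreducible $V$ and the equivalence of the conclusion with $\lie{m}=0$ are comparatively routine once Corollary \ref{cor:AbelianGen} and Theorem \ref{thm:Affinity} are available.
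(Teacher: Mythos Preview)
Your argument is correct and follows essentially the same route as the paper: apply Corollary \ref{cor:AbelianGen} to the generalized pair $(\univ{g},A)$, get a subalgebra $\lie{m}\subset\lie{a}$ acting locally finitely, and use that $\lie{g}[V]$ (which contains $\lie{k}$ and $\lie{m}$) must then contain a simple ideal of $\lie{g}$. The paper compresses this last step into a single sentence, while you supply a full proof of the Lie-algebraic lemma; your case analysis (non-Hermitian vs.\ Hermitian, with the observation $\lie{a}_\RR\cap J\lie{a}_\RR=0$) is sound.

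One remark: your preliminary reduction to a $\lie{g}$-irreducible module is unnecessary. Since $G_\RR$ is connected semisimple with finite center, $K_\RR=G_\RR^\theta$ is connected, hence so is $K$; for connected $K$ every $\lie{g}$-submodule of a $(\lie{g},K)$-module is automatically $K$-stable (because it is $\lie{k}$-stable and the rational $K$-action is determined on each finite-dimensional $K$-submodule by its differential). Thus $V$ is already $\lie{g}$-irreducible, and Corollary \ref{cor:AbelianGen} applies directly. Your isotypic-component argument, as written, tacitly assumes $V$ is $\lie{g}$-semisimple, which is not given; but since the conclusion of the reduction holds for free, this does not affect the validity of the proof.
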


\begin{proof}
	Assume that the $\univ{a}$-module $V|_{\lie{a}}$ is not torsion-free.
	By Corollary \ref{cor:AbelianGen} for $(\univ{g}, A)$, there exists a non-zero subalgebra $\lie{m}$ of $\lie{a}$ such that the $\lie{m}$-action on $V$ is locally finite.
	Hence $\lie{g}[V]$ contains $\lie{k}$ and $\lie{m}$.
	Since $\lie{m}$ is non-zero and contained in $\lie{g}^{-\theta}$,
	$\lie{g}[V]$ contains a simple factor of $\lie{g}$.
	This contradicts the assumption of the theorem.
	We have shown the theorem.
\end{proof}

Unitarity is essential in Theorem \ref{thm:AbelinUnitary}.
To show Theorem \ref{thm:AbelianGK}, we use the property that $\lie{a}_\RR$ is split abelian instead of the unitarity.
In the $(\lie{g}, K)$-module case, we do not need the algebraicity in Theorem \ref{thm:AbelinUnitary}.
In fact, the restriction of torsion-free module to a subalgebra is also torsion-free.

\begin{corollary}\label{cor:AbelianGK}
	Let $V$ be an irreducible $(\lie{g}, K)$-module.
	Suppose that no simple factor of $\lie{g}$ acts on $V$ locally finitely.
	For any abelian subspace $\lie{a}'_\RR$ in $\lie{g}_\RR^{-\theta}$, the $\univ{a'}$-module $V|_{\lie{a'}}$ is torsion-free.
\end{corollary}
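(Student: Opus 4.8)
The plan is to deduce this from Theorem \ref{thm:AbelianGK} by a conjugation argument, together with the elementary observation that torsion-freeness over a polynomial ring is inherited by polynomial subrings.

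First I would embed $\lie{a}'_\RR$ into a maximal abelian subspace: by Zorn's lemma there is a maximal abelian subspace $\lie{a}''_\RR$ of $\lie{g}_\RR^{-\theta}$ with $\lie{a}'_\RR \subset \lie{a}''_\RR$. By the standard conjugacy theorem for maximal abelian subspaces of $\lie{g}_\RR^{-\theta}$ (here $G_\RR$ is connected semisimple with finite center, so $K_\RR$ is connected and acts transitively on these subspaces), there exists $k \in K_\RR$ with $\Ad(k)\lie{a}_\RR = \lie{a}''_\RR$, where $\lie{a}_\RR$ is the maximal abelian subspace fixed at the beginning of the subsection.

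Next I would transport Theorem \ref{thm:AbelianGK} along $k$. Since $V$ is a $(\lie{g}, K)$-module, the action of $k$ on $V$ is a linear automorphism satisfying $k\cdot(X v) = (\Ad(k)X)\cdot(k\cdot v)$ for $X \in \lie{g}$ and $v \in V$, hence $k\cdot(u v) = (\Ad(k)u)\cdot(k\cdot v)$ for $u \in \univ{g}$, where $\Ad(k)$ also denotes the induced algebra automorphism of $\univ{g}$. In particular the action of $k$ intertwines the $\univ{a}$-module $V|_{\lie{a}}$ with the $\univ{a''}$-module $V|_{\lie{a''}}$ through the isomorphism $\Ad(k)\colon \univ{a}\xrightarrow{\sim}\univ{a''}$. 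The hypothesis that no simple factor of $\lie{g}$ acts locally finitely on $V$ is invariant under this twist, so Theorem \ref{thm:AbelianGK} applies and gives that $V|_{\lie{a}}$ is torsion-free over $\univ{a} = S(\lie{a})$; transporting by $k$, $V|_{\lie{a''}}$ is torsion-free over $\univ{a''} = S(\lie{a}'')$.

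Finally, $\univ{a'} = S(\lie{a}')$ is a subalgebra of the integral domain $\univ{a''} = S(\lie{a}'')$, so every nonzero element of $\univ{a'}$ is a nonzero element of $\univ{a''}$ and therefore acts injectively on $V$; thus $V|_{\lie{a'}}$ is torsion-free. I do not foresee a genuine obstacle here: the only point requiring care is checking that torsion-freeness survives the twist by $\Ad(k)$, and this is immediate from the $(\lie{g},K)$-compatibility of the action.
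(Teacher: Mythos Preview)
Your proof is correct and follows the same line as the paper: the paper remarks just before this corollary that ``the restriction of torsion-free module to a subalgebra is also torsion-free,'' so its intended argument is exactly to embed $\lie{a}'_\RR$ in a maximal abelian subspace, invoke Theorem~\ref{thm:AbelianGK} there, and then restrict. You have simply made the passage from the fixed $\lie{a}_\RR$ to an arbitrary maximal abelian subspace explicit via $K_\RR$-conjugacy and the $(\lie{g},K)$-compatibility, which is the standard way to justify that step.
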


\bibliographystyle{abbrv}


\def\cprime{$'$} \def\cprime{$'$}

\end{document}